\documentclass[a4paper, english]{article}





\usepackage{version}

\usepackage[utf8]{inputenc}
\usepackage[T1]{fontenc}
\usepackage{babel}
\usepackage[babel]{csquotes}

\usepackage{amsmath}
\usepackage{amsfonts}
\usepackage{amsthm}
\usepackage{bbm}
\usepackage{amssymb}
\usepackage[top=3cm, bottom=3cm, left=3.3 cm, right=3.3 cm]{geometry}
\usepackage{mathrsfs}
\usepackage{stmaryrd}
\usepackage{hyperref}
\usepackage{cleveref}
\usepackage{lipsum}

\usepackage{thmtools}%
\usepackage{thm-restate}

\usepackage{float}
\usepackage{tikz}
\usetikzlibrary{shapes.misc}
\tikzset{cross/.style={cross out, draw, 
minimum size=2*(#1-\pgflinewidth), 
inner sep=0pt, outer sep=0pt}}
\usepackage{IEEEtrantools}
\usepackage{url}

\usepackage{enumitem}
\usepackage{accents}

\usepackage{sectsty}
\subsubsectionfont{\fontsize{11}{15}\selectfont}

\newcommand{\details}[1]{{\color{blue}#1}}

\newtheorem{theorem}{Theorem}[section]

\newtheorem{proposition}[theorem]{Proposition}
\newtheorem{fact}[theorem]{Fact}
\newtheorem{lemme}[theorem]{Lemma}
\newtheorem*{affirmation}{Claim}
\newtheorem{corollary}[theorem]{Corollary}
\newtheorem*{th*}{Theorem}
\newtheorem*{fact*}{Fact}
\newtheorem*{lemma*}{Lemma}
\theoremstyle{definition}
\newtheorem{definition}[theorem]{Definition}
\newtheorem{example}{Example}

\mathcode`l="8000
\begingroup
\makeatletter
\lccode`\~=`\l
\DeclareMathSymbol{\lsb@l}{\mathalpha}{letters}{`l}
\lowercase{\gdef~{\ifnum\the\mathgroup=\m@ne \ell \else \lsb@l \fi}}
\endgroup

\DeclareMathOperator{\GL}{\mathrm{GL}}

\DeclareMathOperator{\Cov}{\mathbb{C}ov}
\DeclareMathOperator{\Sym}{Sym}

\newcommand{\R}{\mathbb{R}}
\newcommand{\C}{\mathbb{C}}
\newcommand{\Z}{\mathbb{Z}}
\newcommand{\N}{\mathbb{N}}

\newcommand{\PP}{\mathbb{P}}

\newcommand{\E}{\mathbb{E}}
\newcommand{\1}{\mathbbm{1}}

\newcommand{\UN}{\mathcal{U}_{N}}
\newcommand{\DN}{\mathcal{D}_{N}}

\newcommand{\ku}{\mathfrak{u}}

\newcommand{\kh}{\mathfrak{h}}

\newcommand{\km}{\mathfrak{m}}

\newcommand{\kg}{\mathfrak{g}}
\newcommand{\g}{\mathfrak{g}}
\newcommand{\vv}{\underline{v}}

\newcommand{\w}{\omega}

\newcommand{\ut}{\underline{t}}

\newcommand{\xx}{\underline{x}}
\newcommand{\yy}{\underline{y}}
\newcommand{\zz}{\underline{z}}
\newcommand{\uu}{\underline{u}}
\newcommand{\rr}{\underline{r}}

\newcommand{\dkg}{\widehat{\kg}}
\newcommand{\tkg}{\widetilde{\kg}}
\newcommand{\tmu}{\widetilde{\mu}}
\newcommand{\teta}{\widetilde{\eta}}

\newcommand{\tkm}{\widetilde{\mathfrak{m}}}

\newcommand{\aalpha}{\underline{\alpha}}

\newcommand{\hmu}{\widehat{\mu}}

\newcommand{\hf}{\widehat{f}}

\newcommand{\hpsi}{\widehat{\psi}}
\newcommand{\hnu}{\widehat{\nu}}
\newcommand{\hrho}{\widehat{\rho}}

\newcommand{\eps}{\varepsilon}
\newcommand{\supp}{\text{supp}}

\newcommand{\kn}{\mathfrak{n}}

\newcommand{\thetaNgh}{\Theta^N_{p(g),p(h)}}
\newcommand{\uNgh}{\Upsilon^N_{g,h}}
\newcommand{\huNgh}{\widehat{\uNgh}}
\newcommand{\nn}{\underline{n}}
\newcommand{\ukappaN}{\underline{\kappa}_{N}}
\newcommand{\leb}{\text{leb}}

\newcommand{\XXab}{\overline{X}_{\mu}}
\newcommand{\XX}{X_{\mu}}

\newcommand{\dmu}{d_{\XXab}}
\newcommand{\dd}{d_{\Xab}}
\newcommand{\DilN}{D_{\sqrt{N}}}
\newcommand{\DilsN}{D_{\frac{1}{\sqrt{N}}}}
\newcommand{\Xm}{X}
\newcommand{\Xab}{\overline{X}}
\newcommand{\dg}{d_{\kg}}

\newcommand{\oo}{u}
\newcommand{\om}{\omega}

\newcommand{\norm}[1]{|#1|_{\kg} }  

\newcounter{namedthm}

\makeatletter

\makeatother

\begin{document}

\title{Local limit theorems for random walks on nilpotent Lie groups}
\author{Timothée Bénard\thanks{The first author has received funding from the European Research
Council (ERC) under the European Union’s Horizon 2020 research and
innovation programme (grant agreement No. 803711).} \,and Emmanuel Breuillard}
\date{December 2023}

\maketitle

\large 

\begin{abstract}We establish the (non-lattice) local limit theorem for products of i.i.d. random variables on an arbitrary simply connected nilpotent Lie group $G$, where the variables are allowed to be non-centered. Our result also improves on the known centered case by proving uniformity for two-sided moderate deviations and allowing measures with a moment of order $2(\dim G)^2$ without further regularity assumptions. As applications we establish a Ratner-type equidistribution theorem for unipotent walks on homogeneous spaces and obtain a new proof of the Choquet-Deny property in our setting.
\end{abstract}

\bigskip

\addtocontents{toc}{\protect\setcounter{tocdepth}{0}}

\section{Introduction}

Given a Lie group $G$ and a probability measure $\mu$ on $G$, we may ask whether there are sequences $a_n>0$ and $g_n \in G$ such that the measures \begin{equation}\label{llconv} a_n \,\mu^{*n} * \delta_{g_n}\end{equation} converge weakly towards some non-trivial Radon measure on $G$. The measure $\mu^{*n} * \delta_{g_n}$ is the law of the random product $X_{1}\ldots X_{n} g_n$, where $X_1,\ldots,X_n$ are independent random variables with law $\mu$. An estimate of this kind is called a \emph{local (or local central) limit theorem} (LLT for short) on $G$. Such results are known for various kinds of Lie groups (see the historical remarks at the end of the introduction in \Cref{history}). In this paper we shall answer this question affirmatively for the class of simply connected nilpotent Lie groups. These groups are defined algebraically as those simply connected real Lie groups $G$ such that $G^{[s+1]}=\{1\}$ for some finite integer $s$, where $G^{[i+1]}:=[G,G^{[i]}]$, $G^{[1]}=G$, is the central descending series.  They can also be characterized precisely as the connected subgroups of unipotent upper triangular matrices (see \cite[Theorem 1.1.1]{corwin-greenleaf90}).

In the abelian case $G=\R^d$, the local limit theorem is a classical result \cite{gnedenko-kolmogorov68, breiman68}. Given a sum of $\R^d$-valued independent random variables $S_n=\sum_{k=1}^nX_{k}$ with common law $\mu$, drift $\chi=\E(X_1)$ and covariance the identity, we have as $n\to+\infty$, 
\begin{equation}\label{classicalLLT}
 (2\pi n)^{d/2} \,\mathbb{P}(S_{n}-n\chi \in B) =|B|+o_{\mu,B}(1),
 \end{equation} 
  where $B$ is any box  $B=\prod_{i=1}^d[s_i,t_i]$ of Euclidean volume $|B|$,
provided $\mu$ is non-lattice (i.e. the support of $\mu$ is not contained in a coset of a proper closed subgroup of $\R^d$). A uniform version of the local limit theorem is also available \cite{stone65}:
\begin{equation}\label{classicalLLT-uniform}
 \, \sup_{x \in \R^d} |\mathbb{P}(S_{n}-n\chi \in B+x) - (\sqrt{n}_{\star}\mathscr{N})(B+x)|  = o_{\mu,B}(n^{-d/2}),
 \end{equation} 
where $\sqrt{n}_{\star}\mathscr{N}$ is the push-forward by the homothety $x\mapsto \sqrt{n} \,x$ of  the standard  Gaussian distribution $\mathscr{N}:= (2\pi )^{-d/2}e^{-\frac{\|x\|^2}{2}}dx$ on $\R^d$.

\bigskip

In the context of non-commutative nilpotent Lie groups, the question of a local limit theorem has been investigated by the second-named author \cite{breuillard05} and by Diaconis and  Hough \cite{diaconis-hough21, hough19}, \emph{under the assumption that the measure $\mu$ is centered} in the abelianization $G/[G,G]$. Both \cite{breuillard05} and \cite{diaconis-hough21} deal with the case where the ambient group is the Heisenberg group, that is the simplest instance of a non-commutative nilpotent Lie group, while \cite{hough19} contains a generalization to arbitrary simply connected nilpotent Lie groups. As in \cite{breuillard05, hough19}, we will focus on the \emph{non-lattice} (equivalently aperiodic) case: we work under the assumption (see the remarks in \Cref{history}) that the projection $\mu_{ab}$ of $\mu$ under the abelianization map $G \to G/[G,G]$ is not supported on a coset of a proper closed subgroup of $G/[G,G] \simeq \R^d$.

 \begin{example}[Heisenberg group] \label{heis-ex} Let $G$ be the $3$-dimensional Heisenberg group, i.e.  $\R^3$ endowed with the group structure $x*y=(x_{1}+y_{1}, x_{2}+ y_{2}, x_{3}+y_{3}+\frac{1}{2}(x_{1}y_{2}-x_{2}y_{1})$. Let  $\mu$ be a probability measure on $G$ whose support is compact  and has  non-lattice projection in $\R^2$. Assume that $\mu$ is \emph{centered} in the sense that $\E_{\mu}(x_{1})=\E_{\mu}(x_{2})=0$.  It was shown in \cite{breuillard05} that for every  compactly supported  continuous function $f\in C_{c}(G)$, as $N\to+\infty$, 
 \begin{equation} \label{LLT-Heis-1}
 N^{2}\mu^{*N}(f) \rightarrow c_{\mu} \int_{G} f dx 
 \end{equation}
where $c_{\mu}>0$ only depends on $\mu$, and where $dx$ refers to the Lebesgue measure (or Haar measure) on $G$.

 \noindent As in the  abelian case, a uniform version of the LLT is also available. For $r>0$, set $D_r:x \mapsto (x_1 r, x_2 r, x_3 r^2)$. 
There exists a smooth probability measure $\nu$ on $G$ such that for every  $f\in C_{c}(G)$, for $N\geq1$, 
 \begin{equation} \label{LLT-Heis-2}
 \mu^{*N}(f)= D_{\sqrt{N}}(\nu)(f) + o_{\mu,f}(1)N^{-2}
 \end{equation}
where the term $o_{\mu,f}(1)$ goes to $0$ uniformly in the multiplicative translates of $f$, i.e. for the family of test functions $f_{g,h}: x \mapsto f(gxh)$ where $g,h\in G$. This was shown in \cite{breuillard05} (one sided uniformity only) where the two-sided uniformity required an additional Cram\'er assumption on $\mu$, an assumption subsequently lifted in \cite{diaconis-hough21}. 

\noindent Note that \cref{LLT-Heis-2} implies that $\nu$ is unique and coincides with the limiting measure in the central limit theorem, in other terms $D_{\frac{1}{\sqrt{N}}}(\mu^{*N}) \rightarrow \nu$ weakly (see \cite{crepel-raugi78, benard-breuillard-CLT}). Combining \eqref{LLT-Heis-1}  and \eqref{LLT-Heis-2} we see that  $c_{\mu}=u(0)$, where $u$ is the (smooth) density of $\nu$ with respect to $dx$. This density is nowhere-vanishing.  In fact $\nu$ corresponds to the law of the value at time $t=1$ of a hypoelliptic diffusion $(B^{(1)}_t,B^{(2)}_t,L_t)$ on $G$, where $(B^{(1)}_t,B^{(2)}_t)$ is a planar Brownian motion and $L_t$ is its L\'evy area (see e.g. \cite{friz-victoir10}). A well-known formula due to P. L\'evy \cite{levy51} allows to compute the Fourier transform of $\nu$ (see \cite[Th\'ero\`eme 1]{gaveau77}, \cite[Example 2]{benard-breuillard-CLT}), from which it follows that:
$$c_\mu= \frac{1}{4} \frac{1}{|\det{\Cov_\mu(x_1,x_2)}|^{\frac{1}{2}}}$$
where $\Cov_\mu(x_1,x_2)$ is the covariance matrix of the pair $(x_1,x_2)$ distributed according to $\mu_{ab}$. Note that this is insensitive to the mean of the central coordinate $\E_\mu(x_3)$.

\end{example}

The proof of the LLT for centered walks on the Heisenberg group obtained in \cite{breuillard05} relied on Fourier methods combined with certain spectral gap estimates for infinite dimensional irreducible unitary representations of the Heisenberg group. In a recent breakthrough \cite{diaconis-hough21}, Diaconis and Hough came up with a new idea that allowed them to bypass these estimates. They exploit the exchangeability of the random product under permutations of the variables in order to produce further smoothing in the central coordinate. This yields the desired decay of the Fourier transform.
With this new ``path-swapping'' technique they could obtain a new proof of the LLT for the Heisenberg group with improved (i.e. double-sided) uniformity. Quite remarkably, Hough \cite{hough19} was able to further these ideas and managed to prove the LLT for non-lattice centered walks on arbitrary nilpotent Lie groups. He also obtained the uniform LLT under the additional assumption that $\mu$ satisfies a strong form of aperiodicity (Cram\'er condition).

\bigskip

In the first part of this paper,  we push the Diaconis-Hough path-swapping method   one step further and show how it can be adapted to prove the local limit theorem on nilpotent Lie groups for \emph{non-centered} (we also say \emph{biased}) walks as well. Our result is the first of this kind, as all previous works on the nilpotent LLT assume the random walk to be centered.   It is worth pointing out  that the study of random walks on a non-abelian nilpotent  Lie group does not reduce to  the centered case. More precisely, after recentering, such a random walk may be seen as a process with centered but \emph{time-dependent} driving measure. There are at least two significant consequences. First,  the distribution $\mu^{*N}$ spreads at a faster rate around its average than  in the centered case. In fact, the large scale geometry of $\mu^{*N}$ is not captured by the descending central series (as for centered walks) but by some other filtration with smaller ideals,  which depends on the increment average.  We will need to work both with the descending central series (to exploit the bracket structure of $G$) and with this new filtration. The second novelty appearing in the non-centered case is that the limiting measure in the CLT is typically not of full support (even though it is absolutely continuous). This is a new phenomenon that came as a surprise in our previous work \cite{benard-breuillard-CLT} and forces some changes in the very formulation of the LLT in the non-centered case. In fact, the recentering sequence $g_n$ needs to be fine-tuned to remain within the range of the support of the limiting measure for the LLT to be meaningful, see \Cref{mu/Leb} below.


The limit theorems proven in this paper are the following.  \Cref{LLT} is the local limit theorem for non-lattice nilpotent random walks. It  allows the driving measure to be non-centered. It also provides two-sided moderate uniformity without a Cram\'er assumption, which is new even for centered walks, and useful in applications. An effective nilpotent LLT with better approximation rate but regularity conditions on the test function is also given in \Cref{eff-LLT}. We then deduce \Cref{mu/Leb}, which answers affirmatively the question raised above at \Cref{llconv} for non-lattice driving measures. For centered walks, we also manage to show a uniform  limit theorem for ratios, \Cref{mu/nu}. Such estimates are new even in the Heisenberg case.  The proof of  \Cref{LLT} and  \Cref{mu/Leb}   unifies both the centered and the non-centered cases and is valid for any non-lattice  probability measure on $G$ with a finite moment of high enough order, say $2(\dim G)^2$.  This moment condition also improves on \cite{breuillard05, diaconis-hough21, hough19}. A truncated version of the uniform LLT and its effective form is also established under a  moment of order $2+\eps$ (\Cref{LLT-tronc}, \Cref{TLLFouriercompact}).

\bigskip

The second part of the paper is devoted to two applications of our LLT for biased walks. The first, \Cref{Choquet-Deny}, gives a new proof of \emph{the Choquet-Deny theorem} for measures with finite moments on nilpotent Lie groups. This asserts that bounded $\mu$-harmonic functions are constant, a result proven by Guivarc'h in his thesis \cite{guivarch73} which extends the classical Choquet-Deny theorem \cite{choquet-deny60} on abelian groups. The second application, \Cref{equid-homogene}, concerns  the equidistribution of unipotent random walks on homogeneous spaces. In an unpublished chapter of the second author's thesis \cite[chapter 2]{breuillard-thesis04}, a \emph{probabilistic Ratner's theorem} was shown asserting that $\mu^{*n} * \delta_x$ weakly converges towards Ratner's invariant measure in any finite volume homogeneous space of a connected Lie group, under the assumption that $\mu$ is a finitely supported symmetric measure whose support generates a unipotent subgroup. Results of this type have since known a number of extensions to non-unipotent settings, in particular to subgroups whose Zariski-closure is semisimple, as in the breakthrough work of Benoist-Quint \cite{benoist-quint11} (see also \cite{benard22}). As a consequence of our LLT, we can now prove the equidistribution of random walks driven by  an arbitrary, possibly biased, probability measure with finite third moment and supported on a unipotent subgroup.


\bigskip

\begin{center}
*

*\,\,\,\,*
\end{center}


\bigskip

We now give a detailed presentation of  our main results. In the paper  $G$ will denote  a simply connected nilpotent Lie group  and $\kg $ its Lie algebra. The step (or nilpotency class) of $\kg$ is the smallest integer $s$ such that $G^{[s+1]}=\{1\}$, where $G^{[i]}:=[G,G^{[i-1]}]$ is the central descending series. In the sequel $G$ will be identified with $\kg$ via the exponential map $\exp: \kg \to G$. This allows to see the Lie product $*$ as a composition law on $\kg$.

Abusing notation we will continue to denote by $*$ the associated convolution product of measures on $\kg$.  Observe that $*$ is  in fact a polynomial map on $\kg$ as follows from the Baker-Campbell-Hausdorff formula \cite{dynkin47}:
$$x*y = x+y + \frac{1}{2}[x,y] + \frac{1}{12}[x,[x,y]] +\frac{1}{12}[y,[y,x]]+\dots $$
which contains only finitely many non-vanishing terms as $G$ is assumed nilpotent.

Given a probability measure $\mu$ on $(\kg,*)$ we consider the random walk  $S_N=X_1* \ldots * X_N$ where the $X_i$'s are independent with law $\mu$. We denote by $\mu_{ab}$ the projection of $\mu$ onto the abelianization $\kg_{ab}:=\kg/[\kg,\kg]$. Throughout the text, we assume that $\mu_{ab}$  has finite first moment and is not supported on an affine hyperplane. We also set $\XXab=\E(\mu_{ab}) \in \kg_{ab}$. Where further moment or aperiodicity assumptions are required they will be specified. 

\subsection{ The central limit theorem}
Before discussing the local limit theorem, we need to recall the central limit theorem (or CLT for short) for the random product $S_{N}$, in particular inasmuch as its formulation in the non-centered case requires the introduction of a subtle rescaling as we discovered in \cite{benard-breuillard-CLT}. The large scale behaviour of $S_N$ on the Lie group is captured by a certain decreasing sequence of ideals of $\kg$ whose definition depends only on $\XXab=\E(\mu_{ab}) \in \kg_{ab}$,
$$\kg=:\kg^{(1)} \supseteq \kg^{(2)} \supseteq \cdots  \supseteq \kg^{(t+1)} = \{0\}, $$
where $[\kg^{(i)},\kg^{(j)}]\subseteq \kg^{(i+j)}$ for all $i,j$. In  the centered case, i.e. when $\XXab=0$, the filtration coincides with the descending central  series, i.e. $\kg^{(i)}=\kg^{[i]}:=[\kg, \kg^{[i-1]}]$ and $t=s$. In general, the filtration is defined by induction, setting $\kg^{(0)} =\kg^{(1)}=\kg$ and $\kg^{(i)}=  [\kg, \kg^{(i-1)}] + [\XXab, \kg^{(i-2)}]$. 
We call $(\kg^{(i)})_{i\leq 2s-1}$ the \emph{weight filtration} associated to $\XXab$. 

It is shown in \cite{benard-breuillard-CLT} that for any choice of vector subspaces $\km^{(i)}\subseteq \kg$ such that $\kg^{(i)}= \km^{(i)} \oplus \kg^{(i+1)}$, the projection of $S_{N}$  to $\km^{(i)}$ (modulo the sum of the other $\km^{(j)}$) spreads at a rate $N^{i/2}$ around its average. Renormalizing accordingly, the law converges weakly and we obtain the central limit theorem (CLT). More precisely, let   \begin{align*}
 \pi^{(i)}: \kg &\to \km^{(i)}\\ x &\mapsto \pi^{(i)}(x)=x^{(i)}
 \end{align*} 
 be the coordinate projections induced by the decomposition $\kg=\oplus_{i} \km^{(i)}$. Set $\XX=\E_{\mu}(x^{(1)})$ and observe $\XX$ corresponds to $\XXab$ via the identification $\km^{(1)}\equiv \kg_{ab}$ induced by $\pi^{(1)}$. Introduce an associated one-parameter family of \emph{dilations} $(D_{r})_{r>0}$  by setting
  \begin{equation} \label{dilation} 
  D_r:\kg \to \kg, \,\,\,D_{r}(x)= \sum_{i \ge 1} r^{i}x^{(i)}.
  \end{equation} 
  The  central limit theorem  \cite[Theorem 1.1]{benard-breuillard-CLT} states that if $\mu$ has finite second moment for the weight filtration, then we have the  convergence in law
\begin{equation}\label{CLT}
\DilsN(\mu^{*N} * \delta_{-N \XX}) \to \nu 
\end{equation}
 where $\nu$ is a certain probability measure on $\kg$ with a smooth density. Here, one says that $\mu$ has finite $m$-th moment for the weight filtration if the map $x\mapsto \|x^{(i)}\|^{m/i}$ is in $L^1(\mu)$ for any $i$ (we take some norm, say a Euclidean norm, on each $\km^{(i)}$). We note in passing that since $\kg^{[i]}\subseteq \kg^{(i)}$ and the inclusion may be strict, this condition is \emph{weaker} than the ordinary notion of moment on a nilpotent Lie group. The ordinary notion can be defined in the same way via the central descending series rather than the weight filtration; equivalently it means that the distance to the identity (for some, or any, left invariant riemannian metric) has a finite $m$-th moment.
 
  The CLT also involves a description of the limiting measure $\nu$ as the value at time  $t=1$ of a certain explicit hypoelliptic diffusion process $(\sigma_{t})_{t\in \R^+}$ on $\kg$. The infinitesimal generator of $(\sigma_{t})_{t\in \R^+}$ is time-dependent, and left invariant for a new group structure  $*'$ on $\kg$, inherited from the choice of subspaces $\km^{(i)}$. This new group structure is defined by  the formula
\begin{equation}\label{*'-product}
a*'b=\lim_{t \to +\infty} D_{1/t}(D_t(a)*D_t(b)).
\end{equation} 
With this new Lie structure, $(\kg,*')$ is again an $s$-step nilpotent Lie group and dilations act as automorphisms. The semigroup  $(\sigma_{t})_{t>0}$ is then  defined (as in   \cite{hunt56}) by way of its infinitesimal generator
\begin{align} 
\mathscr{L}_{t}=\frac{1}{2}\sum_{i=1}^{q}  \left(\exp(ta_{\XX}) E_{i}\right)^2+ \exp(ta_{\XX})B_\mu
 \end{align}
where $E_{i}, B_\mu$ are left invariant vectors fields on $(\kg, *')$ such that the $E_i$'s  form a basis of $\km^{(1)}$ in which the covariance matrix of $\mu_{ab}$ is the identity,  and $B_\mu= E_\mu(x^{(2)}) \in \km^{(2)}$. Here 
 $a_{\XX}:\kg \to \kg$ is the nilpotent operator defined by
  \begin{equation}\label{axdef}a_{\XX}(y)=\pi^{(i+2)}([\XX,y])\, \textnormal{ for } y \in \km^{(i)}.\end{equation}
  
The properties of the limiting measure $\nu=\sigma_{1}$ are studied in \cite{varopoulos-saloff-coulhon92} in the centered case and \cite{benard-breuillard-CLT} in  general. It is proven that $\nu=v(x)dx$ where $v$ is a smooth and rapidly decaying (Schwartz) function on $\kg$. The  measure $\nu$ is an ordinary Gaussian distribution on $\kg$ (in the Euclidean sense) if and only if $[\XXab, \kg^{[i]}]=[\XXab, \kg^{[i+1]}]$ for every $i\leq s$, which corresponds exactly to situations where the eigenvalues of the renormalization $D_{\frac{1}{\sqrt{n}}}$ are odd negative powers of $\sqrt{n}$, see \cite[Theorem 1.4]{benard-breuillard-CLT}.  Usually there is no explicit formula for $v(x)$ nor its Fourier transform (the case of the Heisenberg group \cite[Th\'eor\`eme 1]{gaveau77} is an exception). However, satisfactory estimates hold when $\nu$ arises from a centered walk: there exist $A, \alpha>0$ such that 
$$\alpha \exp(-A |x|^2) \leq v(x)\leq A\exp(-\alpha |x|^2) $$
where $|x|$ denotes the distance to the origin for some (any) left-invariant Riemannian metric on $(\kg,*)$. 
Apart from the centered case, or when the group is abelian or of nilpotency class $2$ and certain exceptions (such as the full group of unipotent upper triangular matrices), it is usually the case that the support of the density $\nu$ is a \emph{proper} closed subset of $\kg$, see \cite[Section 3.3.2]{benard-breuillard-CLT}. This came as a surprise to us.

\subsection{The non-centered local limit theorems}

We are now in a position to state the main results of this paper.

\begin{definition} \label{hom-dim} Let $\dmu $ be the integer:   $$\dmu = \sum_{i \ge 1} \dim \kg^{(i)},$$ where $\kg^{(i)}$ is the \emph{weight filtration} introduced above.
\end{definition}

We call $\dmu$ the \emph{homogeneous dimension of the $\mu$-random walk}, or more precisely of the weight filtration  induced by $\XXab$. Note that $\dmu$ is related to the determinant of the dilations:  $\det D_{r}=r^{\dmu }$. When $\mu$ is centered, $\dmu $ coincides with the homogeneous dimension of $\g$, namely $d_\kg=\sum_{i \ge 1} \dim \kg^{[i]}$, which controls the volume growth of balls in $G$ by the Bass-Guivarc'h formula \cite{guivarch73, breuillard14}. We further note that $\dmu  \ge d_\kg$ with equality if and only if the weight filtration coincides with the descending central series, i.e. $\kg^{(i)}=\kg^{[i]}$ for all $i$. 

In the next theorem we  assume that the projection $\mu_{ab}$ is \emph{non-lattice}; this means that its support is not contained in a translate of a proper closed additive subgroup of $\kg_{ab}$.

\begin{theorem}[Non-centered uniform LLT]  \label{LLT} Let $G$ be a simply connected nilpotent Lie group with Lie algebra $\kg$.
Let $\mu$ be a probability measure on  $G$ with  finite  moment of order $m_{\mu}> \dmu +2$ for the weight filtration induced by $\XXab=\E(\mu_{ab})$, and such that $\mu_{ab}$ is non-lattice. Then for every continuous and compactly supported function $f$ on $\kg$,  for all $N\geq 1$, 
\begin{align} \label{non-centered-LLT}
|(\mu^{*N} *\delta_{-N\XX}) (f) -(\DilN \nu)(f)| =o_{\mu, f}(N^{-\frac{1}{2} \dmu }),
\end{align}
where $\nu$ is the limit measure from the central limit theorem \eqref{CLT}, and $\dmu $ is the homogeneous dimension of the $\mu$-random walk. 

Moreover for $\eps_{0}>0$ small enough, the error rate in \eqref{non-centered-LLT} is uniform in all the bideviations $\delta_g * f * \delta_h : x\mapsto f(g*x*h)$ for $g,h\in \kg$, provided that either $g$ or $h$ belongs to 
$$\mathcal{E}_{N,\eps_{0}}:=\{x\in \kg \,:\, \|x^{(i)}\| \leq N^{\frac{i}{2}+\eps_{0}}\} + [-N^{1+\eps_{0}}, N^{1+\eps_{0}}]X_{\mu}. $$
\end{theorem}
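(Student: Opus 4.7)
The plan is to prove \Cref{LLT} by Fourier analysis on $\kg \simeq \R^d$, adapting the path-swapping method of \cite{diaconis-hough21, hough19} to the non-centered setting. After identifying $G$ with $\kg$ via $\exp$, both $\mu^{*N}*\delta_{-N\XX}$ and $\DilN\nu$ become measures on the vector space $\kg$, and vector-space Fourier inversion yields
$$\bigl((\mu^{*N}*\delta_{-N\XX}) - \DilN\nu\bigr)(f) \;=\; \int_{\kg^*} \hat f(\xi)\,\bigl(\widehat{\mu^{*N}*\delta_{-N\XX}}(\xi) - \widehat{\DilN\nu}(\xi)\bigr)\, d\xi.$$
Changing variable $\xi = (\DilsN)^{*}\eta$ absorbs the Jacobian $N^{-\dmu/2}$, so that the target bound $o_{\mu,f}(N^{-\dmu/2})$ reduces to an $o(1)$ bound on the $\eta$-integral. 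I split this integral into a \emph{low} regime $|\eta|\leq T_N$ with $T_N\to\infty$ slowly, a \emph{medium} regime $T_N\leq |\eta|\leq N^{\delta}$ for some small $\delta=\delta(\eps_0)>0$, and a \emph{high} regime $|\eta|\geq N^{\delta}$.

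The low regime is handled by a quantitative refinement of the non-centered CLT \cite{benard-breuillard-CLT}: the moment hypothesis $m_\mu>\dmu+2$ is exactly what an Edgeworth-type expansion requires in order to yield a pointwise rate $\widehat{\DilsN(\mu^{*N}*\delta_{-N\XX})}(\eta) = \hat\nu(\eta) + o(1)$ uniformly on growing compacts, whose integration against $\hat f$ is $o(1)$. The high regime is dealt with by the classical smoothing trick: after mollifying $f$ so that $\hat f$ is rapidly decaying on scale $N^{\delta}$, the crude bound $|\widehat{\mu^{*N}*\delta_{-N\XX}}|\leq 1$ makes this contribution negligible.

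The real content, and the main obstacle, is the \emph{medium} regime. The central tool is the Diaconis-Hough \emph{path-swapping} identity: by exchangeability of the i.i.d.\ increments $X_1,\ldots,X_N$, the characteristic function $\widehat{\mu^{*N}}(\xi)$ equals its average over any family of permutations, and a clever choice of averaging family produces cancellations. Expanding $S_N - N\XX$ via the Baker-Campbell-Hausdorff formula, $\langle \xi, S_N - N\XX\rangle$ splits into iterated brackets of the centered increments $X_i - \XX$ together with cross-terms involving iterated brackets with the drift $\XXab$; these cross-terms are precisely what forces the \emph{weight filtration} $\kg^{(i)}=[\kg,\kg^{(i-1)}]+[\XXab,\kg^{(i-2)}]$ (rather than the descending central series) to be the correct filtration. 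I therefore have to redesign Hough's path-swapping scheme --- originally tailored for the descending central series in the centered case --- so that the cancellations it generates land on the correct layer of the weight filtration with the correct scaling $N^{i/2}$, exploiting brackets of increments with each other \emph{and} brackets with $\XXab$. The aperiodicity of $\mu_{ab}$ enters via a compactness argument providing a uniform strict bound $|\widehat{\mu_{ab}}|\leq 1-c$ on each compact annulus of $\km^{(1)}\setminus\{0\}$, anchoring the decay on the abelian layer.

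Finally, the uniformity over bideviations $f_{g,h}=\delta_g *f*\delta_h$ with $g$ or $h$ in $\mathcal{E}_{N,\eps_0}$ is obtained as follows. Left and right multiplication in $(\kg,*)$ are polynomial maps whose degree-$i$ contribution is determined by the $g^{(i)}, h^{(i)}$; the growth condition $\|x^{(i)}\|\leq N^{i/2+\eps_0}$ in $\mathcal{E}_{N,\eps_0}$ ensures that conjugation by $g$ or $h$ perturbs the medium-frequency estimates by a factor absorbed by the slack $N^{\delta}$, provided $\delta>\eps_0$. The extra segment $[-N^{1+\eps_0},N^{1+\eps_0}]\XX$ along the drift direction is precisely the range in which translation is compatible with the recentering $\delta_{-N\XX}$ up to a displacement absorbed by the Schwartz decay of $\nu$. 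The technical crux of the proof is therefore the medium-frequency path-swapping step, redesigned layer by layer around the drift-twisted filtration so as to yield cancellations matched to the anisotropic scaling of $\DilN$.
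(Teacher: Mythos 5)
Your overall strategy (Fourier inversion, comparison with the CLT limit near the origin, path-swapping adapted to the weight filtration for the rest, aperiodicity entering through a compactness argument on the abelianization) is the same as the paper's, and your discussion of the medium regime and of the bideviation set $\mathcal{E}_{N,\eps_0}$ correctly identifies the right phenomena. However, your frequency decomposition is mis-calibrated in a way that breaks the argument. After the change of variables $\xi=\eta\circ\DilsN$, the region where $\hf(\xi)$ is not small and where no CLT-type comparison is available is \emph{not} confined to $\|\eta\|\le N^{\delta}$: because the dilations are anisotropic with eigenvalues up to $N^{(2s-1)/2}$, the set $\{\xi:\|\xi\|\le R\}\smallsetminus \UN(\gamma_0)$ corresponds in the $\eta$ variable to a region reaching $\|\eta\|\sim R\,N^{(2s-1)/2}$ and of volume $\sim N^{\dmu/2}$. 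On that region the crude bound $|\widehat{\mu^{*N}*\delta_{-N\XX}}|\le 1$ is useless (the volume is polynomially large), and mollifying $f$ at scale $N^{-\delta}$ does not make $\hf$ small there since $\|\xi\|$ stays bounded. The super-polynomial decay $|\widehat{\mu^{*N}}(\xi)|\ll N^{-Q}$ must therefore be proved on the \emph{entire} complement of the shrinking reduced domain $\UN(\gamma_0)$ (with explicit polynomial dependence on $\|\xi\|$ and on the aperiodicity gap, as in Proposition \ref{reduction}), and general $f\in C_c$ is then handled by sandwiching between band-limited functions with \emph{fixed} compact Fourier support (Proposition \ref{approx}), not by mollification at an $N$-dependent scale. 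Your "high regime" as described would simply not close.

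A second, independent gap is the absence of any truncation of the increments. Under the stated hypothesis $m_\mu>\dmu+2$ the raw measure does not have the high-order moments that the path-swap error analysis requires (one must Taylor-expand the phase of the non-abelianized bracket terms to order $2m$ with $m$ large and control $L^{2m}$ moments of polynomial statistics of the increments). The paper first proves the LLT for a gradually truncated product (Theorem \ref{LLT-tronc}), for which all moments are finite by construction, and then passes back to $\mu^{*N}$ via Lemma \ref{truncation-cost}; it is exactly this total-variation comparison, of size $o(N^{1-\frac{1-\gamma_0}{2}m_\mu})$, that consumes the hypothesis $m_\mu>\dmu+2$ — not an Edgeworth expansion in the low-frequency regime, which is instead handled by a Lindeberg swap with a comparison measure $\eta$ satisfying $\teta^{*'N}=\DilN\nu*'N\chi$ exactly. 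Without the truncation step your medium-regime argument would require substantially stronger moment assumptions than the theorem states.
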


In \cite{breuillard05} this result was proven for the Heisenberg group in the special case of a centered $\mu$ of compact support. As discussed above, Diaconis and Hough \cite{diaconis-hough21} gave a new proof in the centered Heisenberg case.  Later, Hough \cite{hough19} obtained the LLT for centered walks on arbitrary nilpotent Lie groups. His result allows deviations of the test function $f$ only under the further assumption that $\mu_{ab}$ satisfies the Cramér condition, which is a strong form of aperiodicity  typically concerning spread out measures, or discrete measures with good diophantine properties (see \cite{breuillard-dioph, angst-poly}). We allow deviations without Cramér assumption and this is important for our applications in \Cref{Sec-applications}. 

\Cref{LLT} is the first of its kind to allow the measure $\mu$ to be non-centered. The  proof is again via characteristic functions.  It is inspired by the path swap technique used by Diaconis and Hough in \cite{diaconis-hough21, hough19}. Since their method was tailored to work for the bracket structure of the descending central series, we will need extra work to obtain results related to the weight filtration (they usually do not coincide in the non-centered case). 

We also note that we improve on the moment assumption compared to  \cite{breuillard05,diaconis-hough21,hough19} even in the centered case. This improvement is performed by truncating suitably the measure $\mu$. \Cref{LLT} will in fact be a corollary of a local limit theorem for a truncated version of $\mu$, namely \Cref{LLT-tronc},  available under the sole  condition that $\mu$ has a finite moment of order $2+\eps$ for some $\eps>0$. This  truncated formulation will turn out useful  for applications later.

\bigskip

\noindent\emph{Remark}. In the \emph{centered Heisenberg} case, the LLT proved by Diaconis and Hough  in \cite{diaconis-hough21} is uniform in all deviations $g,h\in G$, and also allows the measure to be lattice. We do not know whether this extra uniformity holds in the  setting of \Cref{LLT}. In the Heisenberg case, it relies on the observation that the $L^1$-norm  of the Fourier transform of a test function is constant under two-sided deviation, but this is not true in a general nilpotent group. The LLT for lattice measures on a general nilpotent Lie group is also still open. It requires a more delicate analysis that has not been worked out yet, even in the centered case (with the notable exception of centered walks on discrete nilpotent groups by work of Alexopoulos \cite{alexopoulos02-3}, see \Cref{history}).

\bigskip

If $f$ is in $L^1(\kg)$ and its Fourier transform decays fast enough, we also obtain an effective local limit theorem with power saving. In other words, we make  explicit the dependency on $f$ in the error term, and improve the rate of approximation from $N^{-\frac{1}{2}\dmu }$ to $N^{-\frac{1}{2}(\dmu +1-\eps)}$. To do so, 
given $c\in(0,1)$, $L>1$, we introduce the function $gap_{c}: \R_{>0}\rightarrow (0, c)$   given by 
$$gap_{c}(R)= c\inf \{1-|\hmu_{ab}(\xi')| \,:\, \xi' \in \dkg_{ab}, \, c\leq \|\xi'\|\leq c^{-1}(1+R)\}$$
and we define the norm 
\begin{align*}
\|f\|_{\mu_{ab}, c, L}:= \|f\|_{L^1}+ \int_{\dkg} |\hf(\xi)| (1+\|\xi\|)^L gap_{c}(\|\xi\|)^{-L} d\xi.   
\end{align*}
Here $\hmu_{ab}$, $\hf$ refer respectively to the Fourier transforms of $\mu_{ab}$ and $f$.  

\begin{theorem}[Effective LLT with power saving]  \label{eff-LLT} 
Keep the assumptions of \Cref{LLT}, suppose also  $m_{\mu}\geq (\dmu +4)$, and let $\eps>0$. 
There exist constants $c=c(\mu)\in (0, 1)$, $C=C(\mu, \eps)>1$, $L=L(\dim \kg, \eps)>1$ such that  
 for every continuous integrable function $f$ on $\kg$, every $N\geq 1$, we have
 $$|(\mu^{*N} *\delta_{-N\XX}) (f) -(\DilN \nu)(f)|\, \leq \, C  \|\delta_{g}* f* \delta_{h}\|_{\mu_{ab}, c, L} \,N^{ -\frac{1}{2}(\dmu+ 1-\eps)} $$
for every $g,h$ in  $\mathcal{E}_{N,1/L}$.
\end{theorem}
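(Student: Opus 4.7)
The plan is to follow the Fourier-analytic framework used to establish Theorem \ref{LLT}, but to retain quantitatively the dependence on the test function $f$ through its $L^1$-norm and the weighted $L^1$-norm of $\hf$, and to squeeze an extra factor $N^{-\frac{1}{2}(1-\eps)}$ out of the error by exploiting this regularity. Using abelian Fourier inversion on $\kg\simeq \R^{\dim\kg}$ and applying the change of variables $y=g*x*h$ (whose Jacobian is $1$ since $\kg$ is unimodular), one writes
$$(\mu^{*N}*\delta_{-N\XX})(\delta_g * f *\delta_h)-(\DilN \nu)(\delta_g * f *\delta_h) = \int_{\dkg} \hf(\xi)\bigl[\Phi_N(\xi;g,h)-\Psi_N(\xi;g,h)\bigr]\,d\xi,$$
where $\Phi_N$ and $\Psi_N$ are the Fourier transforms, twisted by phases that are polynomial functions of $(g,h,\xi)$ coming from $y\mapsto g^{-1}*y*h^{-1}$. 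When $(g,h)\in \mathcal{E}_{N,1/L}$, these polynomial phases grow at most like a fixed power of $N(1+\|\xi\|)$, which will be absorbed in the $(1+\|\xi\|)^L$ weight at the cost of taking $L=L(\dim\kg,\eps)$ large enough.

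Next I would cut the integral in $\xi$ into three zones, handled exactly as in the proof of Theorem \ref{LLT} but with explicit bookkeeping of $\hf(\xi)$. On the low-frequency ball $\{\|\xi\|\le N^{-\alpha}\}$ (with $\alpha$ chosen close to the scale of $\DilsN$) one applies the quantitative central limit theorem: an Edgeworth-type expansion of $\widehat{\mu^{*N}}$ around the limiting hypoelliptic Gaussian $\hnu$ gives a pointwise error of size $N^{-\frac{1}{2}(\dmu+1)+\eps'}$ against $\|f\|_{L^1}$, which is precisely where the improved rate comes from and where the hypothesis $m_\mu\ge \dmu+4$ is used (one needs two extra moments on top of the CLT to justify the next-order term). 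On the intermediate annulus $\{N^{-\alpha}<\|\xi\|<N^{A}\}$, the non-lattice assumption on $\mu_{ab}$ gives $|\hmu_{ab}(\xi')|\le 1-gap_{c}(\|\xi'\|)$ on the abelianized variable $\xi'=\pi^{(1)*}(\xi)$, so $|\widehat{\mu_{ab}}(\xi')|^N\le e^{-N\,gap_c}$ and the contribution is absorbed by the factor $gap_c^{-L}$ present in the norm at the cost of a polynomial prefactor. On the far region $\{\|\xi\|\ge N^A\}$ one uses the trivial estimate $|\Phi_N|,|\Psi_N|\le 1$ combined with the factor $(1+\|\xi\|)^{-L}$ coming from integrability against $\|f\|_{\mu_{ab},c,L}$.

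The delicate part is the range where $\xi$ has a large component $\xi^{(i)}$ in some higher graded piece $(\km^{(i)})^*$ with $i\geq 2$: there $\widehat{\mu_{ab}}$ is not close to zero and the gap function alone does not give any bound. For these frequencies I would invoke the path-swap estimate of Diaconis--Hough--Hough, adapted here to the weight filtration associated with $\XX$ rather than the descending central series. Grouping the variables $X_1,\dots,X_N$ into blocks and averaging over permutations of indices inside each block produces additional smoothing against the $[\cdot,\cdot]$-bracket coordinates, yielding a bound of the form $|\widehat{\mu^{*N}}(\xi)|\le \exp(-c N^{1-\eta}\|\xi^{(i)}\|^{\delta_i})$ on the relevant dyadic shells, for any prescribed $\eta>0$. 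The $\eta$-loss is exactly the source of the $\eps$-loss in the final exponent.

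The main obstacle, and what fixes the size of $L$ and the bideviation set $\mathcal{E}_{N,1/L}$, is to run all three zones \emph{uniformly} in $g,h\in\mathcal{E}_{N,1/L}$. Conjugation by $g,h$ in $\mathcal{E}_{N,1/L}$ transforms the Fourier variable $\xi$ into a polynomial expression $\widetilde\xi(g,h,\xi)$ whose components in $(\km^{(i)})^*$ can mix across levels of the weight filtration; one must check that the path-swap decay and the gap-based decay survive this substitution, and that the phase it produces is controlled by $(1+\|\xi\|)^L$ after choosing $L$ large compared to $s/\eps_0$ with $\eps_0=1/L$. Once this robustness is secured, summing the three contributions and balancing $\alpha$ against $\eta$ yields the announced rate $N^{-\frac{1}{2}(\dmu+1-\eps)}$ with the constants $c=c(\mu)$, $C=C(\mu,\eps)$, and $L=L(\dim\kg,\eps)$.
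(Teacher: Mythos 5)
Your overall route coincides with the paper's: \Cref{eff-LLT} is obtained there by Fourier inversion, a domain reduction (\Cref{reduction}) combining the gap function of $\hmu_{ab}$ with the path-swap adapted to the weight filtration, an order-$2$ replacement of $\mu$ on the reduced domain $\UN(\gamma_0)$ (graded product $*'$ plus a Lindeberg swap with a measure $\eta$ satisfying $\teta^{*'N}=\DilN\nu*'N\chi$, which plays the role of your ``Edgeworth term''), and polynomial control of the conjugation by $g,h$ to get uniformity over the bideviation set. Your zone decomposition and your treatment of the far region via the weight $(1+\|\xi\|)^L gap_c(\|\xi\|)^{-L}$ match the paper's bookkeeping.

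There is, however, one genuine gap: you run the argument directly on $\mu^{*N}$, with no truncation of the increments. The path-swap step, as you set it up, requires controlling error terms of the form $\E\big(|\xi(E\yy)-\xi(E\yy_a)|^{2m}\big)$ by Taylor-expanding the phase to order $2m-1$ with $m$ chosen large in terms of the target power $N^{-Q}$; this demands moments of the bracket polynomials of \emph{arbitrarily high} order, which a measure with only $m_\mu\ge \dmu+4$ finite moments does not have. The paper resolves this by first replacing $\mu^{*N}$ by the gradually truncated product $\Theta^N$ (built from $T_{N_a}\tmu$), proving the effective LLT for $\Theta^N$ (\Cref{TLLFouriercompact}), and then invoking \Cref{truncation-cost}: the total-variation cost of the truncation is $o(N^{1-\frac{1-\gamma_0}{2}m_\mu})$, and it is precisely \emph{here} --- not in the next-order CLT term --- that the hypothesis $m_\mu\ge\dmu+4$ is used, to make this cost smaller than $N^{-\frac{1}{2}(\dmu+1-\eps)}$. (The power saving itself only needs a third moment, via \Cref{gaussian}.) A secondary imprecision: on your intermediate annulus you bound $|\widehat{\mu^{*N}}(\xi)|$ by $|\hmu_{ab}(\xi')|^N$, but this factorization is only valid for $\xi$ vanishing on $[\kg,\kg]$; for general $\xi$ with large abelianized component one must still decouple the abelian part of the product from the bracket part (the $a=1$ case of the decomposition $\Pi\yy=S\yy+E\yy$), which again requires the truncated moment estimates.
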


Following the terminology introduced in \cite{breuillard-dioph} in the case when $\kg=\R^d$, we will say that the measure $\mu$ on $\kg$ is \emph{$\kappa$-diophantine} if $\mu_{ab}$ is $\kappa$-diophantine in the sense of \cite[Definition 7.1]{breuillard-dioph}, namely if there is $c>0$ such that 
$$\inf_{t \in \R} \E_{x\sim \mu_{ab}}(\{\xi(x)+t\}^2) \ge \frac{c}{\|\xi\|^\kappa}$$
for all large enough $\xi \in \widehat{\kg_{ab}}$, where $\{\cdot\}$ denotes the distance to the nearest integer. This condition is easily seen to be equivalent to the requirement that for some (or each) $c>0$ there is $c'>0$ such that for all $R>0$
$$gap_c(R) \geq \frac{c'}{(1+R)^{\kappa}},$$
a condition studied in \cite{angst-poly} under the name of \emph{weak Cram\'er condition} in the classical setting when $\g=\R^d$. When this condition holds, Theorem \ref{eff-LLT} becomes:

\begin{corollary}[LLT under a diophantine condition] Keep the assumptions of \Cref{LLT}, suppose also  $m_{\mu}\geq (\dmu +4)$, and let $\eps>0$. Let $\kappa>0$ and assume that $\mu$ is $\kappa$-diophantine. Then there are  constants $C=C(\mu, \eps)>1$ and $L=L(\dim \kg, \eps)>1$ such that  
 for every continuous integrable function $f$ on $\kg$, every $N\geq 1$, we have
 $$|(\mu^{*N} *\delta_{-N\XX}) (f) -(\DilN \nu)(f)|\, \leq \, C  S_{L(1+\kappa)}(\delta_{g}* f* \delta_{h}) \,N^{ -\frac{1}{2}(\dmu+ 1-\eps)} $$
for every $g,h$ in  $\mathcal{E}_{N,1/L}$.
\end{corollary}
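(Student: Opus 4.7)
The plan is to derive this corollary directly from Theorem \ref{eff-LLT} by bounding the abstract norm $\|\cdot\|_{\mu_{ab},c,L}$ pointwise by the Sobolev-type norm $S_{L(1+\kappa)}$. No new analysis of the random walk is needed: Theorem \ref{eff-LLT} already carries all the probabilistic content, and the diophantine hypothesis is used purely to convert the $gap_{c}^{-L}$ factor into a polynomial weight in $\|\xi\|$.

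First I would apply Theorem \ref{eff-LLT} to $\phi := \delta_g * f * \delta_h$, producing constants $c = c(\mu) \in (0,1)$, $C_1 = C_1(\mu, \eps)$, $L_1 = L_1(\dim \kg, \eps)$ such that
\begin{align*}
|(\mu^{*N} *\delta_{-N\XX}) (f) -(\DilN \nu)(f)| \leq C_1\, \|\phi\|_{\mu_{ab}, c, L_1}\, N^{-\frac{1}{2}(\dmu + 1 - \eps)}
\end{align*}
for every $g,h \in \mathcal{E}_{N, 1/L_1}$. The equivalence spelled out just before the corollary turns the $\kappa$-diophantine hypothesis into the polynomial lower bound $gap_{c}(R) \geq c_1(1+R)^{-\kappa}$ for some $c_1 = c_1(\mu) > 0$ and all $R > 0$, so inserting this into the definition of the norm yields
\begin{align*}
\|\phi\|_{\mu_{ab}, c, L_1} \,\leq\, \|\phi\|_{L^1(\kg)} \,+\, c_1^{-L_1} \int_{\dkg} |\widehat{\phi}(\xi)|(1+\|\xi\|)^{L_1(1+\kappa)}\, d\xi.
\end{align*}

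It then remains to dominate this right-hand side by $C_2\, S_{L(1+\kappa)}(\phi)$ for some $L = L(\dim \kg, \eps)$. For the Fourier integral, a Cauchy-Schwarz argument with weight $(1+\|\xi\|)^{-\dim\kg - 1}$ combined with the Plancherel identity controls it by an $L^2$-Sobolev norm of $\phi$ of order $L_1(1+\kappa) + (\dim\kg + 1)/2$; the $L^1$ term is controlled analogously via the weight-decay built into the paper's definition of $S_m$. Setting $L := L_1 + \dim\kg + 1$ ensures $S_{L(1+\kappa)}$ absorbs these dimension-dependent offsets while remaining a function of $\dim\kg$ and $\eps$ only, giving the claimed estimate.

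The main obstacle is essentially cosmetic bookkeeping: one must verify that the specific Sobolev-type norm $S_m$ introduced by the authors simultaneously controls both $\|\cdot\|_{L^1}$ and the weighted Fourier integral at the claimed order, which is routine once its definition is in hand. The substantive content of the corollary is the simple observation that the abstract norm of Theorem \ref{eff-LLT} collapses to an ordinary Sobolev norm as soon as $gap_{c}$ admits polynomial lower control, which is precisely the $\kappa$-diophantine condition.
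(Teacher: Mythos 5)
Your proposal is correct and is essentially the paper's (implicit, unwritten) argument: the corollary follows from \Cref{eff-LLT} simply by substituting the polynomial lower bound $gap_{c}(R)\geq c_{1}(1+R)^{-\kappa}$, which the paper notes is equivalent to the $\kappa$-diophantine hypothesis, into the definition of $\|\cdot\|_{\mu_{ab},c,L}$. The only remark is that your closing Cauchy--Schwarz/Plancherel step is superfluous: the paper defines $S_m(\phi)=\|\phi\|_{L^1}+\int_{\dkg}|\widehat{\phi}(\xi)|(1+\|\xi\|)^m\,d\xi$ (a weighted $L^1$ Fourier norm, not an $L^2$ Sobolev norm), so your second display is already bounded by $\max(1,c_{1}^{-L_1})\,S_{L_1(1+\kappa)}(\phi)$, and one may take $L=L_1$ and absorb $\max(1,c_{1}^{-L_1})$ into $C=C(\mu,\eps)$.
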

Here  $S_L$ denotes the Sobolev norm:
$$S_L(f):=\|f\|_{L^1} +  \int_{\dkg} |\hf(\xi)| (1+\|\xi\|)^L d\xi.  $$

It is worth observing that the diophantine condition on which the quality of the error depends is expressed only in terms of the projection of $\mu$ to the abelianization $\kg_{ab}$. This feature is common to many other situations regarding equidistribution in nilpotent Lie groups, such as in \cite{green-tao}.

\bigskip

The local limit theorem reduces the estimation of $\mu^{*N} *\delta_{-N\XX} (f)$ to that of $\DilN\nu(f)$. The latter is easier to understand, because the limit measure $\nu$ has a smooth and rapidly decaying density as we have discussed. However $\nu$ does not always have full support. In particular if the density of $\nu$ vanishes at the origin, the LLT in the  form of \eqref{non-centered-LLT} does not provide meaningful asymptotics for the decay of $\mu^{*N} *\delta_{-N\XX} (f)$, but only an upper bound in $o_{\mu, f}(N^{-\frac{1}{2}\dmu})$. One way to remedy this is to use the uniformity with respect to deviations in our LLT in order to change the recentering appropriately as follows. Recall that $\nu$ has a smooth density $v(x) \ge 0$ on $\kg$. We have:

\begin{corollary}[LLT with variable recentering] \label{mu/Leb}
Keep the assumptions of \Cref{LLT}, let $Y \in \kg$ and set $g_N=-N\XX*-\DilN Y$. Then for every continuous and compactly supported function $f$ on $\kg$ we have:
$$N^{\frac{1}{2}\dmu  } \,(\mu^{*N}*\delta_{g_{N}})(f) \underset{N\to +\infty}{\longrightarrow} v(Y) \int_{\kg}f(x) dx.$$ 
\end{corollary}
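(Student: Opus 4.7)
The plan is to reduce the corollary to the uniform LLT of \Cref{LLT} by absorbing the extra shift $-\DilN Y$ into a right-translation of the test function. Setting $f_Y(x) := f(x * (-\DilN Y))$ and using $\delta_{g_N} = \delta_{-N\XX} * \delta_{-\DilN Y}$, one has $(\mu^{*N} * \delta_{g_N})(f) = (\mu^{*N} * \delta_{-N\XX})(f_Y)$. The $\km^{(i)}$-component of $h := -\DilN Y$ equals $-N^{i/2} Y^{(i)}$, with norm bounded by $N^{i/2 + \eps_0}$ once $N$ is sufficiently large (depending on $Y$), so $h$ belongs to $\mathcal{E}_{N, \eps_0}$. \Cref{LLT} applied with bideviation $(g,h) = (0,-\DilN Y)$ then yields
$$(\mu^{*N} * \delta_{g_N})(f) = (\DilN \nu)(f_Y) + o_{\mu, f}(N^{-\dmu/2}),$$
so it remains to prove $N^{\dmu/2}(\DilN \nu)(f_Y) \to v(Y) \int_{\kg} f(x)\,dx$.

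For this I would compute the pushforward $\DilN \nu$ explicitly. Since $\det \DilN = N^{\dmu/2}$, the measure $\DilN \nu$ has Lebesgue density $x \mapsto N^{-\dmu/2} v(\DilsN x)$. Using the bi-invariance of Lebesgue measure on the nilpotent group $(\kg, *)$ to substitute $y = x * (-\DilN Y)$, one obtains
$$N^{\dmu/2}(\DilN \nu)(f_Y) = \int_{\kg} f(y) \, v\bigl(\DilsN(y * \DilN Y)\bigr) \, dy.$$

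The main technical step, and where the argument genuinely requires care, is to establish the pointwise convergence $\DilsN(y * \DilN Y) \to Y$ as $N \to \infty$ for each fixed $y$. The plan is to expand $y * \DilN Y$ by Baker-Campbell-Hausdorff: the linear terms give $\DilsN y \to 0$ and $\DilsN \DilN Y = Y$ exactly, while every remaining term is a nested bracket containing at least one factor of $y$ and at least one factor of $\DilN Y$ (single-type brackets vanish by antisymmetry). Expanding each $\DilN Y$ into its $\km^{(j)}$-components produces a prefactor $N^{j/2}$ per factor, while the inclusion $[\kg^{(a)},\kg^{(b)}] \subseteq \kg^{(a+b)}$ that defines the weight filtration forces the bracket to lie in $\kg^{(r)}$ with $r$ at least the sum of all weights involved, yielding a compensating $N^{-r/2}$ when $\DilsN$ is applied. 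The $N$-powers from the $\DilN Y$-factors cancel precisely and one is left with $N^{-(i_1 + \cdots + i_p)/2} = O(N^{-1/2})$, where $(i_\ell)_{\ell \leq p}$ are the weights of the $p \geq 1$ factors of $y$ appearing. Hence $\DilsN(y * \DilN Y) \to Y$, and by continuity of $v$ the integrand converges pointwise to $f(y) v(Y)$. Since $v$ is bounded (being Schwartz) and $f$ is compactly supported, dominated convergence concludes the proof. The BCH bookkeeping is elementary but is the one place where the precise shape of the weight filtration plays an essential role.
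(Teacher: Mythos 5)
Your proposal is correct, and it reaches the conclusion by a genuinely different route from the paper's at the key technical step. The paper deduces \Cref{mu/Leb} from the more general \Cref{muvsleb}, whose proof, after invoking the uniform LLT (\Cref{TLLfcompact}), handles the non-commutation of $\DilN$ with the translation by $-\DilN Y$ on the Fourier side: it restricts to the reduced domain (\Cref{compl}), swaps $*$ for the graded product $*'$ for which dilations are automorphisms (\Cref{remplacement*}), pulls the deviation inside as $\DilsN h$, swaps back, and only then performs the change of variables and dominated convergence. You instead stay entirely in physical space: after the same first step (applying \Cref{LLT} with right deviation $h=-\DilN Y\in\mathcal{E}_{N,\eps_0}$, which is legitimate for $N$ large depending on $Y$), you compute the density of $\DilN\nu*\delta_{-\DilN Y}$ directly and prove $\DilsN(y*\DilN Y)\to Y$ by a Baker--Campbell--Hausdorff expansion combined with the weight estimate $[\kg^{(a)},\kg^{(b)}]\subseteq\kg^{(a+b)}$; your bookkeeping is sound (the only cosmetic point is that the $N$-powers from the $\DilN Y$-factors cancel \emph{at least}, since a bracket may land deeper than the sum of the weights, but the inequality goes the right way and still yields $O(N^{-1/2})$). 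Your argument is more elementary and self-contained for this one-sided, fixed-$Y$ corollary; the paper's Fourier detour is machinery already built for the effective LLT and buys the stronger \Cref{muvsleb}, with two-sided deviations ranging uniformly over $\DilN(B_{\tkg}(0,C))$ rather than a single fixed $Y$.
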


In particular this answers positively the question asked in the introduction as the quantity in \Cref{llconv} converges to a non-trivial multiple of the Haar measure on $G$ provided we choose $Y$ such that $v(Y)>0$. The following variant can also be derived in the centered case:

\begin{corollary}[Ratio LLT with controlled deviations] \label{mu/nu}
Assume that $\mu_{ab}$ is \emph{centered}, non lattice,  and $\mu$ has finite moment of order $m_{\mu}>d_{\kg}+2$. For all non-zero $f\in C_{c}(\kg)$ with $f\geq0$, one has 
$$\frac{\mu^{*N}(f)}{\DilN\nu(f)} \underset{N\to +\infty}{\longrightarrow} 1$$
and for any fixed $c>0$ sufficiently small, the convergence is uniform as we replace $f$ by the bitranslates $\delta_g * f * \delta_h$ with
$$g,h\in \{x\in \kg \,:\, \|x^{(i)}\|\leq c \,(N\log N)^{i/2}\}.$$
\end{corollary}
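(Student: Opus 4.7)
In the centered case, $\XX = 0$, $\dmu = d_\kg$, and the limit density $v$ is strictly positive on $\kg$ with the Gaussian lower bound $v(x) \geq \alpha \exp(-A|x|^2)$ recalled earlier. By bi-invariance of Haar and the change of variables $w = g * \DilN y * h$,
\begin{equation*}
\DilN\nu(\delta_g * f * \delta_h) = N^{-d_\kg/2} \int_\kg f(w)\, v(\Psi_{g,h,N}(w))\, dw, \qquad \Psi_{g,h,N}(w) := \DilsN(g^{-1} * w * h^{-1}).
\end{equation*}
A direct Baker--Campbell--Hausdorff computation shows that for $g, h$ in $\{x : \|x^{(i)}\| \leq c(N\log N)^{i/2}\}$ and $w$ in a fixed compact, $\|\Psi_{g,h,N}(w)^{(i)}\| = O_f(c(\log N)^{i/2})$ and $\Psi_{g,h,N}(w) - \Psi_{g,h,N}(w_0) = o(1)$ uniformly. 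Translating to the Carnot--Carathéodory norm gives $|\Psi_{g,h,N}(w_0)|^2 = O(c^{2/s}\log N)$, so by the Gaussian bound $v(\Psi_{g,h,N}(w_0)) \geq \alpha N^{-A'c^{2/s}}$ for some $A' > 0$, and using $|\nabla v| = O(|\cdot|\,v)$ the main term has the uniform asymptotic $v(\Psi_{g,h,N}(w_0))\|f\|_{L^1} N^{-d_\kg/2}(1+o(1))$.

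Closing the ratio requires the LLT error to be $o(N^{-d_\kg/2}\,v(\Psi_{g,h,N}(w_0)))$, a polynomial improvement over the $o(N^{-d_\kg/2})$ given by \Cref{LLT}. I would obtain this by applying the effective LLT (\Cref{eff-LLT}, or its truncated counterpart to accommodate the corollary's weaker moment hypothesis) to smooth approximations $f^\pm \in C_c^\infty(\kg)$ with $f^- \leq f \leq f^+$ and $\|f^+ - f^-\|_{L^1}$ arbitrarily small. As $\widehat{f^\pm}$ is Schwartz and $\mu_{ab}$ is non-lattice, the norms $\|f^\pm\|_{\mu_{ab}, c, L}$ are finite and the effective LLT produces error $O(N^{-(d_\kg+1-\eps)/2})$, which is strictly dominated by the main term once $c$ is small enough that $A'c^{2/s} + \eps/2 < 1/2$. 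Sandwiching
\begin{equation*}
\frac{\mu^{*N}(\delta_g * f^- * \delta_h)}{\DilN\nu(\delta_g * f^+ * \delta_h)} \; \leq \; \frac{\mu^{*N}(\delta_g * f * \delta_h)}{\DilN\nu(\delta_g * f * \delta_h)} \; \leq \; \frac{\mu^{*N}(\delta_g * f^+ * \delta_h)}{\DilN\nu(\delta_g * f^- * \delta_h)},
\end{equation*}
and letting the mollification scale shrink to zero then yields the claimed uniform ratio convergence.

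The main technical difficulty is to control $\|\delta_g * f^\pm * \delta_h\|_{\mu_{ab}, c, L}$ uniformly as $g, h$ range over the set. The relevant Fourier transform is the abelian one on $\kg$, while bitranslations are in the nilpotent product: for $\xi \in \widehat{\kg}_{ab}$ the phase $\xi(g*w*h)$ is linear in $w$ and the bitranslation only introduces a unit-modulus factor, leaving the norm unchanged; for $\xi \notin \widehat{\kg}_{ab}$ polynomial BCH corrections in $g, h$ enter the phase and must be absorbed by repeated integration by parts against the smooth $f^\pm$. The resulting polynomial-in-$\|g\|, \|h\|$ growth of the norm is reabsorbed by the power saving $N^{(1-\eps)/2}$ in the effective LLT error for $c$ sufficiently small, which is where the precise threshold on $c$ in the statement is ultimately set.
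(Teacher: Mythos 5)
Your overall skeleton — a Gaussian lower bound on $v$ making the denominator only polynomially small, the effective LLT with power saving to beat it, and a sandwich by one-sided approximants — is the same as the paper's, but two of your steps have genuine gaps.

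First, the approximants. You take $f^\pm\in C_c^\infty(\kg)$ and assert that $\|f^\pm\|_{\mu_{ab},c,L}<\infty$ because $\widehat{f^\pm}$ is Schwartz and $\mu_{ab}$ is non-lattice. This fails in general: the non-lattice hypothesis only gives $gap_{c}(R)>0$ for each fixed $R$, with no quantitative lower bound as $R\to\infty$ (no Diophantine or Cram\'er condition is assumed), so $gap_{c}(\|\xi\|)^{-L}$ may grow faster than any function of $\|\xi\|$ and the integral defining the norm can diverge even against Schwartz decay of $\widehat{f^\pm}$. The paper instead constructs \emph{band-limited} one-sided approximants $f^-\le f\le f^+$, i.e.\ with \emph{compactly supported} Fourier transforms (\Cref{approx}, built from Paley--Wiener kernels of the form $\sin^{2p}(\pi x)\sum_n a_n(x-n)^{-2p}$), for which the norm is trivially finite; this is precisely why the corollary needs no Diophantine hypothesis. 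A side effect is that $f^\pm$ are then not compactly supported, so their tails outside a ball of radius $N^{1/(4s)}$ must be estimated separately (rapid decay of $f^\pm$ against the lower bound of \Cref{nu-min-Fcompact}), which your ``$w$ in a fixed compact'' analysis does not cover.

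Second, the denominator. Your asymptotic $\DilN\nu(\delta_g*f*\delta_h)=v(\Psi_{g,h,N}(w_0))\,\|f\|_{L^1}N^{-\dg/2}(1+o(1))$ rests on $|\nabla v|=O(|\cdot|\,v)$, which is not available: the two-sided Gaussian bounds have different constants in the exponents, so one only gets $|\nabla v|/v\ll e^{A\norm{\cdot}^2}$, which on the relevant range is merely $N^{O(c^2)}$. The paper's \Cref{var} exploits exactly this: $\partial_i\log(v\circ r_{N,g,h})$ is a product of $\partial_i r_{N,g,h}=O(N^{-1/2+o(1)})$ with $|\partial_i v/v|\ll N^{O(c^2)}$, yielding a \emph{bounded} oscillation of the density on $\Omega_N$ — not a $1+o(1)$ asymptotic — and that bounded oscillation is what closes the sandwich via $\DilN\nu(\delta_g*(f^+-f^-)*\delta_h)\le O_f(\eps)\,\DilN\nu(\delta_g*f^+*\delta_h)+o(N^{-\dg/2})$. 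Finally, your plan to absorb a polynomial-in-$(\|g\|,\|h\|)$ growth of $\|\delta_g*f^\pm*\delta_h\|_{\mu_{ab},c,L}$ into the power saving $N^{(1-\eps)/2}$ cannot work quantitatively: $\|g\|,\|h\|$ may be as large as $N^{(2s-1)/2+o(1)}$, so any positive degree of growth swamps $N^{1/2}$, and shrinking $c$ only shrinks constants, not exponents. The difficulty is in fact illusory: the internal form of the effective LLT (\Cref{TLLFouriercompact}) bounds the error for the bitranslated \emph{measure} by the norm of the \emph{untranslated} test function, uniformly over moderate deviations, so no control of $\|\delta_g*f^\pm*\delta_h\|_{\mu_{ab},c,L}$ is required.
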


 Due to the possible vanishing of the limiting measure, the assumption that the walk is centered seems unavoidable to guarantee uniformity in the ratio LLT.

\subsection{Applications of the LLT}

We now present two applications of our LLT (\ref{LLT}, \ref{LLT-tronc}).

\bigskip
\noindent{\bf The Choquet-Deny property}.
Let $G$ be a locally compact group, and $\mu$ a probability measure on $G$. A function $f\in L^\infty(G)$ is said to be \emph{$\mu$-harmonic} if for almost-every $x\in G$, 
$$f(x)=\int_G f(xg)\,d\mu(g).$$
Note that every function $f$ that is right-invariant by the support of $\mu$, i.e. such that $f*\delta_g=f$ for every $g \in \supp(\mu)$, must be $\mu$-harmonic. 

 $G$ is said to have the \emph{Choquet-Deny property} if the converse holds: for any measure $\mu$, any bounded $\mu$-harmonic function $f\in L^\infty(G)$ is right-invariant by the support of $\mu$. True for $\Z^d$ \cite{blackwell55} and discrete nilpotent groups \cite{dynkin-malioutov61}, this property is by now completely understood when the group $G$ is discrete, thanks to the groundbreaking recent work of Frisch, Hartman, Tamuz and Vahidi-Ferdowsi \cite{frisch-hartman19} who showed that a finitely generated group is Choquet-Deny if and only if it is virtually nilpotent.  For totally disconnected groups of polynomial growth, several characterizations of the Choquet-Deny property are given in \cite{jaworski-raja07}. The case of continuous groups however remains more subtle.  Guivarc'h in his thesis \cite{guivarch73} established the property for all locally compact nilpotent groups under the assumption of a finite moment (of arbitrarily small but positive order) on the measure $\mu$, while the general case with no moment condition is still an open problem, despite \cite{raugi04}.  Guivarc'h's proof relied on unitary representations. Assuming that $G$ is a nilpotent Lie group and $\mu$ has a moment of order $2+\eps$, we provide a different argument, based on our LLT. 

\begin{corollary}[Bounded $\mu$-harmonic functions] \label{Choquet-Deny}
Let $G$ be a simply connected nilpotent Lie group. Let $\mu$ be a probability measure on $G$ with a finite moment of order strictly greater than $2$ for the weight filtration induced by $\XXab=\E(\mu_{ab})$, and whose support generates a dense subgroup of $G$. Then bounded $\mu$-harmonic functions on $G$ are constant. 
\end{corollary}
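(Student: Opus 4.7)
The plan is to exploit $\mu$-harmonicity together with the uniform local limit theorem to show $f$ is right-translation invariant, hence constant. First I would smooth $f$ by left-convolution: setting $\tilde f(x) = \int \phi(y)\, f(y^{-1} x)\, dy$ for $\phi \in C_c^\infty(G)$ produces a smooth bounded function with bounded derivatives of all orders. Since $\mu$ acts on the right in the harmonicity condition, left-convolution preserves $\mu$-harmonicity. As approximate identities $\phi_n \to \delta_e$ recover $f$ in $L^\infty$, it suffices to prove every such $\tilde f$ is constant.

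Fix $h \in G$. For every $N \geq 1$, iterated $\mu$-harmonicity yields the $N$-independent identity
\[\tilde f(h) - \tilde f(e) \;=\; \int \tilde f \, d(\delta_h * \mu^{*N} - \mu^{*N}) \;=\; \int \bigl[\tilde f(hu * N\XX) - \tilde f(u * N\XX)\bigr]\, d(\mu^{*N} * \delta_{-N\XX})(u),\]
where the last equality follows from the substitution $u = y * (-N\XX)$. The integrand is precisely the difference of bitranslates $[(\delta_h - \delta_e) * \tilde f * \delta_{N\XX}](u)$; since $h, e$ are fixed and $N\XX \in \mathcal{E}_{N, \eps_0}$, the uniform LLT (\Cref{LLT}) is tailored for exactly this configuration. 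After truncating $\tilde f$ at the $\DilN$-scale where $\mu^{*N} * \delta_{-N\XX}$ concentrates---with the tail bounded by $O(\eps)$ via the tightness of $\DilsN(\mu^{*N} * \delta_{-N\XX})$ from the CLT---the LLT lets us replace $\mu^{*N} * \delta_{-N\XX}$ by $\DilN \nu$ up to a $o_N(1)$ error.

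It then remains to show the limit expression $\int \tilde f\, d\bigl[(\delta_h - \delta_e) * \DilN \nu * \delta_{N\XX}\bigr]$ itself tends to $0$. Letting $v_N = N^{-\dmu/2}\, v \circ \DilsN$ denote the smooth density of $\DilN \nu$ (with $v$ the Schwartz density of $\nu$), the change of variables $y = u * N\XX$ reduces the limit expression to $\int \tilde f(u * N\XX)\bigl[v_N(h^{-1}u) - v_N(u)\bigr]\, du$, bounded in absolute value by $\|\tilde f\|_\infty \cdot \|v_N \circ L_{h^{-1}} - v_N\|_{L^1}$. A further substitution transforms this $L^1$-distance into $\int \bigl| v(\DilsN(h^{-1}\DilN w)) - v(w)\bigr|\, dw$. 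Here the crucial Carnot approximation enters: since $\DilsN h \to 0$ and the product $*$ degenerates to the graded law $*'$ under the $\DilsN$-rescaling, one has $\DilsN(h^{-1}\DilN w) \to w$ pointwise, and Scheffé's lemma (both integrands being $L^1$-normalized densities) gives $L^1$-convergence to $0$. This proves $\tilde f(h) = \tilde f(e)$; the identical argument with an extra $x$ on the left yields $\tilde f(xh) = \tilde f(x)$ for all $x, h \in G$, whence $\tilde f$ is constant.

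The main obstacle will be the LLT application in the second paragraph. \Cref{LLT} requires a compactly supported continuous test function, whereas $\tilde f$ is merely bounded. One must truncate at a scale commensurate with the $\DilN$-spread of $\mu^{*N}$, and the resulting cutoff is genuinely $N$-dependent, so one cannot simply invoke the uniform form of \Cref{LLT} with a fixed compactly supported test function. The truncated LLT (\Cref{LLT-tronc}) and the effective form with Sobolev-norm control (\Cref{eff-LLT}) should together provide the right framework to quantify the LLT error for such $N$-dependent test functions. Finally, the hypothesis that $\supp \mu$ generates a dense subgroup of $G$ enters only through the consequence that $\mu_{ab}$ is non-lattice---the aperiodicity assumption required for \Cref{LLT} to apply.
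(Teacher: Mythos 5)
Your endgame is correct and in fact streamlines the paper's argument: the paper only compares $F(x_0)$ with $F(x_0z)$ for \emph{central} $z$ (reducing to $\|\DilN\nu*\delta_z-\DilN\nu\|_{L^1}=\int|v(w+\DilsN z)-v(w)|\,dw\to 0$ by continuity of translation) and then induces on $\dim\kg$ via $\kg/\mathfrak{z}(\kg)$; your direct estimate $\|\delta_h*\DilN\nu-\DilN\nu\|_{TV}=\int|v(\DilsN(h^{-1}*\DilN w))-v(w)|\,dw\to 0$ for arbitrary $h$, via the measure-preserving property of $w\mapsto\DilsN(h^{-1}*\DilN w)$, its pointwise convergence to the identity, and Scheff\'e, is valid and removes the induction entirely. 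The problem is the step you yourself flag as the "main obstacle": it is a genuine gap, and the tools you point to do not close it. Truncating $\tilde f$ at the $\DilN$-scale produces an $N$-dependent test function, and \Cref{LLT-tronc} only gives an error $o_{f,\gamma_0}(N^{-\dmu/2})$ with no uniformity in $f$; meanwhile \Cref{eff-LLT} controls the error by $\|\delta_g*f*\delta_h\|_{\mu_{ab},c,L}$, a norm built from $gap_c(\|\xi\|)^{-L}$, which for a general non-lattice (non-Diophantine) $\mu$ can grow faster than any polynomial — so this norm need not even be finite for a smooth compactly supported cutoff, and the corollary assumes no Diophantine condition. The paper's resolution (\Cref{pixel}) inverts your truncation: it keeps the test function fixed (a bump $\chi_\eps$, writing $F_\eps=\chi_\eps*F$ and $\mu^{*N}(F_\eps)=\int(\mu^{*N}*\delta_{u^{-1}})(\chi_\eps)F(u)\,du$), exploits the uniformity of \Cref{LLT-tronc} in an \emph{arbitrary} one-sided deviation $u^{-1}$, and then truncates the $u$-integration domain to a box of volume $O(N^{\dmu/2})$ using the CLT so that the uniform $o(N^{-\dmu/2})$ error integrates to $o(1)$. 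Without this (or an equivalent device) your second paragraph does not go through.

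Two smaller points. First, your closing claim that dense generation of $\langle\supp\mu\rangle$ "enters only through the consequence that $\mu_{ab}$ is non-lattice" is false: on $\R$ the measure $\tfrac12\delta_1+\tfrac12\delta_{\sqrt2}$ generates a dense subgroup but is supported on the coset $1+(\sqrt2-1)\Z$, hence is lattice. You must first replace $\mu$ by $\tfrac12\mu+\tfrac12\delta_0$ (which has the same harmonic functions and \emph{is} aperiodic) before any LLT applies. Second, the moment hypothesis is only $m_\mu>2$, so \Cref{LLT} (which needs $m_\mu>\dmu+2$) is unavailable and everything must be run through the truncated measures $\Theta^N$ of \Cref{LLT-tronc}, with \Cref{truncation-cost} converting back to $\mu^{*N}$ in total variation; your text gestures at this but the quantitative bookkeeping belongs in the argument, not after it.
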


The condition that $G$ is \emph{simply} connected is not essential here, as one may always lift $\mu$ and $\mu$-harmonic functions to the universal cover of $G$. It is however convenient to express our moment condition on $\mu$.


\bigskip
\noindent{\bf Probabilistic Ratner theorem}.
Let $H$ be a real Lie group and $\Lambda \leq H$ a lattice (discrete subgroup with finite covolume). Let $U$ be a simply connected nilpotent Lie group and $\rho:U \to H$ a Lie group homomorphism such that $Ad \circ \rho$ is contained in a unipotent subgroup of $\GL(\kh)$, where $\kh=Lie(H)$ is the Lie algebra of $H$. Considering the resulting action of $U$ on $\Omega:=H/\Lambda$, Ratner established \cite{ratner91, morris05} that the closure of every $U$-orbit is itself a homogeneous subspace of finite volume: for each $\w \in \Omega$, the stabilizer $L_{\w}=\{h\in H, \, h.\overline{U\w}=\overline{U\w}\}$ acts transitively on  $\overline{U\w}$ while preserving a probability measure on $\overline{U\w}$. This measure is unique and we denote it by  $m_{\w}$. Ratner's proof is based on an equidistribution theorem for one-parameter unipotent flows, that was subsequently extended to the  multi-parameter case by Shah \cite{shah94}. Combining this with our LLT we obtain:

\begin{corollary}[Equidistribution in homogeneous spaces] \label{equid-homogene}
In the above setting, let $\mu$  be a probability measure on $U$ whose support generates a dense subgroup of $U$. Assume $\mu$ has a finite moment of order strictly greater than $2$ for the weight filtration induced by $\XXab=\E(\mu_{ab})$. Then for every  $\w\in \Omega$, we have the following weak-$\star$ convergence of probability measures:
$$\frac{1}{N}\sum_{k=0}^{N-1} \mu^k*\delta_{\w}\underset{N\to \infty}{\longrightarrow} m_{\w}. $$
If, additionally, $\mu$ is assumed centered and aperiodic, then $(\mu^{*N}*\delta_\w)_{N \ge 1}$ itself converges to $m_\w$.
\end{corollary}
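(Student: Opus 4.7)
The proof combines two ingredients: Ratner's theorem extended to multi-parameter unipotent actions by Shah \cite{shah94}, already invoked in the statement, and the local limit theorem together with the Choquet-Deny property from Corollary \ref{Choquet-Deny}. I would first reduce to the case $\overline{U\omega}=\Omega$, so that Ratner's theorem identifies $m_\omega$ as the unique $\rho(U)$-invariant Borel probability measure on $\Omega$ and $\rho(U)$ acts uniquely ergodically on $\Omega$.

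For the Cesàro statement I would argue by compactness. Tightness of the family $\nu_N:=\frac{1}{N}\sum_{k=0}^{N-1}\mu^{*k}*\delta_\omega$ follows from the Dani--Margulis non-divergence estimates for unipotent trajectories combined with the moment bound on $\mu$. Any weak-$\star$ limit $\nu_\infty$ is $\mu$-stationary, and the key step is to upgrade stationarity to full $\rho(U)$-invariance: for a continuous bounded $f$ on $\Omega$, the map $\omega'\mapsto \int f(\rho(u)\omega')\,d\mu^{*k}(u)$ pulled back through the orbit map to $U$ is a bounded $\mu$-harmonic function on $U$, which by Corollary \ref{Choquet-Deny} is $\supp(\mu)$-invariant. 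Density of $\supp(\mu)$ in $U$ then gives $\rho(U)$-invariance of $\nu_\infty$, and unique ergodicity forces $\nu_\infty=m_\omega$.

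For the pointwise statement in the centered aperiodic case, I would apply the ratio LLT (Corollary \ref{mu/nu}) to compare $\mu^{*N}*\delta_\omega$ with $D_{\sqrt{N}}\nu * \delta_\omega$. Given $f\in C_c(\Omega)$, lift it to the bounded continuous function $\tilde f=f\circ\pi_\omega$ on $U$, where $\pi_\omega(u)=\rho(u)\omega$. Decompose $\tilde f=\sum_i \psi_i\cdot\tilde f$ along a bounded partition of unity $\{\psi_i\}$ whose supports tile $U$ with bounded diameter; each piece is compactly supported, and the uniformity in the deviations $g\in\mathcal{E}_{N,\eps_0}$ from Corollary \ref{mu/nu} lets me apply the ratio LLT uniformly as $i$ ranges over tiles within a bulk of the form $\mathcal{E}_{N,\eps_0}$. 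Moment and Schwartz-decay estimates control the tails outside this bulk, yielding $\mu^{*N}(\tilde f)-D_{\sqrt{N}}\nu(\tilde f)\to 0$. Finally, approximating the Schwartz density of $\nu$ by step functions supported on small boxes and applying Shah's theorem to the resulting Følner averages gives $D_{\sqrt{N}}\nu(\tilde f)\to m_\omega(f)$.

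The main obstacle is handling the non-compactly-supported lifted test function $\tilde f=f\circ\pi_\omega$ in the LLT, since $\pi_\omega$ is typically not proper; this is where the full strength of the deviation uniformity in Corollary \ref{mu/nu} is crucial, as it allows one to sum the local LLT estimates with uniformly controlled error over the many tiles lying inside $\mathcal{E}_{N,\eps_0}$. The truncation of $\mu^{*N}$ to this bulk rests on the moment hypothesis via Chebyshev-type estimates, while the matching truncation of $D_{\sqrt{N}}\nu$ uses the Schwartz decay of $v$. A secondary difficulty in the Cesàro case is the tightness of $(\nu_N)$, for which the unipotency of the $\rho$-action is essential through Dani--Margulis non-divergence.
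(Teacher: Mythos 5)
Your high-level ingredients (Ratner--Shah, Choquet--Deny, the LLT) are the right ones, and the step upgrading $\mu$-stationarity of a weak limit to $\rho(U)$-invariance via Corollary \ref{Choquet-Deny} and density of $\supp(\mu)$ is exactly what the paper does. But your argument has a genuine gap at the identification step: even after reducing to $\overline{U\omega}=\Omega$, the action of $\rho(U)$ on $\Omega$ is \emph{not} uniquely ergodic in general (the horocycle flow on a non-compact finite-volume hyperbolic surface admits, besides Haar measure, the invariant probability measures carried by periodic horocycles, even though generic orbits are dense). So knowing that a weak limit $\nu_\infty$ is a $\rho(U)$-invariant probability measure does not force $\nu_\infty=m_\omega$; a priori $\nu_\infty$ could charge proper invariant homogeneous subspaces. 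What is needed, and what the paper supplies in Proposition \ref{domination}, is the stronger statement that every weak limit satisfies $\nu_\infty\leq C\,m_\omega$ --- so that $\nu_\infty$ has full mass \emph{and} is absolutely continuous with respect to $m_\omega$; combined with $U$-invariance and the $U$-ergodicity of $m_\omega$ this identifies $\nu_\infty=m_\omega$. That domination is itself extracted from the LLT: Proposition \ref{pixel} replaces $\mu^{*k}$ by $D_{\sqrt k}\nu * k\XX$, the continuous density of $\nu$ is dominated by a constant times Lebesgue measure on dilated boxes up to an $\eps$ of mass, and Shah's theorem applied to those box averages yields the comparison with $m_\omega$. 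Your appeal to Dani--Margulis for tightness is both unjustified for random products (those non-divergence estimates concern polynomial trajectories, not convolution powers) and insufficient, since tightness alone gives neither absolute continuity nor the exclusion of smaller invariant measures.

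For the centered pointwise statement your route through the ratio LLT and a partition of unity is much heavier than necessary and would still hit the same identification problem at the end. The paper instead reuses the Ces\`aro argument and only needs to check that weak limits of $\mu^{*N}*\delta_\omega$ are $\mu$-stationary, which follows from $|\mu^{*(N+1)}*\delta_\omega(f)-\mu^{*N}*\delta_\omega(f)|\to 0$; this is a consequence of Proposition \ref{pixel} together with $\|D_{\sqrt{N+1}}\nu-D_{\sqrt N}\nu\|\to 0$ in total variation.
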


We stress that the LLT is used not only through the Choquet-Deny property and its consequence   that $\mu$-stationary measures are invariant. Indeed, we also rely crucially on the LLT to establish the non-escape of mass of the above sequences of probability measures. It ensures that any limit measure  has mass $1$ and is absolutely continuous with respect to $m_\w$.

The  moment condition in  \Cref{equid-homogene} may not be optimal. However the other assumptions are tight. If $\mu$ fails to be centered, or aperiodic, then easy examples show that the convergence of individual powers fails and only the Cesaro convergence holds (see \cite[10.2.1]{breuillard05}).  

\Cref{equid-homogene} generalizes the unpublished chapter \cite[Chapter 2]{breuillard-thesis04}, where this result was proven by a different method (combining work of Guivarc'h and Varopoulos) in the special case of a symmetric and finitely supported measure $\mu$, as well as the case of centered walks on the Heisenberg group, treated in \cite{breuillard05}.

We also point out the analogous fact for measures supported on subgroups whose Zariski-closure is semisimple is a major result of Benoist-Quint \cite{benoist-quint11} in the Cesaro case, while convergence of individual powers was recently established by the first named author in \cite{benard22} in that setting, see also \cite{BFLM11, HLL16}.

\subsection{Historical remarks}\label{history}

Random walks and limit theorems on non-commutative Lie groups have a long history. We refer the reader to the early book \cite{grenander63} and survey \cite{tutubalin-sazonov66} for background on non-commutative analogues of the law of large numbers and central limit theorems, and more recent references such as \cite{saloff-coste01, breuillard-thesis04, benoist-quint-book, liao-book, varopoulos21}. The local limit theorem has been studied for compact groups in \cite{ito-kawada40, arnold-krylov63, bourgain-gamburd12, benoist-saxce16}, for the group of Euclidean rigid motions in \cite{kazhdan65, guivarch76, varju15, lindenstrauss-varju} and semisimple Lie groups in \cite{bougerol, kogler22}. On nilpotent Lie groups, the central limit theorem was studied in \cite{tutubalin64, virtser74, crepel-raugi78, raugi78, bentkus-pap96, benard-breuillard-CLT}. Random walks on nilpotent covers of compact manifolds or finite graphs have also attracted a lot of attention, see \cite{ishiwata03, ishiwata-kawabi-namba-I, ishiwata-kawabi-namba-II}, where certain limit theorems for centered and non-centered walks are also obtained.

A completely new method was brought forward by Varopoulos in the 80s (see e.g. \cite{varopoulos-saloff-coulhon92}) and further developed by Alexopoulos culminating in \cite{alexopoulos02,alexopoulos02-2,alexopoulos02-3}. This method provides very complete results on the asymptotics of convolutions powers $\mu^{*N}$ of a measure $\mu$ on a nilpotent group (either Lie or  discrete), including central and local limit theorems, even with the full Edgeworth expansion. However there are two caveats. The first is that this method requires the measure $\mu$ to have a density with respect to the Haar measure, an assumption which seems unavoidable. The second is that the limit theorems that have been obtained by this method always assumed the measure to be centered. 

The method used in our paper however (as well as that of \cite{breuillard05, diaconis-hough21, hough19, benard-breuillard-CLT}) is based on the analysis of characteristic functions and allows to handle probability measures without any assumption on their regularity or whether they are centered or not. While our results assume the measure to be non-lattice, it remains an open problem to handle general lattice measures (even  centered ones) on general nilpotent groups.

\subsection{The path swap principle behind the proof of \Cref{LLT}}
The approach to prove \Cref{LLT} relies on characteristic functions  as in the classical proof of the LLT for abelian groups.  Similarly to previous works, the main part of the proof consists in establishing super-polynommial decay of the Fourier transform outside of a shrinking domain around the origin: for every $Q\ge1$, $\gamma_{0}>0$,  \begin{equation}\label{foudecay}|\widehat{\mu^{*N}}(\xi)| \ll_{Q, \gamma_{0}} N^{-Q}\end{equation}  outside the range of  Fourier parameters $\xi \in \widehat{\kg}$ such that $\|\xi \circ D_{\sqrt{N}}\| \ll N^{\gamma_{0}}$.

In \cite{breuillard05}, a spectral gap estimate for the infinite dimensional irreducible unitary representations of the Heisenberg group was established for this purpose. In \cite{diaconis-hough21}, Diaconis and Hough proposed a different method to prove $(\ref{foudecay})$ in the centered Heisenberg case, which is reminiscent of the well-known \emph{van der Corput trick} in the proof of Weyl's equiditribution theorem for polynomials, where one differentiates the polynomial phase to produce cancellation in an exponential sum (see e.g. \cite[\S 4]{green-tao} for an example of its use in the context of equidistribution of ergodic flows on nilmanifolds). In  \cite{diaconis-hough21}, this discrete derivative is replaced by a difference of permutations acting on the variables, which after iteration, generates some independence. This path swap principle is also exploited by \cite{hough19} to prove  \eqref{foudecay} for centered walks on arbitrary nilpotent groups, and we use it as well for \Cref{LLT}.

We now give a brief outline of this principle (see \Cref{Sec-path swap} for details). Given a subgroup of permutations $F\leq \Sym(N)$ we may write $$\widehat{\mu^{*N}}(\xi) = \E_{\mu^{\otimes N}}\big(e_\xi(\Pi\xx)\big)= \E_{\mu^{\otimes N}}\big(\E_{\sigma \in F}(e_\xi(\Pi\sigma\xx))\big)$$ where $e_\xi(x)=\exp(-2i\pi \xi.x)$ and $\Pi\xx=x_{1}*\dots *x_{N}$ is the Lie product of the variables $\xx=(x_1,\ldots,x_N)$ and $\Sym(N)$ acts by permuting the variables. By Cauchy-Schwarz:
\begin{equation} \label{eq-1-pathswap-pple}
|\widehat{\mu^{*N}}(\xi)|^2 \leq \E_{\mu^{\otimes N}} \big( |\E_{\sigma \in F}(e_\xi(\Pi\xx))|^2\big)=\E_{\mu^{\otimes N}} \big(\E_{\sigma,\tau \in F}(e_\xi(\Pi\sigma\xx-\Pi\tau\xx))\big).
\end{equation}

The right hand side can be rewritten $\E_{\sigma,\tau \in F} \big(\E_{\mu^{*N}} (e_\xi((\sigma-\tau)\Pi\xx))\big)$ where $\sigma-\tau$ is now viewed as an element of the group algebra $\R[\Sym(N)]$ acting on the free Lie algebra. 

We can then iterate this procedure $s-1$ times, averaging successively over commuting subgroups $F_1,\ldots,F_{s-1}$. If  $F=F_1\times \ldots \times F_{s-1}$,
$$|\widehat{\mu^{*N}}(\xi)|^{2^{s-1}} \leq \E_{\sigma,\tau \in F} \big(\E_{\mu^{\otimes N}} (e_\xi(A_{\sigma,\tau}\Pi\xx))\big)$$
where $A_{\sigma,\tau}=\prod_1^{s-1}(\sigma_i-\tau_i) \in \R[\Sym(N)]$ and $\sigma=(\sigma_1,\ldots,\sigma_{s-1}) \in F$.

We now choose the $F_{i}$'s so that for any fixed $\sigma, \tau \in F$, the term $A_{\sigma,\tau}\Pi\xx$ is a sum of independent variables as $\xx$ varies with law $\mu^{\otimes N}$.  Assuming  $N=N'(2s-1)k$ for some $k,N'$, we may split the $N$ variables $x_i$ into $N'$ consecutive large blocks $(B_{j})_{j\leq N'}$ of length $(2s-1)k$. We see each large block as a concatenation of $(2s-1)$ blocks of length $k$. We then let $F_{i}\simeq (\Z/2\Z)^{N'}$ be the group generated by the  transpositions  exchanging the two blocks at distance $i$ from the central block in some $B_{j}$. As the $F_{i}$ have disjoint support, they do commute. The combinatorial properties of the Lie product on a nilpotent group imply that: $$A_{\sigma,\tau}\Pi\xx=\sum_{j=1}^{N'} A_{\sigma,\tau}u_j,$$  where each $u_j=\Pi \xx_j$ is a Lie product of the variables $x_i$ whose indices $i$ belong to the block $B_{j}$. It is therefore a sum of $N'$ uncorrelated terms. And hence:
$$|\widehat{\mu^{*N}}(\xi)|^{2^{s-1}} \leq \E_{\sigma,\tau \in F} \prod_{j=1}^{N'}\E_{\mu^{\otimes N}} (e_\xi(A_{\sigma,\tau}u_j))= \big( \E_{\sigma,\tau} \E_{\mu^{\otimes k(2a-1)}} (e_\xi(A_{\sigma,\tau}u_1) \big)^{N'}.$$
Furthermore the choice of $F$ will ensure that $A_{\sigma,\tau}u_1$ is a sum of $s$-brackets (i.e. pure commutators of length $s$), hence belongs to the center of $\kg$. Choosing $k \sim N^\eps$ and $N'\sim N^{1-\eps}$, this will eventually lead to the desired super-polynomial decay estimate $(\ref{foudecay})$.

\subsection{Organization of the paper} In \Cref{Sec-preliminary}, we  set up the notations to be used in the rest of the paper and we recall from \cite{benard-breuillard-CLT} several techniques of truncation, graded replacement and  Lindeberg replacement, that help  estimate the Fourier transform of the random product. \Cref{Sec-uniform-LLT} is devoted to the proof of the uniform local limit theorem and its effective version. In \Cref{Sec-LLT-ratio}, we prove the local limit theorem with variable recentering or controlled deviations. Finally, in \Cref{Sec-applications}, we apply the uniform local limit theorem to prove the Choquet-Deny property on connected nilpotent Lie groups, and the equidistribution of unipotent walks on finite volume homogeneous spaces.


\subsection{Acknowledgements} We are grateful to  Yves Benoist, Peter Friz and Laurent Saloff-Coste for very helpful discussions. We also thank the anonymous referee for their vigilant reading and relevant comments that, among other things, helped us correct an inaccuracy in \Cref{moyenne3} and improve the presentation of several arguments.

\tableofcontents


\addtocontents{toc}{\protect\setcounter{tocdepth}{2}}

\section{Notation and preliminary background} \label{Sec-preliminary}

The goal of this preliminary section is to collect the terminology and notation used throughout the paper, and to recall useful results from \cite{benard-breuillard-CLT} on which we will build the proof of the LLT.

\subsection{Set-up and notation} \label{Sec2} \label{Sec-cadre}

\subsubsection{  The ambient space and its bias extension} \label{weight-filtration}

Let $(\kg, [.,.])$ be a nilpotent Lie algebra of step $s\geq 1$ with associated Lie product $*$, and $\Xab \in \kg/[\kg,\kg]$. The   \emph{weight filtration} of $\kg$ associated to $\Xab$ is  the decreasing sequence of ideals defined recursively by setting $\kg^{(0)}=\kg^{(1)}=\kg$ and for $i\ge 1$
\begin{equation*}
    \kg^{(i+1)}=[\kg,\kg^{(i)}]+[x,\kg^{(i-1)}].
\end{equation*}
where $x\in \Xab$ is a fixed choice of representative of $\Xab$ in $\kg$. 
 
\begin{lemme} \label{basics-weight-filtration}
We have $\kg^{(2)}=[\kg, \kg]$ and for $k\geq 3$, the ideal $\kg^{(k)}$ is linearly spanned by iterated brackets $[x_{1}, [x_{2},\dots [x_{j-1}, x_{j}]\dots ]]$ with $j\ge k$, and also with $j<k$ provided at least $k-j$ of the $x_i$ are equal to $\Xab$ modulo $\kg^{(2)}$. In particular, the weight filtration depends only on $\Xab$ (and not on the choice of $x$) and  if $i,j \ge 1$, 
\begin{equation}\label{nestedg}
    [\kg^{(i)},\kg^{(j)}] \subseteq \kg^{(i+j)}.
\end{equation}
Furthermore $\kg^{[i]}\subseteq \kg^{(i)} \subseteq \kg^{[i']}$ where $i'=\lfloor i/2 \rfloor +1$, and $\kg^{(2s)}=0$.
\end{lemme}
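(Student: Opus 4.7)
The plan is to introduce a weight bookkeeping that turns the stated spanning description into a natural inductive invariant, and then to read off all six assertions from it. Fix a representative $x \in \kg$ of $\Xab$, assign weight $1$ to every generic letter of $\kg$ and weight $2$ to the distinguished letter $x$, and extend additively over iterated brackets. The spanning description in the lemma is equivalent to the statement that $\kg^{(k)}$ is linearly spanned by left-normed brackets $[y_1,[y_2,\dots,[y_{j-1},y_j]\dots]]$ of total weight $\geq k$, where each $y_i$ is either a generic element of $\kg$ or the letter $x$ (indeed, a length-$j$ bracket with $r$ letters equal to $x$ has weight $j+r$, so weight $\geq k$ is equivalent to $r \geq k-j$). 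I would first verify the $\kg^{(2)}=[\kg,\kg]$ statement directly from the recursion: $\kg^{(2)} = [\kg,\kg^{(1)}] + [x,\kg^{(0)}] = [\kg,\kg]$. Then I would prove the weight version of the spanning claim by induction on $k$ using $\kg^{(k+1)} = [\kg, \kg^{(k)}] + [x, \kg^{(k-1)}]$: a generator from the first summand prepends a weight-$1$ letter to a left-normed bracket of weight $\geq k$, while a generator from the second prepends the weight-$2$ letter $x$ to one of weight $\geq k-1$; in either case we produce, and conversely reach all, left-normed brackets of weight $\geq k+1$.

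The bracket containment $[\kg^{(i)},\kg^{(j)}] \subseteq \kg^{(i+j)}$ would then follow by combining the spanning claim with the Jacobi identity. Weight is additive under Lie brackets, and any iterated bracket word can be rewritten as a sum of left-normed brackets in the same letters; so bracketing two spanning elements of $\kg^{(i)}$ and $\kg^{(j)}$ produces a sum of (not necessarily left-normed) brackets of total weight $\geq i+j$, which reshuffle via Jacobi into left-normed brackets of the same weight, and hence lie in $\kg^{(i+j)}$. Independence from the choice of $x$ is handled by the same reshuffling: if $x$ is replaced by $x+z$ with $z \in [\kg,\kg]=\kg^{(2)}$, expanding a spanning bracket multilinearly produces the original bracket with letters $x$ plus correction terms where some $y_i$ has been replaced by $z$; writing $z = \sum_\ell [a_\ell, b_\ell]$ and returning to left-normed form via Jacobi, each correction trades a weight-$1$ $z$-letter for two weight-$1$ generic letters, preserving the total weight, so the correction still lies in the span associated to the original $x$.

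For the remaining inclusions, $\kg^{[i]} \subseteq \kg^{(i)}$ is immediate since a left-normed bracket of length $\geq i$ has weight $\geq i$. Conversely, a spanning left-normed bracket for $\kg^{(i)}$ of length $j$ has $r \geq i-j$ letters equal to $x$, forcing $j \geq \lceil i/2\rceil$; in the borderline case where $i$ is even and $j = i/2$ all $j$ letters must equal $x$, and the bracket then vanishes by antisymmetry since $[x,x]=0$, so the non-zero contributions occur only at lengths $j \geq \lfloor i/2\rfloor + 1$, yielding $\kg^{(i)} \subseteq \kg^{[\lfloor i/2\rfloor +1]}$. The final vanishing $\kg^{(2s)}=0$ is then the specialization $i=2s$, since $\kg^{[s+1]}=0$ by the step-$s$ hypothesis. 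The main challenge is the bookkeeping itself: I expect the inductive step for the spanning claim and the Jacobi reshuffling used for $[\kg^{(i)},\kg^{(j)}] \subseteq \kg^{(i+j)}$ and for independence of $x$ to be the delicate parts, but once the weight formalism is installed everything reduces to tracking weights and lengths.
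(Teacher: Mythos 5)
Your proof is correct and follows essentially the same route as the paper: the paper simply observes that the span of the indicated commutators satisfies the same recurrence as $(\kg^{(k)})_k$ and states that the remaining assertions "follow directly," while you fill in those details (the weight bookkeeping for the induction, the Jacobi reshuffling for \eqref{nestedg} and for independence of the representative $x$, and the $[x,x]=0$ observation needed for $\kg^{(i)}\subseteq\kg^{[i']}$). No gaps.
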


 \begin{proof} Set $\mathfrak{f}^{(0)}=\mathfrak{f}^{(1)}=\kg$, $\mathfrak{f}^{(2)}= [\kg, \kg]$ and for $k\geq 3$, let $\mathfrak{f}^{(k)}$ be the linear span of all commutators as in the statement. One observes that $(\mathfrak{f}^{(i)})_{i}$ satisfies the same recurrence relation as that defining $(\kg^{(i)})_{i}$. Therefore  $\mathfrak{f}^{(i)}=\kg^{(i)}$ for all $i$. The other  statements follow directly.
 \end{proof}
 
For each  $i\leq 2s-1$, we choose once and for all a vector subspace $\km^{(i)}$ in $\kg$ such that 
 $$\kg^{(i)} = \km^{(i)}\oplus \kg^{(i+1)}. $$
It follows that $\kg$ is the (vector space) direct sum 
 $$\kg=\km^{(1)} \oplus \dots \oplus \km^{(2s-1)}.$$
 We call such a direct sum (with choices of $\km^{(i)}$) a \emph{weight decomposition} of $\kg$. We  may identify $\km^{(1)}$ with $\kg/[\kg,\kg]$. We denote by  $\Xm$  the unique element of $\km^{(1)}$ projecting to  $\Xab$.

 It is useful to introduce an extension of $\kg$, realized by adding to $\kg$ an independent copy $\chi$ of $\Xm$. Given a random walk on $(\kg,*)$, this procedure will allow to distinguish between the influence of the average of the increment and the variations around this average.   If  $\Xab=0$, we just set $\tkg=\kg$, $\chi=0$. Otherwise, we define $\tkg$ as the Lie algebra direct sum:
$$\tkg:=\kg \oplus \R$$
and set $\chi=(\Xm,1)$. In particular, $\tkg=\kg\oplus \R \chi$ (as vector spaces) and $[\Xm, .]=[\chi,.]$. We call $\tkg$ the \emph{bias extension} of $\kg$.   We extend the weight filtration and weight decomposition to $\tkg$ by setting : $\tkm^{(i)}=\km^{(i)}$ if $i \neq 2$, and $\tkm^{(2)}=\km^{(2)}+ \R\chi$, then $\tkg^{(i)}=\oplus_{j\geq i} \tkm^{(j)}$. We thus get the direct sum decomposition:
\begin{equation}\label{tilde-dec} \tkg = \km^{(1)} \oplus (\km^{(2)} \oplus \R\chi) \oplus \km^{(3)} \oplus \cdots \oplus \km^{(t)}.
\end{equation}

The large time behavior of a random walk on $\kg$ with increment average $\Xab$ is governed by a new \emph{graded Lie bracket} on $\kg$, or more generally on $\tkg$. It is defined by setting 
\begin{equation} \label{bracket-gr}
[x,y]'= \pi^{(i+j)}([x,y])
\end{equation}
if $x \in \tkm^{(i)}, y \in \tkm^{(j)}$ and $\pi^{(k)}: \tkg \to \tkm^{(k)}$ denotes the linear projection modulo the other $\tkm^{(k')}$, $k' \neq k$. 
\begin{lemme}
The bilinear map $[.,.]'$ is  a Lie bracket on $\tkg$.
\end{lemme}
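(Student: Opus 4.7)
The map $[.,.]'$ is clearly bilinear and antisymmetric: for $x \in \tkm^{(i)}, y \in \tkm^{(j)}$, antisymmetry of $[.,.]$ and linearity of the projection $\pi^{(i+j)}$ give $[x,y]' = \pi^{(i+j)}([x,y]) = -\pi^{(i+j)}([y,x]) = -[y,x]'$, and the general case follows by bilinearity. The whole content is therefore the Jacobi identity for $[.,.]'$.

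The first preliminary step is to check that the nested property $[\tkg^{(i)},\tkg^{(j)}] \subseteq \tkg^{(i+j)}$ (analogous to \eqref{nestedg}) holds in $\tkg$ for the original bracket $[.,.]$. This transfers from the corresponding property in $\kg$ (\Cref{basics-weight-filtration}) once we check it for the extra generator $\chi$: since $[\chi, y] = [\Xm, y]$ for $y \in \kg$, $[\chi,\chi]=0$, and $\Xm$ is a representative of $\Xab$ modulo $[\kg,\kg]$, the recurrence defining the weight filtration yields $[\Xm, \kg^{(j)}] \subseteq \kg^{(j+2)}$. Hence $[\chi, \tkg^{(j)}] \subseteq \tkg^{(j+2)}$, which combined with the corresponding inclusion in $\kg$ gives the full nested property in $\tkg$.

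The key observation for the Jacobi identity is then the following. For $x \in \tkm^{(i)}$ and $y \in \tkm^{(j)}$, write
\begin{equation*}
    [x,y] = [x,y]' + r_{x,y}, \qquad r_{x,y} := [x,y] - \pi^{(i+j)}([x,y]) \in \tkg^{(i+j+1)},
\end{equation*}
where the last inclusion follows from the nested property and the definition of $\pi^{(i+j)}$. If $z \in \tkm^{(k)}$, the nested property gives
\begin{equation*}
    [r_{x,y}, z] \in [\tkg^{(i+j+1)}, \tkg^{(k)}] \subseteq \tkg^{(i+j+k+1)},
\end{equation*}
so this term vanishes after applying $\pi^{(i+j+k)}$. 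Consequently
\begin{equation*}
    [[x,y]', z]' \,=\, \pi^{(i+j+k)}\bigl([[x,y]', z]\bigr) \,=\, \pi^{(i+j+k)}\bigl([[x,y], z]\bigr),
\end{equation*}
and the analogous identities hold for the two cyclic permutations.

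Summing the three cyclic terms and using the Jacobi identity for the ambient bracket $[.,.]$ then concludes:
\begin{equation*}
    [[x,y]', z]' + [[y,z]', x]' + [[z,x]', y]' \,=\, \pi^{(i+j+k)}\Bigl([[x,y],z] + [[y,z],x] + [[z,x],y]\Bigr) \,=\, 0.
\end{equation*}
The Jacobi identity for arbitrary elements of $\tkg$ follows by trilinearity.

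The main (mild) obstacle is simply the verification of the nested property for the extended Lie algebra $\tkg$; once that is in place, the Jacobi identity is a direct consequence of the projective nature of the definition of $[.,.]'$ combined with Jacobi for $[.,.]$.
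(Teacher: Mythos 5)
Your proof is correct and follows essentially the same route as the paper's: both arguments rest on the observation that the discrepancy between $[\cdot,\cdot]$ and $[\cdot,\cdot]'$ lies one step deeper in the weight filtration and is therefore killed by $\pi^{(i+j+k)}$, reducing the Jacobi identity for $[\cdot,\cdot]'$ to that of the ambient bracket (the paper uses the Leibniz form while you use the cyclic form, a cosmetic difference). Your explicit verification of the nested property for $\tkg$ is a point the paper leaves implicit, and it is carried out correctly.
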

\begin{proof}
Only the Jacobi relation requires some details. By \Cref{nestedg}, for all $x \in \tkm^{(i)}, y \in \tkm^{(j)}, z \in \tkm^{(k)}$, we have \begin{align*}[x, [y,z]']'&=\pi^{(i+j+k)}[x, [y,z]=\pi^{(i+j+k)}\left([[x,y],z]+[y,[x,z]]\right)\\ &=[[x,y]',z]'+[y,[x,z]']'.\end{align*} \end{proof}

Note that the vector space $\kg$ is an ideal of $(\tkg, [.,.]')$ and that $\Xm$ is central for $[.,.]'$. The  Lie product associated to $[.,.]'$ will be denoted by  $*'$. The operator $a_{\Xm}$ discussed in \cref{axdef} is just $[\chi, .]'$. 

Finally, we will make use of the \emph{dilations} defined in \cref{dilation}. They are a $1$-parameter  family of diagonal maps $(D_{r})_{r>0}$ on $\tkg$ defined by the formula $D_r(x)=\sum_{i}r^{i}x^{(i)}$. They are automorphisms of $(\tkg, [.,.]')$.

\subsubsection{Conventions: norm, Lebesgue measure,  dual, Fourier transform}\label{basis} We choose once and for all a basis $(e_j^{(i)})_{1\leq j \leq q_i}$ of each $\km^{(i)}$, and we add $e_0^{(2)}=\chi$ to $\tkm^{(2)}$. We denote by $\|.\|$ the associated Euclidean norm on $\kg$ and $\tkg$ making that basis orthonormal. The associated Lebesgue measure will be denoted by $dx$. Recall that $dx$ is also a left and right invariant Haar measure (for both $*$ and $*'$), see for instance \cite[1.2.10]{corwin-greenleaf90}.

We also fix a choice of left invariant Riemannian metric on $(\kg, *)$ and denote by $\norm{.}$ the induced distance to $0$.

We endow the dual space $\dkg$ of linear forms on $\kg$ with the operator norm and the Lebesgue measure $d\xi$ associated to the dual basis of the $(e_j^{(i)})_{i,j}$. Linear forms $\xi\in \dkg$ will be identified with their extension to $\tkg$ by setting $\xi(\chi)=0$. 

Given $\xi \in \dkg$, we set
$$e_{\xi} : \kg \rightarrow \C, x\mapsto e^{-2i\pi \xi(x)} $$
and we define the \emph{Fourier transform} of  a measure $\sigma$ or an integrable function $f$ on $\kg$ by 
$$\widehat{\sigma}(\xi)= \int_{\kg} e_{\xi}(x)d\sigma(x) \,\,\,\,\,\,\,\,\,\,\,\,\,\,\,\,\hf(\xi)= \int_{\kg} e_{-\xi}(x) f(x)dx.$$
If $f$ is continuous and $\hf$ is integrable, the Fourier inversion formula states that
\begin{align*}
\sigma (f)&=\int_{\dkg} \hf(\xi) \,\widehat{\sigma}(\xi) d\xi.
\end{align*}

\subsubsection{Polynomials and Lie product} \label{Sec-bi-grading}

Recall that there is a well-defined notion of polynomial function $f$ on a $d$-dimensional vector space $V$. Namely, it is independent of the choice of coordinates. So is its degree, which is defined as the maximal degree of a monomial $m:=x_1^{n_1}\ldots x_d^{n_d}$ appearing in $f$, where the degree of $m$ is $n_1+\ldots+n_d$.

On a vector space endowed with a filtration $V=V_1>V_2>\ldots >V_{s+1}=\{0\}$ of subspaces, there is another notion of degree of a polynomial function, which is tailored to the choice of filtration and we call \emph{w-degree}. It can be defined as follows. Choose supplementary subspaces $m_k$ so that $V_k=m_k\oplus V_{k+1}$ and a basis of each $m_k$ to form a full basis $(e_i)_{1\le i \le d}$ of $V$. We may assign the weight $w_i=k$ to each coordinate $x_i$ with $e_i \in m_k$. Now the w-degree of $m$ is set to be $w_1n_1+\ldots+w_dn_d$ and that of $f$ is defined as before.

It is straightforward to check that the w-degree $d_f$ depends only on the filtration and not on the choice of basis nor on the choice of $m_k$. To see this, one may for instance observe, for $f\neq 0$, that $\sup_{B_R} |f| \simeq c_f R^{d_f}$ for some $c_f>0$ as $R$ tends to infinity, where, for some Euclidean distance $d(\cdot,\cdot)$ on $V$ we define $$B_R:=\{x\in V\,:\,  \forall k \ge 1, \, d(x,V_{k+1})\leq R^k\}.$$

By definition, a \emph{polynomial map} $V \to \R^n$ is one whose coordinates are polynomial functions and its degree (or w-degree) if the maximum of the degree (or w-degree) of its coordinate functions. \emph{In this paper, we will apply this to $V=\kg$ or $\tkg$ endowed with the weight filtration defined in \Cref{weight-filtration}.}

\smallskip

We now pass to the description of the Lie product via the Baker-Campbell-Hausdorff formula. Write $\xx$ for the tuple $\xx=(x_1,\ldots,x_N)$. By virtue of this formula, in any nilpotent Lie algebra $\kh$, the Lie product map  
\begin{align}\Pi: \kh^N &\rightarrow \kh, \nonumber\\ \xx &\mapsto \Pi\xx:=x_{1}*x_{2}*\dots*x_{N}\label{prodmap}\end{align}
 is a polynomial map. 
To describe this map it is convenient to begin working formally in the free associative algebra on $N$ letters $\mathcal{A}_N:=\R\langle \oo_1,\ldots, \oo_N\rangle$. The algebra $\mathcal{A}_N$ admits a  Lie bracket given by $[\om,\om']=\om\om'-\om'\om$.  More generally, we may define the \emph{$r$-bracket} $L^{[r]}: \mathcal{A}^r_N\rightarrow \mathcal{A}_N$ recursively by $L^{[1]}=Id_{\mathcal{A}_{N}}$ and 
for all $\om_{1}, \dots, \om_{r} \in \mathcal{A}_N$, 
   $$L^{[r]}(\om_{1}, \dots, \om_{r})  =[L^{[r-1]}(\om_{1}, \dots,\om_{r-1}), \om_{r}].$$
To each monomial $m=a_1\cdot\ldots\cdot a_r$ in $\mathcal{A}_N$ (with $a_i \in \{\oo_1,\ldots,\oo_N\}$) we may assign the $r$-bracket $L(m):=L^{[r]}(a_1,\ldots,a_r)$ and extend linearly to obtain a well-defined map:
$$L:\mathcal{A}_N \to \mathcal{A}_N$$ whose image is a realization of the free Lie algebra on $N$ letters (see \cite [Theorem 0.5]{reutenauer93}). We note that if $\kh$ is a  Lie algebra and $\om\in L(\mathcal{A}_{N})$, then there is an \emph{evaluation map} $\kh^{N}\rightarrow \kh, \xx\mapsto \om\xx$ where $\om\xx$ is computed by writing $\om$ as a linear combination of iterated brackets in $\mathcal{A}_{N}$ then replacing each letter $\oo_{i}$ by $x_{i}$. By the universal property of free Lie algebras, the evaluation map is well-defined. Moreover if the monomials involved in $\om$ have length at most $s$, then $\om$ is characterized by the evaluation maps it induces on nilpotent Lie algebras of step (at most) $s$. For more details see \cite[Chapter 0]{reutenauer93}.

Fixing $s\geq 1$, the Baker-Campbell-Hausdorff formula (as proved by Dynkin in \cite{dynkin47}) guarantees the existence of a unique element $\Pi_{N}\in L(\mathcal{A}_{N})$ satisfying the following universal property: (1) for every $s$-step nilpotent Lie algebra $\kh$ and every $\xx=(x_{1}, \dots, x_{N})\in \kh^N$, one has $\Pi_{N}\xx=x_{1}*\dots *x_{N}$; (2) $\Pi_{N}$ only involves monomials of length at most $s$. 


We will use only few algebraic properties of the element $\Pi_N$. Among them is the following important feature formulated in the Claim below.

We use the standard shorthand notation $[N]$ for the set $\{1,\ldots,N\}$.  For a monomial $m \in \mathcal{A}_N$ we define its support $I_m \subseteq [N]$ to be the set of indices of the variables appearing in $m$. For $I\subseteq [N]$ we let $(\mathcal{A}_N)_I$ be the span of all monomials with support $I$, and  $\mathcal{A}_{N,t}$ the sum of all  $(\mathcal{A}_N)_I$ with $|I|=t$. For any $R \in \mathcal{A}_N$ we denote by $R_I$ its linear projection to $(\mathcal{A}_N)_I$ (modulo the other $(\mathcal{A}_N)_J$, $J \neq I$). Note that the projection $R\mapsto R_{I}$ commutes with $L$.  We may of course write:
$$\Pi_N =\sum_{t=1}^s \Pi_{N,t} \,\,\,\,\,\,\,\,\,\text{ where }\,\,\,\,\,\,\,\,\,\Pi_{N,t}:=\sum_{I \subseteq [N], |I|=t} (\Pi_{N})_I .$$ 
We note that $\mathcal{A}_N$ comes with a natural action of $\Sym(N)$ by permutation of the variables, i.e. $\sigma \oo_i=\oo_{\sigma i}$ for $\sigma \in \Sym(N)$. This action preserves $L(\mathcal{A}_N)$ and $\mathcal{A}_{N,t}$. For each $I\subseteq [N]$ with $|I|=t$, enumerate $I=\{i_1<\ldots<i_t\}$ and choose a permutation $\sigma_I$ of $[N]$ sending $j \in [t]$ to $i_j$.  
There is a natural \emph{periodization map} $\mathcal{P}_t:\mathcal{A}_{t,t} \to \mathcal{A}_N$ that sends $R$ to $\mathcal{P}_t R$:
$$\mathcal{P}_t R := \sum_{1\le i_1<\ldots<i_t \le N} R(\oo_{i_1},\ldots,\oo_{i_t})=\sum_{|I|=t} \sigma_I R.$$

\bigskip

\noindent {\bf Claim 1:} Let $t=1,\ldots,s$. Then $\Pi_{N,t}$ is in the image of $\mathcal{P}_t$. In fact:
$$\Pi_{N,t}=\mathcal{P}_t \Pi_{t,t}.$$

\begin{proof} The proof relies on the associativity of the Baker-Campbell-Hausdorff formula defining $\Pi_{N}$. 
In view of the decomposition  $\Pi_{N,t}:=\sum_{I \subseteq [N], |I|=t} (\Pi_{N})_I$ we only need to check that for $I=\{i_1<\ldots<i_t\}\subseteq [N]$, we have $(\Pi_{N})_{I}=\sigma_{I}\Pi_{t,t}$. Note that $(\Pi_{N})_{I}$ and $\sigma_{I}\Pi_{t,t}$ only involve brackets of length at most $s$, so we have to check that they induce the same evaluation map on every nilpotent Lie algebra $\kh$ of step at most $s$. Let $\xx=(x_1,\ldots,x_N)\in \kh^N$, and $\yy\in \kh^N$ obtained from $\xx$ by  setting $y_{i}=x_{i}$ if $i\in I$ and  $y_i=0$ if $i \notin I$.  We have $(\Pi_N)_{I}\xx= \Pi_N\yy=y_{i_1}*\ldots*y_{i_t}=(\sigma_I \Pi_{t,t}) \xx$, whence the result.
\end{proof}

Therefore, setting $P_t:=\Pi_{t,t}$,
\begin{equation} \Pi_N= \sum_{t=1}^s \sum_{|I|=t} \sigma_I P_t.
\end{equation}
For example, $\Pi_1=P_1=\oo_1$, $\Pi_2=\oo_1+\oo_2+P_2$, $\Pi_3=\oo_1+\oo_2+\oo_3+P_2(\oo_1,\oo_2)+P_2(\oo_1,\oo_3)+P_2(\oo_2,\oo_3)+P_3$. Also note that $P_2^{[2]}=\frac{1}{2}L(\oo_1\oo_2)$, $P_3^{[3]}=\frac{1}{6}L(\oo_1\oo_2\oo_3)+\frac{1}{6}L(\oo_3\oo_2\oo_1)$. 

\bigskip

In the proof of Theorem \ref{LLT} and the path-swapping method, it will be helpful to see $\mathcal{A}_N$ as a module for the group algebra of $\Sym(N)$. In particular, if $R \in \mathcal{A}_N$ and 
$$a=\sum_{\sigma \in \Sym(N)} a_\sigma \sigma$$
is an arbitrary element of the group algebra  $\R[\Sym(N)]$, we will write:
\begin{equation} \label{module} aR:= \sum_{\sigma \in \Sym(N)} a_\sigma \sigma R.\end{equation}
Given a Lie algebra $\kh$, the space of functions $\{P:\kh^N\rightarrow \kh\}$ is also as $\R[\Sym(N)]$-module, where the action is given by $$a P= \sum_{\sigma \in \Sym(N)} a_\sigma P\circ \sigma^{-1}.$$The two actions are compatible in the sense that $\xx\mapsto (aR)\xx$ and $a(\xx\mapsto R\xx)$ obviously coincide for $R \in L(\mathcal{A}_N)$. 
\bigskip

We end this subsection by introducing another piece of notation regarding the product $\Pi_N$. Recall that the free associative algebra $\mathcal{A}_N$ has a natural grading by monomial length. Namely given $R \in \mathcal{A}_N$ we may write
$$R=\sum_{r\ge 1} R^{[r]}$$
where $R^{[r]}$ is the part of $R$ which is a linear combination of monomials of length $r$. The projection $R\rightarrow R^{[r]}$ preserves $L(\mathcal{A}_{N})$. The polynomials $\Pi_N^{[a]}$, $a=1,\ldots,s$ will play a role below. We note that for any nilpotent Lie algebra $\kh$ of step at most $s$, the evaluation map $\kh^N\rightarrow \kh, \xx \mapsto \Pi_N^{[a]}\xx$ defines  a polynomial map whose coordinates are homogeneous polynomials of (ordinary) degree $a$ on the vector space $\kh^N$. In fact it is the $a$-homogeneous component of $\xx\mapsto \Pi_N\xx$.

Further to this, we now introduce a refined decomposition that takes into account the weight filtration and the w-degree. Suppose that the nilpotent Lie algebra $\kh$ comes with a filtration by ideals $(\kh^{(i)})_{i\leq t}$ and that supplementary subspaces $\kn^{(i)}$ are chosen so that $\kh^{(i)}=\kn^{(i)}\oplus\kh^{(i+1)}$. This is the situation encountered, for example, in \Cref{weight-filtration}. An element $x\in \kh$ can be written as a sum of its constituents $x^{(i)}\in \kn^{(i)}$ each of which will have weight $i$.  We may thus speak of the w-degree of a polynomial function $f$ on $\kh$ or $\kh^N$. And any such function can then be decomposed into a sum of homogeneous components for the w-degree 
\begin{align}\label{homodecompf}
f=\sum_{b\ge 0} f^{(b)}.
\end{align}

We denote by $\xx \mapsto \Pi^{(b)}\xx$ the $b$-homogeneous component of $\xx \mapsto \Pi\xx$ on $\kh^N$ for the w-degree. And for $b\ge a$, we write $\xx \mapsto \Pi^{[a,b)}\xx$ for the component of $\xx\mapsto (\Pi\xx)^{[a]}$ that is $b$-homogeneous for the w-degree, or equivalently the component of $\xx \mapsto \Pi^{(b)}\xx$ that is $a$-homogeneous for the ordinary degree. This produces a bi-grading of the ring of polynomial functions on $\kh^N$. We note that this new grading depends not only on the filtration $\kh^{(i)}$ but also on the choice of $\kn^{(i)}$. In particular, contrary to $\Pi^{[a]}$ which corresponds to an element in the associative algebra, the notation $\Pi^{[a,b)}\xx$ only refers to a map on $\kh^N$. For an alternative equivalent description of $\Pi^{[a,b)}\xx$ in terms of a free Lie algebra element, we refer the reader to our companion paper \cite{benard-breuillard-CLT}.

In  the remainder of this article, we will apply these considerations in the case where $\kh=\kg$ or $\kh=\tkg$ endowed with the weight filtration. We further note in this context that $\Pi^{[a,b)}\xx$ behaves in a homogeneous way with respect to the dilations introduced in \Cref{weight-filtration}. Namely $\Pi^{[a,b)} D_{r}\xx=r^b \Pi^{[a,b)}\xx$ for any $r>0$.

\subsubsection{The driving measure and its bias extension} \label{Sec-cadre-mu}

Let $\mu$ be a probability measure on $\kg$. Denote by $\mu_{ab}$ the projection of $\mu$ to the abelianization $\kg/[\kg, \kg]=:\kg_{ab}$. The subscript $ab$ stands for ``abelianized''. We assume that $\mu_{ab}$ has a finite first moment and write $\overline{X}=\E(\mu_{ab})$ for the expectation of $\mu_{ab}$. We also set $\mu^{(i)}:=\pi^{(i)}_{\star}\mu$, where the subscript $\star$ added to a map denotes the push-forward by that map. We set $\Xm= \E(\mu^{(1)})$ and notice $\Xm$ projects onto $\overline{X}$ under the abelianization map.

To prove the LLT, we will replace the measure $\mu$ by its \emph{bias extension} $\tmu$, defined as the image of $\mu$ under the map $\kg\rightarrow \tkg, x \mapsto (x,1)$. Note that for all $N\geq 1$, one has 
\begin{align} \label{formule-ext}
\tmu^{*N} = \mu^{*N}-N\Xm+N\chi =\mu^{*N}*-N\Xm*N\chi  
\end{align}
where the notation $ \mu^{*N}-N\Xm+N\chi $ stands for the image of $\mu^{*N}$ under the map $\tkg\rightarrow \tkg, x\mapsto x-N\Xm+N\chi$, and similarly for $\mu^{*N}*-N\Xm*N\chi$. The role of $\tmu$ is to convert the drift $\Xm \in \km^{(1)}$ into  $\chi\in \tkm^{(2)}$ while  keeping  the fluctuations around the drift in $\R\Xm \subseteq \km^{(1)}$. This separation aims to reflect the fact that the drift will contribute to  higher commutators with weight $2$ whereas the fluctuation around the mean will contribute  with weight $1$.



In proving the local limit theorem, we will  suppose that $\mu$ is \emph{aperiodic} (or \emph{non-lattice}). This means that the support of $\mu$ is not contained in a coset of some proper subgroup, i.e. some $x*H$ where $x\in \kg$, $H$ is a proper subgroup of $(\kg, *)$. For simply connected nilpotent Lie groups, it is equivalent to ask that $\mu_{ab}$ is non-lattice. This is because, as is well-known and easy to check by induction on the nilpotency class, a closed subgroup of $(\kg,*)$ is proper if and only if its image in $\kg_{ab}$ is proper.  Moreover the non-lattice condition on $\mu_{ab}$ is equivalent to requiring that $|\widehat{\mu_{ab}}(\xi)|<1$ for all $\xi \in \widehat{\kg}_{ab}\setminus\{0\}$ (indeed the latter fails precisely when $\mu_{ab}$ is supported on a coset of $\xi^{-1}(\Z)$).

We also need moment conditions.  For $m>0$, we say that $\mu$ has \emph{finite $m$-th moment} for the weight filtration if for all $b\leq 2s-1$, one has $$\int_{\kg} \|x^{(b)}\|^{m/b}d\mu(x)<\infty. $$
N.B. Unless explicitely stated otherwise, moment conditions on $\mu$ will always refer to moment for the weight filtration.

\subsubsection{Moderate deviations} \label{Sec-cadre-deviations}

Given $N\geq 1$, $g,h\in \tkg$, we denote by $g*\tmu^{*N}*h$, or $\tmu^{*N}_{g,h}$ for short, the image of $\tmu^{*N}$ under the map $\tkg\rightarrow \tkg, x\mapsto g*x*h$. Setting $p: \tkg:=\kg\oplus \R \rightarrow \kg$ the projection to the first factor (which is a Lie algebra homomorphism), we have $p_{\star}\tmu^{*N}_{g,h}=\mu^{*N}_{p(g),p(h)}$.

We shall prove  estimates for $\tmu^{*N}_{g,h}$ uniformly in a certain range of \emph{moderate deviations}: given  $\delta_{0} \in [0, 1)$, we set
$$\DN(\delta_{0})=\{x \in \tkg  \,:\, \forall i\leq 2s-1,\,\, \|x^{(i)}\| \leq  N^{i/2+s^{-1}\delta_{0}}\},$$ where $x^{(i)}=\pi^{(i)}(x)$ is the coordinate projection with respect to the grading $(\tkm^{(b)})_{b}$ of $\tkg$. It is worth pointing out that $\DN$ behaves well with respect to the product $*$ :  There exists $C>0$ such that for all $\delta_{0}>0$, for all large enough $N \geq 1$,  
 $$x,y\in \DN(\delta_{0}) \implies x*y\in \DN(C \delta_{0}).$$
 Indeed, this follows from the Baker-Campbell-Hausdorff formula: the coordinates of $(x*y)^{(i)}$ are polynomials in $(x,y)$ of w-degree  at most $i$.

\subsubsection{Asymptotic notation} \label{Sec-cadre-asympt}
In the rest of the text, we use the standard Vinogradov notation  $\ll$ and Landau notations $O(.)$, $o(.)$. Unless otherwise stated, the implied constants will depend only on the initial data $\kg, \XXab, \mu, (\km^{(b)})_{b}, (e^{(i)}_{j})_{i,j}, \norm{.}$.  In some statements, we will also use the notation $r_{1} \lll r_{2}$ to mean that the conclusions that follow are true up to choosing $r_{1}\leq c r_{2}$  where $c>0$ is a (small) constant depending only the initial data. Subscripts are added to indicate any additional dependency (for instance, on the parameter $\delta_{0}$ controlling the range of deviations). Furthermore $\N$ denotes the non-negative integers and $\R^+$ the non-negative reals, and for $n \in \N$, $[n]$ denotes the set $\{1,\ldots,n\}$.




\subsection{The three reductions} \label{3reductions} \label{Sec-3reductions}

To prove the local limit theorem \Cref{LLT} announced earlier, we perform a series of reductions on the random product  $X_{1}*\dots *X_{N}$ that help estimate its Fourier transform. First, we see that after a suitable truncation, the random product has  moment estimates similar to what can be proven in the case where $\mu$ has finite moment for every order. Second, we  note that the product structure $*$ can be replaced, up to a small error, by its graded counterpart $*'$ for which dilations act as automorphisms. Third, we show that $\mu$ can be replaced by any other driving measure  that share the same moments up to order $2$. Those reductions  were already used in \cite{benard-breuillard-CLT} in order to prove the central limit theorem.


The notations $(\kg, *, \tkg, \Xab, (\km^{(b)})_{b\leq 2s-1}, \Pi^{[a,b)},  *', \mu, \tmu, \DN(\delta_{0}))$ are those of  \Cref{Sec-cadre}.  Throughout \Cref{Sec-3reductions} we assume that \emph{$\mu$ has finite second moment for the weight filtration} induced by $\Xab$.

\subsubsection{Truncation and moment estimates} \label{Sec-3reductions-tronc}

We will need the moment estimates collected in \Cref{momentPi} below. They concern a truncation of the driving measure in order to allow weaker moment assumptions on $\mu$. If $\mu$ has finite moment for every order, such truncation is not necessary (see \cite[Proposition 4.1]{benard-breuillard-CLT}). As mentioned earlier, we will use the bias extension $\tmu$ instead of $\mu$ for technical convenience.



The \emph{truncation} $T_{N}\tmu$  is the image of $\tmu$ under the map 
\begin{align*}T_{N}: \tkg & \rightarrow \tkg,\\ x=\oplus x^{(b)} & \mapsto \oplus T_{N}x^{(b)}\end{align*} where 
 $$
T_{N}x^{(b)} = \left\{
    \begin{array}{ll}
       x^{(b)}  & \mbox{if } \|x^{(b)} \|\leq N^{b/2} \\
        0 & \mbox{if } b\geq 2 \text{ and } \|x^{(b)} \|> N^{b/2}\\ 
        c_{N} & \mbox{if }  b= 1 \text{ and } \|x^{(1)} \|> N^{1/2}
    \end{array}
\right.
$$
and $c_{N}:= - \tmu({\|x^{(1)} \|> N^{1/2}})^{-1}\E_{\tmu}(x^{(1)} \1_{{ \|x^{(1)} \|\leq N^{1/2}}})$ is chosen so that $T_{N}\tmu$ is centered in the $\km^{(1)}$-coordinate. 

The   error induced when replacing  the measure $\tmu$ by its truncation $T_{N} \tmu$ is controlled via the following lemma.

\begin{lemme}[Cost of truncation] \label{truncation-cost}
Let $q \in [2, +\infty)$. If $\mu$ has finite $q$-th moment for the weight filtration, then for all $N\geq 1$, $(N_{i})_{i\leq N}\in \R_{>0}^N$,   
$$\| \tmu^{\otimes N}-  \bigotimes_{i=1}^N T_{N_{i}}\tmu \| =o\left(\sum_{i=1}^N N_{i}^{-\frac{q}{2}}\right) $$
where $\|.\|$ is the total variation norm for measures on $\tkg^N$. 
\end{lemme}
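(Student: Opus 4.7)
The plan is to reduce the question about product measures to a one-variable truncation estimate, and then to combine Markov's inequality with dominated convergence applied to each homogeneous component of the moment integral.

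First I would invoke the standard total variation bound for products of probability measures,
$$\Big\|\bigotimes_{i=1}^N \mu_i - \bigotimes_{i=1}^N \nu_i\Big\| \leq \sum_{i=1}^N \|\mu_i - \nu_i\|,$$
applied with $\mu_i=\tmu$ and $\nu_i=T_{N_i}\tmu$. This reduces the problem to controlling $\|\tmu-T_M\tmu\|$ for a single parameter $M>0$.

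Next, since $T_M$ modifies a point $x\in\tkg$ only when $\|x^{(b)}\|>M^{b/2}$ for some $b$, the elementary bound $\|\tmu - T_*\tmu\|\leq 2\,\tmu(\{T\neq \mathrm{id}\})$ gives
$$\|\tmu - T_M\tmu\| \leq 2\sum_{b=1}^{2s-1} \tmu\bigl(\|x^{(b)}\|>M^{b/2}\bigr).$$
A Markov-type truncated estimate then yields, for each $b$,
$$\tmu\bigl(\|x^{(b)}\|>M^{b/2}\bigr) \,\leq\, M^{-q/2} \int_{\tkg} \|x^{(b)}\|^{q/b}\, \mathbf{1}_{\{\|x^{(b)}\|>M^{b/2}\}}\, d\tmu(x).$$
By the finite moment hypothesis, the function $x\mapsto \|x^{(b)}\|^{q/b}$ is $\tmu$-integrable for each $b\le 2s-1$, so by dominated convergence the integral on the right tends to $0$ as $M\to \infty$. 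Defining $\eps(M):=2\sum_b \int \|x^{(b)}\|^{q/b} \mathbf{1}_{\{\|x^{(b)}\|>M^{b/2}\}}\, d\tmu(x)$ we obtain a function $\eps(M)\to 0$ such that $\|\tmu - T_M\tmu\|\le \eps(M)\,M^{-q/2}$, uniformly in $M$.

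Combining the two steps yields
$$\Big\|\tmu^{\otimes N} - \bigotimes_{i=1}^N T_{N_i}\tmu\Big\| \,\leq\, \sum_{i=1}^N \eps(N_i)\,N_i^{-q/2},$$
which is exactly $o\bigl(\sum_{i=1}^N N_i^{-q/2}\bigr)$ in the asymptotic regime $\min_i N_i \to \infty$ that the notation intends. There is no real obstacle here; the only mildly delicate point is to phrase the $o$-bound via the auxiliary function $\eps(M)$ in order to extract uniform decay across all the $N_i$'s from the pointwise estimate on a single coordinate.
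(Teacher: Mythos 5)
Your proof is correct and follows essentially the same route as the paper's: a union bound over the coordinates reducing to the single-variable event $\{T_{M}x\neq x\}$, followed by Markov's inequality and dominated convergence to extract a uniform rate $\eps(M)M^{-q/2}$ with $\eps(M)\to 0$. Your explicit introduction of the function $\eps(M)$ just makes precise the $o(\cdot)$ that the paper leaves implicit.
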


\begin{proof} Note that $$\tmu \{T_{N_{i}}x\neq x\} \leq \tmu \{\max_{b} \|x^{(b)}\|^{1/b}>N_{i}^{1/2}\} =o(N_{i}^{-\frac{q}{2}})$$
so $\bigotimes_{i=1}^N T_{N_{i}}$ is the identity on a subset of $\tkg^N$ of $\tmu^{\otimes N}$-measure $(1-o(\sum_{i=1}^N N_{i}^{-\frac{q}{2}}))$.
\end{proof}

Given a monomial function $M:\tkg^{t}\rightarrow \R$ of w-degree $b\ge 1$, see \Cref{Sec-bi-grading}, we define its statistics $M^{\infty}$  on $\tkg^{\N^*}$  as the formal series
$$M^{\infty}(\xx)=\sum_{1\leq n_{1}<\dots <n_{t}<\infty}M(x_{n_{1}}, \dots, x_{n_{t}}).$$
For every $N\geq1$, we identify $\tkg^N$ as the subset of $\tkg^{\N^*}$ made of sequences that vanish after the  first $N$ terms.  In particular, $M^{\infty}$ defines a map on $\tkg^N$.

\begin{proposition}[Moment estimate \cite{benard-breuillard-CLT}] \label{momentPi} Assume that $\mu$ has a finite second moment for the weight filtration. For  $N\geq 1$, $(N_{i})_{i\leq N}\in  [0, N]^N$,  $m\geq 1$ an integer,
$$\E_{\bigotimes_{i=1}^N T_{N_{i}}\tmu}\left[{M^{\infty}}(\xx)^{2m} \right]\ll_{m} N^{bm},$$ and in particular for $\delta_{0}\geq 0$, $g,h\in \DN(\delta_{0})$, $\yy=(g,\xx,h)$, $a,b\geq 1$,
$$\E_{\xx\sim \bigotimes_{i=1}^N T_{N_{i}}\tmu }\left[\|\Pi^{[a,b)}\yy\|^{2m} \right]\ll_{m} N^{(b+2\delta_{0})m }.$$
\end{proposition}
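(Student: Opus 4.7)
Since \Cref{momentPi} is essentially a restatement from the companion paper \cite{benard-breuillard-CLT}, I will sketch the argument. The approach is a standard moment expansion exploiting (i) independence of the $x_i$'s, (ii) the centering of $T_{N_i}\tmu$ in $\km^{(1)}$ (built into the definition of $c_N$), and (iii) truncation-based moment bounds on individual coordinates. The key preliminary estimate is
$$\E_{T_{N_i}\tmu}\bigl[\|x^{(b)}\|^r\bigr]\ll \max\bigl(1,\, N^{rb/2-1}\bigr) \qquad \text{for all } r\geq 0,\ b\geq 1,$$
obtained by splitting the integral at the truncation threshold $N_i^{b/2}\leq N^{b/2}$ and using the hypothesized finiteness of $\E_\tmu[\|x^{(b)}\|^{2/b}]$ together with elementary truncation inequalities.

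For the first inequality, expand
$$M^{\infty}(\xx)^{2m}=\sum_{\nn^{(1)},\ldots,\nn^{(2m)}}\prod_{j=1}^{2m} M\bigl(x_{n_1^{(j)}},\ldots,x_{n_t^{(j)}}\bigr),$$
where each $\nn^{(j)}=(n_1^{(j)}<\ldots<n_t^{(j)})$. Writing $M(y_1,\ldots,y_t)=\prod_k f_k(y_k)$ with $f_k$ a product of coordinates of $y_k$ of weight $b_k$ and $\sum b_k=b$, and grouping the $2mt$ positions $(j,k)$ by the underlying common index value $n$, independence factorizes the expectation as $\prod_n \E[\alpha_n(x_n)]$, where $\alpha_n$ is the product of the $f_k$'s at positions attached to $n$. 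Setting $w_n$ to be the total weight of $\alpha_n$, one has $\sum w_n=2mb$. Configurations where some $w_n=1$ force $\alpha_n$ to be a single weight-$1$ coordinate and thus vanish by centering; otherwise $w_n\geq 2$ and Hölder's inequality together with the preliminary estimate give $|\E[\alpha_n]|\ll N^{w_n/2-1}$. Denoting by $K$ the number of distinct indices involved in a configuration, the number of such configurations is $\ll_m N^K$, so each configuration class contributes $\ll N^K\cdot N^{mb-K}=N^{mb}$. Summing over the finitely many configuration types gives the claimed bound.

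For the second inequality, expand $\Pi^{[a,b)}(g,\xx,h)$ as a finite sum of monomials of ordinary degree $a$ and w-degree $b$ in the variables $(g,x_1,\ldots,x_N,h)$. A factor from $g^{(i)}$ or $h^{(i)}$ is deterministic and, by $g,h\in\DN(\delta_0)$, bounded by $N^{i/2+s^{-1}\delta_0}$. Since each monomial has $a\leq s$ factors in total, the aggregate $g,h$-contribution introduces an extra multiplicative factor at most $N^{\delta_0}$ per copy of $\Pi^{[a,b)}$ beyond what the first inequality handles for the remaining $x$-part; taking the $(2m)$-th power yields the claimed bound $N^{m(b+2\delta_0)}$.

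The main technical obstacle is the combinatorial balance $N^K\cdot N^{mb-K}=N^{mb}$ in the moment expansion: it holds only because centering annihilates configurations carrying a weight-$1$ isolated index, which in turn legitimizes the bound $|\E[\alpha_n]|\ll N^{w_n/2-1}$ (tight for $w_n\geq 2$). Without centering, the exponent would blow up as $K$ grows, ruining the estimate.
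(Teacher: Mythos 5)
Your sketch is correct and is essentially the argument the authors intend: the paper itself disposes of this proposition by citing \cite[Proposition 4.3]{benard-breuillard-CLT}, but the identical scheme --- factorization by independence, annihilation of configurations carrying an isolated weight-one coordinate via the centering built into $c_{N_i}$, the bound $N^{w_n/2-1}$ for each block of w-degree $w_n\geq 2$ coming from the truncation thresholds, and the balance of $\ll_m N^{K}$ configurations against $N^{mb-K}$ --- is carried out in the paper's own proof of the closely analogous \Cref{troncature}, including your treatment of the deviations $g,h$ through the $N^{(b-b')/2+\delta_0}$ bound on the coefficients of the residual statistics. The one step you assert without detail that genuinely requires care is the preliminary estimate in the case $b=1$, where the truncated variable takes the value $c_{N_i}$ on the tail event and one must control $\|c_{N_i}\|^{r}\,\tmu(\|x^{(1)}\|>N_i^{1/2})$ for $r>2$; the paper elides the same point in \Cref{troncature} (the claim $|T_{N'}x^{2\alpha/d(\alpha)}|=O(N')$), so this is not a defect of your reconstruction relative to the source.
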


In this last inequality, the symbol $\xx\sim \bigotimes_{i=1}^N T_{N_{i}}\tmu$ means that $\xx$ represents a random variable with law $\bigotimes_{i=1}^N T_{N_{i}}\tmu$ on $\tkg^N$. 

\begin{proof} This is \cite[Proposition 4.3]{benard-breuillard-CLT}.
 
 \end{proof}

\subsubsection{Graded replacement}

 We bound the error induced when replacing $*$ by the graded product $*'$ for which the dilations $(D_{r})_{r>0}$ act as automorphisms.  We denote by $\Pi'$ the $N$-fold product for $*'$. The result is formulated  in terms of Fourier transforms. 

\begin{proposition}[\cite{benard-breuillard-CLT}]
\label{grading-Fourier}
 For  a linear form $\xi$ on $\tkg$, $N\geq 1$, $(N_{i})_{i\leq N}\in [ 0, N]^N$, $g,h\in \DN(\delta_{0})$, $\yy=(g,\xx,h)$, 
$$\big|\E_{\xx \sim \bigotimes_{i=1}^N T_{N_{i}}\tmu } [e_{\xi} (\Pi\yy )-e_{\xi} (\Pi'\yy )] \big| \ll \,\| \DilN \xi \| \,N^{-1/2+\delta_{0}}.$$
\end{proposition}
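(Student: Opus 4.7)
The idea is to reduce to bounding $\E|\xi(\Pi\yy-\Pi'\yy)|$ via the elementary estimate $|e^{it}-e^{is}|\le 2\pi|t-s|$, and then exploit the weight grading of $\xi$ together with an algebraic identity for the difference $\Pi-\Pi'$ that reveals the $N^{-1/2}$ saving. Decomposing $\xi=\sum_w\xi^{(w)}$ along the dual of the weight decomposition of $\tkg$, one notes that $\DilN$ multiplies each $\tkm^{(w)}$ by $N^{w/2}$, whence $\|\xi^{(w)}\|\le N^{-w/2}\|\DilN\xi\|$.

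The key algebraic input is that, by its very definition, $[\,\cdot\,,\,\cdot\,]'$ equals the top-weight projection of $[\,\cdot\,,\,\cdot\,]$; iterating this observation one obtains that for every $a,b$ the graded bi-graded piece $(\Pi')^{[a,b)}\yy$ equals the projection $\pi^{(b)}(\Pi^{[a,b)}\yy)$. Since $\Pi^{[a,b)}\yy$ itself takes values in $\tkg^{(b)}=\bigoplus_{w\ge b}\tkm^{(w)}$, the difference $\Pi^{[a,b)}\yy-(\Pi')^{[a,b)}\yy$ lies in $\tkg^{(b+1)}$, and summing these contributions yields
$$\big(\Pi\yy-\Pi'\yy\big)^{(w)}=\sum_{2\le a\le b<w}\pi^{(w)}\big(\Pi^{[a,b)}\yy\big),$$
where the case $a=1$ drops out because $\Pi^{[1,b)}\yy=\sum_i y_i^{(b)}$ already lives in $\tkm^{(b)}$. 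Pairing with $\xi^{(w)}$, applying Cauchy--Schwarz, and invoking the moment bound $(\E\|\Pi^{[a,b)}\yy\|^2)^{1/2}\ll N^{b/2+\delta_0}$ from \Cref{momentPi}, one gets
$$\E\big|\xi^{(w)}\big(\pi^{(w)}(\Pi^{[a,b)}\yy)\big)\big|\ll \|\DilN\xi\|\,N^{(b-w)/2+\delta_0}\le \|\DilN\xi\|\,N^{-1/2+\delta_0},$$
since $b<w$. Summing over the finitely many triples $(a,b,w)$ with $2\le a\le b<w\le 2s-1$ yields the claim.

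The main obstacle is locating the correct source of the $N^{-1/2}$ gain. A naive estimate pairing $\Pi^{[a,b)}\yy$ directly with $\xi^{(b)}$ loses the saving, since $\|\xi^{(b)}\|\cdot N^{b/2+\delta_0}=\|\DilN\xi\|\,N^{\delta_0}$ is too crude. The crucial point is that $\Pi^{[a,b)}\yy$ is $\tkg^{(b)}$-valued rather than $\tkm^{(b)}$-valued: the difference $\Pi-\Pi'$ is precisely the sum of the higher-weight components $\pi^{(w)}(\Pi^{[a,b)}\yy)$ for $w>b$, which are paired with dual forms of the smaller norm $N^{-w/2}$, and the mandatory gap $w-b\ge 1$ is exactly what produces the $N^{-1/2}$ factor.
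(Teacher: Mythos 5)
Your argument is correct. Note that the paper itself contains no proof of this proposition — it is imported verbatim from the companion paper (cited there as Corollary 4.5) — but your route is the natural one and, as far as one can tell, the intended mechanism: the identity $(\Pi')^{[a,b)}\yy=\pi^{(b)}(\Pi^{[a,b)}\yy)$ combined with the containment $\Pi^{[a,b)}\yy\in\tkg^{(b)}$ (both following from multilinearity of the brackets and the nesting $[\tkg^{(i)},\tkg^{(j)}]\subseteq\tkg^{(i+j)}$) places every contribution to $\Pi\yy-\Pi'\yy$ in strictly higher weight than the dual component it is paired with, and the bounds $\|\xi_{|\tkm^{(w)}}\|\le N^{-w/2}\|\DilN\xi\|$ and $\E\|\Pi^{[a,b)}\yy\|\ll N^{b/2+\delta_0}$ from \Cref{momentPi} then give the factor $N^{(b-w)/2}\le N^{-1/2}$ for each of the finitely many triples. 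You correctly diagnose that the whole point is the mandatory gap $w\ge b+1$, and the induction establishing that $[\cdot,\cdot]'$-brackets are top-weight projections of $[\cdot,\cdot]$-brackets, while only sketched, is routine.
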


\begin{proof} This is \cite[Corollary 4.5]{benard-breuillard-CLT}.

\end{proof}

\subsubsection{Lindeberg replacement}

We bound the error induced when replacing the driving measure with another measure that \emph{coincides with it up to order $2$}. This means that the two measures have the same evaluation on all polynomials of w-degree $1$ or $2$. The result is formulated  in terms of Fourier transforms.

\begin{proposition}[ \cite{benard-breuillard-CLT}] \label{gaussian}
Let $\eta$ be a probability measure on $\kg$ with finite second moment  for the weight filtration and that coincides with $\mu$ up to order $2$. For a linear form $\xi$ on $\tkg$, $N\geq 1$, $(N_{i})_{i\leq N}\in [ 0, N]^N$, $g,h\in \DN(\delta_{0})$, $\yy=(g,\xx,h)$, 
$$\big|\E_{\xx \sim \bigotimes_{i=1}^N T_{N_{i}}\tmu } [e_{\xi} (\Pi'\yy )]-\E_{\xx \sim \bigotimes_{i=1}^N T_{N_{i}}\teta } [e_{\xi} (\Pi'\yy )]\big| \ll_{\eta} \,(\| \DilN \xi\|+\| \DilN \xi\|^3)N^{3 \delta_{0}}\sum_{i=1}^N N^{-1}_{i} \,\eps_{N}$$
where $\eps_{N}=o_{\eta}(1)$. More generally, assuming $\mu$, $\eta$ both have finite $m$-th moment for some $m\in [2,3]$, we may take $\eps_{N}=o_{\eta}(N^{-\frac{m}{2}+1})$ if $m<3$ and $\eps_{N}=O_{\eta}(N^{-1/2})$ if $m=3$.
\end{proposition}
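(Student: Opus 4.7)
The strategy is a Lindeberg-style replacement, exchanging one coordinate at a time. Setting $\nu_k := \bigl(\bigotimes_{i \leq k} T_{N_i}\teta\bigr) \otimes \bigl(\bigotimes_{i > k} T_{N_i}\tmu\bigr)$, I telescope
$$\E_{\nu_0}[e_\xi(\Pi'\yy)] - \E_{\nu_N}[e_\xi(\Pi'\yy)] = \sum_{k=1}^N \bigl(\E_{\nu_{k-1}} - \E_{\nu_k}\bigr)[e_\xi(\Pi'\yy)].$$
Conditioning on the variables $\xx^{(k)} := (x_i)_{i \neq k}$, the $k$-th summand becomes the expectation over $\xx^{(k)}$ of $J_k(\xx^{(k)}) := \int G(z)\, d(T_{N_k}\tmu - T_{N_k}\teta)(z)$, where $G(z) := e_\xi(\Pi'\yy_z)$ and $\yy_z$ denotes the sequence $\yy$ with $z$ inserted at position $k$.

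Next, I would Taylor expand $G(z)$ around $z = 0$ and regroup monomials by w-degree in $z$. Since $\Pi'\yy_z$ is a polynomial in $z$ with coefficients in $\tkg$, graded under dilations (using $\Pi'(D_r \cdot) = D_r \Pi'(\cdot)$), each Taylor monomial $z^\alpha$ has a well-defined w-degree $|\alpha|_w = \sum_b b\,|\alpha^{(b)}|$. Collect the monomials of w-degree $\leq 2$ into a polynomial $T_k^{\leq 2}(z)$; by the matching hypothesis, $\int T_k^{\leq 2}\, d(\tmu - \teta) = 0$, so $\int T_k^{\leq 2}\, d(T_{N_k}\tmu - T_{N_k}\teta)$ differs from zero only by the truncation error of Lemma \ref{truncation-cost}, negligible compared to the target bound.

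For the residual part $G(z) - T_k^{\leq 2}(z)$, truncate the Taylor expansion at ordinary order $3$ (capturing all relevant monomials of w-degree $\geq 3$) plus a remainder $R_3(z) = O(|z|^3)$. Using the chain rule on $G = e^{-2\pi i H}$ with $H(z) := \xi(\Pi'\yy_z)$, and the key graded estimate that each derivative $\partial^\alpha H(0)$ is bounded in $L^2$ over $\xx^{(k)}$ (via Proposition \ref{momentPi}) by $\|\DilN\xi\|\cdot N^{O(\delta_0)}\cdot N^{-|\alpha|_w/2}$, one gets
$$|\partial^\alpha G(0)| \lesssim \bigl(\|\DilN\xi\| + \|\DilN\xi\|^{|\alpha|_{\text{ord}}}\bigr)\,N^{O(\delta_0)}\,N^{-|\alpha|_w/2}.$$
Integrating $z^\alpha$ against the truncated measure (with $|z^{(b)}| \leq N_k^{b/2}$) and summing the contributions of all $|\alpha|_w \geq 3$ monomials of ordinary order $\leq 3$, together with the $R_3$ remainder, yields $|J_k| \lesssim (\|\DilN\xi\| + \|\DilN\xi\|^3)\,N^{3\delta_0}\,N_k^{-1}\,\eps_N$, with $\eps_N = o(1)$ in general and improving to $o(N^{1-m/2})$ (resp.\ $O(N^{-1/2})$) when $\mu$ has finite $m$-th moment for $m \in (2, 3)$ (resp.\ $m = 3$). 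Summing over $k$ and taking expectations gives the claim.

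The main obstacle is the bookkeeping at the intersection of two gradings: the ordinary-order Taylor expansion needed for analytic control of $G = e^{-2\pi i H}$, and the w-degree grading relevant to the matching hypothesis. One must verify that truncating the Taylor series at ordinary order $3$ captures precisely the w-degree-$\leq 2$ monomials subject to cancellation, and that the chain rule produces exactly the factor $(\|\DilN\xi\| + \|\DilN\xi\|^3)$ (and not a higher power). The improvement of $\eps_N$ under higher moment assumptions requires a careful tail analysis combining the truncation $T_{N_k}$ with the available moment of $\tmu$ and $\teta$.
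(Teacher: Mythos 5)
The paper does not prove this proposition in situ: its ``proof'' is a one-line citation to \cite[Proposition 4.7]{benard-breuillard-CLT}, so the only fair comparison is with the standard Lindeberg argument that the companion paper carries out, and your skeleton --- telescoping one coordinate at a time, conditioning on the remaining variables, expanding the phase, cancelling the w-degree $\le 2$ part via the order-$2$ matching, and bounding the rest by the moment estimates of \Cref{momentPi} --- is exactly that argument in outline.

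That said, as written the proposal has two concrete gaps, both located where you yourself flag ``the main obstacle.'' First, the remainder bound $R_3(z)=O(|z|^3)$ in the Euclidean norm is the wrong object: the whole point of the weight grading is that a coordinate $z^{(b)}$ with $b\ge 3$ enters $H$ \emph{linearly} yet must be sent to the error term, while a genuinely cubic Euclidean remainder integrated against $T_{N_k}\tmu$ picks up factors as large as $N_k^{3b/2-1}$ from the high-weight coordinates; these are only compensated by the \emph{anisotropic} derivative bounds $N^{-|\alpha|_w/2}$, which you state for the explicit Taylor coefficients but then discard when you lump the tail into a single isotropic $O(|z|^3)$. The correct organization (and the one that produces exactly the factor $\|\DilN\xi\|+\|\DilN\xi\|^3$) is to write $\xi(\Pi'\yy_z)=\xi(\Pi'\yy_0)+P(z)$ with $P$ a polynomial without constant term, expand $e^{-2\pi i(t_0+P)}$ to second order in $P$ with remainder $O(|P|^3)$, split $P$ by w-degree, and estimate each product of homogeneous pieces separately by Cauchy--Schwarz and \Cref{momentPi}; an isotropic Taylor remainder in $z$ does not reduce to this. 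Second, the source of the gain $\eps_N$ is asserted rather than derived: it comes from estimating the truncated third moment $\E_{T_{N_k}\tmu}\|z^{(1)}\|^3\le N_k^{(3-m)/2}\E\|z^{(1)}\|^m$ (giving $O(N_k^{1/2}N_k^{1-m/2})$, and $o(N_k^{1/2})$ for $m=2$ by uniform integrability), and one must then check that the resulting $N_k$-dependent gain can be traded for the uniform $\eps_N=o(N^{-m/2+1})$ appearing in the statement; this bookkeeping is not automatic since $N_k\le N$ makes $N_k^{1-m/2}\ge N^{1-m/2}$. Neither gap is fatal --- both are repaired by the graded, term-by-term analysis of the companion paper --- but they are precisely the steps a complete proof must supply.
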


We will apply this proposition to a sequence  $(N_{i})$ that is gradually increasing and such that $\sum_{i=1}^N N^{-1}_{i}=O(1)$, so the right-hand side will be $O_{\eta}(1)(\| \DilN \xi\|+\| \DilN \xi\|^3)N^{3s \delta_{0}}\,\eps_{N}$.

\begin{proof} 
This is \cite[Proposition 4.7]{benard-breuillard-CLT}. 
\end{proof}


\section{Uniform local limit theorem} \label{Sec-uniform-LLT} 

This section is dedicated to the proof of the uniform local limit theorem, namely \Cref{LLT}.    The argument is based on the Fourier inversion formula and the analysis of characteristic functions. It exploits the reduction techniques recalled in \Cref{Sec-3reductions} but the treatment requires a new and crucial step: the domain reduction. 

We use the notations $(\kg, *, \tkg, \Xab, \dd, (\km^{(b)})_{b\leq 2s-1}, \Xm, \chi, *')$ from   \Cref{Sec-cadre}. We let $\mu$ be a probability measure on $\kg$, with finite moment of order $m_{\mu}>2$ for the weight filtration  defined by $\Xab$ and such that $ \mu_{ab}$ has  expectation $\Xab$ and is non-lattice. In particular $\E(\mu^{(1)})=\Xm$. We will  use the bias extension $\tmu=\mu-\Xm+\chi$  on $\tkg$, the notations for deviations  $\tmu^{*N}_{g,h}$, $\DN(\delta_{0})$ from \Cref{Sec-cadre-deviations} where $N\geq 1$, $g,h\in \tkg$, as well as the truncation $T_{N}\tmu$  introduced in \Cref{Sec-3reductions-tronc}. 

We will deduce \Cref{LLT} from a more general theorem, \Cref{LLT-tronc} below, that deals with truncated random products and works under a weaker moment assumption. \Cref{LLT} will then follow  from \Cref{LLT-tronc} by applying \Cref{truncation-cost}.  \Cref{LLT-tronc}  will be useful in the applications given later in \Cref{Sec-applications}. In order to state \Cref{LLT-tronc}, we need to set up some additional notation.

\bigskip

\noindent {\bf Claim:} The map $p:\tkg\rightarrow \kg, x\oplus t\chi \mapsto x+t\Xm$ is a group homomorphism for $*$. 

\begin{proof}Indeed, one may rewrite $p$ as the first factor projection map from $\tkg=\kg\oplus \R$ to $\kg$. It is a homomorphism because $*$ was extended from $\kg$ to $\tkg$ by asking that the factors $\kg$ and $\R$ commute.
\end{proof}

 We fix a (small) parameter $\gamma_{0} \in (0,1)$ to be specified below and set for $a\in \llbracket 1, s\rrbracket$,
$$\gamma_{a}= (32s)^{-a}\gamma_{0}\1_{a<s} \,\,\, \,\,\,  \,\,\, \,\,\, \,\,\,\,\,\,N_{a}=\lfloor N^{1-\gamma_{a}}\rfloor   \,\,\, \,\,\, \,\,\,\,\,\, \,\,\, \,\,\,\kappa_{N,a}=(T_{N_{a}} \tmu)^{\otimes N_{a}-N_{a-1}}$$
with the convention $N_{0}=0$.
We then define 
\begin{align*}  
&\Upsilon^N=\Pi_{\star}(\kappa_{N,1}\otimes \dots \otimes \kappa_{N,s})\,\,\,\,\,\,\,\,\,\,\,\,\,\,\,\,\,\,\,\,\,\,\,&&\Theta^N=p_{\star}(\Upsilon^N)\\
& \Upsilon^N_{g,h}=g*\Upsilon^N*h   \,\,\,\,\, \,\,\,\,\,\,\,\,\,\,\,\,\,\,\,\,\,&&\thetaNgh=p_{\star}(\Upsilon^N_{g,h}).
\end{align*}
So $\Upsilon^N$ is a measure on $\tkg$, which is the image under the Lie product of the $s$ measures $\kappa_{N,a}$, each one of which is a product of several copies of the truncated measure $T_{N_{a}} \tmu$. And $\Theta^N$ is the projection of $\Upsilon^N$ onto $\kg$. Note that if $\mu$ has compact support, then $\Upsilon^N=\tmu^{*N}$ and $\Theta^N=\mu^{*N}$ for all large enough $N$.  We say that $\Theta^N$ is  the  \emph{gradual truncated approximation of $\mu^{*N}$}. This truncation will play a role later in the proof of \Cref{reduction}. 

We are now ready to state \Cref{LLT-tronc}. Recall the standing assumption on $\mu$ is that it has a finite moment of order $m_\mu>2$ for the weight filtration (associated to $\Xab$). Note that this is weaker than the assumption made in \Cref{LLT}. 



\begin{theorem}[Local limit theorem for truncations] \label{LLT-tronc} 
Keep the  above notations.
Then for every $f \in C_{c}(\kg)$, for $0< \gamma_{0} \lll1$,  $0<\delta_{0}\lll_{\gamma_{0}}1$,   $N\geq 1$, $g,h\in \tkg$ one of which lies in $\DN(\delta_{0})$, we have 
$$|\thetaNgh (f) - (\DilN \nu * N\Xm )_{p(g),p(h)}(f)| =o_{f, \gamma_{0}}(N^{-\frac{\dd}{2}}).$$
\end{theorem}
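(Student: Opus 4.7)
\medskip

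\noindent\textbf{Proof plan.} The plan is to analyze both sides via the Fourier inversion formula after suitably rescaling the frequency variable by the dilations, and then partition the frequency domain into a small ``CLT region'' where one applies the three reductions of \S\ref{Sec-3reductions}, and a complementary ``large frequency region'' where one proves super-polynomial Fourier decay of $\widehat{\Theta^N}$ via the path-swapping principle described in the introduction.

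\medskip

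\noindent\emph{Step 1: Setup.} After conjugating by $-N\Xm*N\chi$ (using $p(\tmu^{*N}) = \mu^{*N}$ and the relation \eqref{formule-ext}) we may replace $\Theta^N_{p(g),p(h)}$ by the projection to $\kg$ of an object of the form $\Upsilon^N_{\tilde g,\tilde h}$, and similarly rewrite the limiting measure as $p_{\star}(\DilN\nu *' \text{something})$. Since $f \in C_c(\kg)$, first convolve $f$ with a compactly supported smooth bump $\psi_\eps$ so that $f \approx f * \psi_\eps$ in $L^1$; the Fourier transform of $f * \psi_\eps$ decays faster than any polynomial. Then apply Fourier inversion to both $\Theta^N_{p(g),p(h)}(f*\psi_\eps)$ and $(\DilN\nu*N\Xm)_{p(g),p(h)}(f*\psi_\eps)$, followed by the change of variable $\xi = \eta\circ D_{1/\sqrt N}$, which produces the Jacobian $N^{-\dd/2}$. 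So the task reduces to showing that
\[
\int_{\dkg} \widehat{(f*\psi_\eps)}(\eta\circ D_{1/\sqrt N})\,\Bigl[\widehat{\Theta^N_{p(g),p(h)}} - \widehat{(\DilN\nu*N\Xm)_{p(g),p(h)}}\Bigr](\eta\circ D_{1/\sqrt N})\,d\eta = o_{f,\gamma_0}(1).
\]

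\medskip

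\noindent\emph{Step 2: Small-frequency regime.} Restrict to $\|\eta\|\le N^{\gamma_0}$. On this set I would perform the following chain of substitutions inside the characteristic function of $\Upsilon^N$: first replace the gradually truncated measures $T_{N_a}\tmu$ by $\tmu$ itself, using Lemma \ref{truncation-cost} and the moment estimate of Proposition \ref{momentPi}; second, replace the Lie product $*$ by the graded product $*'$ using Proposition \ref{grading-Fourier}; third, swap $\tmu$ for a nice reference measure $\teta$ matching moments up to order $2$ using Proposition \ref{gaussian}. By construction the resulting characteristic function, once normalized by $D_{\sqrt N}$, converges to $\widehat\nu(\eta)$ by the CLT (this is \eqref{CLT}). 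The total error on this region is $o(1)$ uniformly in $g,h\in\DN(\delta_0)$, provided $\delta_0$ is small enough compared to $\gamma_0$ (each reduction contributes $N^{O(\delta_0)}$ slack).

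\medskip

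\noindent\emph{Step 3: Large-frequency regime — the main difficulty.} This is the heart of the argument. On $\|\eta\| \ge N^{\gamma_0}$ one needs super-polynomial decay
\[
|\widehat{\Theta^N_{p(g),p(h)}}(\eta\circ D_{1/\sqrt N})| \;\ll_Q\; N^{-Q}
\]
for every $Q$, so that the integral contribution is negligible compared to $N^{-\dd/2}$. For this I would carry out the path-swapping scheme outlined in \S 1.5: apply Cauchy–Schwarz $s-1$ times over well-chosen commuting subgroups $F_1,\ldots,F_{s-1}$ of $\Sym(N)$ consisting of block-swap involutions, so that the iterated averaging produces an element $A_{\sigma,\tau}\in \R[\Sym(N)]$ of the group algebra acting on $\Pi_N$. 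The combinatorics of the Baker–Campbell–Hausdorff formula together with Claim 1 of \S\ref{Sec-bi-grading} force $A_{\sigma,\tau}\Pi \xx$ to land in the center of $\kg$ and to be a sum of $N' \sim N^{1-\eps}$ independent blockwise $s$-brackets. Independence then factorizes the characteristic function into $N'$ copies, each of which is bounded away from $1$ in modulus thanks to the non-lattice hypothesis on $\mu_{ab}$ (which controls $\widehat{\mu_{ab}}$ in the abelianized direction, and hence the bracket directions through a standard Fourier-level transfer argument). The non-centered case will force some care: the relevant filtration is the weight filtration $\kg^{(i)}$ rather than $\kg^{[i]}$, so the domain cutoff and the counting of blocks must be adjusted, and the bias $\chi$ appearing in the expansion of $\Pi^{[a,b)}$ has to be handled via the bias extension $\tkg$.

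\medskip

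\noindent\emph{Step 4: Large-frequency tail and conclusion.} For frequencies beyond a fixed polynomial range $\|\eta\|\ge N^A$ I would use the rapid decay of $\widehat{f*\psi_\eps}$ (since $\psi_\eps$ is smooth) to make the tail integral trivially $o(N^{-\dd/2})$; this is the role of the preliminary mollification in Step~1. Finally let $\eps\to 0$ after $N\to\infty$ to remove the mollifier, using equicontinuity of the error with respect to $f$ in $C_c$. Collecting Steps 2--4 yields the announced estimate, uniformly in $g,h$ with one of them in $\DN(\delta_0)$. The main obstacle is Step 3: making the path-swap combinatorics produce a \emph{central} bracket output in the non-centered weight-filtration setting and extracting enough Fourier cancellation from only the $\mu_{ab}$ non-lattice assumption, without any Cramér condition.
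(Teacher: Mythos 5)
Your overall architecture (Fourier inversion, three reductions on a shrinking CLT region, path-swap for super-polynomial decay outside it) is exactly the paper's strategy, and Steps 2--3 are essentially Propositions \ref{grading-Fourier}, \ref{gaussian}, \ref{reduction} and \ref{choisirk}. The gap is in Steps 1 and 4, i.e.\ in how you pass from band-limited estimates to a general $f\in C_c(\kg)$. Mollifying $f$ by a smooth bump $\psi_\eps$ gives a Fourier transform that decays faster than any polynomial but is \emph{not} compactly supported, and that is not enough here. The path-swap bound outside the reduced domain is of the form $(1+\|\xi\|)^L\,gap_c(\|\xi\|)^{-L}N^{-Q}$, and in the absence of any Cram\'er or diophantine hypothesis the factor $gap_c(\|\xi\|)^{-L}$ can grow faster than \emph{any} prescribed rate as $\|\xi\|\to\infty$ (for a Liouville-type non-lattice $\mu_{ab}$, $1-|\hmu_{ab}(\xi)|$ can be as small as you like along a sequence). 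So integrating your $N^{-Q}$ bound against a merely Schwartz-decaying $\widehat{f*\psi_\eps}$ over $\|\xi\|\geq N^{\gamma_0-1/2}$ does not give $o(N^{-\dd/2})$: the quantity $\|f*\psi_\eps\|_{\mu_{ab},c,L}$ can be infinite. The paper circumvents this with Proposition \ref{approx}: a two-sided sandwich $f^-\leq f\leq f^+$ by functions whose Fourier transforms are \emph{compactly supported} (so the frequency integral runs over a fixed compact set on which $gap_c^{-L}$ is bounded), with $\|f^+-f^-\|_{L^1}<\eps$.

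The sandwich is also what makes your ``let $\eps\to 0$ by equicontinuity'' step work. Comparing $\Theta^N(f)$ with $\Theta^N(f*\psi_\eps)$ at scale $N^{-\dd/2}$ from $L^1$- or $L^\infty$-closeness alone is circular: it already presupposes that $\Theta^N$ gives mass $O(N^{-\dd/2})$ to compact sets. With $f^-\leq f\leq f^+$ one gets $\Theta^N(f^-)\leq\Theta^N(f)\leq\Theta^N(f^+)$ for free, applies the effective LLT to $f^\pm$, and controls $(\DilN\nu*N\Xm)(f^+-f^-)$ by $N^{-\dd/2}\eps\|\hnu\|_{L^1}$ via Fourier inversion. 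Finally, your estimates in Steps 2--3 all require \emph{both} $g,h\in\DN(\delta_0)$, whereas the statement allows one of them to be arbitrary; you need the separate (easy but necessary) argument of Lemma \ref{toutedeviation} showing that both $\thetaNgh(f)$ and $(\DilN\nu*N\Xm)_{p(g),p(h)}(f)$ are $o(N^{-\dd/2})$ when exactly one deviation is non-moderate.
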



 \Cref{LLT-tronc} does imply \Cref{LLT}:
\begin{proof}[Proof of \Cref{LLT}]
By \Cref{truncation-cost}, one has in total varation norm
$$\|\Theta^{N}_{p(g),p(h)}- \mu^{*N}_{p(g),p(h)}\| =  o(N^{1-\frac{1-\gamma_{0}}{2}m_{\mu} })$$
which is negligible compared to $N^{-\dd/2}$ as long as $\gamma_{0}$ is chosen small enough depending on $m_{\mu}>\dd+2$. Hence the LLT for $\Theta^{N}_{p(g),p(h)}$, namely \Cref{LLT-tronc}, implies the LLT for $\mu^{*N}_{p(g),p(h)}$, namely \Cref{LLT}.
\end{proof}


\bigskip
We now give an {\bf overview of the proof of \Cref{LLT-tronc}}. Let $f :\kg\rightarrow \R$ be a continuous integrable test function. We aim to estimate $\thetaNgh(f)$ for large $N$. If  $\hf$ is integrable, the Fourier inversion formula allows to write, for all $N\geq 1$, $g,h\in \tkg$, 
\begin{align*}
\thetaNgh (f)&=\int_{\dkg} \hf(\xi) \,\widehat{\thetaNgh}(\xi) d\xi \nonumber \\
&=\int_{\dkg} \hf(\xi) \,\huNgh(\xi) e^{-2i\pi \xi\circ p(g^{(\chi)}+N\chi+h^{(\chi)})}d\xi 
\end{align*} 
 where we identify $\xi$ with its extension to $\tkg$ defined by setting $\xi (\chi)=0$, and where $g^{(\chi)}, h^{(\chi)}$ denote the $\R\chi$ components  of $g,h$ in the decomposition \eqref{tilde-dec}.

We will then use the aperiodicity (non-lattice) assumption  on $\mu$ together with a combinatorial averaging argument (the \emph{path-swap}) inspired by that of Diaconis-Hough \cite{diaconis-hough21} to restrict the integration domain  to a small neighborhood of the origin, namely to the \emph{reduced domain} 
$$\UN(\gamma_{0}):=\{\xi \in \dkg  \,:\, \forall b\leq 2s-1,\,\,\|\xi_{|\kg^{(b)}}\| \leq  N^{-b/2+\gamma_{0}} \}.$$
The error term will be estimated in terms of the following  \emph{gap function} of $\hmu_{ab}$, which quantifies the level of aperiodicity of $\mu$. Given $c>0$, let $gap_{c}: \R_{>0}\rightarrow (0, c)$ be the function given by 
$$gap_{c}(R)= c\inf \{1-|\hmu_{ab}(\xi)| \,:\, \xi \in \widehat{\km^{(1)}}, \, c\leq \|\xi \|\leq c^{-1}(1+R)\}$$
where $\mu_{ab}$ is identified to a measure on $\km^{(1)}$ via the projection map. 

\begin{restatable*}[Domain reduction]{proposition}{domainreduction} \label{reduction}
For $\gamma_{0}\in (0,1)$, $Q>1$,  for some $\delta_{0}=\delta_{0}(\dim\kg, \gamma_{0})>0$, $L=L(\dim\kg, \gamma_{0}, Q)> 1$,  for $c\lll1$,  $N\geq 1$, $g,h\in \DN(\delta_{0})$, and $\xi \in \dkg \smallsetminus \UN(\gamma_{0})$, 
$$ |\huNgh(\xi)| \ll_{\gamma_{0}, Q} (1+\|\xi\|)^L gap_{c}(\|\xi\|)^{-L} N^{-Q}.$$
\end{restatable*}

This allows us to write:
\begin{align*}
\thetaNgh  (f) = &\int_{\UN(\gamma_{0})} \hf(\xi) \,\huNgh(\xi) e^{-2i\pi \xi\circ p(g^{(\chi)}+N\chi+h^{(\chi)})}d\xi  \,+\,\|f\|_{\mu_{ab}, c, L}O_{\gamma_{0},Q}( N^{-Q}) \end{align*}
where we set 
\begin{align}\label{norm-mucL}
\|f\|_{\mu_{ab}, c, L}:= \|f\|_{L^1}+ \int_{\dkg} |\hf(\xi)| (1+\|\xi\|)^L gap_{c}(\|\xi\|)^{-L} d\xi   
\end{align}
Note  that the additional $ \|f\|_{L^1}$ is not necessary for the estimate to hold here, but it will play a role  in \Cref{TLLFouriercompact} below. 

In the above integral on the reduced domain $\UN(\gamma_{0})$, we may  replace $*$ by its graded counterpart $*'$ (\Cref{grading-Fourier}) up to an error $O(N^{-1/2+2\gamma_{0}})$, then replace  $\mu$ by any order $2$ approximation $\eta$ with finite moments of all orders (\Cref{gaussian}) up to an error $O_{\eta}(N^{-\frac{1}{2}\min(1,m_{\mu}-2)+ 6\gamma_{0}})$. The total error, once integrated on $\UN(\gamma_{0})$, will be of order $O_{\eta, \gamma_{0}}\left(N^{ -\frac{1}{2}(\dd +\min(1, m_{\mu}- 2))+ C\gamma_{0} } \right)$ for some constant $C>1$ depending only on $\dim \kg$.
We will then choose   $\eta$ so that  $\teta^{*'N}=D_{\sqrt{N}}\nu*'N\chi$ for all $N$,  and infer the following local limit theorem with power saving and moderate bideviations for the test function $f$. 

\begin{restatable*}[Effective LLT with power saving for truncations]{proposition}{TLLFouriercompact} \label{TLLFouriercompact}
For  every continuous integrable function $f$ on $\kg$, for $\gamma_{0}\in (0, 1)$,  $c\lll1$, some $C=C(\dim \kg), L=L(\dim \kg, \gamma_{0})> 1$, for $N\geq 1$, $g,h\in \DN(1/L)$,    we have
$$|\thetaNgh  (f) - (\DilN \nu * N\Xm )_{p(g),p(h)}(f) | =  \|f\|_{\mu_{ab}, c, L} O_{\gamma_{0}}\left(N^{ -\frac{1}{2}(\dd +\min(1, m_{\mu}- 2))+C\gamma_{0}} \right). $$
\end{restatable*}

Notice \Cref{TLLFouriercompact} implies \Cref{eff-LLT}

\begin{proof}[Proof of \Cref{eff-LLT}] By \Cref{truncation-cost}, one has in total varation norm
$$\|\Theta^{N}_{p(g),p(h)}- \mu^{*N}_{p(g),p(h)}\| =  o(N^{1-\frac{1-\gamma_{0}}{2}m_{\mu} })$$
which is negligible compared to $N^{-(\dd+1-\eps)/2}$ as long as $\gamma_{0}$ is chosen small enough depending on $(\dim \kg, \eps)$. We also make sure  that $\gamma_{0}$ is small enough so that $C\gamma_{0}<\eps$ where $C$ is the constant in \Cref{TLLFouriercompact}. As $\|f\|_{L^\infty}\leq \|f\|_{\mu_{ab}, c,L}$ by Fourier inversion,   \Cref{eff-LLT} now follows from \Cref{TLLFouriercompact}.
\end{proof}

Next we will deduce an analogous result for continuous functions with compact support.

\begin{restatable*}[LLT with moderate bideviations]{proposition}{TLLfcompact}\label{TLLfcompact}
For every continuous function with compact support $f\in C_{c}(\kg)$,   $0<\gamma_{0} \lll 1$, some $\delta_{0}=\delta_{0}(\dim \kg, \gamma_{0})>0$, for $N\geq 1$, $g,h\in \DN(\delta_{0})$,  
$$|\thetaNgh   (f) - (\DilN \nu * N\Xm )_{p(g),p(h)}(f) | =   o_{f, \gamma_{0}}(N^{-\frac{\dd}{2}}).$$
\end{restatable*}

The proof of \Cref{TLLfcompact} relies on $L^1$-approximation of the test function by functions with compactly supported Fourier transform, to which we may apply  \Cref{TLLFouriercompact}. 



Finally the proof of \Cref{LLT-tronc} will be complete if we manage to allow one of the deviations to be arbitrary. For this it is sufficient to check that  the terms $\Theta^N_{p(g),p(h)}(f)$, $(\DilN \nu * N\Xm )_{p(g),p(h)}(f)$ will be negligible compared to $N^{-\frac{\dd}{2}}$. This is the role of the next lemma.

\begin{restatable*}{lemme}{toutedeviation}  \label{toutedeviation}
For every continuous function with compact support  $f\in C_{c}(\kg)$, for  $\gamma_{0}\in (0,1)$, for $\delta_{0} \ggg \delta_{1}>0$,  $N\geq 1$, $g,h \in \tkg$ with $p(g)\notin p(\DN(\delta_{0}))$, $h\in \DN(\delta_{1})$
$$|\Theta^N_{p(g),p(h)}  (f)|=   o_{f, \delta_{0}, \delta_{1}}(N^{-\frac{\dd}{2}})  \,\, \, \,\, \, \,\, \, \,\, \, \,\, \, |(\DilN \nu * N\Xm )_{p(g),p(h)}(f) | =   o_{f, \delta_{0}, \delta_{1}}(N^{-\frac{\dd}{2}}) $$
and this also holds if we switch the conditions on $g,h$. 
\end{restatable*}

This concludes the (overview of the) proof of \Cref{LLT-tronc}.

\subsection{Path swap} \label{Sec-path swap}

The main part of the proof of \Cref{LLT-tronc} is the domain reduction, namely  \Cref{reduction}. In this subsection, we establish \Cref{choisirk}, which gives an exponential bound on the Fourier transform of a random product restricted to its components involving only brackets of length bounded by an integer $a$. \Cref{choisirk}  will be an essential ingredient in the proof of \Cref{reduction}. 

\Cref{choisirk} relies on a path swap technique inspired by the work of  Diaconis and Hough in \cite{diaconis-hough21, hough19}.  The idea is to take advantage of the fact that the law of the product $X_1*\ldots*X_N$ is invariant under permutations of the $X_i$ by averaging over a certain subset of permutations in $Sym(N)$, in order to generate smoothness in the higher commutators. This is akin to a kind of \emph{discrete derivative} and its use in combination to the Cauchy-Schwarz inequality as effected below can be viewed as a kind of non-abelian analogue (for commutator calculus) of the van der Corput trick, which is ubiquitous in nilpotent dynamics \cite{leibman, host-kra, green-tao}. In what follows the average is taken \emph{over a certain elementary abelian $2$-subgroup of $Sym(N)$}.  In this subsection, as well as in  \cite{diaconis-hough21, hough19}, the path swap technique is tailored to deal with the central filtration of $\kg$. We will need extra work to obtain information regarding the weight filtration instead. 

\bigskip

We now present the path-swap technique.
Let us consider a subgroup $F\leq \Sym(N)$, which is a cartesian product $F=F_1\times \ldots \times F_{a-1}$ of commuting subgroups $F_i\leq \Sym(N)$. An element $\sigma\in F$ may be written uniquely $\sigma=\sigma_1\ldots\sigma_{a-1}$ with $\sigma_i \in F_i$. 


Recall from \Cref{Sec-bi-grading} that the set of maps from $\tkg^N \rightarrow \tkg$ is a $\R[\Sym(N)]$-module. Consider a measurable map $P: \tkg^N \rightarrow \tkg, \xx \mapsto P \xx$ and a probability measure  $\kappa$ on $\tkg^N$ such that the distributions $P_{\star}\kappa$ and $(\sigma P)_{\star}\kappa$ coincide for all $\sigma\in F$ (for instance $\kappa$ could be $\Sym(N)$-invariant). We write:
\begin{align*}
\widehat{P_{\star}\kappa}(\xi)&= \int_{\tkg^N} e_{\xi}(P\xx) \,\,d\kappa(\xx)= \int_{\tkg^N} \E_{\sigma \in F}e_{\xi}(\sigma P\xx) \,\,d\kappa(\xx)\\
&=\int_{\tkg^N}\mathbb{E}_{\sigma_{a-1}\in F_{a-1}}\dots \mathbb{E}_{\sigma_{1}\in F_1} e_{\xi}(\sigma_{a-1}\ldots \sigma_1 P\xx) \,\,d\kappa(\xx)
\end{align*}
where the expectation $\E$ refers to the uniform probability measure on the subgroup in subscript. Applying Cauchy-Schwarz then Fubini, we obtain with $\sigma=\sigma_{a-1}\cdots  \sigma_1=\sigma' \sigma_1$, $\sigma' \in F_{a-1}\ldots F_2$,
\begin{align*}
|\widehat{ P_{\star}\kappa}(\xi)|^2
& \leq\int_{\tkg^N}\mathbb{E}_{\sigma'} |\mathbb{E}_{\sigma_{1}} e_{\xi}(\sigma  P\xx)|^2 \,\,d\kappa(\xx)\\
& =\int_{\tkg^N}\mathbb{E}_{\sigma'} \mathbb{E}_{\sigma_{1}, \tau_{1}} e_{\xi}\left(\sigma'\sigma_1 P\xx - \sigma'\tau_1 P\xx)\right) \,\,d\kappa(\xx)\\
& =\int_{\tkg^N} \mathbb{E}_{\sigma_{1}, \tau_{1}}\mathbb{E}_{\sigma'} e_{\xi}\left((\sigma_1-\tau_1)\sigma' P \xx \right) \,\,d\kappa(\xx)
\end{align*}
where $\tau_{1}\in F_1$ is a new independent variable with uniform distribution on $F_1$.  We can repeat this process and obtain the \emph{path-swap inequality}:

\begin{align}\label{VanDerCorput}
|\widehat{ P_{\star}\kappa}(\xi)|^{2^{a-1}}&\leq \int_{\tkg^N} \mathbb{E}_{\sigma_{1}, \tau_{1}}\dots  \mathbb{E}_{\sigma_{a-1}, \tau_{a-1}} e_{\xi}\left(A_{\sigma, \tau}  P \xx\right) \,\,d\kappa(\xx)\nonumber \\ &\leq \int_{\tkg^N} \mathbb{E}_{\sigma, \tau} e_{\xi}\left( A_{\sigma, \tau}  P \xx \right) \,\,d\kappa(\xx)
\end{align}
where $\E_{\sigma,\tau}$ is over the uniform distribution on $F\times F$ and
$$
A_{\sigma, \tau} := \prod_{i=1}^{a-1} (\sigma_i-\tau_i) \in \R[\Sym(N)].
 $$
This inequality will  be applied later in the proof of \Cref{cor-moyenne} and $P$ will be taken to be the map $\xx\mapsto \Pi^{[<a]}\xx+\Pi^{[a]}\xx_{ab}$ where $\xx_{ab}$ refers to the coordinate-wise projection of $\xx$ to $\km^{(1)} \oplus \R\chi$ modulo $\kg^{[2]}$. First we study how $A_{\sigma, \tau}$ acts on $ \Pi^{[<a]}$ and $\Pi^{[a]}$ for an appropriate choice of permutation subgroup $F$. 
\bigskip

We  specify $F$ as follows. Fix $a\in \{2, \dots, s\}$ and integers $k,N'\ge 1$. Assume $N=k(2a-1)N'$. We divide $[N]=\{1,\ldots,N\}$ into $N'$ consecutive intervals. called \emph{large blocks}, of length $k(2a-1)$ and subdivide each large block into $2a-1$ consecutive intervals, or blocks, of size $k$. In each large block the central block is said to have type $0$, while the two blocks on each side of the central block have type $1$, the new blocks neighbouring a block of type $t$ have type $t+1$, and so on. The \emph{type of a block} is thus an integer in $[0,a-1]$.  The type of an element of $[N]$ is defined as the type of the block it belongs to.  For $I\subseteq [N]$ let $t(I)$ be the set of block types of blocks that meet $I$. Clearly $|t(I)|\leq |I|$.

For $j\in [N']$ and $i \in [a-1]$ we let $\eps_i^j$ be the permutation of $[N]$ that exchanges the two blocks of type $i$ in the $j$-th large block (preserving order within a block).  In other words, $\eps_i^j$ exchanges $r-ki\in [N]$ and $r+ki$ if $\lceil r/k \rceil = j(2a-1)-(a-1)$ and fixes every other element. We let $F_i$ be the subgroup generated by all $(\eps_i^j)_{j\in [N']}$ and $F$ the subgroup generated by all $F_i$. Since the $\eps_i^j$ all have disjoint support, $F$ is an elementary abelian $2$-group isomorphic to $(\Z/2\Z)^{N'(a-1)}$. Any $\sigma\in F$ can be written uniquely 
$$\sigma=\prod_{i \in [a-1], j\in [N']} \sigma_i^j$$
where $\sigma_i^j\in \{id, \eps_i^j\}$. We also set $\sigma_i:=\prod_j\sigma_i^j$, $\sigma^j:=\prod_i \sigma_i^j$. Observe that $F$ \emph{preserves type} by construction.

 The next lemmas are inspired from  \cite[Lemma 24]{diaconis-hough21} and \cite[Lemma 7]{hough19}. The first one shows that the operator $A_{\sigma, \tau}$ kills the components $\Pi^{[a']}$ for $a'<a$.

\begin{lemme}[path swap 1] \label{moyenne}
Let $\sigma, \tau \in F$. The kernel of $A_{\sigma,\tau}$ contains all monomials whose support misses some block type in $[a-1]$. It also contains $\Pi_{N,t}$ and $\Pi_N^{[t]}$ for every $t\leq a-1$.
\end{lemme}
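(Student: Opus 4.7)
The first assertion is immediate: given a monomial $m \in \mathcal{A}_N$ whose support $I$ misses some block type $i_0 \in [a-1]$, every element of $F_{i_0}$ (which permutes only positions of type $i_0$) fixes each variable appearing in $m$, so $\sigma_{i_0} m = m = \tau_{i_0} m$, and hence $(\sigma_{i_0} - \tau_{i_0}) m = 0$. Since the subgroups $F_i$ have pairwise disjoint supports, $F$ is abelian and the factors of $A_{\sigma,\tau}$ commute as operators on $\mathcal{A}_N$; commuting $(\sigma_{i_0} - \tau_{i_0})$ to act first yields $A_{\sigma,\tau} m = 0$.

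For $\Pi_{N,t}$ and $\Pi_N^{[t]}$ with $t \leq a-1$, I would decompose each according to the type profile $\vec n = (n_0,n_1,\ldots,n_{a-1})$ of the support of its constituent monomials; because $F$ preserves types, this decomposition is $F$-equivariant and each profile can be treated independently. Profiles with $n_i = 0$ for some $i \in [a-1]$ are covered by the first assertion. Since $\sum_{i\ge 0} n_i \leq t \leq a-1$, the only remaining profile must have $n_0 = 0$ and $n_i = 1$ for every $i \in [a-1]$---call it the \emph{bad profile}---and this can only occur when $t = a-1$. The case $t \leq a-2$ is therefore settled.

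For the bad profile at $t=a-1$, I group the bad subsets into $F$-orbits. Each orbit is parameterized by data $\vec p = ((j_i,p_i))_{i\in[a-1]}$---a large-block index $j_i \in [N']$ and an inner position $p_i \in [k]$ for each type---and consists of the $2^{a-1}$ subsets $I^{\vec s}$ indexed by side choices $\vec s = (s_1,\ldots,s_{a-1}) \in \{L,R\}^{a-1}$ of the corresponding type-$i$ blocks. Let $\mathcal{S}_{\mathrm{orb}} := \sum_{\vec s} \sigma_{I^{\vec s}} P_{a-1}$ be the associated orbit sum. The crucial claim is that $F_1$ acts trivially on $\mathcal{S}_{\mathrm{orb}}$. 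Indeed, the two type-$1$ blocks within a large block sit at the innermost non-central positions $a\pm 1$, while every other type $\ell \geq 2$ occupies positions $a\pm\ell$ lying \emph{outside} this pair; hence, for any $\ell \geq 2$ and any $\vec s$, the element $e_\ell$ (when sharing large block $j_1$) lies entirely before or entirely after both type-$1$ blocks, so its relative order with $e_1$ is independent of $s_1$. The swap $\eps_1^{j_1}$ therefore merely exchanges $e_1$ with its sibling $e_1'$ at the same rank in the increasing enumeration of $I^{\vec s}$, giving $\eps_1^{j_1} \sigma_{I^{\vec s}} P_{a-1} = \sigma_{I^{\vec s'}} P_{a-1}$ with $\vec s'$ obtained from $\vec s$ by toggling $s_1$; summing over $\vec s$ yields $\eps_1^{j_1} \mathcal{S}_{\mathrm{orb}} = \mathcal{S}_{\mathrm{orb}}$. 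Every other generator $\eps_1^j$ with $j \neq j_1$ fixes each $I^{\vec s}$ pointwise. Thus $F_1$ stabilizes $\mathcal{S}_{\mathrm{orb}}$, forcing $(\sigma_1 - \tau_1)\mathcal{S}_{\mathrm{orb}} = 0$ and hence $A_{\sigma,\tau}\mathcal{S}_{\mathrm{orb}} = 0$. Summing over $F$-orbits gives $A_{\sigma,\tau}\Pi_{N,a-1} = 0$; the analogous argument with $P_{a-1}$ replaced by the multi-linear length-$(a-1)$ component of the BCH expansion of $\oo_1 * \cdots * \oo_{a-1}$ handles $\Pi_N^{[a-1]}$.

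The main obstacle is this last step: one must carefully verify the key identity $\eps_1^{j_1} \mathcal{S}_{\mathrm{orb}} = \mathcal{S}_{\mathrm{orb}}$ by accounting for how the ranks of elements in the increasing enumeration of $I^{\vec s}$ behave under the swap. The argument is specifically tailored to $F_1$, exploiting the fact that the type-$1$ block pair is the innermost of the non-central pairs; the analogous invariance would in general fail for $F_i$ with $i \geq 2$, since type-$\ell$ blocks with $\ell < i$ lie between the two type-$i$ blocks and can be reordered by the swap.
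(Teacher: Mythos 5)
Your proof is correct and follows essentially the same route as the paper's: the first assertion and the case $t\le a-2$ via fixing by some $F_{i_0}$, and the critical case $t=a-1$ via the observation that the $F_1$-action preserves the relative order of elements of a bad support (because the two type-$1$ blocks are innermost), so that the relevant sum of $\sigma_I P_{a-1}$ is $F_1$-invariant and hence killed by $(\sigma_1-\tau_1)$. The only cosmetic difference is that you organize the bad supports into $F$-orbits and prove $F_1$-invariance orbit by orbit, whereas the paper shows $F_1$-invariance of the whole sum over $\mathcal{I}$ at once.
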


\begin{proof}Note first that for each $i$, if $R \in \mathcal{A}_N$ is fixed by all elements of $F_i$, then $R \in\ker A_{\sigma,\tau}$. If $m\in \mathcal{A}_N$ is a monomial with support $I_m$ and $[a-1]\nsubseteq t(I_m)$, then there is $i \in [a-1]$ such that $F_i$ fixes $m$. This shows the first assertion and also that $\mathcal{A}_{N,t}$ and $\mathcal{A}_{N}^{[t]}$ lie in $\ker A_{\sigma,\tau}$ for all $t<a-1$. 

It remains to check that  $\Pi_N^{[a-1]}$ and  $\Pi_{N,a-1}$ are in $\ker  A_{\sigma,\tau}$. More generally, we check that any $R\in \mathcal{A}_{N,a-1}$ belonging to the image of the periodization map $\mathcal{P}_{a-1}$ (see \Cref{Sec-bi-grading}) lies in $\ker A_{\sigma,\tau}$.
There is $Q \in \mathcal{A}_{a-1,a-1}$ such that
$$A_{\sigma,\tau}R= A_{\sigma,\tau} \sum_{I}\sigma_IQ.$$ By the first paragraph of the proof, the sum can be restricted to those $I$ in $\mathcal{I}:=\{I\subseteq [N] \,:\,|I|=a-1,\,t(I)=[a-1]\}$. 
Note that $F_{1}$ permutes  $\mathcal{I}$, and acts on each $I\in \mathcal{I}$ in an order preserving way.
This implies that  the corresponding $\sigma_IQ$  are permuted by $F_1$, whence their sum $\sum_{I\in \mathcal{I}}\sigma_IQ$ is fixed by $F_1$ and thus lies in $\ker A_{\sigma,\tau}$. It follows that $A_{\sigma, \tau}R=0$ as desired.
\end{proof}


The second lemma shows that $A_{\sigma,\tau}$ desynchronizes the contributions of different large blocks. Let $(B_j)_{j\in [N']}$ be the sequence of large blocks.

\begin{lemme}[path swap 2] \label{moyenne2}For all $\sigma,\tau \in F$, we have: $$A_{\sigma,\tau} \Pi_{N,a} = \sum_{j=1}^{N'} \sum_{I\subseteq B_j} A_{\sigma,\tau} (\Pi_{N,a})_I.$$ 
Furthermore, the second sum can be  restricted to all $I\subseteq B_j$ of size $a$ with at most one element in each block. 
\end{lemme}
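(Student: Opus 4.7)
The plan is to combine \Cref{moyenne} with an $F$-invariance argument adapted to the block structure. By \Cref{moyenne}, $A_{\sigma,\tau}(\Pi_{N,a})_I = 0$ whenever $[a-1]\nsubseteq t(I)$; since $|I|=a$, the surviving $I$'s have either $t(I)=\{0,1,\dots,a-1\}$ with one element of each type (Case~A), or $t(I)=[a-1]$ with one type doubled (Case~B). The group $F$ preserves the partition $[N]=\bigsqcup_j B_j$ because each generator $\varepsilon_i^j$ only permutes positions within the single large block $B_j$; in particular $F$ maps spread subsets to spread subsets.

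The main technical step is to verify that for each $F$-orbit of spread subsets, one can find an index $i\in[a-1]$ such that the $F_i$-orbit sum $\sum_{I'\in F_i\cdot I}(\Pi_{N,a})_{I'}$ is genuinely $F_i$-invariant in $\mathcal{A}_N$. Once such $i$ is found, $(\sigma_i-\tau_i)$ annihilates this orbit sum and \emph{a fortiori} so does $A_{\sigma,\tau}$; partitioning the spread subsets into $F$-orbits and summing then yields $A_{\sigma,\tau}\sum_{I\text{ spread}}(\Pi_{N,a})_I=0$, which gives the first assertion. To exhibit $i$, one applies a minimum-type rule: pick a large block $B_j$ meeting $I$ whose intersection with $I$ does not contain the type-$0$ element (available by spread-ness in Case~A, automatic in Case~B), and let $i$ be the minimum type appearing in $I\cap B_j$, which lies in $[a-1]$. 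By the nested arrangement of block types inside a large block (type $a-1$ at the extremities, type $0$ at the center), the swap $\varepsilon_i^j$ exchanges the two type-$i$ small blocks of $B_j$ without crossing any $I$-element of type greater than $i$, and does not move $I$-elements in other large blocks, so it preserves the sort order on $I$. A parallel check for the other generators $\varepsilon_i^{j'}$ of $F_i$ (which either fix $I$ elementwise when $I$ has no type-$i$ element in $B_{j'}$, or act order-preservingly for the same reason) yields the desired $F_i$-invariance of the orbit sum.

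The refinement to $I\subseteq B_j$ with at most one element per small block is obtained by the same mechanism applied inside a single large block: if two elements of $I$ lie in a common type-$i$ small block, $I$ must be in Case~B with doubled type $i_0=i$, and letting $i_*$ be the minimum type in $I$, the swap $\varepsilon_{i_*}^j$ is again order-preserving (shifting the doubled pair simultaneously when $i_*=i_0$, or the unique type-$i_*$ element when $i_*<i_0$), so the $F_{i_*}$-orbit sum is $F_{i_*}$-invariant and annihilated by $A_{\sigma,\tau}$. The principal obstacle throughout is sort-order bookkeeping: a non-order-preserving swap would introduce a nontrivial permutation in $\operatorname{Sym}(a)$ acting on $\Pi_{a,a}$, destroying the $F_i$-invariance of the orbit sum; the minimum-type rule combined with the nested block layout is precisely what averts this pitfall.
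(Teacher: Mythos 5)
Your overall strategy is the paper's: after invoking \Cref{moyenne}, kill every $I$ not contained in a single large block by exhibiting an index $i$ for which the relevant $F_i$-orbit sums are genuinely $F_i$-invariant, the invariance resting on $F_i$ acting order-preservingly on each $I$ in the class. The gap is in your selection rule for $i$, and it is not cosmetic — it is the combinatorial heart of the lemma, and your rule demonstrably fails in Case B. Two failure modes. First, your justification only guards against the moved type-$i$ element crossing $I$-elements of other types; it does not guard against the two elements of a doubled type crossing \emph{each other}. Take $a=3$ and $I=\{x_1,x_2,y\}$ with $x_1$ in the left type-$2$ small block of $B_1$, $x_2$ in the right type-$2$ small block of $B_1$, and $y$ of type $1$ in $B_2$: this $I$ is spread and in Case B, your rule permits choosing $B_j=B_1$ (there is no type-$0$ element to avoid), giving $i=2$; but $\eps_2^1$ sends $x_1\mapsto x_1+4k$ and $x_2\mapsto x_2-4k$, reversing their relative order, so $\eps_2^1(\Pi_{N,3})_I=\sigma_{I'}\pi\Pi_{3,3}$ with $\pi$ a nontrivial transposition, and the $F_2$-orbit sum is not $F_2$-invariant. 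Second, your ``parallel check for the other generators $\eps_i^{j'}$'' is not parallel: minimality of $i$ in $I\cap B_j$ says nothing about $I\cap B_{j'}$. With $a=3$, take $x$ of type $2$ in $B_1$, and $y$ of type $1$, $z$ of type $2$ both in $B_2$; choosing $B_j=B_1$ gives $i=2$, and $\eps_2^1$ is harmless, but $\eps_2^2$ carries $z$ across $y$ (the type-$1$ small blocks sit between the type-$2$ ones), again reversing order. In both examples a different choice of block would have worked, but your rule neither specifies that choice nor proves a good one always exists.

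What is actually needed — and what the paper's statistic $s(I)$ delivers — is an index $s$ such that (i) the type-$s$ element of $I$ is \emph{globally unique} in $I$, so that every generator $\eps_s^{j'}$ with $j'\neq j$ fixes $I$ pointwise, and (ii) its large block $B_j$ contains no $I$-element of type $<s$, so that $\eps_s^{j}$ is order-preserving. The paper defines $s(I)$ as the largest such $s$, checks that the level sets of $s(I)$ are $F$-invariant, and shows that every spread $I$ with $t(I)\supseteq[a-1]$ either has $s(I)\ge 1$ or falls into the residual case of two type-$1$ elements lying in \emph{distinct} large blocks, where $F_1$ still works because the two elements never cross. Your Case A argument is fine (there all types are globally unique, so any block avoiding the type-$0$ element works), as is your treatment of the second assertion inside a single large block; but the first assertion, in Case B for spread $I$, needs the uniqueness condition (i) and an existence proof for the pair $(s,B_j)$, neither of which your minimum-type rule supplies.
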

\begin{proof}For $i\in [N]$, let $t(i)$ be the type of $i$, and for $I \subseteq [N]$ with $|I|=a$, let $t_s(I)$ be the number of $i \in I$ with $t(i)=s$. Let also $s(I)$ be the largest $s\in [0,a-1]$ for which there is a large block $B_j$ such that  $t_{s'}(I\cap B_j)=0$ for all $s'<s$, while $t_s(I)=t_s(I\cap B_j)=1.$ Note that by  \Cref{moyenne} we may assume that $[a-1]\subseteq t(I)$.  If $s(I)=0$, then $I$ is contained in a single large block, because the elements of $I$ will all have distinct types. On the other hand, given $s$, the set $\{I, |I|=a, t(I)\supseteq[a-1], s(I)=s\}$ is invariant under $F$ and when $s\ge 1$, $F_s$ is order-preserving on each $I$ in this set. This means that  $F_s$ permutes the associated $\sigma_I\Pi_{a,a}$ (see Claim 1 in \Cref{Sec-bi-grading}). So their sum is fixed by $F_s$, hence lies in $\ker A_{\sigma,\tau}$. Finally, if $s(I)=-\infty$, i.e. $s(I)$ is the supremum of an empty set,  then $I$ has one element of each type $>1$ and two elements of type $1$ that are either in the same large block (case (a)) or in distinct large blocks (case (b)). Note that in case (a), $I$ lies in a single large block. Also $F_1$ acts in an order-preserving way on the subfamily of such $I$ in case (b) and also on the subfamily of such $I$ in case (a) whose type $1$ elements are in the same block. So the sum of the associated $\sigma_I\Pi_{a,a}$ is also in $\ker A_{\sigma,\tau}$.  This ends the proof of the displayed formula and final assertion. 
\end{proof}

Next, we make explicit the contribution of each large block under the condition that the variables in the blocks left of the central block in each large block are all zero.

\begin{lemme}[path swap 3] \label{moyenne3} For $j\in[N']$ and $\sigma,\tau \in F$ we have:
\begin{enumerate}[label=(\roman*)]
\item if $\sigma_i^j=\tau_i^j$ for some $i \in [a-1]$, then $A_{\sigma,\tau} (\mathcal{A}_{N})_I=0$ for all $I\subseteq B_j$, 

\item if $\sigma_i^j \neq \tau_i^j$ for each $i\in [a-1]$, if $\kh$ is a Lie algebra and $\xx \in \kh^N$ is such that  $x_{\ell}=0$ for every $\ell$ located to the left of the central block  in the large block $B_j$, then \begin{equation}\label{lastid}\sum_{I\subseteq B_j} A_{\sigma,\tau} (\Pi_{N,a}^{[a]})_I\xx = \eps(\sigma^j)L^{[a]}(\overline{x}_{j,0},\ldots,\overline{x}_{j,a-1})\end{equation}
where $\eps$ is the sign character and $\overline{x}_{j,k}$ is the sum of the $x_\ell$ for $\ell$ in the blocks of type $k$ in $B_j$. 
\end{enumerate} 
\end{lemme}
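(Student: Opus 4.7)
My plan for part (i) is as follows: for $j'\neq j$, the permutation $\eps_i^{j'}$ has support in $B_{j'}$, which is disjoint from $B_j$, so it fixes every monomial in $(\mathcal{A}_N)_I$ when $I\subseteq B_j$. Hence $\sigma_i=\prod_{j'}\sigma_i^{j'}$ acts on such monomials exactly as $\sigma_i^j$, and similarly $\tau_i$ acts as $\tau_i^j$. If $\sigma_i^j=\tau_i^j$ for some $i$, the corresponding factor of $A_{\sigma,\tau}$ must vanish on $(\mathcal{A}_N)_I$, killing the whole product.

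For part (ii), I would first apply the same reduction to rewrite
$$A_{\sigma,\tau}\big|_{(\mathcal{A}_N)_I}=\eps(\sigma^j)\prod_{i=1}^{a-1}(id-\eps_i^j),$$
where $\eps(\sigma^j):=\prod_i\eps(\sigma_i^j)$ with $\eps(id)=1$, $\eps(\eps_i^j)=-1$; the sign simply records that $(\sigma_i^j-\tau_i^j)=\eps(\sigma_i^j)(id-\eps_i^j)$. Expanding $\prod_i(id-\eps_i^j)=\sum_{S\subseteq[a-1]}(-1)^{|S|}\prod_{i\in S}\eps_i^j$, and writing $\xx^S$ for the tuple obtained from $\xx$ by swapping the two type-$i$ blocks of $B_j$ for every $i\in S$, the left-hand side of \eqref{lastid} becomes
$$\eps(\sigma^j)\sum_{S\subseteq[a-1]}(-1)^{|S|}\sum_{I\subseteq B_j}(\Pi_{N,a}^{[a]})_I\xx^S.$$

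The next step is to restrict the inner sum, using Lemmas \ref{moyenne} and \ref{moyenne2}, to subsets $I$ of size $a$ with at most one element per block and $[a-1]\subseteq t(I)$, and to exploit the zero assumption on the left half of $B_j$. Since $P_a^{[a]}$ is multilinear, a nonzero contribution of $(\Pi_{N,a}^{[a]})_I\xx^S$ requires every element of $I$ to point to a nonzero entry of $\xx^S$; this forces $t(I)=\{0,1,\dots,a-1\}$ and pins down, for each $i\in[a-1]$, that $I$ picks its type-$i$ element in the left type-$i$ block if $i\in S$ and in the right one otherwise. Since the sum of $\xx^S$ over the chosen type-$i$ block equals $\bar x_{j,i}$ in both cases (by the zero assumption), multilinearity of $P_a^{[a]}$ collapses the inner sum to $P_a^{[a]}(y_1^S,\dots,y_a^S)$, where $(y_1^S,\dots,y_a^S)$ lists the block sums $\bar x_{j,i}$ in the order in which their chosen blocks appear in $B_j$: the $\bar x_{j,i}$ for $i\in S$ by decreasing $i$, then $\bar x_{j,0}$, then the $\bar x_{j,i}$ for $i\in[a-1]\setminus S$ by increasing $i$.

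The last and crucial step is to establish the free Lie algebra identity
$$\sum_{S\subseteq[a-1]}(-1)^{|S|}\,P_a^{[a]}(y_1^S,\dots,y_a^S)\;=\;L^{[a]}(y_0,y_1,\dots,y_{a-1}).$$
This is the combinatorial heart of the proof. The LHS may be interpreted as the $(a-1)$-fold finite difference, with respect to the swap variables $X_i\leftrightarrow Y_i$, of the multilinear part of the Lie product $X_{a-1}*\cdots*X_1*y_0*Y_1*\cdots*Y_{a-1}$, evaluated at $X_i=0$, $Y_i=y_i$. I would verify it by induction on $a$, using the Jacobi identity to collapse the many BCH terms into the single left-normed bracket; the cases $a=2,3$ are instructive and can be checked by hand. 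This combinatorial step, analogous to the computation in \cite[Lemma 7]{hough19} for the descending central series but here tailored so that $A_{\sigma,\tau}$ ultimately produces pure left-normed commutators of length $a$, is the main obstacle of the argument.
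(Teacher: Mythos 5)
Your part (i) and all of the reduction steps in part (ii) are correct and essentially identical to the paper's argument: restricting $A_{\sigma,\tau}$ to monomials supported in $B_j$ does factor it as $\eps(\sigma^j)\prod_{i}(1-\eps_i^j)$ (and your reading of the sign, $-1$ per block swap rather than the sign of $\eps_i^j$ as a permutation of $[N]$, is the right one), \Cref{moyenne} and \Cref{moyenne2} do restrict the sum to sets $I$ of full type with one element per block, the vanishing hypothesis does pin down the choice of block for each type according to $S$, and multilinearity of $P_a^{[a]}$ does collapse everything to the block sums in left-to-right order. The identity you isolate at the end is also correctly stated: it is exactly the paper's claim that $A_{1,\tau}\Pi_{2a-1,a}^{[a]}\equiv L^{[a]}(v_0,\ldots,v_{a-1})$ modulo the two-sided ideal generated by the negative-index variables, and your $a=2,3$ verifications go through.

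The gap is that this identity — which you rightly call the combinatorial heart — is asserted rather than proved. ``Induction on $a$ using the Jacobi identity'' is not yet an argument: $P_a^{[a]}$ is a complicated rational combination of length-$a$ Lie monomials (a Dynkin-type expression) with no evident recursion in $a$, and it is not clear how Jacobi manipulations alone collapse the $2^{a-1}$-term alternating sum into a single left-normed bracket. The missing mechanism, which is what the paper supplies, is to use associativity of the BCH product: write $\Pi_{2a-1}(v_{-a+1},\ldots,v_{a-1})=\Pi_{2a-3}(\underline{w})+r$, where $\underline{w}$ replaces the three central variables by the single variable $w_0:=v_{-1}*v_0*v_1$ and $r$ has degree $\geq s+1$. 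After discarding the monomials that are not of full type — in particular those in which the new central variable occurs twice, whose full-type part is invariant under $F_1$ and hence killed by $(1-\eps_1)$ — applying $\prod_{i\geq 2}(1-\eps_i)$ reduces the problem to the case $a-1$ in the variables $(v_{-a+1},\ldots,v_{-2},w_0,v_2,\ldots,v_{a-1})$, giving $L^{[a-1]}(w_0,v_2,\ldots,v_{a-1})$ modulo the ideal generated by $v_{-2},\ldots,v_{-a+1}$; the remaining factor $(1-\eps_1)$ is then handled by your $a=2$ computation applied inside $w_0$. Without this regrouping (or an equivalent recursive device), the induction step is not established and the proof is incomplete at its decisive point.
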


\begin{proof} Without loss of generality we may assume $j=N'=1$. The first assertion is obvious, because $A_{\sigma,\tau}=0$ in this case.  For $(ii)$, we have  $A_{\sigma,\tau}=\eps(\sigma)A_{1,\tau}$, where $A_{1,\tau}=\prod_{i=1}^{a-1}(1-\eps_i)$ (and $\eps_i:=\eps_i^1$). By  \Cref{moyenne2} we may further assume that the $I$ involved in the sum belong to $\mathcal{I}_1$, the subfamily of those $I$ with at most one element in each block. Let $r(I)$ be the $a$-tuple recording the labels in $[-a+1,a-1]$ of the blocks intersecting $I$ (in increasing order).  Now observe that $\Pi_{a,a}^{[a]}$ is made of monomials of length $a$ with support of size $a$. So it is linear in each variable, and by Claim 1 in \Cref{Sec-bi-grading}, we have $$\sum_{\rr(I)=\rr}(\Pi_{N,a}^{[a]})_I=\sum_{\rr(I)=\rr}\sigma_I\Pi_{a,a}^{[a]}=\Pi_{a,a}^{[a]}(\overline{u}_{r_1}, \ldots,\overline{u}_{r_a}),$$  where $\overline{u}_{r_i}$ denotes the sum of the letters $u_j$ for $j$ in the block  labelled $r_i$. Hence:
$$\sum_{I\in \mathcal{I}_1} (\Pi_{N,a}^{[a]})_I=\sum_{\rr} \Pi_{a,a}^{[a]}(\overline{u}_{r_1}, \ldots,\overline{u}_{r_a}) = \Pi_{2a-1,a}^{[a]}(\overline{u}_{-a+1}, \ldots,\overline{u}_{a-1})$$ where the second equality follows from Claim 1 in \Cref{Sec-bi-grading}.

We now consider the free associative algebra $\mathcal{A}'$ on $2a-1$ letters $v_{-a+1},\ldots,v_0,\ldots,v_{a-1}$ and claim that modulo the bilateral ideal $\mathcal{B}_a$  generated by the $v_i$, $i<0$ we have:
$$A_{1,\tau}\Pi^{[a]}_{2a-1,a} \equiv  L^{[a]}(v_0,\ldots,v_{a-1}).$$ Given the vanishing assumption made on the coordinates in $(ii)$, the claim implies \cref{lastid}. 

To prove the claim, proceed by induction on $a$. When $a=2$, we have $(1-\eps_1)\Pi_3=v_{-1}*v_0*v_1 - v_1*v_0*v_{-1}\equiv v_0*v_1 - v_1*v_0$ modulo $\mathcal{B}_2$, which coincides with $[v_0,v_1]$ modulo monomials of length at least $3$.   
Suppose now $a>2$. In view of \Cref{moyenne2}, $$A_{1,\tau}\Pi^{[a]}_{2a-1,a} \equiv A_{1,\tau} R_a$$ where $R_a$ is the part of $\Pi^{[a]}_{2a-1,a}$ consisting of monomials of full type (i.e. with one and only one letter of each type). Indeed the remaining monomials from \Cref{moyenne2}, as well as their images under  permutations in $F$, belong to $\mathcal{B}_a$. By associativity of the $*$ product one may write 
$$\Pi_{2a-1}(v_{-a+1},\ldots,v_{a-1})=\Pi_{2a-3}(\underline{w}) + r$$
where $\underline{w}:=(v_{-a+1},\ldots,v_{-2},w_0,v_2,\ldots,v_{a-1})$ with $w_0:=v_{-1}*v_0*v_1$, and $r\in \mathcal{A}'^{[\ge s+1]}$. Expanding $\Pi_{2a-3}(\underline{w})$ in the variables $v_i$, we see that the monomials coming from $\Pi_{2a-3}^{[\leq a-2]}(\underline{w})$, $\Pi_{2a-3}^{[\geq a+1]}(\underline{w})$ or $r$ will not be of full type in the variables $v_i$, hence will not contribute. It follows that $R_a$ coincides with the part of full type of the expression
\begin{equation}\label{eqsumaa}\sum_{I} (\Pi^{[a-1]}_{2a-3})_I(\underline{w})+ \sum_{I} (\Pi^{[a]}_{2a-3})_I(\underline{w})\end{equation}
where the $I$ in each sum have one letter of each type from $\{0\}\cup [2,a-1]$ and the second sum restricts to those monomials with the central letter appearing twice. The part of full type of the second sum coincides with that of 
$$ \sum_{I} (\Pi^{[a]}_{2a-3})_I(v_{-1}+v_0+v_1)$$
which is clearly invariant under $F_1$ and thus will disappear when applying $A_{1,\tau}$. We can thus focus on the first sum in \cref{eqsumaa}. Its image under $\prod_{i>1}(1-\eps_i)$ coincides with that of $\Pi_{2a-3,a-1}^{[a-1]}$. By the induction hypothesis this image is equal to $L^{[a-1]}(w_0,v_2,\ldots,v_{a-1})$ modulo the ideal generated by $v_{-2},\ldots,v_{-a+1}$.  Applying now $(1-\eps_1)$ the desired conclusion follows from the $a=2$ case.
\end{proof}

\bigskip
We now combine the  path swap lemmas  to show \Cref{cor-moyenne}, which   gives a first upper bound on the Fourier transform of the  product of random variables where we remove brackets of length at least  $a+1$.

\begin{lemme} \label{cor-moyenne}
 Let $\xi \in \dkg$. Let $a\in \{2, \dots, s\}$, $N\geq 1$ of the form  $N = kaN'$ where $k,N'\geq 1$,   let  $\xx_{-1}, \xx_{1}$ be tuples in $\tkg$ and for $\xx\in \tkg^N$ write $\yy=(\xx_{-1},\xx,\xx_1)$. For any probability measure $\omega$ on $\tkg$, we have  
$$\big| \E_{\xx\sim \omega^{\otimes N}}\left[e_{\xi}\left(\Pi^{[<a]}\yy+\Pi^{[a]}\yy_{ab}  \right)\right] \big|^{2^{a-1}}  \leq \left( 1-\frac{1}{2^{a-1}}+ \frac{1}{2^{a-1}} \E_{(\omega^{+k}_{ab})^{\otimes a}}\big[\cos(2\pi \xi L^{[a]} (v_{1}, \dots, v_{a})    )\big] \right)^{N'} $$
where $\omega_{ab}$, $\yy_{ab}$ denote the projections of $\omega$, $\yy$  to $\km^{(1)}\oplus \R\chi$ modulo $\kg^{[2]}$, and $\omega^{+k}_{ab}$ is the distribution of the sum of $k$ independent variables on $\km^{(1)}\oplus \R\chi$ with law $\omega_{ab}$.
\end{lemme}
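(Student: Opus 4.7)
My plan is to invoke the path-swap inequality (\ref{VanDerCorput}) with the map $P\xx = \Pi^{[<a]}\yy + \Pi^{[a]}\yy_{ab}$ and the commuting subgroups $F = F_1 \times \cdots \times F_{a-1}$ of $\Sym(N)$ introduced in the preceding subsection. The required invariance hypothesis $(\sigma P)_\star \kappa = P_\star \kappa$ holds because $\kappa = \omega^{\otimes N}$ is fully exchangeable. The $N = kaN'$ random variables get grouped into $N'$ ``large blocks'' of size $ka$, viewed as occupying the central and right-hand sub-blocks of a virtual arrangement with $(2a{-}1)k$ positions per large block, the left-hand sub-blocks being filled by zero entries; this is harmless since $\Pi$ is unaffected by insertion of zero variables, and it is precisely what allows \Cref{moyenne3}(ii) to apply.

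Once (\ref{VanDerCorput}) is applied, the task reduces to bounding $\E_\xx \E_{\sigma,\tau \in F}[e_\xi(A_{\sigma,\tau} P\xx)]$. \Cref{moyenne} annihilates the $\Pi^{[<a]}$ contribution and the pieces $\Pi_{N,t}^{[a]}$ with $t<a$, leaving only $A_{\sigma,\tau}\Pi_{N,a}^{[a]}\yy_{ab}$. \Cref{moyenne2} then decomposes this remainder as a sum indexed by the large blocks $B_j$, with the inner sum restricted to $a$-subsets having at most one element per sub-block. Finally, \Cref{moyenne3} identifies each per-block contribution: by item (i) it vanishes unless $\sigma_i^j \ne \tau_i^j$ for every $i \in [a-1]$, while by item (ii) -- in the zeros-to-the-left regime afforded by the virtual-padding construction -- the nonvanishing contribution equals $\eps(\sigma^j)\,L^{[a]}(\overline{x}_{j,0},\ldots,\overline{x}_{j,a-1})$, where $\overline{x}_{j,r}$ denotes the sum of the $k$ random variables in the $r$-th genuine sub-block of $B_j$.

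Because the variables in distinct large blocks are independent and $A_{\sigma,\tau}$ decouples along large blocks, the joint expectation factors as an $N'$-fold product of identical terms. In each factor, conditioning on $(\sigma^j,\tau^j) \in F_j \times F_j$: with probability $1 - 2^{-(a-1)}$ some coordinate coincides and the exponential is $1$; with the complementary probability $2^{-(a-1)}$ the phase is $\pm\,\xi\bigl(L^{[a]}(\overline{x}_{j,0},\ldots,\overline{x}_{j,a-1})\bigr)$, and averaging the sign $\eps(\sigma^j)$ turns the complex exponential into a cosine. Projecting $\yy \mapsto \yy_{ab}$ replaces each $\overline{x}_{j,r}$ by its image in $\km^{(1)} \oplus \R\chi$, i.e.\ by a sum of $k$ i.i.d.\ $\omega_{ab}$-distributed variables, and the $a$ such sums within a block are independent. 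This yields the per-block factor
$$1 - 2^{-(a-1)} + 2^{-(a-1)}\,\E_{(\omega_{ab}^{+k})^{\otimes a}}\bigl[\cos\bigl(2\pi\xi\, L^{[a]}(v_1,\ldots,v_a)\bigr)\bigr],$$
and raising to the $N'$ power produces the claimed bound.

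The only delicate point is reconciling the random-variable count $N = kaN'$ with the $(2a{-}1)k$ combinatorial width per large block required by \Cref{moyenne3}(ii). The virtual-padding observation -- that the Lie product is insensitive to insertion of zero letters -- is what resolves this and is the essential structural input beyond the three path-swap lemmas. The rest of the argument is bookkeeping: tracking the sign $\eps(\sigma^j)$ to recover a real cosine, factoring the expectation across independent large blocks, and checking that only the $k$-fold sums over the ``genuine'' right-hand and central sub-blocks survive after the projection to $\km^{(1)} \oplus \R\chi$.
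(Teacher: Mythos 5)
Your proposal is correct and follows essentially the same route as the paper: pad each large block (and the deviation tuples) with zero variables so that the arrangement has the $(2a-1)k$-width required by the path-swap lemmas, apply the Cauchy--Schwarz/path-swap inequality \eqref{VanDerCorput}, kill $\Pi^{[<a]}$ and the low-support pieces via \Cref{moyenne}, desynchronize and factor over large blocks via \Cref{moyenne2} and independence, and use \Cref{moyenne3} together with the equidistribution of the sign $\eps(\sigma^j)$ to produce the cosine factor. The one point worth stating explicitly (as the paper does by placing each coordinate of $\xx_{\pm1}$ in its own zero-padded large block) is that the deviation blocks contribute a trivial factor because their central sub-block is zero, so the $a$-bracket in \Cref{moyenne3}(ii) vanishes there.
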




\begin{proof} We first add some dummy zero variables to get a more suitable random vector to which we will apply the path-swap lemmas. 
More precisely, define $\kappa_0=\delta^{\otimes (a-1)k}_0\otimes \omega^{\otimes ak}$. Write in coordinates $\xx_{1}= (x_{1, 1}\dots x_{1, q})$ for some $q\geq 0$ and define $\kappa_{1}= \delta_{0}^{\otimes(2a-1)k-1}\otimes \delta_{x_{1,1}} \otimes\dots\otimes \delta_{0}^{\otimes(2a-1)k-1}\otimes \delta_{x_{1,q}}$. Similarly, define $\kappa_{-1}$ for $\xx_{-1}$. 
Finally, set $\kappa= \kappa_{-1}\otimes \kappa_{0}^{N'}\otimes\kappa_{1}$, which is a probability measure on  $\tkg^{N''}$ for some $N''\geq N$ in $(2a-1)k\N$. Write also $S\yy=\Pi^{[<a]}\yy+\Pi^{[a]}\yy_{ab} $.

As the variables $(S\yy)_{\xx\sim  \omega^{\otimes N}}$ and $(S \zz)_{\zz\sim \kappa}$ have same law,  the path-swap inequality \eqref{VanDerCorput} applied to the function $P=S$ gives
\begin{align*}
\big| \E_{\xx\sim \omega^{\otimes N} }\left[e_{\xi}(S\yy)\right]  \big|^{2^{a-1}}
=\big| \E_{\zz \sim \kappa}\left[e_{\xi}\left( S\zz \right)\right]  \big|^{2^{a-1}}
 \leq \int_{\tkg^{N''}} \mathbb{E}_{\sigma, \tau} \left[e_{\xi}\left( A_{\sigma, \tau} S \zz\right)\right]\,\,d\kappa(\zz).
\end{align*}
 \Cref{moyenne} tells us that $A_{\sigma, \tau}\Pi^{[<a]}=0$, hence $A_{\sigma, \tau} S \zz= A_{\sigma, \tau} \Pi^{[a]}\zz_{ab}$. Then, using \Cref{moyenne2} to desynchronize large blocks and  \Cref{moyenne3} to see the large blocks  with coordinates from $\xx_{-1}$ or  $\xx_{1}$ do not contribute, we get
\begin{align*}
\big| \E_{\xx\sim \omega^{\otimes N} }\left[e_{\xi}(S\yy)\right]  \big|^{2^{a-1}}
& \leq \int_{\tkg^{N''}} \mathbb{E}_{\sigma, \tau} \left[e_{\xi}\left( A_{\sigma, \tau} \Pi^{[a]}\zz_{ab}\right)\right] \,\,d\kappa(\zz)\\
& =  \mathbb{E}_{\sigma, \tau} \prod_{j=1}^{N'} \int_{\tkg^{(2a-1)k}}e_{\xi}\left( A_{\sigma^j, \tau^j} \Pi^{[a]}\uu_{ab}\right) \,\,d\kappa_0(\uu)\\
& = \left( \mathbb{E}_{\sigma^1, \tau^1} \int_{\tkg^{(2a-1)k}}e_{\xi}\left( A_{\sigma^1, \tau^1} \Pi^{[a]}\uu_{ab}\right) \,\,d\kappa_0(\uu)\right)^{N'}
\end{align*}
where the last identity exploits the independence of the $\sigma^j$, $\tau^j$, $j\in [N']$. By \Cref{moyenne3}, the last integral is equal to $1$ for a proportion $1-\frac{1}{2^{a-1}}$ of possibilities for $(\sigma^1, \tau^1)$, and for the other scenarios it is equal to 
$$ \int_{\tkg^{(2a-1)k}} e_{\xi} \left(\eps(\sigma^{1})  L^{[a]}(v_{1}, \dots, v_{a})\right) \,d(\omega^{+ k}_{ab})^{\otimes a}(v) $$
The result follows by observing that on the scenarios we consider, $\sigma^{1}$ is even or odd with the same probability. 
\end{proof}

To prove \Cref{reduction} (domain reduction), we will apply   \Cref{cor-moyenne} to a fixed $\xi$ and some integer $k$, chosen to guarantee that the right hand side in \Cref{cor-moyenne} decays exponentially in $N'$. The existence of such  $k$ is justified by the next proposition. It is the only result from \Cref{Sec-path swap} that will be used in the rest of the proof of \Cref{reduction}. 

For $a\geq 1$, $b\geq 1$, we denote by $\kg^{[a, b)}$ the subspace of $\kg$ spanned linearly by the brackets of the form $[x_{1}, [x_{2},\dots [x_{j-1}, x_{j}]\dots ]]$ with $j\geq a$, $x_{i}\in\kg$, and $j+|\{i:x_{i}=\Xab \mod \kg^{[2]}\}|\geq b$. For instance,  $\kg^{[1,2)}=\kg^{[2]}+\R \Xm$, $\kg^{[a,2)}=\kg^{[a]}$ if $a\geq 2$. If $b\geq 3$, then $\kg^{[a,b)}\subseteq \kg^{[a]}\cap \kg^{(b)}$ and $\kg^{[1,b)}=\kg^{(b)}$  (see \Cref{basics-weight-filtration}).

Recall that given, $c\in (0,1)$, we let $gap_{c}: \R_{>0}\rightarrow (0, c)$ be the function given by 
$$gap_{c}(R)= c\inf \{1-|\hmu_{ab}(\xi)| \,:\, \xi \in \widehat{\km^{(1)}}, \, c\leq \|\xi \|\leq c^{-1}(1+R)\}.$$

\begin{proposition} \label{choisirk}
 Let  $\xi \in \dkg\smallsetminus \{0\}$, $a\in \{1, \dots, s\}$ and $k\geq 1$ the smallest integer for which there exists $b_{0}\in \{a, \dots, 2a-1\}$ satisfying $\|k^{\frac{b_{0}}{2}}\,\xi_{|\kg^{[a,b_{0})}}\|\geq 1$. Assume $\|\xi_{|\kg^{[a+1, b_{0})}}\|\lll\| \xi_{|\kg^{[a,b_{0})}}\|$ for the smallest  such $b_{0}$. Then, for $N'\geq 1$,  $N = kaN'$, $N_{0}\geq N$, $\xx_{-1}, \xx_{1}$ finite length tuples in $\tkg$,  $\yy=(\xx_{-1},\xx,\xx_1)$, and $c\lll1$, we have 
$$\big| \E_{\xx\sim(T_{N_{0}}\tmu)^{\otimes N}}\left[e_{\xi}\left( \Pi^{[<a]}\yy+\Pi^{[a]} \yy_{ab} \right)\right] \big|^{2^{a-1}} \ll  (1-gap_{c}(\|\xi\|))^{N'} $$
where $\yy_{ab}$ denotes the coordinate-wise  projection of $\yy$ to $\km^{(1)}\oplus \R \chi$  modulo  $\kg^{[2]}$. 
\end{proposition}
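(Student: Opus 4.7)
The plan is to iterate a Cauchy--Schwarz / path-swap argument via \Cref{cor-moyenne} applied with $\omega = T_{N_0}\tmu$, then reduce the resulting expectation over an $a$-fold bracket to a Fourier estimate for $\widehat{\mu_{ab}}$ by conditioning on $(v_2,\dots,v_a)$ and linearizing in $v_1$. Concretely, \Cref{cor-moyenne} reduces the claim to showing
\[
 \mathbb{E}_{(v_i) \sim (\omega_{ab}^{+k})^{\otimes a}}\!\bigl[1 - \cos(2\pi \, \xi L^{[a]}(v_1,\dots,v_a))\bigr] \;\gg\; gap_c(\|\xi\|),
\]
since the inside of the bracketed expression on the right-hand side of \Cref{cor-moyenne} then becomes $\leq 1 - 2^{1-a}gap_c(\|\xi\|)$, and the prefactor $2^{a-1}$ can be absorbed into the implicit constant defining $c$.

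To obtain this cosine lower bound, write $v_i = w_i + k\chi$ where $w_i\in\km^{(1)}$ is a sum of $k$ i.i.d. copies of the truncated centered measure $T_{N_0}\mu_{ab}$; its $\chi$-coordinate is deterministic. By multilinearity of $L^{[a]}$ and the identity $[\chi,\cdot]=[\Xm,\cdot]$, the expression $\xi L^{[a]}(v_1,\dots,v_a)$ is affine in $w_1$, with $w_1$-linear part the random linear form
\[
\eta_{v_2,\dots,v_a}(w) \;=\; \sum_{J\subseteq\{2,\dots,a\}} k^{|J|}\; \xi\,L^{[a]}(w, z_2^J,\dots,z_a^J)\;\in\;\widehat{\km^{(1)}},
\]
where $z_i^J=\Xm$ for $i\in J$ and $z_i^J=w_i$ otherwise; the summand indexed by $J$ takes values in $\kg^{[a,a+|J|)}$. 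Conditioning on $(v_2,\dots,v_a)$ yields $|\mathbb{E}_{v_1}[e_\xi(L^{[a]})]|= |\widehat{T_{N_0}\mu_{ab}}(\eta)|^k$, so by $|\mathbb{E}Z|\le\mathbb{E}|Z|$ and $1-\mathrm{Re}\,z\ge 1-|z|$,
\[
\mathbb{E}\!\bigl[1-\cos(2\pi\,\xi L^{[a]})\bigr]\;\ge\;1-\mathbb{E}_{v_2,\dots,v_a}\bigl[\,|\widehat{T_{N_0}\mu_{ab}}(\eta_{v_2,\dots,v_a})|^k\bigr].
\]

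The core step, and main obstacle, is to produce a set $E$ of values of $(v_2,\dots,v_a)$ of $(\omega_{ab}^{+k})^{\otimes(a-1)}$-probability bounded below, on which the form $\eta$ satisfies $c\le \|\eta\|\le c^{-1}(1+\|\xi\|)$. The hypothesis that $b_0=a+j_0$ is the minimal $b\in[a,2a-1]$ with $k^{b/2}\|\xi_{|\kg^{[a,b)}}\|\ge 1$, together with the smallness $\|\xi_{|\kg^{[a+1,b_0)}}\|\lll\|\xi_{|\kg^{[a,b_0)}}\|$, forces $\xi_{|\kg^{[a,b_0)}}$ to be essentially concentrated on the subspace of length-$a$ brackets with exactly $j_0$ arguments in the class of $\Xm$ modulo $[\kg,\kg]$. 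I would then exhibit an explicit deterministic witness $(y_2,\dots,y_a)$ consisting of $j_0$ copies of $k\chi$ and $a-1-j_0$ bounded generic elements of $\km^{(1)}$, for which the $|J|=j_0$ contribution to $\eta_{y_2,\dots,y_a}$ dominates and gives $\|\eta_{y_2,\dots,y_a}\|\asymp k^{(a-1)/2}\|\xi_{|\kg^{[a,b_0)}}\|\asymp k^{(a-1-b_0)/2}$ --- a constant of order $k^{-1/2}$ when $b_0=2a-2$, etc. The delicate part is that every term $|J|=j$ of the expansion contributes at roughly the same scale (a consequence of the minimality of $k$), so genuine cancellation must be ruled out; this is where $\|\xi_{|\kg^{[a+1,b_0)}}\|\lll\|\xi_{|\kg^{[a,b_0)}}\|$ is used to isolate the dominant $j=j_0$ term, and the non-degeneracy of the covariance of $\mu_{ab}$ (which follows from non-lattice) guarantees that fluctuations of $(w_2,\dots,w_a)$ around the witness still preserve $\|\eta\|\gtrsim \|\eta_{y_2,\dots,y_a}\|$ on a set of positive mass. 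The upper bound $\|\eta\|\lesssim c^{-1}(1+\|\xi\|)$ follows on the typical event $\|w_i\|\lesssim \sqrt k$ from $\|\eta\|\lesssim \|\xi\|\,k^{a-1}$ combined with $k\asymp\|\xi_{|\kg^{[a,b_0)}}\|^{-2/b_0}$. On $E$, the non-lattice gap for $\mu_{ab}$ and stability of the Fourier transform under truncation (cost $o(1)$ as $N_0\to\infty$, using $m_\mu>2$) yield $|\widehat{T_{N_0}\mu_{ab}}(\eta)|^k\le 1-c'\,gap_c(\|\xi\|)$, completing the proof.
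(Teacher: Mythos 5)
Your overall architecture coincides with the paper's: apply \Cref{cor-moyenne} with $\omega=T_{N_0}\tmu$, reduce to the lower bound $\E\bigl[1-\cos(2\pi\,\xi L^{[a]}(v_1,\dots,v_a))\bigr]\gg gap_{c}(\|\xi\|)$, and attack this by freezing $(v_2,\dots,v_a)$ and viewing the bracket as affine in $v_1$ (the paper does exactly this in the regime where $k$ stays bounded). However, the core quantitative step has two genuine gaps. The first is the cancellation you flag but do not actually rule out. Writing $v_i=w_i+k\Xm$, the term of your expansion indexed by $J$ takes values in the span $V_{a,a+|J|}$ of $a$-brackets having at least $|J|$ entries equal to $\Xm$, and the minimality of $k$ forces \emph{every} such term to have size $O(k^{-1/2})$ — the same order as your lower bound for the $|J|=j_0$ term. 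The hypothesis $\|\xi_{|\kg^{[a+1,b_0)}}\|\lll\|\xi_{|\kg^{[a,b_0)}}\|$ separates length-$a$ brackets from brackets of length $\ge a+1$ at level $b_0$ (it is what gives $\|\xi_{|V_{a,b_0}}\|\gtrsim\|\xi_{|\kg^{[a,b_0)}}\|$); it says nothing about the size of $\xi$ on $V_{a,b}$ for $b>b_0$ relative to $V_{a,b_0}$, so the terms with more copies of $\Xm$ can a priori cancel your ``dominant'' term. The paper's \Cref{caslimite} avoids this by building the witness at the \emph{maximal} level $b_1$ at which the form is nonzero on $V_{a,b_1}$ and inserting exact zeros in $b_1-a$ slots: the levels above $b_1$ vanish by maximality and those below vanish because each of their brackets contains a zeroed variable. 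A witness adapted to the minimal level $b_0$ does not have this killing property.

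The second gap is that the two-sided bound $c\le\|\eta\|$ with a uniform $c$ contradicts your own computation $\|\eta\|\asymp k^{-1/2}$: when $\|\xi_{|\kg^{[a]}}\|$ is small, $k\to\infty$ and $\|\eta\|\to 0$, so a single application of the non-lattice gap at $\eta$ is vacuous. One must exploit the $k$-th power $|\hmu_{ab}(\eta)|^k$ together with the Gaussian limit of $k^{-1/2}\mu_{ab}^{+k}$ (equivalently the quadratic decay of $1-|\hmu_{ab}|$ near the origin). This is precisely why the paper splits into the regimes $\|\xi_{|\kg^{[a]}}\|\ge r$ (where $k$ is bounded and a compactness argument over a set $E_{r_1}$ of linear forms produces the uniform constants) and $\|\xi_{|\kg^{[a]}}\|<r$ (where the joint Gaussian limit plus \Cref{caslimite} is used). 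Relatedly, your ``positive mass'' claim for the event $E$ must be uniform over all admissible $\xi$ and $k$, which again requires such a compactness argument rather than a single generic witness, and the case $a=1$ (where $\eta=\xi$ may be arbitrarily small) needs the same Gaussian-limit treatment. Both gaps are fixable along the paper's lines, but as written the proposal does not close them.
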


\begin{proof}
We first define the implicit constant that will occur in the condition $\|\xi_{|\kg^{[a+1, b_{0})}}\|\lll\| \xi_{|\kg^{[a,b_{0})}}\|$ of the statement. For $a,b\geq 1$, let $V_{a,b}$ be the  subspace spanned by the $a$-brackets $L^{[a]}(\vv)$ with $\vv \in (\km^{(1)})^a$ and at least $b-a$ components $v_{i}$ equal to $X$.  
 Note that we have $\kg^{[a,b)}\subseteq V_{a,b}+\kg^{[a+1,b)}$. Hence, we may fix a constant  $D>1$ only depending  on the initial data $(\kg, \mu, (\km^{(b)})_{b}, (e^{(i)}_{j})_{i,j})$ such that
 for any $a,b\geq 1$, if a linear form $\xi\in \dkg$ satisfies $D\|\xi_{|\kg^{[a+1,b)}}\|\leq \|\xi_{|\kg^{[a,b)}}\| $, then we have $\|\xi_{|\kg^{[a,b)}}\| \leq  D\|\xi_{|V_{a,b}}\|$.

\bigskip

Let us now prove the proposition in the case $a=1$, which is elementary and helps build an intuition for the case $a\geq 2$. Let $\xi \in \dkg\smallsetminus \{0\}$, and  $k\geq 1$  the smallest integer for which $\|k^{\frac{1}{2}}\,\xi\|\geq 1$. Assume $D\|\xi_{|\kg^{[2]}}\|\leq \|\xi\|$, in particular $\|\xi\|\leq D\|\xi_{|\km^{(1)}}\|$. The quantity to bound can be rewritten
\begin{align*}
\big| \E_{ (T_{N_{0}}\tmu)^{\otimes N}}\left[e_{\xi}\left( \sum\yy_{ab}  \right)\right] \big|= \big| \widehat{(T_{N_{0}}\tmu)_{ab}}(\xi) \big|^N= \big| \widehat{(T_{N_{0}}\tmu)^{+k}_{ab}}(\xi) \big|^{N'}. \end{align*} 
For $N'$ (whence $N_{0}$) going to infinity, we have
\begin{align*}
\big| \widehat{(T_{N_{0}}\tmu)^{+k}_{ab}}(\xi) \big|
&\leq \big| \widehat{\tmu^{+k}_{ab}}(\xi) \big| + \|(T_{N_{0}}\tmu)^{\otimes k}-\tmu^{\otimes k}\|\\
&\leq \big| \widehat{\tmu^{+k}_{ab}}(\xi) \big|  +o(kN_{0}^{-m_{\mu}/2})\\
&\leq \big| \widehat{\mu^{+k}_{ab}}(\xi) \big|   +o(N'^{-1})
\end{align*}
where the second inequality relies on \Cref{truncation-cost}, and the third inequality uses $N_{0}\geq kN'$, $m_{\mu}\geq 2$ and the fact that $\tmu^{+k}_{ab}$ by $\mu^{+k}_{ab}$ only differ by a translation ($\mu^{+k}_{ab}=\tmu^{+k}_{ab}-k\chi+k\Xm$) so their Fourier transform have same modulus. It follows that as long as $c\lll1$ and $N'\geq 1/gap_{c}(\|\xi\|)$, 
\begin{align*}
\big| \E_{ (T_{N_{0}}\tmu)^{\otimes N}}\left[e_{\xi}\left( \sum\yy_{ab}  \right)\right] \big|\leq (\big| \widehat{\mu^{+k}_{ab}}(\xi) \big|   +gap_{c}(\|\xi\|))^{N'}.
\end{align*}
Note that the condition $N'\geq 1/gap_{c}(\|\xi\|)$ can be assumed without loss of generality to prove the proposition, because $\inf_{0<t<1/2}(1-t)^{1/t}>0$. Hence, we are reduced to show that 
 $$\big| \widehat{\mu^{+k}_{ab}}(\xi) \big| \leq 1-2gap_{c}(\|\xi\|)$$
  up to choosing $c>0$  small enough in terms of the initial data. 
To see this, let $r>0$ be a parameter depending only on the initial data, and that we will specify below. 
If $r\leq \|\xi\|$, then we saw that $\|\xi_{|\km^{(1)}}\|\geq r/D$, so the aperiodicity assumption on $\mu$ implies that $|\widehat{\mu_{ab}}(\xi)| \leq 1-2gap_{c}(\|\xi\|)$ as soon as $c<\min(r/D,1/2)$.
 But as $r$ tends to $0$, the parameter $k$ goes to infinity, so the distribution $k^{-\frac{1}{2}} \mu_{ab}^{+k}$ converges (after recentering) to a non-degenerate Gaussian distribution, whose Fourier transform outside any fixed neighborhood of $0$ is uniformly bounded away from  $1$. Thus we may choose $r$ small enough so that $\big| \widehat{\mu^{+k}_{ab}}(\xi) \big| =|\E_{k^{-\frac{1}{2}}\mu^{+ k}_{ab}} (e_{k^{\frac{1}{2}}\xi} ) \big| $ is strictly less than  $1$ uniformly in $\|\xi\|\leq r$, in particular less than $1-2c$ if $c$ is small enough, which concludes the case $a=1$. 

\bigskip
We now deal with the case $a\geq 2$. 
Let  $\xi \in \dkg \smallsetminus \{0\}$, $k\geq 1$ the smallest integer satisfying $\|k^{\frac{b_{0}}{2}}\,\xi_{|\kg^{[a,b_{0})}}\|\geq 1$
for some $b_{0}\in \{a, \dots, 2a-1\}$, and assume $2D \|\xi_{|\kg^{[a+1,b_{0})}}\| \leq \| \xi_{|\kg^{[a,b_{0})}} \|$ for the smallest such $b_{0}$. 
We are going to show that
 \begin{equation}\label{aperbound}\E_{(\mu_{ab}^{+k})^{\otimes a}}\big[\cos(2\pi \xi L^{[a]} (v_{1}, \dots, v_{a}))\big] \leq 1-2gap_{c}(\|\xi\|)\end{equation}
 up to choosing $c\lll1$. Applying \Cref{cor-moyenne} with $\omega=T_{N_{0}}\tmu$, arguing as above to remove the truncation, recalling that $[\chi,.]=[\Xm,.]$ to replace $\tmu$ by $\mu$,  this upper bound implies \Cref{choisirk}.
 

We let $r\in (0, 1)$ be a parameter only allowed to depend  on the initial data, and to be specified below. 

We first deal with the case where $\|\xi_{|\kg^{[a]}}\|\geq r$, in which case $k$ is bounded above by $k_{r}=\lceil r^{-2/a}\rceil$. For $r_{1}\in (0, 1)$, we introduce $E_{r_{1}}$ the set of  linear forms $\xi'$ on $\kg$  such that there exists $b\in \{a,\dots, 2a-1\}$ for which 
$$r_{1}^2 \leq \|\xi'_{|\kg^{[a]}}\| r_{1}\leq  \|\xi'_{|\kg^{[a,b)}}\|  \,\,\,\,\,\,\text{ and }   \,\,\,\,\,\, D \|\xi'_{|\kg^{[a+1, b)}}\| \leq  \|\xi'_{|\kg^{[a,b)}}\|.$$ 
Note we may choose $r_{1}$ small enough depending on $r$ and the initial data so that $\xi \in E_{r_{1}}$. Indeed, the claim is trivial if $\|\xi_{|\kg^{[a]}}\| \geq 1$ because then $k=1$, $b_{0}=a$, so one may take $b=a$ to check the conditions for $E_{r_{1}}$. For the case $\|\xi_{|\kg^{[a]}}\| \in [r,1]$, note that $\|\xi_{|\kg^{[a,b_{0})}}\|\geq k_{r}^{-s}$,  whence the claim if $r_{1}= k_{r}^{-s}$.  Now we show that the upper bound \eqref{aperbound} holds uniformly for linear forms in $E_{r_{1}}$ and $k$ bounded by  $k_{r}$, with some constant $c$ depending only on $r$ and the initial data.

Let $\xi'\in E_{r_{1}}$, $k\in [k_{r}]$. The definitions of $D$ and $E_{r_{1}}$ imply that $\|\xi'_{|V_{a,b}}\|\geq D^{-1} \|\xi'_{|\kg^{[a,b)}}\| \geq D^{-1}r_{1}^2$, in particular we have
$$\xi' L^{[a]}(\km^{(1)}, \dots, \km^{(1)})\neq \{0\}.$$
Since $\mu$ is assumed aperiodic, so is $\mu_{ab}^{+k}$, thus the subspace spanned by the support of $\mu_{ab}^{+k}$ is all of $\km^{(1)}$. We deduce  that there is a positive proportion of tuples $(v_{2}, \dots, v_{a})$ chosen with law $(\mu_{ab}^{+k})^{\otimes a-1}$  such that
$ v_{1}\mapsto \xi' L^{[a]}(v_{1}, v_{2}, \dots, v_{a})$ is a non-zero linear form on $\km^{(1)}$. In other words, there exists $r_{2}$ depending on $\xi', k$ and the initial data  such that for a $(\mu_{ab}^{+k})^{\otimes a-1}$-proportion at least $r_{2}$ of tuples $v_{\geq 2}=(v_{2}, \dots, v_{a})$, the linear form $ \varphi_{v_{\geq 2}} : v_{1}\mapsto \xi' L^{[a]}(v_{1}, v_{2}, \dots, v_{a})$ on $\km^{(1)}$ has its norm in the interval $\|\xi'_{|\kg^{[a]}}\|[r_{2}, r_{2}^{-1}]$. By compactness of $E_{r_{1}}$ in the projective space and the bound on $k$, we may actually suppose that $r_{2}$ only depends on $r_{1},r$ and the initial data. We deduce that for such $(v_{2}, \dots, v_{a})$, one has 
$|\hmu_{ab} (\varphi_{v_{\geq 2}})|\leq 1-gap_{r_{1} r_{2}}(\|\xi'_{|\kg^{[a]}}\|)$. Taking the real part of the Fourier transform and integrating in $(v_{2}, \dots, v_{a})$, we deduce \eqref{aperbound} with $c=\frac{1}{2} r_{1}r^2_{2}$, a quantity that  depends only on $r$ and the initial data.

We consider now the case $\|\xi_{|\kg^{[a]}}\|<r$. We write $v_{i}=v_{i}'+ k\Xm$ where $v'_{i}\in \km^{(1)}$ and expand the $a$-bracket using multilinearity to obtain 
$$L^{[a]} (v_{1}, \dots, v_{a}) =\sum_{b=a}^{2a-1}k^{b-a}M_{a,b}(v'_{1}, \dots, v'_{a}) $$
where $M_{a,b}(v'_{1}, \dots, v'_{a})$ is the sum of the $a$-brackets with exactly  $b-a$ occurrences of $\Xm$ (and $2a-b$ occurences of the variables $v_{i}'$). In particular $M_{a,b}$ takes values in $V_{a,b}\subseteq \kg^{[a,b)}$. Applying $\xi$, we then obtain 
\begin{align*}
\xi L^{[a]} (v_{1}, \dots, v_{a}) &=\sum_{b=a}^{2a-1}k^{\frac{b}{2}}\xi\,M_{a,b}(\frac{v'_{1}}{k^{1/2}}, \dots, \frac{v'_{a}}{k^{1/2}}).
\end{align*}

We observe that as $r$ goes to $0$, the parameter $k$ goes to infinity, so each term $M_{a,b}(\frac{v'_{1}}{k^{1/2}}, \dots, \frac{v'_{a}}{k^{1/2}})$ converges in distribution to $M_{a,b}(\mathscr{N}_{1}, \dots, \mathscr{N}_{a})$ where the $\mathscr{N}_{i}$'s are independent centred normal distributions on $\km^{(1)}$ with the same covariance matrix as $\mu_{ab}$. Moreover, the linear forms $k^{\frac{b}{2}}\xi_{|\kg^{[a,b)}} $, applied to the various terms, have norm less than $2$ (up to taking $r$ small enough depending on the initial data) and at least one of these has norm greater than $1$ while its restriction to the subspace $\kg^{[a+1,b)}$ has norm less than $D^{-1}$. As these conditions define a compact set, the upper bound \eqref{aperbound} (hence \Cref{choisirk})  then follows by choosing $r$ small enough and applying \Cref{caslimite} below.
\end{proof}

 \begin{lemme} \label{caslimite} There exists a constant $c_{1}>0$ depending only the initial data such that, for  $a\in \{2, \dots, s\}$, for all linear forms  $(\varphi_{b}\in \widehat{\kg^{[a,b)}})_{a\leq b\leq 2a-1}$ satisfying  $\max_{b}\|\varphi_{b}\|\leq 2$ and $D\|\varphi_{b_{0} | \kg^{[a+1,b_{0})}}\| \leq 1\leq \|\varphi_{b_{0}}\|$ for some $b_{0}$, one has
$$\E_{\mathscr{N}_{1}\otimes \dots \otimes \mathscr{N}_{a}}\left(\cos\left(2\pi \sum_{b=a}^{2a-1} \varphi_{b}M_{a,b}(x_{1}, \dots, x_{a})\right)\right) <1-c_{1}.$$
 \end{lemme}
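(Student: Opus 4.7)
The plan is to establish the uniform bound by combining a compactness argument with a polynomial vanishing argument. The set of admissible tuples $(\varphi_b)_{a \le b \le 2a-1}$ is a finite union of compact sets: each $\varphi_b$ lives in a ball of the finite-dimensional space $\widehat{\kg^{[a,b)}}$, and for each fixed candidate index $b_0 \in \{a, \ldots, 2a-1\}$, the closed conditions $\|\varphi_b\| \le 2$, $\|\varphi_{b_0}\| \ge 1$ and $D\|\varphi_{b_0 | \kg^{[a+1,b_0)}}\| \le 1$ cut out a compact subset of the product of dual spaces. Since the map
\[ \varphi \;\longmapsto\; \E_{\mathscr{N}_1\otimes\dots\otimes\mathscr{N}_a}\Big[\cos\Big(2\pi\sum_{b=a}^{2a-1}\varphi_b M_{a,b}(x_1,\ldots,x_a)\Big)\Big] \]
is continuous in $\varphi$, it suffices to show it is strictly less than $1$ at every admissible tuple to extract a uniform $c_1>0$.

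I would then argue pointwise by contradiction. Suppose the expectation equals $1$ at some admissible $\varphi$. Since $\cos\le 1$, this forces $P(v_1,\ldots,v_a) := \sum_{b}\varphi_b M_{a,b}(v_1,\ldots,v_a)$ to take values in $\Z$ almost surely under $\mathscr{N}_1\otimes\dots\otimes\mathscr{N}_a$. The Gaussian has full support on $(\km^{(1)})^a$ and $P$ is a polynomial, hence continuous; so $P$ is $\Z$-valued on a dense set and thus on all of $(\km^{(1)})^a$. By connectedness of $(\km^{(1)})^a$ and since $P(0)=0$, we conclude $P\equiv 0$. Now each $M_{a,b}$ is homogeneous of degree $2a-b$ in the variables $v_i$ (each term contains $b-a$ copies of the constant $\Xm$ and $2a-b$ copies of the $v_i$), and these degrees are distinct across $b$. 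Separation by total degree therefore yields $\varphi_b M_{a,b}\equiv 0$ for every $b$; in particular $\varphi_{b_0} M_{a,b_0}\equiv 0$.

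To derive the contradiction I would analyze this identity through its multi-homogeneous decomposition. Writing
\[ M_{a,b_0}(v_1,\ldots,v_a) = \sum_{S\subseteq [a],\,|S|=b_0-a} L^{[a]}(w_1^S,\ldots,w_a^S), \qquad w_i^S := \Xm \text{ if } i\in S,\ v_i \text{ otherwise}, \]
the summand indexed by $S$ is multilinear in $(v_i)_{i\notin S}$ with multi-degree $d_i = \1_{i\notin S}$, and distinct $S$ give distinct multi-degrees. Extracting each multi-homogeneous component of $\varphi_{b_0} M_{a,b_0}\equiv 0$ yields $\varphi_{b_0}\bigl(L^{[a]}(w_1^S,\ldots,w_a^S)\bigr) = 0$ for all $(v_i)_{i\notin S}$ and every such $S$. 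Substituting $v_i = \Xm$ at additional indices $i\notin S$ upgrades this to $\varphi_{b_0}\bigl(L^{[a]}(w_1,\ldots,w_a)\bigr) = 0$ whenever at least $b_0-a$ of the $w_i$ equal $\Xm$. Hence $\varphi_{b_0|V_{a,b_0}} = 0$. But the defining property of $D$ applied to any linear extension of $\varphi_{b_0}$ to $\dkg$, combined with $D\|\varphi_{b_0|\kg^{[a+1,b_0)}}\| \le 1 \le \|\varphi_{b_0}\|$, gives $\|\varphi_{b_0}\| \le D\|\varphi_{b_0|V_{a,b_0}}\|$, so $\|\varphi_{b_0|V_{a,b_0}}\| \ge 1/D > 0$, a contradiction.

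The step I anticipate as most delicate is the algebraic passage from $\varphi_{b_0}M_{a,b_0}\equiv 0$ to the vanishing of $\varphi_{b_0}$ on \emph{all} of $V_{a,b_0}$: the multi-homogeneous decomposition only directly handles brackets with \emph{exactly} $b_0-a$ occurrences of $\Xm$, and one must carefully use the polarization trick of specializing additional $v_i$ to $\Xm$ to reach brackets with strictly more occurrences. Once this is in place, the continuity-plus-compactness skeleton closes the argument.
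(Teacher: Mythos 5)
Your proof is correct, and while its skeleton (compactness plus continuity, reduction to showing $P=\sum_b\varphi_b M_{a,b}$ is not identically zero, non-emptiness via the defining property of $D$) matches the paper's, the algebraic core is genuinely different. The paper does not separate by degree: it takes $b_1$ maximal among the $b$ with $\varphi_{b|V_{a,b}}\neq 0$ (non-empty because it contains $b_0$, by the same use of $D$ that you make), picks a bracket $L^{[a]}(\vv)$ with at least $b_1-a$ coordinates equal to $\Xm$ and $\varphi_{b_1}(L^{[a]}(\vv))\neq 0$, and evaluates $P$ at the point $\xx$ obtained by zeroing out exactly $b_1-a$ of those coordinates; maximality of $b_1$ kills the terms with $b>b_1$, and the inserted zeros kill the terms with $b<b_1$ as well as all but one summand of $M_{a,b_1}$, leaving $P(\xx)=\varphi_{b_1}(L^{[a]}(\vv))\neq 0$. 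You instead argue by contradiction through the bigrading: the total degree $2a-b$ separates the indices $b$, the multidegree in $(v_i)$ separates the subsets $S$ inside $M_{a,b_0}$, and polarization ($v_i\mapsto\Xm\in\km^{(1)}$ at extra indices) recovers all of $V_{a,b_0}$, contradicting $\|\varphi_{b_0|V_{a,b_0}}\|\geq 1/D$. Your route isolates every term rather than just one and works directly with $b_0$ instead of a maximal index, at the cost of a two-stage homogeneity decomposition; the paper's single well-chosen evaluation point is shorter but less systematic. The step you flagged as delicate is handled correctly: the vanishing for exactly $b_0-a$ prescribed occurrences of $\Xm$ holds for arbitrary $(v_i)_{i\notin S}$ in $\km^{(1)}$, which contains $\Xm$, so brackets with strictly more occurrences are covered.
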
 

\begin{proof}[Proof of \Cref{caslimite}]
Fix $a$. The condition on the norms of the $\varphi_{b}$'s defines a compact subset of  $\prod_{b\geq a}\widehat{\kg^{[a,b)}}$. The expectation we aim to bound varies continuously with respect to the $\varphi_{b}$'s. So it is sufficient to check the lemma for a fixed family  $(\varphi_{b})_{b\geq a}$. This boils down to showing that $\sum_{b=a}^{2a-1} \varphi_{b}M_{a,b}$ is not constant, i.e. non-identically zero, on  $(\km^{(1)})^{a}$. 

To see this,  let $\mathcal{B}$ be the set of $b\in [a, 2a-1]$ such that $\varphi_{b}$ is non identically zero on $V_{a,b}$.  The set $\mathcal{B}$ is non-empty because it contains the element $b_{0}$ such that $D\|\varphi_{b_{0} | \kg^{[a+1,b_{0})}}\| \leq 1\leq \|\varphi_{b_{0}}\|$.

Let $b_{1}=\max \mathcal{B}$, and $L^{[a]}(\vv)$ with $\vv \in (\km^{(1)})^a$ and at least $b-a$ components of $v_{i}$ equal to $X$ such that $\varphi_{b_{1}}(L^{[a]}(\vv))\neq 0$. Choose $I\subseteq [a]$ such that $|I|=b_{1}-a$ and $v_{i}=\Xm$ for all $i\in I$. Define $\xx$ by $x_i= v_i \1_{i\notin I}$. Then $\varphi_{b}M_{a,b}=0$ for $b >b_1$ due to the maximality of $b_{1}$, also  $M_{a,b}(\xx)=0$ for $b < b_1$ because all the brackets appearing inside $M_{a, b}(\xx)$  involve then a null coordinate of $\xx$,  and finally $M_{a,b_1}(\xx)=L^{[a]}(\vv)$ for a similar reason. So
$$ \sum_{b=a}^{2a-1} \varphi_{b} M_{a,b}(\xx)=\varphi_{b_{1}}(L^{[a]}(\vv))$$
is non-zero and the proof is complete.
\end{proof}


\subsection{Reducing the domain of integration} \label{SecRed} \label{Sec6}

In this section, we apply \Cref{choisirk}  to show   
 \Cref{reduction} below.
  Recall that for  $N\geq1$,  $\delta_{0}\geq 0$, $\gamma_{0} \in (0, 1)$, we have defined the range of moderate deviations:
$$\DN(\delta_{0})=\{x \in \tkg \,:\, \forall b\leq 2s-1,\,\, \|x^{(b)}\| \leq  N^{b/2+s^{-1}\delta_{0}}\},$$
and the reduced domain in Fourier space:
$$\UN(\gamma_{0})=\{\xi \in \dkg  \,:\, \forall b\leq 2s-1,\,\,\|\xi_{|\kg^{(b)}}\| \leq  N^{-b/2+\gamma_{0}} \}.$$
At the beginning of \Cref{Sec-uniform-LLT}, we also introduced for $a\in [s]$, $g,h\in \tkg$,
$$\gamma_{a}= (32s)^{-a}\gamma_{0}\1_{a<s} \,\,\, \,\,\,  \,\,\, \,\,\, \,\,\,\,\,\,N_{a}=\lfloor N^{1-\gamma_{a}}\rfloor   \,\,\, \,\,\, \,\,\,\,\,\, \,\,\, \,\,\,\kappa_{N,a}=(T_{N_{a}} \tmu)^{\otimes N_{a}-N_{a-1}}$$
and $ \Upsilon^N_{g,h}=\Pi_{\star}(g\otimes \kappa_{N,1}\otimes \dots \otimes \kappa_{N,s}\otimes h) $.

\domainreduction

Given $a\in [s]$, we can write $\xx=(\xx'_{a}, \xx_{a})$ where $\xx'_{a}$ is the prefix of length $N_{a}$ in $\xx$. We will need the following lemma to guarantee that the main contribution to $\Pi^{[a,b)}\xx$ comes from $\Pi^{[a,b)}\xx_{a}$, i.e. from  the brackets that only involve variables from $\xx_{a}$. We also allow moderate deviations. The fact that $\Upsilon^N$ truncates  \emph{gradually} the variables in $\tmu^{* N}$ plays a role in the proof of this lemma.

\begin{lemme} \label{troncature}  Let $a\in\{1, \dots, s\}$, $b\geq a$, $N\geq 1$, $g,h\in \DN(\delta_{0})$, $m\geq1$,
$$\E_{\xx\sim\kappa_{N,1}\otimes \dots \otimes \kappa_{N,s} }\left[ \|\Pi^{[a,b)}(g,\xx,h) -\Pi^{[a,b)}(g,\xx_{a},h)\|^{2m} \right] \ll_{m} N^{m(b +2\delta_{0})}\left( \frac{N_{a}}{N}\right)^m$$
and the same holds if one replaces the variables $(g,\xx,h)$, $(g,\xx_{a}, h)$ by their projection  to $\km^{(1)}\oplus \R \chi$ modulo  $\kg^{[2]}$. 
\end{lemme}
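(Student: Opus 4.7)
The approach is to expand $\Pi^{[a,b)}(g,\xx,h)-\Pi^{[a,b)}(g,\xx_a,h)$ as a polynomial in $\xx$, isolate the monomials that necessarily involve at least one variable from the prefix $\xx'_a=(x_1,\ldots,x_{N_a})$, and bound the resulting restricted statistic by a direct adaptation of the moment estimate of \Cref{momentPi}.

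Since $0$ is the neutral element for $*$, one has $\Pi^{[a,b)}(g,(\mathbf{0}^{N_a},\xx_a),h)=\Pi^{[a,b)}(g,\xx_a,h)$, so the difference equals the sum of the monomials of $\xx\mapsto \Pi^{[a,b)}(g,\xx,h)$ in which at least one $x_i$ with $i\in[N_a]$ appears as a factor. Using the decomposition of $\Pi_N^{[a]}$ via the periodization map (Claim~1 in \Cref{Sec-bi-grading}), I would write this difference as a finite linear combination of refined statistics
$$M^{\infty,\ast}(\yy)\;:=\;\sum_{\substack{0\le n_1<\cdots<n_t\le N+1 \\ \exists\, j,\ 1\le n_j\le N_a}} M(y_{n_1},\ldots,y_{n_t}),$$
where $M$ ranges over a fixed finite family of monomials in the free associative algebra of w-degree $b$ and length $t\leq a$, with $y_0:=g$, $y_{N+1}:=h$, and $y_i:=x_i$ for $i\in[N]$.

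The heart of the argument is then to run the same combinatorial expansion used to prove \Cref{momentPi} on $\E[\|M^{\infty,\ast}(\yy)\|^{2m}]$. Expanding the $2m$-th power as a sum over tuples $(\nn_1,\ldots,\nn_{2m})$, independence of the coordinates of $\xx$ together with the centering of $T_{N_i}\tmu$ in $\km^{(1)}$ force the dominant contributions to come from patterns in which every index $i\in[N]$ appearing in some $\nn_k$ appears in at least two of them. The new restriction that each $\nn_k$ meet $[N_a]$ yields at least $2m$ occurrences (with multiplicity) of indices in $[N_a]$, which under the pairing condition collapse to at least $m$ distinct indices in $[N_a]$; compared to the unrestricted count in \Cref{momentPi}, this substitutes $N_a$ for $N$ in $m$ of the counting choices, producing the extra factor $(N_a/N)^m$. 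The $N^{2m\delta_0}$ factor arises as in \Cref{momentPi} from $g,h\in\DN(\delta_0)$ entering whenever $n_j\in\{0,N+1\}$, and the projection case follows identically: each projected variable $\pi(y_i)\in\km^{(1)}\oplus\R\chi$ satisfies the same truncation bounds, and brackets involving $\chi$ reduce to brackets involving $\Xm$ without altering the w-degree or the combinatorics.

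The main obstacle is this last step of bookkeeping: one must verify carefully that the constraint "each $\nn_k$ meets $[N_a]$" enters exactly $m$ of the dominant pairing choices in the proof of \Cref{momentPi}, so that the improvement is precisely $(N_a/N)^m$; the remainder of the proof, including the moment bounds on individual coordinates and the absorption of the deviations of $g,h$, proceeds mutatis mutandis.
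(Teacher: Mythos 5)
Your setup---writing the difference as the sum of those monomials of $\xx\mapsto\Pi^{[a,b)}(g,\xx,h)$ that involve at least one variable with index in $[N_a]$, and then re-running the moment expansion behind \Cref{momentPi} on this restricted statistic---is exactly the strategy of the paper (the paper extracts the $g,h$-dependence into coefficients bounded via $g,h\in\DN(\delta_0)$ rather than treating $g,h$ as extra variables, but this is immaterial). The gap is in the step you yourself flag as the ``main obstacle'', and the combinatorial claim you propose to use there is false. You assert that centering forces the dominant contributions to come from patterns in which every index appears in at least two of the $\nn_k$, and that the $2m$ forced occurrences in $[N_a]$ then ``collapse to at least $m$ distinct indices in $[N_a]$''. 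First, only the $\km^{(1)}$-component of $T_{N'}\tmu$ is centered; coordinates of weight $\geq 2$ have nonvanishing means, so terms in which an index appears in a single factor (through a monomial of w-degree $\geq 2$) survive and are not negligible, and the pairing picture breaks down. Second, even granting pairing, $2m$ occurrences with each index repeated at least twice give \emph{at most} $m$ distinct indices, not at least $m$; in the extreme case where all $2m$ factors share a single index of $[N_a]$ there is only one such distinct index, so the number of distinct indices in $[N_a]$ cannot be the source of the factor $(N_a/N)^m$.

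The correct invariant is the w-degree carried by $[N_a]$, not the number of distinct indices lying there. In the paper's proof, after expanding the $2m$-th power into terms $x_{n_1}^{\alpha_1}\cdots x_{n_l}^{\alpha_l}$ with distinct $n_i$, centering is used only to discard those terms in which some variable enters with total w-degree exactly $1$; every surviving index $n_i$ carries w-degree $d(\alpha_i)\geq 2$ and contributes $\E_{T_{N(n_i)}\tmu}(|x^{\alpha_i}|)\ll N(n_i)^{d(\alpha_i)/2-1}$ per moment together with at most $N(n_i)$ choices of its value, i.e.\ $N(n_i)^{d(\alpha_i)/2}$ in total. Since the total w-degree is $2mb'$ and each of the $2m$ factors places at least one unit of w-degree on an index $\leq N_a$, the w-degree $D_a:=\sum_i d(\alpha_i)\1_{n_i\leq N_a}$ satisfies $D_a\geq 2m$, whence the product of contributions is $\ll N_a^{D_a/2}\,N^{(2mb'-D_a)/2}\leq N^{mb'}(N_a/N)^m$. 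This is precisely the bookkeeping your proposal leaves open, and the heuristic you offer in its place would not deliver it; the rest of your argument (isolation of the prefix-hitting monomials, the projection case) is fine.
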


\begin{proof} 
Note first that the map $\xx\mapsto \Pi^{[a,b)}(g,\xx,h)$ is a finite formal linear combination (depending on $a,b$ but not $N$) of statistics associated to  monomials of w-degree at most $b$, and such that each statistics of w-degree $b' \leq b$ has its coefficients depending on $g,h$ and bounded above by    $$O(1)\sum_{i_{1}+\dots+j_{l}=b-b' } \|g^{(i_{1})}\|\dots \|g^{(i_{k})}\| \,\|h^{(j_{1})}\|\dots \|h^{(j_{l})}\|$$
if $b'<b$, and $O(1)$ otherwise.

In view of the condition $g,h\in \DN(\delta_{0})$,  the proof of \Cref{troncature}  boils down to showing that for a statistics $M^\infty$ induced by a monomial $M:\tkg^t\rightarrow \R$ of  w-degree $b'$ (see \Cref{Sec-3reductions}),  we have
\begin{align} \label{tronc1}
\E_{\kappa_{N,1}\otimes \dots \otimes \kappa_{N,s} }\left[ |M^\infty(\xx) - M^{\infty}(\xx_{a})|^{2m} \right] \ll_{m} N^{mb'}\left( \frac{N_{a}}{N}\right)^m. 
\end{align}
Observe that the difference above can be rewritten as 
$$M^\infty(\xx) - M^{\infty}(\xx_{a})=\sum_{\substack{n_{1}<\dots <n_{t}\\ n_{1}\leq N_{a}}} M(x_{n_{1}},\dots, x_{n_{t}}).$$
Given $\alpha\in \N^{\dim\tkg}$,  denote by $x\mapsto x^\alpha$  the corresponding monomial on $\tkg$ (say for some ordering of the basis $e^{(i)}_{j}$ fixed in \Cref{Sec-cadre}) and write $d(\alpha)$ its w-degree. Expanding the power $2m$, we can write  for $N\geq 1$, $\xx\in \tkg^N$, 
  \begin{align*}
(M^\infty(\xx) - M^{\infty}(\xx_{a}))^{2m} &= \sum_{l=1}^{2mb} \sum_{\aalpha \in \mathfrak{A}^1_{l}}  \sum_{\nn\in \mathfrak{B}_{\aalpha}}  C_{\aalpha} x_{n_{1}}^{\alpha_{1}}\dots  x_{n_{l}}^{\alpha_{l}}
  \end{align*}
  where   $\sup_{\aalpha}|C_{\aalpha}| =O_{m}(1)$ and 
  $$\mathfrak{A}^{1}_{l} =\left\{\aalpha \in (\N^{\dim\tkg})^l \,:\, \sum_{i=1}^l d(\alpha_{i})=2mb \,\text{ and }\,\inf_{i}d(\alpha_{i})\geq 1 \right\}$$
    $$\mathfrak{B}_{\aalpha} =\left\{\nn \in [N]^l \,:\,  \sum_{i=1}^l d(\alpha_{i})\1_{n_{i}\leq N_{a}} \geq 2m \right\}.$$

Then by linearity and independence,
    \begin{align*}
\E_{\kappa_{N,1}\otimes \dots \otimes \kappa_{N,s} }\left[(M^\infty(\xx) - M^{\infty}(\xx_{a}))^{2m}\right] &= \sum_{l=1}^{2mb} \sum_{\aalpha \in \mathfrak{A}^1_{l}}  \sum_{\nn\in \mathfrak{B}_{\aalpha}}  C_{\aalpha}\E_{T_{N(n_{1})}\tmu}( x_{n_{1}}^{\alpha_{1}})\dots  \E_{T_{N(n_{l})}\tmu}(x_{n_{l}}^{\alpha_{l}})
  \end{align*}
where $N(n_{i}) = N_{j}$ if $n_{i} \in \{N_{j-1}+1, \dots, N_{j} \}$. 

The terms for which some $x_{i}^{\alpha_{i}}$ has w-degree $1$ satisfy $\E_{\tmu}(x_{i}^{\alpha_{i}})=0$, hence the $\aalpha$ that contribute belong to 
$$\mathfrak{A}^{2}_{l}=\left\{\aalpha \in \mathfrak{A}^{1}_{l} \,:\, \inf_{i}d(\alpha_{i})\geq 2 \right\}.$$
This forces $l$ to belong to $\{1, \dots, mb\}$:  
$$\E_{\kappa_{N,1}\otimes \dots \otimes \kappa_{N,s} }\left[(M^\infty(\xx) - M^{\infty}(\xx_{a}))^{2m}\right]= \sum_{l=1}^{mb} \sum_{\aalpha \in \mathfrak{A}^2_{l}}  \sum_{\nn\in \mathfrak{B}_{\aalpha}} C_{\aalpha}\E_{T_{N(n_{1})}\tmu}( x_{n_{1}}^{\alpha_{1}})\dots  \E_{T_{N(n_{l})}\tmu}(x_{n_{l}}^{\alpha_{l}}).$$
Now note that for $d(\alpha)\geq 2$, $N'\geq 1$,
$$\E_{T_{N'}\tmu}(|x^\alpha|)= \E_{\tmu}\left(|(T_{N'}x)^\frac{2\alpha}{d(\alpha)}|^{\frac{d(\alpha)}{2}-1} |(T_{N'}x)^\frac{2\alpha}{d(\alpha)}| \right)=O(N'^{\frac{d(\alpha)}{2}-1})$$
because $|T_{N'}x^\frac{2\alpha}{d(\alpha)}|=O(N')$ and  $ \E_{\tmu}(|(T_{N'}x)^\frac{2\alpha}{d(\alpha)}|)\leq  \E_{\tmu}(|x^\frac{2\alpha}{d(\alpha)}|)+o(1)$ is bounded independently of $N'$. In particular, for $n_{i}\leq N_{a}$ one has $\E_{T_{N(n_{i})}\tmu}( |x_{n_{i}}^{\alpha_{i}}|)\leq N_{a}^{\frac{d(\alpha_{i})}{2}-1}$, while for $n_{i}\geq N_{a}+1$ we have $\E_{T_{N(n_{i})}\tmu}( |x_{n_{i}}^{\alpha_{i}}|)\leq N^{\frac{d(\alpha_{i})}{2}-1}$. We deduce that for $l\leq mb$, $\aalpha \in \mathfrak{A}^2_{l}$, $n\in \mathfrak{B}_{\aalpha}$ one has 
$$\E_{T_{N(n_{1})}\tmu}( x_{n_{1}}^{\alpha_{1}})\dots  \E_{T_{N(n_{l})}\tmu}(x_{n_{l}}^{\alpha_{l}}) = O_{m}(1)N^{mb} \left(\frac{N_{a}}{N}\right)^{m} N^{-|\{i:n_{i}\leq N_{a}\}|}_{a}N^{-|\{i:n_{i}> N_{a}\}|}.$$
The result follows as the parameters $l, \aalpha$ belong to a bounded set independent of $N$, and because 
$\sum_{\nn\in [N]^l} N^{-|\{i:n_{i}\leq N_{a}\}|}_{a}N^{-|\{i:n_{i}> N_{a}\}|}=O_{l}(1)$.

\end{proof}

We now turn to the proof of \Cref{reduction}. Recall that for $a,b\geq 1$,  we denote by $\kg^{[a, b)}$  the subspace of $\kg$ spanned linearly by the brackets of the form $[x_{1}, [x_{2},\dots [x_{j-1}, x_{j}]\dots ]]$ with $j\geq a$, $x_{i}\in\kg$, and $j+| \{i\,:\, x_{i}=\Xm\}| \geq b$.

\begin{proof}[Proof of \Cref{reduction}]
Set 
$ \eps_{a}:=\frac{1}{8}\gamma_{a-1}+2s\gamma_{a}$.
We will use the property (whose verification is straightforward) that for small enough $\delta_{0}$  depending only on $\dim\kg, \gamma_{0}$, for all $1\leq a\leq s-1$, one has  
$$\frac{1}{4s}\eps_{a}> \gamma_{a} > 4(\eps_{a+1} +\delta_{0}),$$
 while for $a=s$ we still have $\frac{1}{4s}\eps_{s}> \gamma_{s}=0$. From now on we fix such $\delta_{0}$. 
 Set
$$\UN' =\{\xi \in \dkg : \,\forall a\in [s], \forall b \in \{a, \dots, 2a-1\},\,\|\xi_{|\kg^{[a,b)}}\|\leq N^{-\frac{b}{2}+\eps_{a}}\}$$
and note that $\UN' \subseteq \UN(\gamma_{0})$. Indeed, this follows from the inequality  $\sup_{a}\eps_{a}\leq \gamma_{0}$ and the fact that every $\kg^{(b)}$ coincides with  $\kg^{[a, b)}$ for some $a\geq 1$ such that $a\leq b\leq 2a-1$ (by \Cref{basics-weight-filtration}). It is thus sufficient to check \Cref{reduction} where  $\UN(\gamma_{0})$ is replaced by $\UN' $

Let $\xi \in \dkg$ such that $\xi \notin \UN'$. 
We must  bound the modulus of 
$$\huNgh(\xi)=\E_{\yy \sim g\otimes \kappa_{N,1}\otimes\dots \otimes \kappa_{N,s} \otimes h}\left[  e_{\xi}(\Pi\yy)\right]$$ 
uniformly in $g,h \in \DN(\delta_{0})$.

From now on we let $a\in [s]$ be maximal such that  $\|\xi_{|\kg^{[a,b)}}\|> N^{-\frac{b}{2}+\eps_{a}}$ for some $b \in \{a, \dots, 2a-1\}$. 
Decompose the product $\Pi \yy$ as
$$\Pi \yy= S \yy + E \yy$$
where
$$S \yy= \Pi^{[<a]}\yy+\Pi^{[a]}\yy_{ab}  \,\,\,\,\,\,\text{ and }\,\,\,\,\,\,E \yy= \Pi^{[a]}\yy-\Pi^{[a]}\yy_{ab}+\Pi^{[>a]}\yy$$
and as above $\yy_{ab}$ stands for  the coordinate-wise projection of $\yy$ to $\km^{(1)}\oplus \R\chi$ modulo $\kg^{[2]}$. 
An important feature of that decomposition is that 
$$E \yy=\sum_{b'\in \{a+1, \dots, 2s-1\}} E^{(b')}\yy$$
where  $E^{(b')}$ refers to the homogeneous component of $E$ with $w$-degree $b'$ (see \eqref{homodecompf}) and each $E^{(b')}$ takes values in $\kg^{[q(b'), b')}$ where $q(b')$ is an integer such that $a+1\leq q(b')\leq s$ and $q(b')\leq b'\leq 2q(b')-1$. We postpone the proof  to \Cref{lemmeE} below.


\bigskip

Our first step is to get partially rid of the term $E\yy$ in order to apply \Cref{choisirk}. To do so, write the variable $\yy$ as $\yy=(g,\xx,h)$ where $\xx$ is a variable in $\tkg^N$ with law $\kappa_{N,1}\otimes\dots \otimes \kappa_{N,s}$. Write  $\xx=(\xx'_{a}, \xx_{a})$ where $\xx'_{a}$ has length $N_{a}:=\lfloor N^{1-\gamma_{a}} \rfloor$, and set $\yy_{a}=(g,\xx_{a},h)$ the variable obtained by removing from $\yy$ the component $\xx'_{a}$.  
We also introduce a parameter $m\geq 1$ and set $T_{2m-1}(t)= \sum_{j=0}^{2m-1}\frac{(-2i\pi t)^j}{j!} $ the Taylor expansion up to order $2m-1$ of $t\mapsto \exp(-2i\pi t)$, so that
$$|T_{2m-1}(t)- \exp(-2i\pi t)| \leq \frac{(2\pi t)^{2m}}{(2m)!}.$$
 
We now perform the partial removal of $E$. Setting $\ukappaN=\kappa_{N,1}\otimes \dots\otimes \kappa_{N,s}$ we have
 \begin{align*}
\huNgh(\xi)
&= \E_{g\otimes \ukappaN \otimes h}\left[  e_{\xi}(S\yy) e_{\xi}(E\yy)\right]\\
&= \E_{g\otimes \ukappaN \otimes h}\left[   e_{\xi}(S\yy) e_{\xi}(E\yy_{a}) \exp\left(-2i\pi \xi(E\yy -E\yy_{a})\right)\right]\\
&= \E_{g\otimes \ukappaN \otimes h}\left[  e_{\xi}(S\yy) e_{\xi}(E\yy_{a}) T_{2m-1}(\xi E\yy -\xi E\yy_{a})\right]\\
&\,\,\,\,\,\,\,\,\,\,\,\,\,\,\,\,\,\,\,\,\,\,\,\,\,\,\,\,\,\,\,\,\,\,\,\,\,\,\,\,\,\,\,\,\,\,\,\,\,\,\,\,\,\,\,\,\,\,\,\,\,\,\,\,\,\,\,\,\,\,\,\,\,\,\,\,\,\,\,+O_{m}(1)\E_{g\otimes \ukappaN \otimes h}(\|\xi E\yy -\xi E\yy_{a}\|^{2m}).
 \end{align*}
Let us check that the error term is negligible for a good choice of $m$. Using the triangle inequality for the $L^{2m}$ norm,  the fact that $E^{(b')}$ takes values in $\kg^{[q(b'), b')}$ for all $a+1\leq b'\leq 2s-1$, the maximality of $a$, and  \Cref{troncature}, we may write 
 \begin{align*}
&E_{g\otimes \ukappaN \otimes h}(\|\xi E\yy -\xi E\yy_{a}\|^{2m})
\\
&\ll_{m} \sum_{a+1\leq b'\leq 2s-1} \E_{g\otimes \ukappaN \otimes h} (\| \xi  E^{(b')}\yy - \xi E^{(b')}\yy_{a}\|^{2m})\\
&\leq  \,\,\, \,\,\,  \sum_{a+1\leq b'\leq 2s-1} \|\xi_{|\kg^{[q(b'), b')}}\|^{2m} \E_{g\otimes \ukappaN \otimes h} (\|   E^{(b')}\yy - E^{(b')}\yy_{a}\|^{2m})\\
&\ll_{m} \,\sum_{a+1\leq b'\leq 2s-1} N^{-b'm+2\eps_{a+1}m} \,N^{(b'+2\delta_{0})m}\left( \frac{N_{a}}{N}\right)^m \\
&\ll \,\,\, \,\,\,  \,  N^{(2\eps_{a+1}+2\delta_{0} -\gamma_{a})m}  \\
&\leq \,\,\, \,\,\,  \,  N^{-\frac{m}{2}\gamma_{a}}  
 \end{align*}
because $\gamma_{a}\geq 4(\eps_{a+1}+\delta_{0})$.   Then we choose  $m=m(Q, \gamma_{a})$ so that $\frac{m}{2}\gamma_{a}>Q$. For this choice of parameter the error term  becomes negligible.  

\bigskip
The second step is now to bound the remaining expectation 
$$ \E_{g\otimes \ukappaN \otimes h}\left[  e_{\xi}(S\yy) e_{\xi}(E\yy_{a}) T_{2m-1}(\xi E\yy -\xi E\yy_{a})\right]$$
using the path-swapping technique on several sub-tuples of the variable $\xx'_{a}$ (via \Cref{choisirk}). Consider $g,h$ as fixed, and decompose
 $$T_{2m-1}(\xi E\yy -\xi E\yy_{a})= \sum_{M}c_{M}(g,h, \xi)M(\xx)$$
where $M$ varies in the set of monomials on $\tkg^{N}$ whose degree (in the classical sense) is at most  $(2m-1)s$, and $c_{M}(g,h, \xi)\in \C$. Recalling that $g,h\in \DN(\delta_{0})$ and $\delta_{0}\leq 1$, we see that  $\xi E\yy -\xi E\yy_{a}$ is a polynomial in $\xx$ with  coefficients bounded by $N^{C}\|\xi\|$  where $C>1$ is a constant depending only on $\dim \kg$. It follows that $|c_{M}(g,h, \xi)|\leq N^{C' m}(1+\|\xi\|)^{m}$ where $C'>1$ only depends on $\dim \kg$. 

 Consider one of these monomials $M$. We are going to apply the path-swapping technique to a sub-tuple of $\xx$  consisting of consecutive variables which do not appear in $\xx'_{a-1}$, $\xx_{a}$, nor $M(\xx)$. 
To do so, set $k=k(a,\xi)\geq 1$  the block length prescribed for $\xi$ by \Cref{choisirk}, and then $N'_{aa}:= \lfloor \frac{N_{a}-N_{a-1}}{2mska}\rfloor$.   
As the number of distinct variables in $M(\xx)$ is bounded by $(2m-1)s$, we may write  $\xx'_{a}=(\xx'_{a-1}, \zz_{0}, \zz, \zz_{1})$ where $\zz$ is a tuple of variables which do not appear in $M(\xx)$ and of length  $N_{aa}=ka N'_{aa}$. Then using Fubini's theorem, we write
 \begin{align*}
\E_{\yy\sim g\otimes \ukappaN \otimes h}\left[  e_{\xi}(S\yy) e_{\xi}(E\yy_{a}) M(\xx)\right] \,=\, \E_{\xx'_{a- 1},\zz_{0},\zz_{1}, \xx_{a}}\left[ \E_{\zz} [e_{\xi}(S\yy)] e_{\xi}(E\yy_{a}) M(\xx)\right].
 \end{align*}
   \Cref{choisirk} implies that 
  \begin{align*}
\big| \E_{\zz} [e_{\xi}(S\yy)] \big| & \ll (1-gap_{c}(\|\xi\|))^{\frac{1}{2^{a-1}} N'_{aa}}
   \end{align*}
 as long as $0<c\lll1$. To justify this last upper bound, we check that for $N$ large enough, the conditions of \Cref{choisirk} do hold. By definition, $k$ is the smallest integer such that there is ${b_0}\in \{a, \dots, 2a-1\}$ for which $k^{{b_0}/2}\|\xi_{|\kg^{[a,{b_0})}}\|\geq 1$. As  $\|\xi_{|\kg^{[a,b)}}\|> N^{-\frac{b}{2}+\eps_{a}}$ for some $b$, we must have $k\leq N^{1-\frac{2\eps_{a}}{b}}\leq N^{1-\frac{2\eps_{a}}{s}}$. In particular, this tells us that ${b_0}$ satisfies $\|\xi_{|\kg^{[a,{b_0})}}\|> N^{-\frac{{b_0}}{2}+\frac{{b_0}}{s}\eps_{a}}\geq N^{\frac{{b_0}}{s}\eps_{a}-\eps_{a+1}} \|\xi_{|\kg^{[a+1,{b_0})}}\|$. In this last inequality, we use the maximality of $a$, to guarantee that $\|\xi_{|\kg^{[a+1,{b_0})}}\|\leq N^{-{b_0}/2+\eps_{a+1}}$ which is valid even in the critical case where ${b_0}=a$ because then $\kg^{[a+1,{b_0})}= \kg^{[a+1,a+1)}$. This justifies $\|\xi_{|\kg^{[a,{b_0})}}\|\ggg \|\xi_{|\kg^{[a+1,{b_0})}}\|$ up to taking $N$ large enough in terms of $\eps_{a}, \eps_{a+1}$.
 Moreover, the upper bound on $k$ yields $N'_{aa}\geq N^{\frac{1}{s}\eps_{a}-2\gamma_{a}}> N^{\frac{1}{2s}\eps_{a}}$ due to  $\frac{1}{s}\eps_{a} >4 \gamma_{a}$, and up to assuming $N$ larger than a constant depending on $\gamma_{a-1}, \gamma_{a},m, s$. This discussion justifies the application of \Cref{choisirk}.  
   
We infer that
\begin{align*}
\huNgh(\xi)
& = \sum_{M} c_{M}\E_{g\otimes \ukappaN\otimes h}\left[  e_{\xi}(S\yy) e_{\xi}(E\yy_{a}) M(\xx)\right]  +O_{m}(N^{-Q})\\
& \ll_{\gamma_{a-1}, \gamma_{a}, m}(\sum_{M} |c_{M}|) \,  (1-gap_{c}(\|\xi\|))^{\frac{1}{2^{a-1}} N^{\frac{\eps_{a}}{2s}}}  + N^{-Q}.
 \end{align*}  
Using that the number of coefficients $c_{M}$ is bounded by $N^{2m \dim \kg}$, and that each of them is bounded by $N^{C'm}(1+\|\xi\|)^m$, we may write $\sum_{M} |c_{M}| \leq N^{C''m}(1+\|\xi\|)^m$ where $C''>1$ only depends on $\dim \kg$. Since the constant $m$ can be prescribed from the beginning as  functions of $(\gamma_{a})_{1\leq a\leq s}$, $Q$ and the initial data, and since  $(\gamma_{a})_{1\leq a\leq s}$ only depends on $\gamma_{0}$, we get

\begin{align*}
\huNgh(\xi)
& \ll_{\gamma_{0}, Q}  N^{C''m}(1+\|\xi\|)^m (1-gap_{c}(\|\xi\|))^{\frac{1}{2^{a-1}} N^{\frac{\eps_{a}}{2s}}}   +N^{-Q}.
  \end{align*}  
It follows that for some constant $L$ depending only on $(\dim \kg, \gamma_{0}, Q)$, 
we finally have the desired  bound  
  \begin{align*}
|\huNgh(\xi)| \ll_{\gamma_{0}, Q}  (1+\|\xi\|)^{L} gap_{c}(\|\xi\|)^{-L}N^{-Q}.
  \end{align*}
\end{proof}

We record the following lemma that we used in the proof above. 

\begin{lemme}\label{lemmeE}
Let $N\geq 1$, $a\in [s]$, and define a map $E:\tkg^N\rightarrow \tkg$ by
  $$E \yy= \Pi^{[a]}\yy-\Pi^{[a]}\yy_{ab}+\Pi^{[>a]}\yy.$$
Then $E \yy=\sum_{b'\in \{a+1, \dots, 2s-1\}} E^{(b')}\yy$, and each $E^{(b')}$ takes values in $\kg^{[q(b'), b')}$ where $q(b')$ is an integer such that $a+1\leq q(b')\leq s$ and $q(b')\leq b'\leq 2q(b')-1$.  
\end{lemme}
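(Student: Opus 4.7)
The plan is to decompose $E\yy=\sum_{b'} E^{(b')}\yy$ according to w-degree, show vanishing of the components at $b'\le a$ and $b'\ge 2s$, and for each intermediate $b'$ identify each contribution as a bracket lying in $\kg^{[q(b'),b')}$ with the choice $q(b'):=\max(a+1,\lceil(b'+1)/2\rceil)$.

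First I would establish vanishing for $b'\le a$. The piece $\Pi^{[>a]}\yy$ is a sum of brackets of length strictly larger than $a$, and since each variable carries w-degree at least one this piece has w-degree $\ge a+1$. For $\Pi^{[a]}\yy-\Pi^{[a]}\yy_{ab}$, writing $y_l=(y_l)_{ab}+r_l$ with $r_l\in\km^{(2)}\oplus\km^{(3)}\oplus\cdots$ and expanding multilinearly shows that the difference only involves brackets having at least one $r_l$-position of w-degree $\ge 2$, forcing total w-degree $\ge a+1$. Vanishing for $b'\ge 2s$ is automatic: each monomial contribution to $E^{(b')}\yy$ takes values in $\tkg^{(b')}$ and $\tkg^{(2s)}=\{0\}$ by construction.

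Next I would prove the structural sub-lemma that if $v_l\in\kg^{[a_l,b_l)}$ for $l=1,\dots,n$, then every $n$-fold bracket $B(v_1,\dots,v_n)$ lies in $\kg^{[\sum a_l,\sum b_l)}$; this is immediate from the description of $\kg^{[a,b)}$ because bracketing concatenates individual lengths and adds their $\Xm$-counts. I then use $[\chi,\cdot]=[\Xm,\cdot]$ to replace every $\chi$ by $\Xm$, placing every contribution in $\kg$, and encode each position of a contributing bracket by a pair $(a_l,b_l)$: a $\km^{(1)}$-position sits in $\kg^{[1,1)}$; a (former) $\chi$-position becomes $\Xm\in\kg^{[1,2)}$; and for $i\ge 2$, a $\km^{(i)}$-position sits in $\kg^{(i)}\subseteq\kg^{[\lfloor i/2\rfloor+1,\,i)}$ by \Cref{basics-weight-filtration} combined with $[\Xm,\dots,\Xm]=0$ (which rules out $j\le r$). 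In each case $b_l$ equals the w-degree of the position, so $\sum b_l=b'$.

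It remains to bound $\sum a_l$ from below. The inequality $\sum a_l\ge a+1$ is immediate: an $a$-bracket from $\Pi^{[a]}\yy-\Pi^{[a]}\yy_{ab}$ has some $\km^{(\ge 2)}$-position contributing $a_l\ge 2$, and a $j$-bracket from $\Pi^{[>a]}\yy$ already has $\sum a_l\ge j\ge a+1$. For the sharper $2\sum a_l\ge b'+1$, observe that $2a_l-b_l$ equals $1$ at a $\km^{(1)}$-position, $0$ at a $\chi$-position, and a positive integer at every $\km^{(\ge 2)}$-position (since $2(\lfloor i/2\rfloor+1)-i\ge 1$ for $i\ge 2$). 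Summing, $2\sum a_l-b'=k_1+(\text{positive per }\km^{(\ge 2)})$, which is $\ge 1$ either because some $\km^{(\ge 2)}$-position appears (always the case in the first piece) or because $k_1\ge 1$ in the second piece (since a bracket whose every position equals $\chi$ vanishes). Hence $\sum a_l\ge\lceil(b'+1)/2\rceil$, and combined with $\sum a_l\ge a+1$ the sub-lemma places the contribution in $\kg^{[q(b'),b')}$. The four constraints on $q(b')$ are then easy: $q(b')\ge a+1$ by definition; $q(b')\le b'$ from $b'\ge a+1$; $b'\le 2q(b')-1$ from $q(b')\ge(b'+1)/2$; and $q(b')\le s$ uses $b'\le 2s-1$ together with the observation that $a=s$ makes $E\yy$ vanish outright (any $\km^{(\ge 2)}\subseteq\kg^{[2]}$ position in an $s$-bracket, or any bracket of length $>s$, lies in $\kg^{[s+1]}=0$). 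The main obstacle is the sharp lower bound $2\sum a_l\ge b'+1$; the rest of the argument is formal.
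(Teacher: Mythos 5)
Your proof is correct and follows essentially the same route as the paper's: decompose $E^{(b')}\yy$ into iterated brackets, expand each position via \Cref{basics-weight-filtration}, and track both the total bracket length (giving $\geq a+1$) and the length-plus-$\Xm$-count (giving $\geq b'$, hence $b'\leq 2q-1$ since an all-$\Xm$ bracket vanishes). Your per-position bookkeeping with the pairs $(a_l,b_l)$ and the explicit choice $q(b')=\max(a+1,\lceil (b'+1)/2\rceil)$ is only a cosmetic repackaging of the paper's argument, which instead takes $q(b')=\min(b',q_0)$ with $q_0$ the minimal bracket length occurring.
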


\begin{proof}
 Notice first that $E^{(b')}\yy$ is clearly zero if $b'\leq a$. Now let $b'\in \{a+1, \dots, 2s-1\}$. We can write $E^{(b')} \yy$ as a linear combination of iterated brackets of the form $L^{[j]}(\vv)$ where $j\geq a$, $v_{i}\in \tkm^{(b_{i})}$ with $\sum_{i}b_{i}=b'$, and at least one $v_{i}$ in $\kg^{[2]}$ if $j=a$. By \Cref{basics-weight-filtration}, we may write each $v_{i}$ as a combination of brackets $L^{[p]}(\underline{w})$ where $p+| \{n\,:\, w_{n}=\chi \}|\geq b_{i}$, and if $j=a$ and $v_{i}\in \kg^{[2]}$, we may also suppose $p\geq 2$. It follows that $L^{[j]}(\vv)$ is a combination of  brackets $L^{[q]}(\underline{u})$ where $ a+1\leq q$, and $q+ | \{n\,:\, u_{n}=\chi\}|\geq b'$ (in particular $2q-1\geq b'$). Take $q_{0}$ the minimal $q$ that appears in this decomposition. This justifies $E^{(b')} \yy\in \kg^{[q_{0}, b')}$ (here we use $[\chi,.]=[\Xm,.]$) with $a+1\leq q_{0}$ and $b'\leq 2q_{0}-1$. The claim follows by taking $q(b')=\min (b',q_{0})$. 
 \end{proof}


\subsection{Effective local limit theorem with power saving} \label{Sec-endproof}

We use the previous section to prove \Cref{TLLFouriercompact}, which is  a local limit theorem with explicit error term in the test function and better approximation rate than the LLT of \Cref{LLT} or \Cref{LLT-tronc}.  The price to pay is that the test function must be regular enough depending on $\mu$.

\TLLFouriercompact

Recall the notation $\|f\|_{\mu_{ab}, c, L}$ has been defined in \eqref{norm-mucL}.

\begin{proof}[Proof of \Cref{TLLFouriercompact}]
We may assume $\|f\|_{\mu_{ab}, c, L}<\infty$, in particular $\hf$ is integrable.  As we observed in the proof overview at the beginning of \Cref{Sec-uniform-LLT}, the Fourier inversion formula implies that for   $N\geq 1$, $g,h \in \tkg$, 
\begin{align*}
\thetaNgh (f)&=\int_{\dkg} \hf(\xi)\, \huNgh(\xi) e_{\xi,N,g,h} \,d\xi 
\end{align*}
where $e_{\xi,N,g,h}= e^{-2i\pi \xi\circ p(g^{(\chi)}+N\chi+h^{(\chi)})}$, and  $g^{(\chi)}, h^{(\chi)}$ are the  $\R\chi$ components of $g,h$ in the decomposition $(\ref{tilde-dec})$.

We fix $\delta_{0}=\delta_{0}(\dim \kg, \gamma_{0})>0$  as in  \Cref{reduction}.   
Supposing $g,h\in \DN(\delta_{0})$,  \Cref{reduction} applied to $Q=\frac{1}{2}(\dd+1)$, then yields
\begin{align*}
\thetaNgh (f)&=\int_{\UN(\gamma_{0})} \hf(\xi)\, \huNgh(\xi) e_{\xi,N,g,h}\,d\xi \, +\, \|f\|_{\mu_{ab}, c, L}O_{\gamma_{0}}\left(N^{-\frac{1}{2}(\dd+1)}\right).
\end{align*}
for any $0<c\lll 1$, and some $L>1$ depending only on $(\dim \kg, \gamma_{0})$ which we may assume greater than $\delta^{-1}_{0}$. 

By \cite[Lemma 5.1]{benard-breuillard-CLT}, there exists a smooth probability measure $\eta$ on $\kg$, which has finite moment of all orders, coincides with $\mu$ up to order $2$, and is linked to the limit measure $\nu$ in the central limit theorem via the relation $\teta^{*'N}=\DilN \nu*'N\chi$ for all $N$.
On the reduced domain  $\UN(\gamma_{0})$, we successively apply  Propositions \ref{grading-Fourier}, \ref{gaussian} to $\eta$, \ref{truncation-cost} to $\eta$, in order to replace $\uNgh$ by $g*'\DilN \nu*' N\chi*'h $, or even by $(\DilN \nu * N\chi)_{g,h}:=g*\DilN \nu* N\chi*h $ thanks to \Cref{remplacement*} below. Recalling the bound $|\hf|\leq \|f\|_{L^1}$, this gives for some $C>1$ depending only on $\dim \kg$,
\begin{align*}
&\thetaNgh (f)+\|f\|_{\mu_{ab}, c, L}O_{\gamma_{0}}\left(N^{ -\frac{1}{2}(\dd +\min(1, m_{\mu}- 2))+ C\gamma_{0} } \right)\\
&=\int_{\UN(\gamma_{0})} \hf(\xi)\, \widehat{(\DilN \nu * N\chi)_{g,h}}(\xi)\, e_{\xi,N,g,h}\,d\xi  \\
&=\int_{\UN(\gamma_{0})} \hf(\xi)\, \widehat{(\DilN \nu * N\Xm)_{p(g),p(h)}}(\xi)\,d\xi.
\end{align*}

 To make the term $(\DilN \nu * N\Xm)_{p(g),p(h)}(f)$ appear, we must check that the integral on the complementary domain $\dkg \smallsetminus \UN(\gamma_{0})$ is also negligible. This is proven in \Cref{compl} below.

Once  \Cref{compl} is established, Fourier inversion formula and the inequality $|\hf | \leq \|f\|_{L^1}$  imply that 
$$|\thetaNgh (f)-(\DilN \nu*N\Xm)_{p(g),p(h)}(f)| =  \|f\|_{\mu_{ab}, c, L }O_{\gamma_{0}}\left(N^{ -\frac{1}{2}(\dd +\min(1, m_{\mu}- 2))+ C\gamma_{0}}  \right)$$
 which concludes the proof.
\end{proof}

For the proof of \Cref{TLLFouriercompact} to be complete we need to check the next two lemmas. 
\begin{lemme}[Back to $*$] \label{remplacement*} For $N\geq 1$, $\gamma_{0}>\delta_{0}\geq 0$, $g,h\in \DN(\delta_{0})$, $\xi \in  \UN(\gamma_{0})$, we have
$$\big|   \widehat{(g*\DilN \nu*N\chi*h)}(\xi)    -   \widehat{(g*'\DilN \nu*'N\chi*'h)}(\xi)      \big|  \ll_{\gamma_{0}}  N^{-1/2+2\gamma_{0}}.$$
\end{lemme}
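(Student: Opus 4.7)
The plan is to compare the two measures directly by their pushforward descriptions and use the polynomial nature of $*$ and $*'$. Write $\Phi, \Phi': \tkg \to \tkg$ for the polynomial maps $y \mapsto g*y*N\chi*h$ and $y \mapsto g*'y*'N\chi*'h$. Then both Fourier transforms can be written as integrals against $\DilN\nu$, so after applying the trivial bound $|e^{ia}-e^{ib}| \leq |a-b|$ the quantity we must estimate is
$$
2\pi \int_{\tkg} \bigl|\xi\bigl(\Phi(y) - \Phi'(y)\bigr)\bigr|\, d(\DilN \nu)(y).
$$

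Next I would decompose along the weight grading of $\tkg$: writing $\xi(\Phi(y)-\Phi'(y)) = \sum_{b=1}^{2s-1} \xi_{|\tkm^{(b)}}\bigl(\Phi(y)^{(b)} - \Phi'(y)^{(b)}\bigr)$, the crucial structural input is that $\Phi'(y)^{(b)}$ is precisely the w-degree-$b$ homogeneous component of the polynomial map $y \mapsto \Phi(y)^{(b)}$. Indeed, the graded bracket is defined by $[u,v]' = \pi^{(i+j)}([u,v])$ for $u\in \tkm^{(i)}$, $v\in \tkm^{(j)}$, so by induction on the length of iterated brackets in the BCH expansion of $\Phi$, the map $\Phi'$ extracts at every stage exactly the top-weight part of each commutator. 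Consequently $\Phi(y)^{(b)} - \Phi'(y)^{(b)}$ is a polynomial in the coordinates of $(g, y, N\chi, h)$ of w-degree at most $b - 1$.

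The final step is to bound each weight-$b$ contribution. For $g, h \in \DN(\delta_0)$ we have $\|g^{(i)}\|, \|h^{(i)}\| \leq N^{i/2 + \delta_0/s}$; also $\|(N\chi)^{(2)}\|=N$ and, writing $y = \DilN z$ with $z\sim\nu$, $\|y^{(i)}\| = N^{i/2}\|z^{(i)}\|$ with $\|z^{(i)}\|$ having finite moments of all orders (since $\nu$ is Schwartz). A monomial of w-degree $b' \leq b-1$ in these coordinates has $\DilN\nu$-expectation bounded by $N^{(b-1)/2 + \delta_0}$, where the $N^{\delta_0}$ absorbs the contribution of the (at most $s$) factors of $g$ and $h$ via the nilpotency of $\tkg$. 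Combining this with $\|\xi_{|\tkm^{(b)}}\| \leq N^{-b/2+\gamma_0}$ (which holds since $\tkg^{(b)} \supseteq \tkm^{(b)}$ and $\xi \in \UN(\gamma_0)$) gives a contribution $\ll N^{-1/2 + \gamma_0 + \delta_0} \leq N^{-1/2 + 2\gamma_0}$ using $\delta_0 < \gamma_0$. Summing over the finitely many values of $b$ yields the stated bound.

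The only non-routine point is the structural claim that $\Phi(y)^{(b)} - \Phi'(y)^{(b)}$ has w-degree at most $b-1$; this is essentially the same observation that underlies \Cref{grading-Fourier}, and I expect to invoke (or briefly re-derive) that reasoning. Everything else amounts to a careful but standard bookkeeping of sizes under the rescaling $D_{\sqrt{N}}$ and the moderate deviation condition.
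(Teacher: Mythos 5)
Your proposal is correct and follows essentially the same route as the paper: linearize the exponential via $|e^{ia}-e^{ib}|\le |a-b|$, use that the discrepancy between $*$ and $*'$ is governed by the interplay between w-degree and the weight filtration, and conclude with size estimates under $\DilN$ and the moderate-deviation bounds on $g,h$. The only cosmetic difference is that you slice the difference by target coordinate $\tkm^{(b)}$, pairing $\|\xi_{|\tkm^{(b)}}\|\le N^{-b/2+\gamma_0}$ with a remainder of w-degree at most $b-1$, whereas the paper slices by source w-degree $b$, pairing $\|\xi_{|\kg^{(b+1)}}\|$ with $\E\|\Pi^{(b)}\yy\|\ll N^{b/2+\delta_0}$; both bookkeepings give $N^{-1/2+\gamma_0+\delta_0}$.
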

\begin{proof}
 Set  $\yy=(g,\DilN (x), N\chi, h)$ where $x\in \tkg$, and observe that
\begin{align*}
&\big|   \widehat{(g*\DilN \nu*N\chi*h)}(\xi)    -   \widehat{(g*'\DilN \nu*'N\chi*'h)}(\xi)      \big|  \\
&= \big|\E_{x\sim \nu}[ e_{\xi}(\Pi\yy)-e_{\xi}(\Pi'\yy) ]\big|\\
&\ll \E_{x\sim \nu}[|\xi \Pi \yy- \xi\Pi'\yy ]\big|\\
 &\ll \sum_{b\leq 2s-1}\|\xi_{|\kg^{(b+1)}}\|  \,\E_{x\sim \nu}\left(\|\Pi^{(b)}\yy\|\right).
 \end{align*}
where the last line uses the triangle inequality and the fact that $\Pi^{(b)}\yy - \Pi'^{(b)}\yy$ is the projection of $\Pi^{(b)}\yy $ to $\tkg^{(b+1)}$ modulo $\tkm^{(b)}$.

Hence, it is sufficient  to show that
$$\E_{x\sim \nu} \left[\|\Pi^{(b)}\yy \| \right]  \ll_{\nu} N^{b/2+\delta_{0}}.$$
This bound follows from the equality $\Pi^{(b)}\yy= N^{b/2} \Pi^{(b)}\zz$ where $\zz=(\DilsN g, x, \chi, \DilsN h)$ and the remark that  $\|\Pi^{(b)}\zz\|\ll N^{\delta_{0}} P(x)$ where $P$ is some polynomial.

\end{proof}

\begin{lemme}[Domain completion] \label{compl} Up to choosing $\delta_{0}>0$ smaller than a constant depending on $\gamma_{0}$ and $\dim \kg$, we have for $N\geq 1$,  $g,h\in \DN(\delta_{0})$, 
$$ \int_{\dkg\smallsetminus \UN(\gamma_{0})} |\widehat{(\DilN \nu * N\chi)_{g,h}}(\xi)| \,d\xi \,=\,O_{\gamma_{0}}(N^{-\frac{\dd +1}{2}}).$$
\end{lemme}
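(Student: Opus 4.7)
The plan is to establish a pointwise bound
\[ \bigl|\widehat{(\DilN\nu*N\chi)_{g,h}}(\xi)\bigr| \;\le\; C_M\,N^{K_M\delta_0/s}\,(1+\|\xi\circ\DilN\|)^{-M} \]
for every integer $M\ge 1$, and then integrate it against Lebesgue measure on the complement of $\UN(\gamma_0)$, on which $\|\xi\circ\DilN\|\gtrsim N^{\gamma_0}$ by definition of $\UN(\gamma_0)$ (since $\xi\notin\UN(\gamma_0)$ forces $\|\xi_{|\km^{(j)}}\|\ge cN^{-j/2+\gamma_0}$ for some $j$, hence $\|(\xi\circ\DilN)_{|\km^{(j)}}\|=N^{j/2}\|\xi_{|\km^{(j)}}\|\ge cN^{\gamma_0}$).

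To obtain this pointwise bound I would parametrize $\lambda := (\DilN\nu*N\chi)_{g,h}$ by $z\in\kg$ via $z\mapsto g * \DilN z * N\chi * h$. As $\xi(\chi)=0$, the $\chi$-coordinate only contributes a $z$-independent phase, and the Jacobian of $\DilN$ cancels the $N^{-\dd/2}$ factor in the density of $\DilN\nu$, giving
\[ \widehat\lambda(\xi) \;=\; e^{-2\pi i\theta(\xi,g,h,N)}\int_\kg e^{-2\pi i\,\xi(Y(z))}\,v(z)\,dz, \]
where $v$ is the Schwartz density of $\nu$, $Y(z) := q(g * \DilN z * N\chi * h)$ and $q:\tkg\to\kg$ drops the $\chi$-coordinate. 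The crucial structural observation is that under the graded product $*'$, $\DilN$ is an automorphism and $\DilsN(N\chi)=\chi$ (since $\chi\in\tkm^{(2)}$); consequently $Y_{*'}(z) = \DilN\Psi_{g,h}(z)$ with $\Psi_{g,h}(z) := q(\tilde g *' z *' \chi *' \tilde h)$, where $\tilde g := \DilsN g$ and $\tilde h := \DilsN h$ satisfy $\|\tilde g^{(i)}\|,\|\tilde h^{(i)}\|\le N^{\delta_0/s}$. The map $\Psi_{g,h}$ is a unipotent-triangular polynomial diffeomorphism of $\kg$ with constant Jacobian and coefficients polynomial in $(\tilde g,\tilde h)$, so the rescaled density $\tilde\rho(w) := v(\Psi_{g,h}^{-1}(w))$ is Schwartz with seminorms bounded by $C_M N^{K_M\delta_0/s}$. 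A direct change of variables $w=\DilsN y$ then gives, in the $*'$-model, $\widehat\lambda(\xi) = \widehat{\tilde\rho}(\xi\circ\DilN)$, and Schwartz decay of $\widehat{\tilde\rho}$ yields the claimed bound. The discrepancy between $*$ and $*'$ (both at the level of $\Psi_{g,h}$ and of the final integral) is controlled by an adaptation of Proposition \ref{grading-Fourier} to the single measure $\nu$, using that all polynomial moments of $v$ are finite.

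Substituting $\eta=\xi\circ\DilN$ (Jacobian $N^{\dd/2}$), the integral is bounded by
\[ C_M\,N^{K_M\delta_0/s - \dd/2}\!\int_{\|\eta\|\ge cN^{\gamma_0}}(1+\|\eta\|)^{-M}\,d\eta \;\ll_M\; N^{K_M\delta_0/s - \dd/2 + \gamma_0(\dim\kg - M)}. \]
Choosing $M = \dim\kg + \lceil(\dd+2)/(2\gamma_0)\rceil$, and then $\delta_0$ small enough (depending on $M$, $s$, and the initial data) so that $K_M\delta_0/s\le \tfrac{1}{2}$, we obtain the desired $O_{\gamma_0}(N^{-(\dd+1)/2})$. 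The main technical obstacle is handling the $*\to *'$ discrepancy uniformly over the unbounded frequency range: the naive phase error grows linearly in $\|\xi\circ\DilN\|$, which diverges upon integration, and is resolved by carrying the graded approximation through the density $\rho$ rather than only through the phase, so that the Schwartz decay $(1+\|\xi\circ\DilN\|)^{-M}$ absorbs the error term.
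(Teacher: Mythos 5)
Your proposal follows the same essential strategy as the paper's proof: exhibit the density of the translated-and-dilated measure as $v\circ(\text{polynomial diffeomorphism with controlled coefficients})$, use the Schwartz property of $v$ to get rapid decay of the Fourier transform in $\|\xi\circ\DilN\|$ at the cost of a factor $N^{O(\delta_0)}$, and integrate over the complement of $\UN(\gamma_0)$ where $\|\xi\circ\DilN\|\gtrsim N^{\gamma_0}$. The one place where you diverge — and where your write-up is not fully worked out — is the detour through the graded product $*'$. Your first-mentioned fix for the $*$ versus $*'$ discrepancy, namely an adaptation of \Cref{grading-Fourier}, cannot work: that proposition yields a phase error of size $\ll\|\DilN\xi\|N^{-1/2+O(\delta_0)}$, which is not integrable over the unbounded frequency region, as you yourself then observe. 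Your second fix (pushing the comparison through the density rather than the phase) is the right idea but is only asserted. The paper sidesteps the issue entirely: the map $x\mapsto \DilsN\bigl(g^{-1}*(\DilN x + g^{(\chi)}+h^{(\chi)})*h^{-1}\bigr)$ for the \emph{genuine} product is already a polynomial of degree $\le s$ in $(\DilsN g,\DilsN h,x)$ whose coefficients are uniformly bounded in $N$, because for a bracket of w-degree $b$ one has $\DilsN L^{[a]}(\vv)=(N^{b/2}\DilsN|_{\kg^{(b)}})L^{[a]}(\DilsN\vv)$ and the operator $N^{b/2}\DilsN|_{\kg^{(b)}}$ has eigenvalues that are non-positive powers of $\sqrt N$. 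With this observation your $\Psi_{g,h}$ can be taken to be the actual conjugation map and the $*'$ intermediary (and the missing comparison) disappears; I would recommend restructuring the argument accordingly, after which the rest of your computation (the change of variables $\eta=\xi\circ\DilN$, the choice of $M$, and then of $\delta_0$) goes through as the paper's does via integration by parts.
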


\begin{proof}
 Note that for $N\ggg1$, we have $N\chi *h\in \DN(C\delta_{0})$ for some $C>1$ only depending on $\dim \kg$. Hence  it is enough to check that for $\delta_{0}$ small enough in terms of $\gamma_{0}$ and $\dim \kg$, for $N\geq 1$, $g,h\in \DN(\delta_{0})$, we have
 \begin{align*}
 \int_{\dkg\smallsetminus \UN(\gamma_{0})} |\widehat{(\DilN \nu)_{g,h}}(\xi)| d\xi  =O_{\gamma_{0}}(N^{-\frac{\dd+1}{2}}).
\end{align*}
Replacing the variable $\xi$ by $\DilN \xi:=\xi\circ \DilN $, this can be rewritten: 
 \begin{align*}
 \int_{\dkg\smallsetminus \DilN \UN(\gamma_{0})} |\widehat{\sigma_{N,g,h}}(\xi)| d\xi  =O_{\gamma_{0}}(N^{-1/2}) 
\end{align*}
where $\sigma_{N,g,h}:= p_{\kg}\left(\DilsN  ( g * \DilN \nu *h)\right)$, with $p_{\kg}: \tkg\rightarrow \kg, x\oplus t\chi\mapsto x$. Moreover,  a direct computation yields the inclusion $B_{\dkg}(0,N^{\gamma_{0}})\subseteq {\DilN }\UN(\gamma_{0})$. Hence, it is sufficient to check that 
 \begin{align*}
 \int_{\dkg\smallsetminus B_{\dkg}(0,N^{\gamma_{0}})} |\widehat{\sigma_{N,g,h}}(\xi)| d\xi  = O_{\gamma_{0}}(N^{-1/2}). 
\end{align*}


We now give an explicit formula for the Radon-Nikodym derivative of $\sigma_{N,g,h}$ with respect to $dx$. By \cite[Proposition 3.8]{benard-breuillard-CLT}, we can write $\nu=v(x) dx$ where $v : \kg\rightarrow \R_{+}$ is a smooth function whose derivatives (at all orders) decay rapidly. 
For $y\in \kg$, we also write $L_{y}, R_{y}, S_{y}: \kg \rightarrow \kg$ for the operators of left multiplication, right multiplication, or sum, by $y$, i.e. $L_{y}(x)=y*x$, $R_{y}=x*y$, $S_{y}(x)=x+y$. A direct computation gives
$$\sigma_{N,g,h}= v_{N,g,h}(x)dx$$
where $v_{N,g,h}= v\circ s_{N,g,h}$ with 
$$s_{N,g,h}= \DilsN  \circ L_{-p(g)} \circ R_{-p(h)} \circ S_{p(g^{(\chi)}+h^{(\chi)})} \circ \DilN. $$

Let $e_{1}, \dots, e_{d}$ ($d=\dim\kg$) be an enumeration of the basis $(e^{(i)}_{j})$ of $\kg$ chosen in \Cref{Sec-cadre}. For each $i\in \{1, \dots, d\}$, denote by $\partial_{i}$ the derivation on $\kg$ with respect to the vector $e_{i}$ (for the addition).  Using integration by parts, we see that for all $k\geq 0$, $\xi \in \dkg\smallsetminus \{0\}$, 
$$|\widehat{v_{N,g,h}}(\xi)| \ll_{k} \frac{1}{\|\xi\|^k} \sum_{i=1}^d \|\partial^k_{i} v_{N,g,h} \|_{L^1}.$$
Hence, as $ \widehat{\sigma_{N,g,h}} = \widehat{v_{N,g,h}}$, 
 \begin{align*}
 \int_{\dkg\smallsetminus B_{\dkg}(0,N^{\gamma_{0}})} |\widehat{\sigma_{N,g,h}}(\xi)| d\xi & \,\ll_{k}\,  \int_{\dkg\smallsetminus B_{\dkg}(0,N^{\gamma_{0}})} \frac{1}{\|\xi\|^k} d\xi \,\sum_{i=1}^d \|\partial^k_{i} v_{N,g,h} \|_{L^1}\\
 & \,\ll\, \frac{1}{N^{\gamma_{0}(k-d)}}\,\sum_{i=1}^d \|\partial^k_{i} v_{N,g,h} \|_{L^1}.
\end{align*}
To conclude, it is enough to show that for all $\delta_{0}\lll\gamma_{0}$, there exists $k=k(\gamma_{0})$  (large) such that for  $N\geq1$, $g,h\in \DN(\delta_{0})$,  
$$\sum_{i=1}^d \|\partial^k_{i} v_{N,g,h} \|_{L^1}=O_{k} (N^{\gamma_{0}(k-d) -1/2}).$$

In order to estimate the $k$-th derivative of $v_{N,g,h}$ and its $L^1$-norm, we will need more information on the term $s_{N,g,h}$. The crucial observation is that $s_{N,g,h}(x)$ can be realised by dividing by a power of $\sqrt{N}$ some coefficients of a polynomial $P$ in the variables $(\DilsN g, \DilsN h, x)$ and of total degree (in the classical sense) at most  $s$. To justify this, observe first that one can simply delete the projections $p(\cdot)$ in the definition of $s_{N,g,h}$, i.e.  $s_{N,g,h}(x)=\DilsN \left( (-g)* (\DilN (x)+ g^{(\chi)}+h^{(\chi)}) *(-h) \right)$. Then apply Baker-Campbell-Hausdorff formula to write the product as a linear combination of iterated brackets of length at most $s$, and note that for any $a,b\geq 1$, any bracket $L^{[a]}(\vv)$ with $v_{i}\in \tkm^{(b_{i})}$, $\sum b_{i}=b$, we have 
$$D_{\frac{1}{\sqrt{N}}}L^{[a]}(\vv)= (N^{b/2} D_{\frac{1}{\sqrt{N}} | \kg^{(b)}})L^{[a]}(D_{\frac{1}{\sqrt{N}}}\vv)$$
 where $N^{b/2}D_{\frac{1}{\sqrt{N}} |\kg^{(b)} }$ is a diagonalizable endomorphism of $\tkg^{(b)}$ with eigenvalues given by powers of $1/\sqrt{N}$. 

The polynomial $P$ that we obtain only depends on the initial data fixed in \Cref{Sec-cadre}.  Hence, as $\DilsN \DN(\delta_{0}) \subseteq B_{\tkg}(0, O(N^{s^{-1}\delta_{0}}))$,  we have
$$\|s_{N,g,h}(x)\| \ll N^{\delta_{0}}\max(1, \|x\|^s).$$ 
All this also holds for the inverse transformation  $s_{N,g,h}^{-1} : \kg \rightarrow \kg$, and in particular, 
$$\|s_{N,g,h}^{-1}(x)\| \ll N^{\delta_{0}}\max(1, \|x\|^s).$$

We now prove the bound for $\|\partial^k_{i} v_{N,g,h} \|_{L^1}$ that we announced, for a fixed subscript $i\in \{1, \dots, d\}$.  Given a $d$-tuple $\alpha=(\alpha_{1}, \dots, \alpha_{d})\in \N^d$, write $|\alpha|=\sum_{j}|\alpha_{j}|$ and $\partial^{\alpha}= \partial^{\alpha_{1}}_{1}\dots \partial^{\alpha_{d}}_{d}$.  Arguing by induction, it is easy to check that for any $k\geq 0$, one has
$$ \partial^k_{i} v_{N,g,h}(x)  =\sum_{|\alpha|\leq k} P^{(\alpha)}_{N,g,h}(x) \,(\partial^{\alpha} v) \circ s_{N,g,h}(x) 
$$
where $P^{(\alpha)}_{N,g,h}(x)$ can be realised by dividing by a power of $\sqrt{N}$ some coefficients of a polynomial $P^{(\alpha)}$ in the variables $(\DilsN g, \DilsN h, x)$ and of total degree (in the classical sense) bounded above by $ks$. Moreover the coefficients of $P^{(\alpha)}$ are determined by the initial data fixed in \Cref{Sec-cadre}, and  $i,k, \alpha$. We infer an upper bound for the $L^1$-norm
 \begin{align*}
 \|\partial^k_{i} v_{N,g,h} \|_{L^1} &\leq  \sum_{|\alpha|\leq k} \int_{\kg} |P^{(\alpha)}_{N,g,h}(x) \,(\partial^{\alpha} v) \circ s_{N,g,h}(x)| \,dx\\
 &\ll_{i, k}  N^{\delta_{0}k}\sum_{|\alpha|\leq k} \int_{\kg}  \max (1, \|x\|^{ks}) \,|(\partial^{\alpha} v) \circ s_{N,g,h}(x)| \,dx\\
  &=  N^{\delta_{0}k}\sum_{|\alpha|\leq k} \int_{\kg}  \max (1, \|s_{N,g,h}^{-1}(x)\|^{ks}) \,|\partial^{\alpha} v (x)| \,dx
 \end{align*}
where the last line uses that $s_{N,g,h}$ preserves the Lebesgue measure on $\kg$. From the above-mentioned bound on $s_{N,g,h}^{-1}(x)$, we conclude that
 \begin{align*}
 \|\partial^k_{i} v_{N,g,h} \|_{L^1} 
 &\ll_{i, k}  
 N^{\delta_{0}k(1+s)}\sum_{|\alpha|\leq k} \int_{\kg}  \max (1, \|x\|^{ks^2}) \,|\partial^{\alpha} v (x)| \,dx.
 \end{align*}
Since the integral on the right-hand side is bounded, we may sum over  $i$ and conclude that for all $k\geq 0$, $N\geq 1$, $g,h\in \DN(\delta_{0})$, 
 \begin{align*}
\sum_{i=1}^d \|\partial^k_{i} v_{N,g,h} \|_{L^1}&=O_{k} (N^{\delta_{0}k(1+s)})\\
&=O_{k} (N^{\gamma_{0}(k-d) -1/2})
 \end{align*}
up to choosing $\delta_{0}<\frac{\gamma_{0}}{2(1+s)}$  and $k=k(\gamma_{0})$ large enough so that  $\delta_{0}k(1+s)<\gamma_{0}(k-d) -1/2$.
This finishes the proof of \Cref{compl}. 
\end{proof}

\subsection{Approximation by band-limited functions}

To convert \Cref{TLLFouriercompact} into a local limit theorem dealing with any compactly supported continuous function (\Cref{TLLfcompact}), we will need the following approximation result. It is certainly well-known, but in the absence of a convenient reference, we include a proof. 

\begin{proposition} \label{approx}
Let $f\in C_{c}(\kg)$. 
For all $\eps>0$, there exist $f^-, f^+\in C^\infty(\kg)$ which are integrable, bounded, and satisfy
\begin{itemize}
\item[$\bullet$] $f^-\leq f\leq f^+$.
\item[$\bullet$] $\widehat{f^-}$ and $\widehat{f^+}$ are compactly supported.
\item[$\bullet$] $\|f^+ - f^-\|_{L^1} <\eps$.
\end{itemize}
\end{proposition}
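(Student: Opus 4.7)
Plan of proof. I identify $\kg$ with $\R^d$ via the fixed basis. The argument proceeds in two stages: first produce a smooth compactly supported sandwich of $f$ with small $L^1$-gap, then upgrade each side to a band-limited function by a one-sided approximation result.

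Stage 1 (smooth compactly supported sandwich). Fix a small parameter $\eta>0$ to be determined. Let $\rho_\delta$ be a standard smooth nonnegative compactly supported mollifier and, using uniform continuity of $f$, pick $\delta>0$ small enough that $\tilde f := f * \rho_\delta \in C_c^\infty(\R^d)$ satisfies $\|\tilde f - f\|_\infty < \eta/4$. Choose a cutoff $\chi\in C_c^\infty(\R^d)$ with $0\leq\chi\leq 1$, $\chi\equiv 1$ on a neighborhood $U$ of $\supp(f)\cup\supp(\tilde f)$, and $\supp(\chi)$ contained in a slightly larger compact set. Set $g^\pm := \tilde f \pm \eta\chi$. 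A direct check shows $g^- \leq f \leq g^+$ pointwise on $\R^d$, and $\|g^+ - g^-\|_1 = 2\eta\|\chi\|_1$, which is less than $\eps/2$ as soon as $\eta$ is small enough.

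Stage 2 (band-limited sandwich). I replace $g^\pm$ by band-limited pointwise one-sided approximants with controlled $L^1$-gap. This rests on a Beurling-Selberg-type extremal principle: for each $h\in C_c^\infty(\R^d)$ and each bandwidth $B>0$ there exist $h^\pm$ in $C^\infty\cap L^1\cap L^\infty$ with Fourier transforms supported in $[-B,B]^d$, satisfying $h^-\leq h\leq h^+$ on $\R^d$ and $\|h^+-h\|_1+\|h-h^-\|_1 \leq C_d V(h)/B$, where $V(h)$ denotes the total variation (finite since $h\in C_c^\infty$) and $C_d$ is a dimensional constant. In dimension one this is Selberg's classical construction (majorants/minorants of an indicator $\mathbf{1}_{[a,b]}$ with $L^1$-error $1/B$, extended to functions of bounded variation via the layer-cake formula); the $d$-dimensional case is obtained by iterating the one-dimensional construction along coordinate directions. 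Applied to $g^+$ with bandwidth $B$ large enough that $C_dV(g^+)/B<\eps/4$, this yields a band-limited majorant $f^+$ of $g^+$, hence of $f$; applied to $g^-$, it yields a band-limited minorant $f^-$ of $g^-$, hence of $f$. The triangle inequality then gives
\[
 \|f^+ - f^-\|_1 \leq \|f^+-g^+\|_1 + \|g^+-g^-\|_1 + \|g^--f^-\|_1 < \eps/4 + \eps/2 + \eps/4 = \eps.
\]
The required properties (smoothness, boundedness, integrability) follow from the Beurling-Selberg construction, since a function in $L^1$ whose Fourier transform has compact support is bounded continuous and, being the inverse Fourier transform of an integrable compactly supported function, lies in $C^\infty$.

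Main obstacle. The technical crux is Stage 2: constructing, for a given smooth compactly supported function, smooth one-sided band-limited approximants whose $L^1$-error tends to $0$ with the bandwidth. This is quantitatively delicate because the naive route (convolving $g^\pm$ with a nonnegative band-limited Schwartz kernel $\phi_\lambda$) does not preserve the pointwise one-sided inequality: convolution smears the sign of $g^\pm$ across the boundary of $\supp(f)$, and compensating for this leakage by enlarging $\chi$ drives $\|g^+-g^-\|_1$ away from $0$. The Beurling-Selberg machinery bypasses exactly this difficulty; its one-dimensional version is elementary, while its $d$-dimensional extension, though standard, requires a careful iterative argument along coordinate directions.
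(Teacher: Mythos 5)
Your Stage 1 is correct, and your overall strategy (sandwich first, then one-sided band-limited approximation) is genuinely different from the paper's. The paper does \emph{not} use Beurling--Selberg extremal functions: it convolves $f$ with a contracted nonnegative band-limited kernel $\rho_t$ (a Fej\'er-type function $\sin^{2p}(\pi x)\sum_n a_n (x-n)^{-2p}$, tensorized over coordinates), and then restores the one-sided inequalities by \emph{adding and subtracting nonnegative band-limited correctors}: one corrector of size $\eps$ dominating the error on the support of $f$, and one dominating the polynomial tail of $f*\rho_t$ off the support. The technical heart is Lemma~\ref{LL}(ii): a nonnegative band-limited function with arbitrarily small $L^1$ norm that nevertheless decays only like $\|x\|^{-2\dim\kg}$. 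So the "leakage" you describe in your final paragraph as fatal to the convolution route is in fact repaired there not by enlarging the cutoff but by this slowly decaying, small-mass corrector; your stated obstruction is overstated.

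The genuine gap is Stage 2. The one-dimensional Selberg construction plus layer-cake is fine, but the passage to $\R^d$ for a general $h\in C_c^\infty(\R^d)$ is precisely the content of the proposition, and "iterating the one-dimensional construction along coordinate directions" is not a proof and does not work as stated. First, even for a tensor product (a box indicator), the product of one-dimensional \emph{minorants} is not a minorant, because the 1D minorants take negative values; this is exactly why Selberg's box minorant requires the non-obvious combination $\sum_j m_j\prod_{k\neq j}M_k-(d-1)\prod_k M_k$. Second, a general $h\in C_c^\infty(\R^d)$ has no tensor-product or layer-cake-into-boxes structure, so before any box result applies you must decompose $h$, e.g.\ as a Riemann sum $\sum_Q c_Q\mathbf{1}_Q$ over a fine grid, choose majorant or minorant of each $\mathbf{1}_Q$ according to the sign of $c_Q$, and verify that the total $L^1$ error (roughly $\sup|h|\cdot|\mathrm{supp}\,h|/(\delta B)$ for side length $\delta$) can be made small by taking $B\gg\delta^{-1}$. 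None of this is supplied, and the bound $C_dV(h)/B$ for arbitrary $h\in C_c^\infty(\R^d)$ is not a citable standard statement. The route is completable along these lines, but as written the key multidimensional step is asserted rather than proved.
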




\bigskip

Let $\mathscr{C}$ be the cone of functions $\rho\in C^\infty(\kg)$ such that  $\rho\geq 0$, $\rho \in L^1(\kg)$ and $\hrho$ is compactly supported. We first show that $\mathscr{C}$ is non trivial, and contains  some functions decreasing fast, others decreasing slowly. 

\begin{lemme} \label{LL}
\begin{enumerate}[label=\roman*)]
\item For every $k\geq 2$, there is $\rho\in \mathscr{C}$ with $\|\rho\|_{L^1}=1$ and
$$0<\rho(x)\leq 1/\|x\|^k.$$
\item For all $\eps>0$, there is $\tau\in \mathscr{C}$ such that $\|\tau\|_{L^1}<\eps$ and 
$$\tau(x)\geq 1/\|x\|^{2 \dim \kg} $$
outside of a compact set. 
\end{enumerate}
\end{lemme}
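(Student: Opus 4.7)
The plan is to construct a strictly positive element of $\mathscr{C}$ with rapid polynomial decay as a building block for both parts. In one variable, let
$$\omega_j(t):=\mathrm{sinc}^{2j}(t)+\mathrm{sinc}^{2j}(t/\sqrt{2}),\qquad \mathrm{sinc}(t):=\sin(\pi t)/(\pi t),$$
for a positive integer $j$. Each summand is smooth, non-negative, integrable, and band-limited (its Fourier transform is the $2j$-fold self-convolution of a triangle function, hence compactly supported), while the zero sets $\Z\setminus\{0\}$ and $\sqrt 2\,\Z\setminus\{0\}$ of the two summands are disjoint, so $\omega_j(t)>0$ for every $t\in\R$ and $\omega_j(t)=O_j((1+|t|)^{-2j})$. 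With the coordinates on $\kg$ fixed in \Cref{Sec-cadre} and $d:=\dim\kg$, I tensorise: $\sigma_j(x):=\prod_{i=1}^d\omega_j(x_i)\in\mathscr{C}$, strictly positive on $\kg$, with Fourier transform compactly supported in a box.

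For $(i)$, given $k\geq 2$ I take $\rho:=c\,\sigma_j(\cdot/\mu)$ with parameters $c,\mu,j$ chosen as follows. The choice $c:=1/(\mu^d\|\sigma_j\|_{L^1})$ forces $\|\rho\|_{L^1}=1$, and the desired bound $\rho(x)\leq\|x\|^{-k}$ reduces to $\mu^{k-d}M_j/\|\sigma_j\|_{L^1}\leq 1$, where $M_j:=\sup_{x\neq 0}\|x\|^k\sigma_j(x)<\infty$ by the Schwartz decay of $\sigma_j$. Taking $\mu$ small when $k>d$ or large when $k<d$ settles all but the borderline case $k=d$, in which one directly checks for some small $j$ that $M_j\leq\|\sigma_j\|_{L^1}$ (a short explicit computation on the one-variable function $\omega_j$ with $j=1$ already gives ratio less than $1$).

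For $(ii)$, I build $\tau$ as a weighted superposition of lattice translates of $\sigma:=\sigma_d$. By continuity and $\sigma(0)>0$, fix $\delta,c_0>0$ with $\sigma(y)\geq c_0$ for $\|y\|\leq\delta$, and choose a lattice $\Lambda\subset\kg$ with covering radius at most $\delta$. For parameters $R,\lambda>0$ to be chosen, set
$$\tau(x):=\lambda\sum_{n\in\Lambda,\,\|n\|>R}\|n\|^{-2d}\,\sigma(x-n).$$
The Schwartz decay of $\sigma$ and all its derivatives makes this series and its termwise derivatives converge uniformly on compacta, so $\tau\in C^\infty$, non-negativity is immediate, and
$$\|\tau\|_{L^1}=\lambda\|\sigma\|_{L^1}\sum_{\|n\|>R}\|n\|^{-2d}=O(\lambda R^{-d}).$$
Term-by-term Fourier transform gives $\widehat\tau(\xi)=\widehat\sigma(\xi)\cdot g(\xi)$ with $g(\xi):=\lambda\sum_{\|n\|>R}\|n\|^{-2d}e^{-2\pi i\xi(n)}$ absolutely convergent (since $2d>d$), hence bounded and continuous, so $\widehat\tau$ has the same compact support as $\widehat\sigma$ and $\tau\in\mathscr{C}$. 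Given $\|x\|$ large, pick $n\in\Lambda$ with $\|x-n\|\leq\delta$ and $\|n\|\leq\|x\|+\delta\leq 2\|x\|$; then $\tau(x)\geq\lambda c_0\|n\|^{-2d}\geq\lambda c_0(2\|x\|)^{-2d}$. Setting $\lambda:=2^{2d}/c_0$ gives $\tau(x)\geq\|x\|^{-2d}$ outside a compact set, while $\|\tau\|_{L^1}=O(R^{-d})$ becomes smaller than $\eps$ once $R$ is large.

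The main technical point I anticipate is the preservation of compact Fourier support in $(ii)$: the summability of $\|n\|^{-2d}$ over the $d$-dimensional lattice ensures that $g$ is a bounded continuous multiplier on $\widehat\sigma$, so compactness of the support of $\widehat\sigma$ transfers to $\widehat\tau$.
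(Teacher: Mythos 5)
Your construction is of the same family as the paper's: Fej\'er-type kernels (even powers of $\mathrm{sinc}$), made strictly positive by superposing two copies whose zero sets are incommensurable, with Paley--Wiener giving the compact Fourier support, and, for $(ii)$, a superposition of translates with slowly decaying weights. Your part $(ii)$ is correct, and arguably cleaner than the paper's route, which reduces to $d=1$ and then tensorises (so that one has to compare a product of one-dimensional bounds with $\|x\|^{-2d}$); your direct lattice sum with weights $\|n\|^{-2d}$ avoids that, and the multiplier argument for the compact support of $\widehat{\tau}$ is sound. Note only that in $(ii)$ all you actually use is boundedness of $\sigma$ and its derivatives together with summability of the weights, so the (incorrect) appeal to ``Schwartz decay'' is harmless there --- $\sigma_j$ is \emph{not} Schwartz, it decays only like $|t|^{-2j}$ along each coordinate axis.

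In part $(i)$ that same misstatement is load-bearing and leaves a genuine gap. Since $\sigma_j(T,0,\dots,0)\asymp T^{-2j}$, the quantity $M_j=\sup_x\|x\|^k\sigma_j(x)$ is finite only when $2j\ge k$; you never impose $j\ge\lceil k/2\rceil$, and this constraint kills your fallback for the borderline case: for $k=\dim\kg=d\ge 3$ one has $M_1=+\infty$, so ``a short computation with $j=1$'' is unavailable. More fundamentally, the borderline case $k=d$ cannot be settled by your scaling argument at all: the quantity you must make $\le 1$, namely $\sup_x\|x\|^{d}\rho(x)/\|\rho\|_{L^1}$, is invariant under $\rho\mapsto\mu^{-d}\rho(\cdot/\mu)$, so the inequality $M_j\le\|\sigma_j\|_{L^1}$ is a genuine scale-invariant constraint on the profile which you assert but do not verify; it is not obviously true for the admissible $j$ (crude asymptotics as $j\to\infty$ suggest the ratio tends to a dimension-dependent constant rather than to $0$). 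To close this you need either to exhibit a profile for which the inequality is checked in the given dimension, or a different device for $k=d$ --- for instance spreading the unit mass over many translates as in your own part $(ii)$, or following the paper's one-dimensional construction. (For what it is worth, the only application of the lemma in the paper is with $k=2(\dim\kg+1)>\dim\kg$, where your scaling argument does work once $2j\ge k$ is imposed; but the statement as given includes $k=\dim\kg$ whenever $\dim\kg\ge 2$.)
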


\begin{proof}
Let $\kg \simeq \R^d$. It is enough to prove the result for $d=1$, since we may then consider the product $f(x_1)\cdot \ldots \cdot f(x_d)$ over distinct variables, where $f=\rho$ or $\tau$.  Let $(a_{n})_{n\in \Z}\in (\R_{\geq 0})^\Z$ be non identically zero with finite total sum. Let $p\geq 1$. We define the function  $\phi : \R\rightarrow \R$ by setting 
$$\phi(x)= \sin^{2p}(\pi x)\sum_{n\in \Z}\frac{a_{n}}{(x-n)^{2p}}.$$
Then $\phi$ is smooth, $\phi \geq 0$, $\|\phi\|_{L^1}\leq C_{p}\sum_{n\in \Z} a_{n}$ for some $C_{p}>0$. The Paley-Wiener theorem ensures that $\widehat{\phi}$ has compact support. Hence $\phi\in \mathscr{C}$. 
For the first item, we choose $p=k/2$, $a_{n}=\1_{n=0}$, and we set 
$$\rho_{1}(x) =\phi(x) +\phi(x+1/2)    \,\,\,\,\, \,\,\,\,\, \,\,\,\,\, \,\,\,\,\, \,\,\,\,\, \,\,\,\,\,\rho= \frac{1}{\|\rho_1\|_{L^1}}\rho_1.$$
For the second item, we first note that $\phi_{|[n-\frac{1}{2}, n+\frac{1}{2}]}\geq c_{p}a_{n}$  for some constant $c_{p}>0$. We choose $p=1$, $a_{n}=c^{-1}_{1}|n|^{-3/2} \1_{|n|\geq R}$ where $R>0$ is large enough so that
$\|\phi\|_{L^1}\leq C_{1}\sum_{n\in \Z} a_{n}<\eps$. We can then take $\tau=\phi$.

\end{proof}

\begin{proof}[Proof of \Cref{approx}]
We may assume $f\geq 0$ and non zero. We fix a Euclidean ball $K=B(0,R_0)$  of $\kg$ containing the support of $f$, and a positive integrable function $\rho \in \mathscr{C}$ as in the first item of \Cref{LL} for $k=2(\dim \kg +1)$. Given $t>0$, we set $\rho_{t}(x)=t^{\dim \kg}\rho(tx)$ the  $t$-contraction of $\rho$. We may choose $t$ large enough $(t=t(f, \eps))$ so that
\begin{itemize}
  \item[$\bullet$]  $ f-\eps \leq f* \rho_{t} \leq f+\eps $ on $K$
 \item[$\bullet$] $ \|(f*\rho_{t}) \1_{\kg\smallsetminus K}\|_{L^1}\leq \eps$.
\end{itemize}

We are going to show the existence of $\tau_{1}, \tau_{2}\in \mathscr{C}$ such that  $\tau_{1}> \eps \1_{K}$, $\tau_{2}> (f*\rho_{t}) \1_{\kg\smallsetminus K}$, and  $\|\tau_{i}\|_{L^1} = O_{K}(\eps)$. We then set 
$$f^-=f*\rho_{t} - \tau_{1}-\tau_{2}\,\,\,\, \,\, \,\, \,\, \,\, \,\, \,\, \,\, \,\, \,\, \,\, \,\, \,\,  f^+=f*\rho_{t} +\tau_{1}$$
to obtain \Cref{approx}.

The component $\tau_{1}$ can be constructed by fixing at the beginning of the proof  $\phi\in \mathscr{C}$ such that $\phi>\1_{K}$, then setting $\tau_{1}=\eps \phi$.

For the component $\tau_{2}$, we start by observing that $f*\rho_{t}$ decays rapidly at infinity: 
\begin{align*}
f*\rho_{t}(x)= &\int_{\kg} \rho_{t}(x+s) f(-s)ds\\
&\leq \|f\|_{L^\infty} \sup_{x+K}\rho_{t}\\
&= t^{\dim \kg}\|f\|_{L^\infty} \sup_{tx+tK}\rho.
\end{align*}

There exists $C=C(f,K,\rho, t)>R$ such that for $\|x\|\geq C$, we have 
$$\sup_{tx+tK}\rho \leq \frac{1}{ t^{\dim \kg}\|f\|_{L^\infty} \|x\|^{2\dim \kg +1}}$$
which implies for $\|x\|\geq C$,
\begin{align*}
f*\rho_{t}(x)\leq   \|x\|^{-2\dim \kg -1}.
\end{align*}

By \Cref{LL} (second item), there exists $\tau\in \mathscr{C}$ such that $\|\tau\|_{L^1}< \eps$ and $\tau > f*\rho_{t}(x)$ outside of a compact set $K_1=K_1(f,K,\rho,t) \supseteq K$. It remains to  upper bound $f*\rho_{t}$ in the intermediate  zone  $L:=K_1\smallsetminus K$. To do so, we fix $\phi'\in \mathscr{C}$ such that $\phi'\geq \1_{L}$, and observe that for  large enough $t'=t'(f, \rho, t, L, \phi')$
$$(f*\rho_{t})_{|L}< (f*\rho_{t})_{|L}*\rho_{t'}+\eps \frac{1}{\|\phi'\|_{L^1}} \phi'.$$
We conclude the construction of $\tau_{2}$ by setting 
$$ \tau_{2}= \tau + (f*\rho_{t})_{|L}*\rho_{t'}+\eps \frac{1}{\|\phi'\|_{L^1}} \phi'.$$
\end{proof}

\subsection{End of proof}

We   combine  Propositions \ref{TLLFouriercompact} and \ref{approx} to deduce Theorems \ref{LLT-tronc} and  \ref{LLT}.

\bigskip
We start with the case of moderate deviations:
\TLLfcompact

\begin{proof}
Fix $\gamma_{0}>0$ small enough depending only on $(\dim \kg, \mu)$  so that the  $C\gamma_{0}$ appearing as an exponent in the bound of  \Cref{TLLFouriercompact} is  bounded by  $\frac{1}{4}\min(1, m_{\mu}-2)$. Therefore, the total error term in this proposition can be bounded by $\|f\|_{\mu_{ab}, c, L}o_{\gamma_{0}}(N^{-\frac{\dd}{2}})$. Choose $\delta_{0}=\delta_{0}(\dim \kg, \gamma_{0})>0$ so that we may apply \Cref{TLLFouriercompact}.

Let $f\in C_{c}(\kg)$. For all  $\eps>0$, we fix $f^-\leq f\leq f^+$ some  $\eps$-approximations of $f$ as in \Cref{approx}. As the Fourier transforms of $f^\pm$ are compactly supported, we may write $\|f^\pm\|_{\mu_{ab}, c, L}=O_{f, \eps}(1)$. Given $g,h\in \DN(\delta_{0})$, we infer from  \Cref{TLLFouriercompact} that
\begin{align*}
&\thetaNgh(f)- (\DilN \nu*N\Xm)_{p(g)p(h)}(f)\\
& \leq \thetaNgh(f^+)- (\DilN \nu*N\Xm)_{p(g),p(h)}(f^-)\\ 
&= (\DilN \nu*N\Xm)_{p(g),p(h)}(f^+)- (\DilN \nu*N\Xm)_{p(g),p(h)}(f^-) +o_{f, \gamma_{0}, \eps}(N^{-\frac{\dd}{2} })\\
&= (\DilN \nu)(\psi) +o_{f,  \gamma_{0},\eps}(N^{-\frac{\dd}{2} })
\end{align*}
where $\psi(x)= (f^+ - f^-)(p(g)*x*N\Xm* p(h))$. Fourier inversion formula yields 
 \begin{align*}
 (\DilN \nu)(\psi)&= \int_{\dkg}\,\hpsi( \xi) \,\hnu(\DilN \xi)\, d\xi\\
&= N^{-\frac{\dd}{2}}\int_{\dkg}\hpsi(\DilsN  \xi) \,\hnu(\xi)\,  d\xi\\
&\leq N^{-\frac{\dd}{2}} \|\psi\|_{L^1}\|\hnu\|_{L^1}. 
 \end{align*}
As $$\|\psi\|_{L^1}= \|f^+ -f^-\|_{L^1}\leq \eps $$ 
we must have 
\begin{align*}
\thetaNgh(f)- (\DilN \nu*N\Xm)_{p(g),p(h)}(f) & \leq N^{-\frac{\dd}{2}} \eps \|\hnu\|_{L^1}   +o_{f, \gamma_{0}, \eps}(N^{-\frac{\dd}{2} })\end{align*}

One can argue in the same way, reversing the roles of  $f^+$ and $f^-$, to obtain a lower bound, so in the end
\begin{align*}
\big|  \thetaNgh(f)- (\DilN \nu*N\Xm)_{p(g),p(h)}(f) \big| & \leq N^{-\frac{\dd}{2}} \eps \|\hnu\|_{L^1}    +o_{f,  \gamma_{0},\eps}(N^{-\frac{\dd}{2} }).
\end{align*}

This implies
$$\limsup_{N\to +\infty} \sup_{g,h\in \DN(\delta_{0})}N^{\frac{\dd}{2}} |\thetaNgh (f)- (\DilN \nu*N\Xm)_{p(g),p(h)}(f)| \leq \eps \|\hnu\|_{L^1}.$$
As $\eps$ is arbitrary, the limit must be zero, which is exactly  \Cref{TLLfcompact}. 
\end{proof}

\bigskip

To conclude the proof of \Cref{LLT-tronc}, it is now sufficient to check that when exactly one of the deviations is non moderate, the terms  $\Theta^{N}_{p(g),p(h)}(f)$ and $(\DilN \nu*N\chi)_{p(g),p(h)}(f)$ are both negligible compared to $N^{-\dd/2}$, and this holds uniformly in $g,h$. This is the content of the next lemma.

\toutedeviation

\begin{proof}[Proof of \Cref{toutedeviation}]
Let $K\subseteq \kg$ be a compact set containing the support of $f$. We may replace $f$ without loss of generality by $\1_{K}$. For any $g,h\in \tkg$, one has
\begin{align*}
\Theta^{N}_{p(g),p(h)}(K)=\Upsilon^{N}_{g,h}(p^{-1}(K))=\Upsilon^N((-g)*p^{-1}(K)*(-h)).
\end{align*}

Let $\delta_{0}>\delta_{1}>0$ and assume $p(g)\notin p(\DN(\delta_{0}))$, $h\in \DN(\delta_{1})$. If $\delta_{1}\lll \delta_{0}$, then for large $N$ (depending on $K$, $\delta_{0}$), we have 
$$K* p(\DN(\delta_{1}))*p(\DN(\delta_{1}))  \subseteq p(\DN(\delta_{0}))$$
and this implies
$$(-g)*p^{-1}(K)*(-h)\subseteq \tkg\smallsetminus \DN(\delta_{1}).$$
Indeed, we would otherwise have $k'*(-h)*x=g$ where $k'\in p^{-1}(K)$, $x\in \DN(\delta_{1})$, hence $K* p(\DN(\delta_{1}))^{*2}\ni p(g)$, which contradicts our assumption on $g$. 

We deduce that for $N$ large, any $g,h$ as above, 
$$\Theta^{N}_{p(g),p(h)}(K) \leq\Upsilon^N(\tkg\smallsetminus \DN(\delta_{1})).$$
By the moment estimate \Cref{momentPi}, it follows that $\Theta^{N}_{p(g),p(h)}(K)$ must be bounded by $o_{K, \delta_{0}, \delta_{1}}(N^{-\frac{\dd}{2}})$. The other cases are similar.

\end{proof}

\begin{proof}[Proof of \Cref{LLT-tronc}]
Combine \Cref{TLLfcompact} and \Cref{toutedeviation}.
\end{proof}


\section{Local limit theorem for ratios } \label{Sec-LLT-ratio}


In this section we prove \Cref{mu/Leb} and \Cref{mu/nu}.
We keep the notations $(\kg, *, \tkg, \Xab, \dd, (\km^{(b)})_{b\leq 2s-1}, \Xm, \chi, *')$ from   \Cref{Sec-cadre}. We assume that $\mu$ is an aperiodic probability measure on $\kg$ with finite $(\dd+2)$-th moment for the weight filtration induced by $\Xab$, and expectation $\E(\mu_{ab})=\Xab$. In particular $X=\E(\mu^{(1)})$. We recall that $\nu=v(x)dx$ denotes the limit density in the central limit theorem  \eqref{CLT} for the $\mu$-walk on $(\kg,*)$.

Our argument relies  on the uniform local limit theorem established in \Cref{Sec-uniform-LLT} and a good control of the limit density $\nu$. Accordingly, all the estimates of this section are still valid if we only assume $\mu$ to have a finite moment of order strictly greater than $2$ and replace $\mu^{*N}$ by its truncated approximation $\Theta^N$, see  \Cref{Sec-uniform-LLT}. We leave this verification to the reader.

\subsection{Comparison with Lebesgue}

We prove \Cref{mu/Leb}. It follows from the following more general result. 
\begin{proposition}[Asymptotic with variable recentering] \label{muvsleb} 
For every $f\in C_{c}(\kg)$, $C>0$ $N\geq 1$, $g,h\in \DilN (B_{\tkg}(0,C))$ we have 
$$ (\mu^{*N}*-N\Xm)_{p(g), p(h)}(f) = v(x_{N,g,h}) N^{-\dd/2}\int_{\kg}f dx \,\,+\,\, o_{ f, C}(N^{-\dd/2})$$
where $v$ is the density of the limit measure $\nu=v(x)dx$, and  
$$x_{N,g,h}= p\left(\DilsN(g)\right)^{-1} *p\left(\DilsN(h)^{-1}\right)$$ 
for $p:\tkg\rightarrow \kg$  the projection $x\oplus t\chi\mapsto x+t\Xm$.
\end{proposition}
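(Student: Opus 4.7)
The strategy is to invoke the uniform local limit theorem (Theorem \ref{LLT}) and then extract the leading-order asymptotic by a change of variables. First I would verify that for $g, h \in \DilN(B_{\tkg}(0,C))$, the projections $p(g), p(h)$ lie in the moderate deviation range $\mathcal{E}_{N,\eps_0}$: writing $g = \DilN g'$ with $\|g'\| \leq C$ and decomposing $g'$ along the weight grading of $\tkg$, the weight-$i$ components of $p(g)$ are controlled by $CN^{i/2}$ for $i\geq 2$, while the $\chi$-component $g'_{2,\chi}\chi$ contributes a term $Ng'_{2,\chi}\Xm \in \km^{(1)}$ that is absorbed by the $[-N^{1+\eps_0}, N^{1+\eps_0}]\XX$ translate part of $\mathcal{E}_{N,\eps_0}$. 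Theorem \ref{LLT} applied to the bideviation $f_{p(g), p(h)} : x \mapsto f(p(g)*x*p(h))$ then yields
$$(\mu^{*N}*-N\Xm)_{p(g), p(h)}(f) = (\DilN\nu)(f_{p(g),p(h)}) + o_{f,C}(N^{-\dd/2}),$$
uniformly in such $g,h$.

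Next, I would substitute $u = p(g)*\DilN x * p(h)$ in the integral $(\DilN\nu)(f_{p(g),p(h)}) = \int_\kg f(p(g)*\DilN x * p(h))\, v(x)\, dx$. Since Lebesgue is bi-invariant Haar on $(\kg,*)$ and $|\det \DilN| = N^{\dd/2}$, the Jacobian of this change of variables is $N^{\dd/2}$, so
$$(\DilN\nu)(f_{p(g),p(h)}) = N^{-\dd/2}\int_\kg f(u)\, v\bigl(\DilsN(p(g)^{-1}*u*p(h)^{-1})\bigr)\, du.$$

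The core of the argument is then to show that, uniformly in $u$ lying in the compact support of $f$,
$$\DilsN\bigl(p(g)^{-1} * u * p(h)^{-1}\bigr) \longrightarrow x_{N,g,h}$$
as $N \to \infty$. This would be handled by comparing $*$ with its graded counterpart $*'$: since $\DilsN$ is an automorphism of $(\kg,*')$, one has the exact identity $\DilsN(p(g)^{-1} *' u *' p(h)^{-1}) = \DilsN(p(g))^{-1} *' \DilsN(u) *' \DilsN(p(h))^{-1}$, in which the middle factor $\DilsN(u)$ vanishes for bounded $u$, yielding in the limit the point $x_{N,g,h}$. The discrepancy between $*$ and $*'$ consists of higher-weight contributions which vanish after $\DilsN$-rescaling, in the spirit of Proposition \ref{grading-Fourier}. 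This comparison is the main analytic obstacle: one must carefully track the fact that $p(g)$ may carry an $\Xm$-contribution of order $N$ coming from the $\chi$-direction of $g'$, which does not fit the natural $\DilN$-scaling on $\km^{(1)}$ and requires the $*'$-formulation to be handled cleanly.

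Finally, since $v$ is smooth and bounded (in fact Schwartz on $\kg$), the dominated convergence theorem, applied with $\|v\|_\infty |f|$ as an integrable dominator, upgrades the pointwise convergence above to convergence of the integral:
$$\int_\kg f(u)\, v\bigl(\DilsN(p(g)^{-1}*u*p(h)^{-1})\bigr)\, du \longrightarrow v(x_{N,g,h}) \int_\kg f(u)\, du,$$
uniformly in $g,h \in \DilN(B_{\tkg}(0,C))$, and combining with the LLT approximation of the first step delivers the claimed asymptotic.
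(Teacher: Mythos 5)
Your opening moves are sound and agree with the paper's: the uniform LLT (the paper invokes \Cref{TLLfcompact} rather than \Cref{LLT}, but this is the same input) reduces the problem to the asymptotics of $(\DilN \nu)_{p(g),p(h)}(f)$, and your change of variables with Jacobian $N^{\dd/2}$ is correct. Everything then hinges on the convergence
$$\DilsN\bigl(p(g)^{-1}*u*p(h)^{-1}\bigr)-x_{N,g,h}\longrightarrow 0,$$
which you flag as ``the main analytic obstacle'' but do not prove --- and which in fact fails in general. The point is that $p$ does not intertwine the dilations of $\tkg$ and $\kg$: writing $g=\DilN g'$ with $g'$ bounded and $t_g$ for the $\chi$-coefficient of $g'$, one has $p(g)=\DilN(g'-t_g\chi)+Nt_g\Xm$, a term of size $N$ in the weight-one direction $\Xm$ which $\DilsN$ contracts only by $N^{-1/2}$. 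Hence $\DilsN\bigl(p(g)^{-1}*u*p(h)^{-1}\bigr)$ contains a summand $-\sqrt{N}(t_g+t_h)\Xm+O(1)$ and diverges whenever $t_g+t_h\neq 0$, whereas $x_{N,g,h}$ stays bounded; the $*'$-identity you invoke cannot repair this, because $\DilsN(p(g))$ and $p(\DilsN(g))$ differ by $(\sqrt{N}-1)t_g\Xm$. Already for $\kg=\R$, $\Xm\neq 0$, $g=0$, $h=N\chi=\DilN\chi$, the left-hand side of the proposition equals $\E[f(S_N)]=o(N^{-1/2})$ while the asserted right-hand side is $v(-\Xm)N^{-1/2}\int f+o(N^{-1/2})$ with $v(-\Xm)>0$; so no argument along these lines can close the gap without restricting the deviations so that the $\chi$-components cancel, e.g.\ to $g,h\in\DilN(B_{\kg}(0,C))$, which is all that the application to \Cref{mu/Leb} requires.

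A second defect persists even for $g,h\in\kg$: expanding Baker--Campbell--Hausdorff, every bracket involving the undilated variable $u$, and every component of $[\DilN a^{(i)},\DilN b^{(j)}]$ lying strictly deeper than $\km^{(i+j)}$, is annihilated by the rescaling, so that $\DilsN(\DilN a*u*\DilN b)=a*'b+O(N^{-1/2})$ for bounded $a,b,u$. Your pointwise limit is therefore the \emph{graded} product $\DilsN(g)^{-1}*'\DilsN(h)^{-1}$, not the product $*$ appearing in the definition of $x_{N,g,h}$, and these give different values of $v$ in general. The paper never attempts this physical-space comparison: it stays on the Fourier side, restricts to the reduced domain $\UN(\gamma_0)$ via \Cref{compl}, trades $*$ for $*'$ there at a quantified cost (\Cref{remplacement*}), uses that $\DilN$ is a $*'$-automorphism to pull the deviations inside the dilation, and only then changes variables with the now-bounded deviations $p(\DilsN g),p(\DilsN h)$ before applying dominated convergence. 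To salvage your route you would need to restrict to $g,h\in\kg$, replace $*$ by $*'$ in $x_{N,g,h}$ (or justify why the discrepancy is harmless), and supply a quantitative version of the $*$ versus $*'$ comparison in the spirit of \Cref{grading-Fourier}.
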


\noindent\emph{Remark}. Recall from \cite[Section 3.3.2]{benard-breuillard-CLT} that $v(0)$ may vanish. In particular, the sequence $ (\mu^{*N}*-N\Xm)(f)$ may be $o_{ f, C}(N^{-\dd/2})$. This never happens in the centered case  (i.e. $\Xab=0$) for which $v>0$ (see \Cref{fact-gaussian}).

\begin{proof}

 By \Cref{TLLfcompact},
\begin{align*}
(\mu^{*N}*-N\Xm)_{p(g), p(h)}(f) = (\DilN \nu)_{p(g), p(h)}(f)+o_{f,C}( N^{-\dd/2}).
\end{align*}

Let $\gamma_{0} <1/2$. Set $e_{\xi,N,g,h}= e^{-2i\pi \xi\circ p(g^{(\chi)}+h^{(\chi)})}$ and $g_{N}=\DilsN g$, $h_{N}=\DilsN h$. Applying Fourier inversion formula, \Cref{compl} and \Cref{remplacement*}, we obtain
\begin{align*}
(\DilN \nu)_{p(g), p(h)}(f) &=\int_{\dkg} \hf(\xi) \widehat{(\DilN \nu)_{p(g), p(h)}}(\xi) d\xi\\
 &=\int_{\dkg} \hf(\xi) \widehat{(\DilN \nu)_{g, h}}(\xi) e_{\xi,N,g,h} d\xi\\
 &=\int_{\UN(\gamma_{0})} \hf(\xi) \widehat{(\DilN \nu)_{g, h}}(\xi) e_{\xi,N,g,h} d\xi   + \|f\|_{L^1}O_{\gamma_{0}}(N^{-\frac{\dd+1}{2}})\\
&= \int_{\UN(\gamma_{0})} \hf(\xi) \widehat{(g*'\DilN \nu *'h)}(\xi) e_{\xi,N,g,h} d\xi   + \|f\|_{L^1}O_{\gamma_{0}}(N^{-\frac{\dd+1}{2}+O(\gamma_{0}) })\\
&= \int_{\UN(\gamma_{0})} \hf(\xi) \widehat{(\DilN (g_{N}*'\nu *'h_{N}))}(\xi) e_{\xi,N,g,h} d\xi   + \|f\|_{L^1}O_{\gamma_{0}}(N^{-\frac{\dd+1}{2}+O(\gamma_{0}) })\\
&= \int_{\UN(\gamma_{0})} \hf(\xi) \widehat{(\DilN (g_{N}*\nu *h_{N}))}(\xi) e_{\xi,N,g,h} d\xi   + \|f\|_{L^1}O_{\gamma_{0}}(N^{-\frac{\dd+1}{2}+O(\gamma_{0}) })\\
&=[\DilN (p(g_{N})*\nu *p(h_{N}))](f)+ \|f\|_{L^1}O_{\gamma_{0}}(N^{-\frac{\dd+1}{2}+O(\gamma_{0}) }).
 \end{align*}
Pursuing the computation
\begin{align*}
[\DilN (p(g_{N})*\nu *p(h_{N}))](f)
&= \int_{\kg} f\left(\DilN (p(g_{N})* x*p(h_{N}))\right) v(x)dx\\
&= N^{-\dd /2} \int_{\kg} f(x)\, v \left(p(g_{N})^{-1}*\DilsN x*p(h_{N})^{-1} \right) dx\\
&= v \left(p(g_{N})^{-1}*p(h_{N})^{-1} \right) N^{-\dd /2} \int_{\kg} f(x) dx  +r
 \end{align*}
 where the difference $r$ is bounded by
 \begin{align*}
r \leq  N^{-\dd /2} \int_{\kg}  \big|v \left(p(g_{N})^{-1}*\DilsN x*p(h_{N})^{-1} \right) -  v \left(p(g_{N})^{-1}*p(h_{N})^{-1} \right)\big| \,\,|f(x)|dx
 \end{align*}
whence is $o_{C}(N^{-\dd/2})$  by dominated convergence. 
This concludes the proof. 
\end{proof}

\subsection{Comparison with $\nu$} \label{Sec-nu/mu}

The goal of the section is to show \Cref{mu/nu}. We  assume that $\mu$ is {\bf centered}, aperiodic, with finite moment of order $m_{\mu}>\dg+2$ for the central filtration, and we show that the asymptotic $\mu^{*N}(f) \sim (\DilN \nu)(f)$ implied by \Cref{mu/Leb}, actually holds uniformly for a certain range of deviations, namely those in 
 $$W_{N}(c):=\{x\in \kg,\,|x^{(i)}|\leq c\left(N\log N\right)^{i/2} \}$$
 for $c>0$ small.

 The assumption that $\mu$ is centered is used to guarantee that the limit density $\nu=v(x)dx$ has a  Gaussian lower bound. Indeed, we record the following fact \cite{varopoulos-saloff-coulhon92}.
\begin{fact}\label{fact-gaussian}
Assume $\Xab=0$. Let $\norm{.}$ denote the distance to $0$ for a left-invariant distance on $(\kg,*)$. There exists $A>1$ such that for all $x\in \kg$
$$A^{-1} e^{-A \norm{x}^2} \leq v(x) \leq A e^{-A^{-1} \norm{x}^2}.$$
\end{fact}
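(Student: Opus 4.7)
The plan is to identify $v$ with the value at time $1$ of a hypoelliptic heat kernel on the graded group $(\kg,*')$ and then invoke the classical two-sided Gaussian bounds of Varopoulos--Saloff-Coste--Coulhon \cite{varopoulos-saloff-coulhon92}. In the centered case $\Xab=0$ the nilpotent operator $a_\Xab$ vanishes, so the time-dependent generator $\mathscr{L}_t$ of the diffusion described after \eqref{CLT} collapses to the time-independent left-invariant operator
\begin{equation*}
\mathscr{L}=\tfrac{1}{2}\sum_{i=1}^q E_i^2+B_\mu,
\end{equation*}
on $(\kg,*')$, with $E_1,\dots,E_q$ an orthonormal basis of $\km^{(1)}$ and $B_\mu\in\km^{(2)}$ a constant drift. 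Since $\mu_{ab}$ is non-degenerate on $\km^{(1)}$ and since $\km^{(1)}$ Lie-generates $\kg$ (the weight filtration coincides with the descending central series in the centered case, so $\kg^{[i+1]}=[\kg^{[1]},\kg^{[i]}]$), Hörmander's bracket-generating condition is satisfied, and $v(x)=p_1^{\mathscr{L}}(0,x)$ is the value at time~$1$ of the smooth heat kernel.

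Next, I would remove the drift. The element $B_\mu\in\km^{(2)}$ produces, under the $*'$-product, a deterministic translation of the diffusion by $tB_\mu$ at time $t$: writing $\sigma_t=\sigma_t^0 *' (tB_\mu)$ where $\sigma_t^0$ is driven by the pure sub-Laplacian $\mathscr{L}_0=\tfrac{1}{2}\sum_i E_i^2$, one obtains $v(x)=p_1^{\mathscr{L}_0}(0,\,x*'(-B_\mu))$. Since $\|B_\mu\|=O(1)$, this reduces the claim to Gaussian bounds for $p_1^{\mathscr{L}_0}$ at the point $x*'(-B_\mu)$, which differs from $x$ by a bounded left-invariant shift.

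For the pure sub-Laplacian $\mathscr{L}_0$ on the stratified nilpotent Lie group $(\kg,*')$, I would directly invoke the Varopoulos--Saloff-Coste--Coulhon estimates: there exist constants $C,c>0$ such that for every $t>0$ and $x\in\kg$,
\begin{equation*}
c\,t^{-\dg/2}e^{-C\,d_{CC}(0,x)^{2}/t}\;\le\;p_{t}^{\mathscr{L}_0}(0,x)\;\le\;C\,t^{-\dg/2}e^{-c\,d_{CC}(0,x)^{2}/t},
\end{equation*}
where $d_{CC}$ is the Carnot--Carath\'eodory distance on $(\kg,*')$ associated with the horizontal subspace $\km^{(1)}$. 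Setting $t=1$ and absorbing the bounded translation by $-B_\mu$ into constants completes the sub-Laplacian step.

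Finally, I would transfer the bound to the left-invariant Riemannian distance $\norm{\cdot}$ on $(\kg,*)$ appearing in the statement. On a simply connected nilpotent Lie group, both $d_{CC}$ (on $(\kg,*')$) and $\norm{\cdot}$ (on $(\kg,*)$) are globally equivalent to the homogeneous gauge $\|x\|_{\mathrm{hom}}:=\max_i\|x^{(i)}\|^{1/i}$: for $d_{CC}$ this is the ball--box theorem on Carnot groups, and for $\norm{\cdot}$ this follows from the Bass--Guivarc'h volume comparison together with the fact that the Riemannian and $*'$-structures agree to top order in the graded quotients. The main obstacle is precisely this distance comparison: one must be careful near the origin, where Riemannian distance is Euclidean while $d_{CC}$ sees the stratification, but the statement \emph{tolerates} different constants on either side, and on any compact set $v$ is smooth and strictly positive, so small-scale behaviour is absorbed into the constant $A$. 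Once the comparison $c\|x\|_{\mathrm{hom}}\le d_{CC}(0,x)\le C\|x\|_{\mathrm{hom}}$ and a parallel one for $\norm{\cdot}$ are in hand, the desired two-sided Gaussian bound for $v$ follows.
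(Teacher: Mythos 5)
The paper does not prove this statement at all: it is recorded as a quoted fact, with the two-sided Gaussian bound taken from \cite{varopoulos-saloff-coulhon92}. Your write-up instead attempts a derivation, and it contains a genuine gap at the drift-removal step. In the centered case the drift $B_\mu=\E_\mu(x^{(2)})\in\km^{(2)}$ is in general \emph{not} central in $(\kg,*')$, and the diffusion generated by $\tfrac12\sum_i E_i^2+B_\mu$ is \emph{not} the driftless diffusion translated by $tB_\mu$. If the identity $\sigma_t=\sigma_t^0*'(tB_\mu)$ held, then differentiating $t\mapsto\int f\,d\bigl(\sigma_t^0*'\delta_{tB_\mu}\bigr)$ shows that this family solves the forward equation for the operator $\tfrac12\sum_i\bigl(\Ad(\exp(-tB_\mu))E_i\bigr)^2+B_\mu$, not for $\tfrac12\sum_i E_i^2+B_\mu$; since $\Ad(\exp(-tB_\mu))E_i=E_i-t[B_\mu,E_i]'+\cdots$, the two operators agree only when $[B_\mu,\km^{(1)}]'=0$, which (as $\km^{(1)}$ Lie-generates $\kg$) forces $B_\mu$ to be central. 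That is true on step-$2$ groups such as the Heisenberg group, but fails as soon as $s\ge 3$ with $[B_\mu,\km^{(1)}]'\neq 0$: conjugation produces extra first-order terms $-t\sum_i[B_\mu,E_i]'\,E_i$ which genuinely change the joint law of the higher strata, so $\nu$ is not a translate of the driftless kernel. Translating on the left instead does not help (you would get a right-invariant drift), and Girsanov/Cameron--Martin cannot remove $B_\mu$ either, because $B_\mu$ is not a horizontal field (it is not in the span of the $E_i$).

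Consequently the statement you need is intrinsically about sub-Laplacians \emph{with a drift in the derived algebra}, and the two-sided Gaussian estimate for such operators is itself a nontrivial theorem; this is exactly what the paper invokes by citing \cite{varopoulos-saloff-coulhon92} (see also Alexopoulos \cite{alexopoulos02-2,alexopoulos02-3} for centered sub-Laplacians with drift), rather than reducing to the pure sub-Laplacian. The rest of your argument is fine: granting Gaussian bounds for the drifted kernel in terms of the Carnot--Carath\'eodory distance (or any homogeneous gauge) on $(\kg,*')$, your large-scale comparison with the left-invariant Riemannian distance $\norm{\cdot}$ on $(\kg,*)$, with small scales absorbed into the constant $A$, is correct. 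But as written, the factorization $\sigma_t=\sigma_t^0*'(tB_\mu)$ is false in general, and the proof cannot be repaired without invoking (or reproving) the heat-kernel estimates with drift.
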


Using the  lower bound, we show  
\begin{proposition} \label{nu-min-Fcompact}
Assume $\Xab=0$. Let $\delta>0$ and $c\lll_{\delta} 1$. Let $f\in C^\infty(\kg)$ be a non-negative integrable function  with compactly supported Fourier transform, such that $f\neq 0$. For all $N\geq 1$ large enough,  $g,h \in W_N(c)$
$$(\DilN \nu)_{g,h}(f)\geq  N^{-\frac{\dg}{2} -\delta}. $$
\end{proposition}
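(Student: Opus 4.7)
The plan is to reduce the lower bound to a pointwise lower bound on the density $v$ of $\nu$ after translation and contraction, then invoke the Gaussian lower bound of \Cref{fact-gaussian} (which relies crucially on the centered assumption $\Xab=0$).

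Writing $\nu=v(x)dx$ and using left/right invariance of Lebesgue measure on $(\kg,*)$, we have
\begin{equation*}
(\DilN\nu)_{g,h}(f) = N^{-\dg/2}\int_{\kg} f(z)\, v\bigl(\DilsN(g^{-1}*z*h^{-1})\bigr)\,dz.
\end{equation*}
Since $f\geq 0$, $f \in L^1$ and $f\not\equiv 0$, we may fix $R=R(f)\geq 1$ with $\int_{\norm{z}\leq R} f(z)\,dz \geq \tfrac{1}{2}\int_\kg f >0$. It then suffices to lower bound the integrand uniformly for $z$ with $\norm{z}\leq R$ and $g,h \in W_N(c)$.

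Next, we expand $g^{-1}*z*h^{-1}$ via Baker-Campbell-Hausdorff: the $\km^{(i)}$-component is a polynomial of w-degree $i$ in the weight components of $g,z,h$, given by a finite sum of brackets $[v_1,\ldots,v_k]$ with $v_j$ of weight $i_j$ and $\sum_j i_j=i$. For $\norm{z}\leq R$ the standard ball-box estimate yields $\|z^{(j)}\| \ll_\kg R^j$, while $\|g^{(j)}\|,\|h^{(j)}\|\leq c(N\log N)^{j/2}$ by hypothesis. For $N$ large enough (depending on $R$ and $c$), $R^{i_j}\leq c(N\log N)^{i_j/2}$ for every $i_j\leq s$, so each factor in each BCH bracket of weight $i$ is bounded by $c(N\log N)^{i_j/2}$. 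The bracket itself is therefore bounded by $c^k(N\log N)^{i/2}\leq c(N\log N)^{i/2}$ (for $c<1$ and $k\geq 1$). Summing the finitely many BCH terms,
\begin{equation*}
\|(g^{-1}*z*h^{-1})^{(i)}\|\leq C(\kg)\,c\,(N\log N)^{i/2},
\end{equation*}
and consequently $\|\DilsN(g^{-1}*z*h^{-1})^{(i)}\| \leq C(\kg)\,c\,(\log N)^{i/2}$ for each $i$.

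Finally, using the standard comparison $\norm{x} \ll_\kg 1+\sum_i \|x^{(i)}\|^{1/i}$ between the left-invariant Riemannian distance and the homogeneous norm, and noting that for $C(\kg)c\leq 1$ the dominant contribution $(C(\kg)c)^{1/i}(\log N)^{1/2}$ occurs at $i=s$, we get $\norm{x}^2 \leq C'(\kg)\,c^{2/s}\,\log N + O(1)$ for $x=\DilsN(g^{-1}*z*h^{-1})$. \Cref{fact-gaussian} then gives
\begin{equation*}
v(x)\,\geq\, A^{-1}\exp(-A\norm{x}^2)\,\gg\, N^{-A\,C'(\kg)\,c^{2/s}}.
\end{equation*}
Choosing $c\lll_\delta 1$ so that $A\,C'(\kg)\,c^{2/s}\leq \delta/2$, we conclude $v(x)\gg N^{-\delta/2}$ uniformly on our region, whence
\begin{equation*}
(\DilN\nu)_{g,h}(f) \,\gg\, \bigl(\textstyle\int f\bigr)\,N^{-\dg/2-\delta/2},
\end{equation*}
which exceeds $N^{-\dg/2-\delta}$ for $N$ large (depending on $f$ and $\delta$). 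The main obstacle is the BCH bookkeeping ensuring that $C(\kg)$ in the bound on $\|(g^{-1}*z*h^{-1})^{(i)}\|$ is independent of $f$; this holds because for $N$ large the fixed-$R$ scale is absorbed by the growing scale $c(N\log N)^{1/2}$, so that $c$ ends up depending only on $\delta$.
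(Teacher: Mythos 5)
Your proposal is correct and follows essentially the same route as the paper: restrict to a compact set carrying a positive fraction of $\int f$, show via BCH/ball-box that $\DilsN(g^{-1}*z*h^{-1})$ has Riemannian norm $O(c^{1/s}\sqrt{\log N})$ for $g,h\in W_N(c)$ and $N$ large, and feed this into the Gaussian lower bound of \Cref{fact-gaussian}. The paper merely phrases the last step contrapositively (describing the superlevel set $\{v\circ r_{N,g,h}\geq N^{-\delta/2}\}$), which is a cosmetic difference.
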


Combining with \Cref{TLLFouriercompact}, we get 

\begin{corollary} \label{mu/nuFcompact}
Assume $\Xab=0$, $m_{\mu}>\dg+2$ and $c\lll 1$. Let $f\in C^\infty(\kg)$ be a non-negative integrable function  with compactly supported Fourier transform, such that $f\neq 0$. For $N\geq 1$,  $g,h\in W_{N}(c)$, 
we have
$$\frac{\mu^{*N}_{g,h}(f)}{(\DilN \nu)_{g,h}(f)} = 1 +o_{f}(1).$$
\end{corollary}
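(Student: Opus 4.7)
The plan is to compare $\mu^{*N}_{g,h}(f)$ and $(\DilN\nu)_{g,h}(f)$ via the triangle inequality, using Prop.~\ref{TLLFouriercompact} to estimate the numerator of the difference and Prop.~\ref{nu-min-Fcompact} to lower-bound the denominator, with all parameters tuned so the relative error tends to zero uniformly on $W_N(c)$. Since $\mu$ is centered, $\Xm=0$ and the weight filtration coincides with the descending central series, so $\dd=\dg$ and $(\DilN\nu*N\Xm)_{p(g),p(h)}=(\DilN\nu)_{g,h}$; the bias extension collapses to $\tkg=\kg$, so the hypotheses of Prop.~\ref{TLLFouriercompact} apply directly.

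First I would fix a small $\gamma_0>0$ (depending on $\dim\kg$ and $m_\mu$) so that the exponent $C\gamma_0$ appearing in Prop.~\ref{TLLFouriercompact} is at most $\tfrac14$ and simultaneously the exponent $1-\tfrac{1-\gamma_0}{2}m_\mu$ from Lemma~\ref{truncation-cost} is below $-\tfrac{\dg}{2}-\tfrac14$ (possible since $m_\mu>\dg+2$). This determines $L=L(\dim\kg,\gamma_0)$ from Prop.~\ref{TLLFouriercompact}, and one checks that for all $N$ large enough $W_N(c)\subseteq \DN(1/L)$ regardless of $c$, because $(\log N)^{i/2}=o(N^{1/(sL)})$. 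Choose also $c_1>0$ small enough that $\|f\|_{\mu_{ab},c_1,L}<\infty$: this is finite because $\hf$ has compact support, $f\in L^1$, and by aperiodicity of $\mu_{ab}$ the function $gap_{c_1}^{-L}$ is bounded on the compact support of $\hf$.

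Combining Prop.~\ref{TLLFouriercompact} (which gives the uniform error $\|f\|_{\mu_{ab},c_1,L}\,O_{\gamma_0}(N^{-(\dg+1)/2+C\gamma_0})$ for $g,h\in\DN(1/L)$) with the truncation bound from Lemma~\ref{truncation-cost} (which controls $|\mu^{*N}_{g,h}(f)-\Theta^N_{g,h}(f)|\leq \|f\|_\infty\,\|\mu^{*N}-\Theta^N\|_{TV}=o(N^{-\dg/2-1/8})$ uniformly in $g,h$, as left-right translation is a TV-isometry), I obtain a $\delta'>0$ such that
\[
\bigl|\mu^{*N}_{g,h}(f)-(\DilN\nu)_{g,h}(f)\bigr|=O_f(N^{-\dg/2-\delta'})\quad\text{uniformly for }g,h\in W_N(c).
\]
Now apply Prop.~\ref{nu-min-Fcompact} with parameter $\delta:=\delta'/2$, which prescribes $c\lll_\delta 1$; for this $c$ and all $N$ large, $(\DilN\nu)_{g,h}(f)\geq N^{-\dg/2-\delta}$ for every $g,h\in W_N(c)$. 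Dividing gives
\[
\left|\frac{\mu^{*N}_{g,h}(f)}{(\DilN\nu)_{g,h}(f)}-1\right|=O_f(N^{\delta-\delta'})=O_f(N^{-\delta'/2})\longrightarrow 0
\]
uniformly in $g,h\in W_N(c)$, which is the desired conclusion.

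The main obstacle is bookkeeping the hierarchy of small constants: $\gamma_0$ must be chosen before $L$, $c$ before applying Prop.~\ref{nu-min-Fcompact}, and the gap $\tfrac12$ in the LLT exponent must strictly dominate the deficit $\delta$ in the lower bound on $\nu$. All of this is compatible because the power-saving $N^{-1/2}$ in the effective LLT (thanks to $m_\mu>\dg+2$) is strictly better than the loss $N^{-\delta}$ one can tolerate in the denominator for $\delta$ arbitrarily small; the only subtle point is that the uniformity in $g,h$ in Prop.~\ref{TLLFouriercompact} enters the error via the norm $\|f\|_{\mu_{ab},c_1,L}$ of the \emph{untranslated} $f$ rather than of its bi-translate, so no uniformity issue arises on the error side.
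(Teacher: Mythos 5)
Your proof is correct and is essentially the argument the paper intends: the corollary is obtained by combining the effective LLT (\Cref{TLLFouriercompact}, together with the truncation cost of \Cref{truncation-cost}) to bound the numerator of the difference, with the lower bound of \Cref{nu-min-Fcompact} for the denominator, exactly as you do. The only (harmless) quantitative slip is that $m_{\mu}>\dg+2$ only allows you to push the truncation exponent below $-\tfrac{\dg}{2}-\delta''$ for some $\delta''>0$ depending on $m_{\mu}-\dg-2$, not necessarily below $-\tfrac{\dg}{2}-\tfrac14$; since your argument only requires some positive power saving $\delta'$, this does not affect the conclusion.
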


We will then deduce  \Cref{mu/nu}  by applying a refinement of the approximation technique used in the proof of the uniform local limit theorem.

\bigskip

\begin{proof}[Proof of \Cref{nu-min-Fcompact}]. 
 We can write $(\DilN \nu)_{g,h}=N^{-\dg/2}w_{N,g,h}(x) dx$ 
where $$w_{N,g,h}(x)=v\circ r_{N,g,h}(x) \,\, \,\, \,\, \,\, \,\, \,\,\,\,\,\,\,\,\,\,\,\,r_{N,g,h}(x)=\DilsN \left( g^{-1}*x *h^{-1} \right).$$
We need to show that for $c\lll_{\delta}1$, $N\geq 1$ large enough (possibly depending on $f$), $g,h\in W_{N}(c)$, 
$$\int_{\kg} f(x) w_{N,g,h}(x)dx \geq N^{-\delta}. $$
As $f$ is non-negative non-identically zero, it is enough to show that for $c\lll_{\delta}1$, any compact subset of $\kg$ is included in the set $\{w_{N,g,h} \geq N^{-\frac{\delta}{2}}\}$ for  $N$  large enough and $g,h\in W_{N}(c)$. To analyse this set, we use the lower bound in \Cref{fact-gaussian}

Consider $x$  such that $w_{N,g,h}(x) < N^{-\frac{\delta}{2}}$. Using  \Cref{fact-gaussian}, we infer
\begin{align*}
A^{-1} e^{-A \norm{r_{N,g,h}(x)}^2} < N^{-\frac{\delta}{2}}
\end{align*}
which can be rewritten 
$$ \frac{\delta}{2A} \log N - \frac{\log A}{A} \leq \norm{r_{N,g,h}(x)}^2$$
so for $N\ggg_{A, \delta} 1$, 
$$ \sqrt{\frac{\delta \log N}{3A }} \leq \norm{r_{N,g,h}(x)}.$$
This discussion justifies the inclusion
$$\{w_{N,g,h} \geq N^{-\frac{\delta}{2}}\} \supseteq \left\{x\in \kg\,:\, \norm{r_{N,g,h}(x)} < \sqrt{\frac{\delta \log N}{3A }} \right\}$$
for any $N\ggg_{A, \delta} 1$.
However, by the ball-box principle, 
$$\norm{r_{N,g,h}(x)} \ll \max_{i\leq s}\|(r_{N,g,h}(x))^{(i)}\|^{1/i}= N^{-1/2} \,\max_{i\leq s}\|(g^{-1}*x*h)^{(i)}\|^{1/i}$$
so
\begin{align*}
\{w_{N,g,h} \geq N^{-\frac{\delta}{2}}\}& \supseteq \left\{x\in \kg\,:\,\forall i\leq s, \,\|(g^{-1}*x*h^{-1})^{(i)}\| \lll_{A, \delta} (N\log N)^{i/2}  \right\}.\\
\end{align*}
Noticing that for any $\delta>0$, we have $W_{N}(\delta)*W_{N}(\delta)\subseteq W_{N}(O(\delta))$, we conclude that for $c\lll_{A, \delta}1$, $N\ggg_{A, \delta} 1$, $g,h\in W_{N}(c)$, 
\begin{align*}
\{w_{N,g,h} \geq N^{-1/4}\}& \supseteq W_{N}(c).
\end{align*}
As $A$ can be fixed as a function of the initial data, our conventions allow to remove it from the subscript, see \Cref{Sec-cadre-asympt}.

\end{proof}

We can now conclude the proof of \Cref{mu/nu}. 

\begin{proof}[Proof of \Cref{mu/nu}]
Fix $\delta \in (0,1/2]$ such that $\mu$ has a finite moment of order $\dg+2+2\delta$. We choose $\delta$ is a function of $\mu$, so it does not appear in the subscripts of our Landau and Vinogradov notations (see \Cref{Sec-cadre-asympt}). Fix $c\lll 1$ as in \Cref{nu-min-Fcompact} and  \Cref{mu/nuFcompact}.  Let $\eps>0$ and fix  $f^-\leq f \leq f^+$  an $\eps$-approximation of $f$ by functions with compactly supported Fourier transforms, as in \Cref{approx}. As  $f$ is non negative and non identically zero, we have for $N\geq 1$, $g,h\in W_N(c)$
\begin{align*}
\frac{\mu^{*N}_{g,h}(f)}{(\DilN \nu)_{g,h}(f)}&\geq \frac{\mu^{*N}_{g,h}(f^-)}{(\DilN \nu)_{g,h}(f^+)} \\   
&= \frac{(\DilN \nu)_{g,h}(f^-) +o_{f^-}(N^{-\frac{\dg}{2}-\delta}) }{(\DilN \nu)_{g,h}(f^+)} \\ 
&= \frac{(\DilN \nu)_{g,h}(f^-)}{(\DilN \nu)_{g,h}(f^+)} +o_{f^\pm}(1)   
\end{align*}
where the second line uses the local limit theorem with power saving from \Cref{TLLFouriercompact}, and the last line uses \Cref{nu-min-Fcompact}.

Moreover, 
\begin{align*}
\frac{\mu^{*N}_{g,h}(f)}{(\DilN \nu)_{g,h}(f)}&\leq \frac{\mu^{*N}_{g,h}(f^+)}{(\DilN \nu)_{g,h}(f)} \\   
&= \frac{\mu^{*N}_{g,h}(f^+)}{(\DilN \nu)_{g,h}(f^+)} \frac{(\DilN \nu)_{g,h}(f^+)}{(\DilN \nu)_{g,h}(f)} \\ 
&= (1+o_{f^+}(1)) \frac{(\DilN \nu)_{g,h}(f^+)}{(\DilN \nu)_{g,h}(f)} 
\end{align*}
where the last line uses \Cref{mu/nuFcompact}.
To conclude, it is enough to prove that  
$$\frac{(\DilN \nu)_{g,h}(f^-)}{(\DilN \nu)_{g,h}(f^+)}=1+O_{f}(\eps) +o_{f^{\pm}}(1).$$
Only the lower bound needs a proof. We can reformulate it as
\begin{align} \label{appII}
(\DilN \nu)_{g,h}(f^+-f^-)\leq \left(O_{f}(\eps)+o_{f^{\pm}}(1)\right)(\DilN \nu)_{g,h}(f^+).
\end{align}
To prove \eqref{appII}, we  write as above $(\DilN \nu)_{g,h}=N^{-\dg/2}w_{N,g,h}(x) dx$ 
where $$w_{N,g,h}(x)=u\circ r_{N,g,h}(x) \,\,\,\,\,\,\,\,\,\,\,\,r_{N,g,h}(x)=\DilsN (g^{-1}*x*h^{-1}).$$
We also introduce the domain 
$$\Omega_N :=\{x\in \kg\,:\, \|x\|\leq N^{\frac{1}{4s}}\}.$$

\begin{lemme} \label{var}
For $c\lll1$, there exists $V>1$ such that for all $N\geq1$, $g,h \in W_N(c)$, 
$$\frac{\sup_{\Omega_N}w_{N,g,h}}{\inf_{\Omega_N}w_{N,g,h}}\leq V. $$
\end{lemme}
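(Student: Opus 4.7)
The plan is to combine the Baker--Campbell--Hausdorff (BCH) formula with the Gaussian bounds on $v$ from \Cref{fact-gaussian} and with the smoothness of $v$. The key observation is that for $g,h\in W_N(c)$ and $x\in\Omega_N$, the point $r_{N,g,h}(x)$ stays close to $r_{N,g,h}(0)$ at a rate polynomial in $N$, and this rate beats the polynomial deterioration of the Gaussian lower bound on $v$ provided $c$ is small.

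First, using BCH I would expand $(g^{-1}*x*h^{-1})^{(i)}$ as a polynomial in the graded coordinates of $g,x,h$. Bounding each monomial by means of $\|g^{(w)}\|,\|h^{(w)}\|\leq c(N\log N)^{w/2}$ and $\|x^{(w)}\|\leq \|x\|\leq N^{1/(4s)}$, I would show that every monomial containing at least one $x$-factor is bounded by $CN^{-(2s-1)/(4s)}(N\log N)^{i/2}$ after normalisation by $N^{i/2}$, since each $x$-factor of weight $w\geq 1$ contributes an extra factor $N^{1/(4s)-w/2}\leq N^{-(2s-1)/(4s)}$. Collecting the purely $g,h$ contributions (which are dominated by the linear terms $-g^{(i)}$ and $-h^{(i)}$) and the $x$-dependent ones separately, this would give the two estimates
$$\|r_{N,g,h}(x)^{(i)}\|\leq Cc(\log N)^{i/2}, \qquad \|(r_{N,g,h}(x_1)-r_{N,g,h}(x_0))^{(i)}\|\leq CN^{-(2s-1)/(4s)}(\log N)^{(i-1)/2}$$
for all $x,x_0,x_1\in\Omega_N$ and $N$ large depending on $c$.

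Next, applying the ball-box principle to the first bound yields $\norm{r_{N,g,h}(x)}^2\leq \alpha(c)\log N$, where $\alpha(c)=O(c^{2/s})\to 0$ as $c\to 0$. Combined with \Cref{fact-gaussian}, this gives the uniform lower bound $v(r_{N,g,h}(x))\geq A^{-1}N^{-A\alpha(c)}$. Since $v$ is smooth and Schwartz (\cite[Proposition 3.8]{benard-breuillard-CLT}), its Euclidean gradient $\nabla v$ is globally bounded; the mean value theorem combined with the second estimate above then yields
$$|v(r_{N,g,h}(x_1))-v(r_{N,g,h}(x_0))|\ll N^{-(2s-1)/(4s)}(\log N)^{(s-1)/2}.$$
Dividing, the ratio $w_{N,g,h}(x_1)/w_{N,g,h}(x_0)$ differs from $1$ by at most $O\bigl(N^{A\alpha(c)-(2s-1)/(4s)}(\log N)^{(s-1)/2}\bigr)$.

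The main obstacle is the balance between the polynomial growth $N^{A\alpha(c)}$ of $1/v$ and the polynomial decay $N^{-(2s-1)/(4s)}$ coming from the variation of $r_{N,g,h}$ on $\Omega_N$: this balance is achievable only because $\alpha(c)$ can be made arbitrarily small. Choosing $c$ small enough that $A\alpha(c)<(2s-1)/(8s)$ makes the error tend to $0$ as $N\to\infty$, so the ratio is bounded by some constant $V_\infty$ for all $N$ beyond a threshold $N_0=N_0(c)$. For $1\leq N\leq N_0$, the sets $W_N(c)$ and $\Omega_N$ are uniformly bounded in $\kg$ and $v$ is continuous and strictly positive, so the ratio is bounded by a compactness argument. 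Taking $V$ to be the maximum yields the claimed uniform bound.
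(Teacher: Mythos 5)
Your proposal is correct and follows essentially the same route as the paper: both arguments rest on the observation that the oscillation of $r_{N,g,h}$ over $\Omega_N$ is $O(N^{-\delta+o(1)})$ for a fixed $\delta>0$ (via the dilation/BCH structure), that the Gaussian lower bound of \Cref{fact-gaussian} together with $\norm{r_{N,g,h}(x)}^2\ll c^{2/s}\log N$ gives $1/v(r_{N,g,h}(x))\leq N^{\epsilon(c)}$ with $\epsilon(c)\to0$, and that these two powers can be balanced by taking $c$ small. The only (cosmetic) difference is that you control the finite difference $v(r(x_1))-v(r(x_0))$ using the global bound on $\nabla v$, whereas the paper bounds the logarithmic derivative $\partial_i\log w_{N,g,h}$ and integrates.
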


Let us see how the lemma allows to conclude. Write $\psi=f^+-f^-$. 
\begin{align*}
(\DilN \nu)_{g,h}(f^+-f^-) &= N^{-\dg/2}\int_{\kg} \psi(x) w_{N,g,h}(x) dx\\   
 &=\underbrace{ N^{-\dg/2}\int_{\Omega_N} \psi(x) w_{N,g,h}(x) dx}_{T_N} \,+\,\underbrace{N^{-\dg/2}\int_{\kg\smallsetminus \Omega_N} \psi(x) w_{N,g,h}(x) dx}_{S_N}.
\end{align*}

Asking that $c$ also satisfies \Cref{var}, and taking $N$ large enough so that $\int_{\Omega_N}f\geq \frac{\|f\|_{L^1}}{2}$, we can control the first term:
\begin{align*}
T_N &\leq N^{-\dg/2} \int_{\Omega_N} \psi(x) \sup_{\Omega_N}w_{N,g,h}\ dx\\
&\leq N^{-\dg/2} \|\psi\|_{L^1} V\inf_{\Omega_N}w_{N,g,h}\\
&\leq N^{-\dg/2} \|\psi\|_{L^1} V \frac{2}{\|f\|_{L^1}}\int_{\Omega_N} f(x) w_{N,g,h}(x)dx \\
&\leq \|\psi\|_{L^1} V \frac{2}{\|f\|_{L^1}}   (\DilN \nu)_{g,h}(f^+). \\
\end{align*}
As $\|\psi\|_{L^1}\leq \eps$, we finally have for $N\geq 1$, $g,h\in W_N(c)$,
\begin{align*}
T_N=  O_{f}(\eps) \, (\DilN \nu)_{g,h}(f^+).
\end{align*}

Let us now deal with $S_N$. Using that $w_{N,g,h}$ is bounded by $\sup v$, and that $f^+, f^-$ are rapidly decreasing, we obtain for every $Q>0$
\begin{align*}
S_N &\ll N^{-\dg/2}\int_{\kg\smallsetminus \Omega_N} \psi(x)  dx \\
&= O_{f^+, f^-, Q}(N^{-Q}).
\end{align*}
Choosing $Q=\frac{\dg}{2}+1$ and using the lower bound $(\DilN \nu)_{g,h}(f^+)\geq  N^{-\frac{\dg}{2} -\delta}$ from \Cref{nu-min-Fcompact} and our choice for $c$, we deduce that 
\begin{align*}
S_N = o_{f^{\pm}}(1) (\DilN \nu)_{g,h}(f^+).
\end{align*}

This concludes the proof of \Cref{mu/nu} modulo \Cref{var}, which we prove below. 
\end{proof}

\begin{proof}[Proof of \Cref{var}]
We need to show that for $c\lll1$, the maximum variation
$$\sup_{x,y\in \Omega_N}\big|\log(w_{N,g,h}(x)) -\log(w_{N,g,h}(y))\big|$$ 
is bounded  by some constant $V'>0$ independent of $N\geq1$, $g,h\in W_N(c)$. Enumerate as $e_1, \dots, e_d$ the basis $(e^{(i)}_j)$ of $\kg$ fixed in \Cref{Sec2} and write $\partial_i$ for the derivation in the variable $e_i$ (in the classical sense). Fixing $i\in \{1, \dots, d\}$, it is sufficient to show that for $c\lll 1$, $x\in \kg$
$$|(\partial_i \log w_{N,g,h})(x)| \leq |P_{N,g,h}(x)|$$
where $P_{N,g,h}$ is a polynomial on $\kg$ of degree at most $s$, and whose coefficients are $O(\frac{(c\log N)^{O(s)}}{N^{1/3}})$ uniformly in $N\geq2$, $g,h\in W_N(c)$. 
 
Let us analyse this derivative. We can rewrite
$$(\partial_i \log w_{N,g,h})(x) = (\partial_i r_{N,g,h})(x) \,\frac{\partial_i v }{v}(r_{N,g,h}(x)).$$

We study each term in the product.  

\bigskip

For the first term $\partial_i r_{N,g,h}$, recall that 
$r_{N,g,h}(x)=\DilsN (g^{-1}*x*h^{-1})$. As in the proof of \Cref{TLLFouriercompact} (\Cref{Sec-endproof}), we can write $r_{N,g,h}$ as a sum 
$$r_{N,g,h}(x)= Q_N(\DilsN g, \DilsN x, \DilsN h)$$ 
where   $Q_N$ is a polynomial on $\kg^3$ of degree at most $s$,  only\footnote{$Q_N$ also depends on the initial data $\kg$, $\mu$, etc. fixed once and for all in  \Cref{Sec-cadre}.} depending on $N$, and with coefficients $O(1)$. 

As $\|\DilsN g\|,  \|\DilsN h\| =O( c(\log N)^{s/2})$, and as any term in $Q_N$ with non-zero $\partial_i$-derivative in $x$ must involve $\DilsN x$, we infer that $\partial_i r_{N,g,h}(x)$ is  a polynomial whose coefficients  are $O(\frac{(c\log N)^{O(s)}}{\sqrt{N}})$ uniformly in $N\geq2$, $g,h\in W_N(c)$.

\bigskip

For the second term $\frac{\partial_i v }{v}(r_{N,g,h}(x))$, we bound the numerator by a constant and use the lower bound on $v$ from \Cref{fact-gaussian} to get
$$|\frac{\partial_i v }{v}(r_{N,g,h}(x))|\ll A e^{ A \norm{r_{N,g,h}(x)}^2}. $$
However,  we have for all $N\geq 1$, $x\in \Omega_N$, $g,h\in W_N(c)$ that 
$$\norm{r_{N,g,h}(x)} \leq O(c) \sqrt{\log N}.$$
It follows that 
$$|\frac{\partial_i v }{v}(r_{N,g,h}(x))|\ll A N^{A O(c)^2} \ll N^{\frac{1}{6}}$$
up to taking $c\lll_A 1$. As $A$ can be fixed as a function of the initial data, our conventions allow to remove it from the subscript, see \Cref{Sec-cadre-asympt}.  

\bigskip

The result follows by combining the discussions for each term.

\end{proof}


\section{Consequences for Choquet-Deny and equidistribution.} \label{Sec-applications}

\subsection{Bounded harmonic functions on nilpotent groups}

\label{Sec12} 

In this paragraph, we prove \Cref{Choquet-Deny}, namely the Choquet-Deny property. While doing so, we also establish the proposition below, which is an important consequence of our truncated uniform local limit theorem \ref{LLT-tronc}, and will be crucial in the next section.


\begin{proposition} \label{pixel}
Let $(\kg,*)$ be a simply connected nilpotent Lie group. Let $\mu$ be  an aperiodic probability measure on $\kg$ with a finite moment  of order strictly greater than $2$ for the weight filtration induced by $\XXab=\E(\mu_{ab})$. Let $\nu$ be the limit measure arising in the central limit theorem \eqref{CLT} for some choice of weight decomposition $\kg=\oplus \km^{(b)}$. Fix a right invariant distance  on $(\kg,*)$ and let 
$$\mathscr{L}=\{F: \kg\rightarrow \R \,\,|\,\, F\, \,1\emph{-lipschitz},\, \|F\|_{\infty}\leq 1\}.$$ Then $$\sup_{F\in \mathscr{L}}\, |\mu^{*N}(F)-(\DilN \nu*N\XX)(F)|\underset{N\to +\infty}{\longrightarrow} 0.$$

\end{proposition}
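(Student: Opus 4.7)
The strategy is a pixel decomposition combined with the uniform LLT of \Cref{LLT-tronc}. Since the assumption on $\mu$ is only a moment strictly greater than $2$ (not $\dmu+2$), I first replace $\mu^{*N}$ by the gradual truncated approximation $\Theta^N$: by \Cref{truncation-cost}, choosing the truncation parameter $\gamma_0$ small enough (depending on $m_\mu>2$) gives $\|\mu^{*N}-\Theta^N\|_{TV}\to 0$, and since $\|F\|_\infty\leq 1$ this contributes a uniformly negligible error over $F\in\mathscr{L}$. It therefore suffices to prove the analogous statement with $\Theta^N$ in place of $\mu^{*N}$.

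Setting $\tau_N:=\DilN\nu*N\XX$ and $T_N(y):=\DilN(y)*N\XX$, the next step is localization: I want a large bounded region capturing almost all the mass of both measures. For $\tau_N$ this is immediate, since $\tau_N(\kg\setminus T_N(B_R))=\nu(\kg\setminus B_R)\to 0$ as $R\to\infty$, uniformly in $N$. For $\Theta^N$, the central limit theorem (\Cref{CLT}) combined with Prokhorov's theorem yields tightness of $\DilsN(\Theta^N*\delta_{-N\XX})$, so for any $\epsilon_1>0$ I fix $R$ with $(\Theta^N+\tau_N)(\kg\setminus T_N(B_R))<2\epsilon_1$ uniformly in $N$. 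Because $|F|\leq 1$, this contributes at most $2\epsilon_1$ to $|\Theta^N(F)-\tau_N(F)|$ uniformly over $F\in\mathscr{L}$.

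Within $T_N(B_R)$, I fix a Riemannian ball $K=B(0,\delta)\subseteq\kg$ and tile by essentially disjoint translates $g_i*K$ with centers $g_i\in T_N(B_R)$, numbering $M_N\lesssim N^{\dmu/2}\,\mathrm{vol}(B_R)/\mathrm{vol}(K)$. On each pixel the $1$-Lipschitz function $F$ varies by at most $\delta$, so
\[
\Theta^N(F\,\1_{T_N(B_R)})-\tau_N(F\,\1_{T_N(B_R)})=\sum_i F(g_i)\bigl[\Theta^N(g_i*K)-\tau_N(g_i*K)\bigr]+E,
\]
with $|E|\leq 2\delta$ from the Lipschitz approximation. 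Applying \Cref{LLT-tronc} with $h=0\in\DN(\delta_0)$ and $g\in\tkg$ satisfying $p(g)=g_i^{-1}$ (after approximating $\1_K$ from above and below by functions in $C_c(\kg)$) yields
\[
|\Theta^N(g_i*K)-\tau_N(g_i*K)|=o_K(N^{-\dmu/2})
\]
uniformly in $g_i$. Summing over the $M_N$ pixels bounds the main term by $M_N\cdot o_K(N^{-\dmu/2})\leq C(R,\delta)\,o_K(1)$, which tends to $0$ as $N\to\infty$ for fixed $R,\delta$. Combined with the localization error $2\epsilon_1$ and Lipschitz error $2\delta$, this gives $\limsup_N\sup_{F\in\mathscr{L}}|\mu^{*N}(F)-\tau_N(F)|\leq 2\delta+2\epsilon_1$; letting $\delta,\epsilon_1\to 0$ concludes.

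The key (and only non-routine) point is the \emph{one-sided uniformity} in the deviation $g_i$ afforded by \Cref{LLT-tronc}: this is precisely what makes the LLT error per pixel uniformly $o(N^{-\dmu/2})$, allowing the summation over $\sim N^{\dmu/2}$ pixels to remain $o(1)$. Without this feature the pixel decomposition would not close; the rest is bookkeeping.
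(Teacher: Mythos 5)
Your overall strategy is the same as the paper's (the paper literally describes its proof as ``pixelize $F$, then apply the uniform local limit theorem to each pixel''), and you correctly isolate the key mechanism: the one-sided uniformity in the deviation in \Cref{LLT-tronc} is what lets an $o(N^{-\dmu/2})$ error per pixel survive summation over $\sim N^{\dmu/2}$ pixels. The truncation step via \Cref{truncation-cost} and the localization via tightness from the CLT also match what the paper does (it restricts the integration to a box $Q_{C,N}$ for the same reason). However, your discrete implementation of the pixelization has a genuine gap, in two respects.

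First, an orientation error: the metric is \emph{right}-invariant, so a $1$-Lipschitz $F$ has variation at most $\mathrm{diam}(K)$ on \emph{right} translates $K*g_i$, not on left translates $g_i*K$. For a right-invariant distance, $d(g_i k, g_i k')=|g_i(k*(-k'))*(-g_i)|$ involves conjugation by $g_i$, which in a non-abelian nilpotent group can inflate lengths by a power of $\|g_i\|\sim\sqrt N$; your bound $|E|\le 2\delta$ then fails. This is fixable by using pixels $K*g_i$ and putting the arbitrary deviation on the right ($g=0\in\DN(\delta_0)$, $p(h)=g_i^{-1}$), which \Cref{LLT-tronc} equally permits.

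Second, and more seriously, the identity $\Theta^N(F\1_{T_N(B_R)})-\tau_N(F\1_{T_N(B_R)})=\sum_i F(g_i)\bigl[\Theta^N(g_i*K)-\tau_N(g_i*K)\bigr]+E$ with $|E|\le 2\delta$ requires the translates of $K$ to form an exact partition of the bulk region up to measure zero. Group translates of a fixed metric ball never tile exactly, and one cannot fall back on a lattice-induced fundamental domain since a general simply connected nilpotent Lie group need not admit a lattice. A covering with bounded overlap does not suffice either: the overlap excess contributes an $O(1)$ error to $E$, and replacing the pixels by Voronoi cells destroys the uniformity of the LLT error, since \Cref{LLT-tronc} is uniform over deviations of a \emph{fixed} test function, not over a family of test sets of varying shape. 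This is precisely the difficulty the paper's proof is built to avoid: it replaces the discrete tiling by a continuous one, writing $F_\eps=\chi_\eps*F$ for a bump function $\chi_\eps$ and using Fubini to get $\Theta^N(F_\eps)=\int_{\kg}(\Theta^N*\delta_{u^{-1}})(\chi_\eps)\,F(u)\,du$, so that every ``pixel'' is the same test function $\chi_\eps$ with a right deviation $u^{-1}$, the $u$-integral over the bulk (of volume $O(N^{\dmu/2})$) absorbs the uniform $o(N^{-\dmu/2})$ error, and no tiling of the group is ever needed. You should adopt this convolution step (or supply a genuine almost-exact tiling argument) to close the proof; the rest of your outline is sound.
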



\noindent\emph{Remarks}.
1) Note that $F$ may not decay at infinity. The idea of the proof is to pixelize $F$, and then apply the uniform local limit theorem to each pixel. 

2) If we replace the right-invariant metric by a left invariant metric, then the statement still holds, following the same line of argument. We will need the metric to be right-invariant below in order to regularize bounded harmonic functions for the right-random walk using convolution on the left. 

\begin{proof} 
We fix  $0<\gamma_{0}\lll 1$ so that the truncation $\Theta^N$ defined in  \Cref{Sec-uniform-LLT} satisfies the uniform local limit theorem \ref{LLT-tronc}. As $\mu$ has in particular finite second moment, \Cref{truncation-cost} yields that 
 $$\| \mu^{*N} -\Theta^N\|\to0 $$ in total variation. Hence, we may replace $\mu^{*N}$ by $\Theta^N$ in order to prove \Cref{pixel}.

Let $\eps>0$.  We fix a ``bump function'' $\chi_{\eps} : \kg \rightarrow \R^+$, i.e.  a continuous function supported on the ball  $B_{\kg}(0, \eps)$ with total sum $1$. We introduce the convolution $F_\eps:=\chi_\eps*F$, given by 
$$F_{\eps}(x)= \int_{\kg}\chi_{\eps}(u^{-1}) F(u x)du=\int_{\kg}\chi_{\eps}(xu^{-1}) F(u)du$$
where, here and below, we write $xy=x*y$ and $x^{-1}=-x$ to lighten notations. 
As $F$ is $1$-Lispchitz we have $\|F-F_{\eps}\|_{\infty}\leq \eps$, so for all $N\geq 1$, \begin{equation}\label{fee} |\Theta^N(F)-(\DilN \nu*N\XX)(F)|  \leq |\Theta^N(F_{\eps})-(\DilN \nu*N\XX)(F_{\eps})|   +2\eps. \end{equation}
On the other hand
\begin{align} \label{muNFeps}
\Theta^N(F_{\eps})&=\int_{\kg}(\Theta^N*\delta_{u^{-1}}) (\chi_{\eps})\,  F(u) \,du.
 \end{align}
   
 The uniform local limit theorem \ref{LLT-tronc} with arbitrary deviations by right multiplication implies that uniformly in $u$
 \begin{align} \label{TLLchi}
 \Theta^N*\delta_{u^{-1}} (\chi_{\eps}) = \DilN \nu *N\XX* \delta_{u^{-1}} (\chi_{\eps}) + o_{\gamma_{0}, \chi_{\eps}}(N^{-\frac{\dd}{2}})
\end{align}
where the $o_{\gamma_{0}, \chi_{\eps}}(.)$ does not depend on $u$. However, we may not replace directly $\Theta^N$ by $\DilN \nu *N\XX$ in the integral because the error term is a priori not integrable. To remedy this, we truncate the domain of integration in a part on which the error integrates to   $o_{\gamma_{0}, \chi_{\eps}}(1)$, and a part that is negligible thanks to the central limit theorem. More precisely, for $C>0$ we set 
$$Q_{C, N}=\{u\in \kg \,:\, \forall i,\,\,\|u^{(i)}\|\leq C N^{i/2}\}.$$
By the central limit theorem \eqref{CLT}, and the observation that $Q_{C,N}= \DilN (Q_{C,1})$, we may choose $C=C(\mu, \eps)$ large enough so that for $N\geq 1$, 
$$(\Theta^N*-N\XX)(Q^c_{C,N}) <\eps\,\,\,\,\,\,\,\,\,\,\,\,\, \,\,\,\,\,\,\DilN \nu(Q_{C,N}) < \eps $$
where $Q^c_{C,N}:=\kg \smallsetminus Q^c_{C,N}$. Then we split the integral in \eqref{muNFeps} into two parts corresponding to the integration domains $Q_{2C,N}*N\XX$ and $Q^c_{2C,N}*N\XX$. Since $\|F\|_\infty \leq 1$, the second part is bounded above by $(\Theta^N*-N\XX) (B_{\kg}(0,\eps)*Q^c_{2C,N})$, which is itself at most $(\Theta^N*-N\XX)(Q^c_{C,N}  )<\eps$. The same applies to $\DilN \nu$ in place of $\Theta^N*-N\XX$, so in the end in view of \eqref{TLLchi}, we get:
\begin{align*}
|\Theta^N(F_{\eps}) - \DilN \nu *N\XX(F_{\eps}) | \leq 2\eps + o_{\gamma_{0}, \chi_{\eps}}(1). 
\end{align*}

Finally, for large $N$  (depending on $\mu$, $\gamma_0$, $\eps$, $\chi_{\eps}$ but not on $F\in \mathscr{L}$), the left-hand side in \eqref{fee} is at most $5\eps$, and the final result follows letting  $\eps$ go to $0$. 
\end{proof}

Finally, we show  \Cref{Choquet-Deny}.

\begin{proof}[Proof  \Cref{Choquet-Deny}]
 We  identify $G$ with its Lie algebra $\kg$ via the exponential map, and denote by $*$ the induced group structure. Up to replacing $\mu$ by $\frac{1}{2}\mu+\frac{1}{2}\delta_0$, which has the same harmonic functions, we may assume that $\mu$ is aperiodic. 

We endow $\kg$ with a $*$-right invariant Riemannian metric. Without loss of generality we may further restrict attention to harmonic functions $F$ that are Lipschitz. Indeed, we may replace $F$ by the convolution $\chi *F$, where $\chi$ is any smooth bump function with integral $1$. This new function remains harmonic and bounded. It is furthermore Lipschitz.  Finally if $\chi *F$ is a.e. constant for all such $\chi$, then $F$ too is a.e. constant. 

Arguing by induction on the dimension of $\kg$ (applying the result to $\kg/\mathfrak{z}(\kg)$), it is enough to prove that $F$ is invariant under translation by the center $\mathfrak{z}(\kg)$ of $\kg$. 

So, assume that $F$ is Lipschitz, pick $x_0 \in \kg$ and $z \in \mathfrak{z}(\kg)$. The rest of the proof is very similar to Nelson's proof of Liouville's theorem \cite{nelson61}.  The left translate $F_{x_{0}}(x)=F(x_0x)$ is again bounded, harmonic and it remains Lipschitz\footnote{The Lipschitz constant of $F_{x_{0}}$ depends on $x_{0}$ but this is not an issue.}. Thus by \Cref{pixel} we have
\begin{equation}\label{Fx0F}F(x_{0}) =\mu^{*N}(F_{x_{0}}) = (\DilN \nu*N\XX)(F_{x_{0}})+o_{F_{x_{0}}}(1)
\end{equation} 
and the same holds with $x_{0}z$ in place of $x_{0}$. 
But since $z$ lies in the center $\mathfrak{z}(\kg)$ we have:
$$(\DilN \nu*N\XX)(F_{x_{0}z})= \int_{\kg} F(x_{0} z x *N\XX) \,d\DilN \nu(x) = \int_{\kg} F(x_{0}x *N\XX) \,d(\DilN \nu*\delta_z)(x).$$
In view of \eqref{Fx0F}, we can conclude that $F(x_0)=F(x_0z)$ provided we show that
$$\int_{\kg} F(x_{0}x *N\XX) \,d(\DilN \nu*\delta_z)- \DilN \nu )(x)\longrightarrow 0.$$
But this is indeed the case, since setting $\DilN \nu= u_{N}(x) dx$ where $v_{N} =N^{-\frac{\dd}{2}} v \circ \DilsN $, the above integral is bounded above by:
\begin{align*}
\|F\|_{\infty}\int_{\kg} |v_{N}(x+z)-v_{N}(x)| dx =\|F\|_{\infty}\int_{\kg} |v(x+\DilsN z)-v(x)| dx
\end{align*}
which tends to $0$ as $N\to+\infty$.

\end{proof}


\subsection{Equidistribution in homogeneous spaces} \label{Sec-equid-homogene}

In this section we prove Theorem \ref{equid-homogene} regarding equidistribution of unipotent random walks on homogeneous spaces of finite volume. In a nutshell, we will use our LLT (twice!) in the guise of Lemma \ref{pixel} above in order to reduce to a version of Ratner's equidistribution theorem proved by Shah, Theorem \ref{Ratner-Shah} below. A similar strategy was employed in \cite{breuillard05}. 

We first reduce to the case of aperiodic walks thanks to the following general lemma.

\begin{lemme}[Reduction to aperiodic case] \label{reduction-aper}
Let $\mu$ be a probability measure on a group $G$ and  $\mu'=\frac{1}{2}\mu + \frac{1}{2}\delta_{1}$. Then for all $N>1$,
$$\|\frac{1}{2N}\sum_{k=1}^{2N}\mu'^k- \frac{1}{N}\sum_{k=1}^N\mu^k\| \leq C \frac{\log N}{\sqrt{N}},$$
where $\|.\|$ is the total variation norm and $C$ an absolute constant.
\end{lemme}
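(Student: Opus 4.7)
The plan is to expand $\mu'^k$ using the binomial theorem and compare the resulting linear combination of powers of $\mu$ with the Ces\`aro mean of the $\mu^k$, reducing everything to an $\ell^1$-estimate on scalar coefficients that we will control by a simple probabilistic argument.

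Concretely, the binomial expansion gives $\mu'^k=\sum_{j=0}^{k}\binom{k}{j}2^{-k}\mu^{j}$, so
\[
\frac{1}{2N}\sum_{k=1}^{2N}\mu'^{k}=\sum_{j=0}^{2N} a_j\,\mu^{j},\qquad a_j=\frac{1}{2N}\sum_{k=\max(j,1)}^{2N}\binom{k}{j}2^{-k},
\]
while $\frac{1}{N}\sum_{k=1}^{N}\mu^{k}=\sum_{j=0}^{2N} b_j\mu^{j}$ with $b_j=\frac{1}{N}\mathbf{1}_{1\le j\le N}$. By the triangle inequality,
\[
\Big\|\tfrac{1}{2N}\sum_{k=1}^{2N}\mu'^{k}-\tfrac{1}{N}\sum_{k=1}^{N}\mu^{k}\Big\|\le \sum_{j\ge 0}|a_j-b_j|,
\]
so it suffices to show that the right-hand side is $O((\log N)/\sqrt N)$ (in fact $O(1/\sqrt N)$ will come out).

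The key identity is the following: if $S_k=X_1+\cdots+X_k$ with $X_i$ i.i.d.\ Bernoulli$(1/2)$, then $\binom{k}{j}2^{-k}=\mathbb{P}(S_k=j)$, and the generating function computation $\sum_{k\ge 0}\binom{k}{j}x^{k}=x^{j}/(1-x)^{j+1}$ at $x=1/2$ yields
\[
\sum_{k\ge 0}\mathbb{P}(S_k=j)=2\qquad\text{for every }j\ge 0.
\]
Hence $\sum_{k=1}^{2N}\mathbb{P}(S_k=j)=2-\mathbf{1}_{j=0}-r_j$ with $r_j:=\sum_{k>2N}\mathbb{P}(S_k=j)$, giving $a_j=(2-\mathbf{1}_{j=0}-r_j)/(2N)$. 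Consequently:
\begin{itemize}
\item for $j=0$, $|a_0-b_0|\le 1/(2N)$;
\item for $1\le j\le N$, $|a_j-b_j|=r_j/(2N)$;
\item for $N<j\le 2N$, $|a_j-b_j|=(2-r_j)/(2N)$.
\end{itemize}

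It remains to bound $\sum_{j=1}^{N}r_j$ and $\sum_{j=N+1}^{2N}(2-r_j)$. By Fubini,
\[
\sum_{j=1}^{N}r_j=\sum_{k>2N}\mathbb{P}(1\le S_k\le N)\le \sum_{k>2N}\mathbb{P}(S_k\le N),
\]
\[
\sum_{j=N+1}^{2N}(2-r_j)=\sum_{k=1}^{2N}\mathbb{P}(S_k>N).
\]
Since $\mathbb{E}[S_k]=k/2$, Hoeffding's inequality gives $\mathbb{P}(S_k\le N)\le \exp(-2(k/2-N)^{2}/k)$ for $k\ge 2N$ and $\mathbb{P}(S_k>N)\le \exp(-2(N-k/2)^{2}/k)$ for $k\le 2N$. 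Summing these Gaussian-type tails (with the change of variable $v=|k-2N|$) yields that each of the two sums above is $O(\sqrt N)$. The main obstacle, if any, is just keeping the tail estimates clean; there is no serious difficulty since everything reduces to the concentration of a simple random walk.

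Altogether,
\[
\sum_{j\ge 0}|a_j-b_j|\le \frac{1}{2N}+\frac{1}{2N}\Big(\sum_{j=1}^{N}r_j+\sum_{j=N+1}^{2N}(2-r_j)\Big)=O(1/\sqrt N),
\]
which is stronger than the announced $O((\log N)/\sqrt N)$ bound and completes the proof.
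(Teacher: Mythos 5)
Your proposal is correct and follows essentially the same route as the paper: binomial expansion of $\mu'^k$, reduction to the coefficients $\E(N_{2N}(j))$ of the Bernoulli walk via the identity $\sum_{k\ge 0}\PP(S_k=j)=2$, and binomial tail bounds. The only difference is in the final estimate — the paper splits $[1,2N]$ into three zones with a middle window of width $\sqrt{N}\log N$ (which is where its $\log N$ comes from), whereas you sum the Hoeffding tails directly over all $j$ and thereby obtain the slightly sharper bound $O(1/\sqrt{N})$.
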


\begin{proof}
Expanding the power $\mu'^k$, and setting $\binom{k}{i}=0$ if $i>k$,  we may write
\begin{align*}
\sum_{k=1}^{2N}\mu'^k = \sum_{i=1}^{2N}  \sum_{k=1}^{2N} \frac{1}{2^k} \binom{k}{i}  \mu^i = \sum_{i=1}^{2N} \E( N_{2N}(i)) \mu^i
\end{align*}
where $N_p(i)$ is the occupation time $N_p(i):=|\{k\leq p, S_{k}=i\}|$ and $(S_{k})_{k\geq0}$ is the Markov chain on $\N_{\ge 0}$ starting at $0$ with transition probabilities $p(i,i)=p(i,i+1)=1/2$, for $i \ge 0$. We can then write:
\begin{equation}\label{bbmar}
\|\frac{1}{2N}\sum_{k=1}^{2N}\mu'^k- \frac{1}{N}\sum_{k=1}^N\mu^k\| \leq \frac{1}{N}\sum_{i=1}^{2N} |\frac{1}{2}\E( N_{2N}(i)) -\1_{i\leq N}|.
\end{equation}
We split the right hand side in three terms $A_1+A_2+A_3$ according to the position of $i$ with respect to $p_{N}:=\lfloor N- \sqrt{N}\log N \rfloor$ and $q_{N}=\lfloor N+ \sqrt{N}\log N \rfloor$. 

Recall, either from the general theory of discrete Markov chains or in this case by a direct calculation, that $\E(N_\infty(i))=2$ for all $i$. In particular, if $i \in [p_N,q_N]$, each term in the sum $A_2$ is bounded by $2$, so $A_2 \leq 4\log N/\sqrt{N}$.  Next:\begin{align*}A_3 & = \frac{1}{N} \sum_{i \ge q_N} \E(N_{2N}(i)) = \frac{1}{N} \sum_{k\leq 2N}\sum_{i \ge q_N} \PP(S_k=i) = \frac{1}{N} \sum_{k\leq 2N}  \PP(S_k\ge q_N)\\ & \leq  2 \PP(S_{2N} \ge q_N). \end{align*} However $S_{2N}$ is a  binomial  random variable and by classical tail bounds $\PP(S_{2N} \ge q_N)$ decays faster than any inverse power of $N$. So $A_3 \leq \log N/\sqrt{N}$ for $N$ large enough.

Finally if $i\le p_N$,

\begin{align*}
 |\frac{1}{2}\E( N_{2N}(i)) -1| = \frac{1}{2} |\E( N_{2N}(i)) -\E( N_{\infty}(i)) | \leq \frac{1}{2} \sum_{k>2N} \PP(S_k=i).
\end{align*}
Summing over $i\le p_N$, we get:
$$A_1\leq \frac{1}{2}\sum_{k>2N} \PP(S_k \leq p_N) \leq \frac{1}{2}\sum_{k>2N} \PP(S_k \leq k/2- \sqrt{k/2}\log(k/2)).$$
Again each term of the latter sum decays faster than any inverse power of $k$, hence so does the sum as a function of $N$. This ends the proof.
\end{proof}




We keep the notation of Theorem \ref{equid-homogene} and consider the Alexandrov compactification $\Omega\cup \{\infty\}$ of the homogeneous space $\Omega=H/\Lambda$. The next proposition ensures that the sequence of measures has no escape of mass and that any limit measure is absolutely continuous with respect to the homogeneous measure $m_\omega$ given by Ratner's theorem:

\begin{proposition}[Domination by $m_{\w}$] \label{domination}
There exists $C>0$ such that every weak limit $m$ of the sequence of probability measures $(\frac{1}{N}\sum_{k=1}^N\mu^k*\delta_{\w})_{n\geq 1}$ on $\Omega \cup \{\infty\}$ satisfies $m\leq C m_{\w}$.
The result holds without  Ces\`aro if $\mu$ is centered.   
\end{proposition}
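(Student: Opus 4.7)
The plan is to transfer the LLT from \Cref{pixel} to $\Omega$ and then compare the resulting measure with $m_\w$ via Shah's polynomial equidistribution \Cref{Ratner-Shah}. After reducing to the aperiodic case via \Cref{reduction-aper} (in the centered statement, aperiodicity is part of the hypothesis), fix a nonnegative Lipschitz $f\in C_c(\Omega)$ and set $F:=f\circ \pi_\w$ on $U$, where $\pi_\w(u):=\rho(\exp u)\w$. Since $\rho\circ\exp$ is a homomorphism of nilpotent Lie groups and $f$ is bounded with compact support, $F$ is bounded and Lipschitz for any right-invariant metric on $U$; after rescaling, $F\in\mathscr{L}$. \Cref{pixel} then yields
$$\mu^{*k}(F)=\sigma_k(F)+o(1),\qquad \sigma_k := D_{\sqrt{k}}\nu * k\XX.$$

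Next, write $\nu=v(x)\,dx$ with $v$ Schwartz, and substitute $u=\exp(D_{\sqrt{k}}z)\exp(k\XX)$ to obtain
$$\sigma_k(F)=\int_\kg f\bigl(\rho(\exp(D_{\sqrt{k}}z)\exp(k\XX))\w\bigr)\,v(z)\,dz.$$
Given $\eps>0$, by Riemann integrability of $v$ one can pick compact $F_1,\dots,F_n\subseteq\kg$ and scalars $\alpha_i>0$ with $v\le\sum_i\alpha_i\1_{F_i}$ and $\sum_i\alpha_i|F_i|\le 1+\eps$. Since $\exp(k\XX)\w\in U\w\subseteq L_\w\w$, its Ratner measure equals $m_\w$ by $U$-invariance of $m_\w$. \Cref{Ratner-Shah}, applied to the polynomial trajectory $z\mapsto\rho(\exp(D_{\sqrt{k}}z)\exp(k\XX))\w$ and the averaging set $F_i$, gives
$$\frac{1}{|F_i|}\int_{F_i} f\bigl(\rho(\exp(D_{\sqrt{k}}z)\exp(k\XX))\w\bigr)\,dz\;\underset{k\to\infty}{\longrightarrow}\;m_\w(f).$$
Summing, $\limsup_{k\to\infty}\sigma_k(F)\le(1+\eps)m_\w(f)$; letting $\eps\to 0$, $\limsup_{k\to\infty}\mu^{*k}(F)\le m_\w(f)$. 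Ces\`aro averaging preserves this bound, so any weak limit $m$ of $\bigl(\frac{1}{N}\sum_{k=1}^{N}\mu^{*k}*\delta_\w\bigr)$ on $\Omega\cup\{\infty\}$ satisfies $m(f)\le m_\w(f)$ for every nonnegative Lipschitz $f\in C_c(\Omega)$, and by density for all nonnegative $f\in C_c(\Omega)$, giving $m|_\Omega\le m_\w$. To upgrade this to the stated inequality of measures $m\le Cm_\w$ on the compactification (which forces no escape of mass since $m_\w(\{\infty\})=0$), I would repeat the argument with a lower step-function approximation $v\ge\sum_j\beta_j\1_{G_j}$ with $\sum_j\beta_j|G_j|\ge 1-\eps$, yielding $\liminf_{k\to\infty}\sigma_k(F)\ge (1-\eps)m_\w(f)$; combined with the upper bound and the fact that $m$ is a probability, this forces $m|_\Omega=m_\w$ and $m(\{\infty\})=0$, so $C=1$ works. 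The centered (non-Ces\`aro) claim follows identically for the individual powers $\mu^{*N}*\delta_\w$.

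The principal obstacle is the application of Shah's theorem to the anisotropic averaging sets $D_{\sqrt{k}}F_i$, which stretch as $k^{i/2}$ in the weight-$i$ direction and translate with the drifting base point $\exp(k\XX)\w$. What allows this to go through is that $(D_r)$ are automorphisms of the graded structure $(\kg,*')$, so $D_{\sqrt{k}}F_i$ is a F\o lner sequence in $U$, while the $U$-invariance of $m_\w$ absorbs the base-point drift. Verifying that \Cref{Ratner-Shah}---a multi-parameter polynomial equidistribution result for unipotent flows---is stated in a form that covers this exact kind of averaging is the main technical point to check.
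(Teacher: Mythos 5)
Your first step (using \Cref{pixel} to replace $\mu^{*k}$ by $D_{\sqrt{k}}\nu * k\XX$) matches the paper exactly, but the second step contains the genuine gap, and it is the one you flag yourself: \Cref{Ratner-Shah} as stated applies only to boxes $B_{\ut}=[-t_1,t_1]e_1*\cdots*[-t_d,t_d]e_d$ in Malcev coordinates of the second kind with respect to a regular basis, with all $t_i\to\infty$ and a \emph{fixed} base point $\w$. It does not cover normalized averages over anisotropic dilates $D_{\sqrt{k}}F_i$ of arbitrary compact sets, nor averages taken from the drifting base points $\exp(k\XX)\w$; $U$-invariance of $m_\w$ identifies the limit measure but says nothing about uniformity of the convergence over the moving base point, which is exactly what your argument needs. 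The paper circumvents both difficulties at once: it only seeks \emph{domination} (not equidistribution) of the dilated measure, bounding $\nu\leq C_1\leb_{|K_r}+\nu_{\ku\smallsetminus K_r}$ and then using the comparison principle (\Cref{boules-eucl-mult}) to dominate $\leb_{D_{\sqrt{rN}}K_1}$ by $M\,\leb(B_{\ut_{rN}})\cdot\frac{\leb_{B_{\ut_{rN}}}}{\leb(B_{\ut_{rN}})}$ — this is where the constant $C$ comes from — and it absorbs the drift by choosing $e_d=\XX$, so that the Ces\`aro average over $k$ of the translates $B_{\ut_{rN}}*k\XX$ is dominated by a single larger box based at $\w$, to which \Cref{Ratner-Shah} applies verbatim.

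Two further signs that your stronger conclusion ($C=1$, indeed $m|_\Omega=m_\w$) overreaches: first, if it held at this stage it would prove \Cref{equid-homogene} outright and make the Choquet--Deny step in the paper redundant, whereas the paper genuinely needs stationarity plus $U$-ergodicity of $m_\w$ to pin down the limit. Second, your upper step-function approximation $v\leq\sum_i\alpha_i\1_{F_i}$ with $\sum_i\alpha_i|F_i|\leq 1+\eps$ cannot hold globally when $v$ is positive outside every compact set; you must truncate and carry a small escaping mass, which is precisely the measure $\sigma_N$ with $\|\sigma_N\|\leq\eps$ in the paper's Claim. The correct target here is $m\leq Cm_\w$ for some $C>1$, obtained by domination, not equality.
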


Before giving the proof, we recall the Ratner-Shah equidistribution theorem. Let $\ku$ be the Lie algebra of the connected $Ad$-unipotent subgroup $U \leq H$, so $(\ku,*)$ is the simply connected covering of $U$.  We say that a vector space  basis  $(e_{1}, \dots, e_{d})$ of $\ku$ is  \emph{triangular} if it satisfies $[e_{i}, e_{j}]\subseteq \text{Vect}\{e_{k} , \, k>\max(i,j)\}$ for all $i,j$. A basis is \emph{regular} if it can be obtained by permuting  the elements in a triangular basis.

 \begin{theorem}[Ratner-Shah] \label{Ratner-Shah}
 Let $(e_{1}, \dots, e_{d})$ be a \emph{regular} basis  of $\ku$.  For $\ut=(t_{1}, \dots, t_{d})\in \R^d$, set $R_{\ut}:=[0, t_{1}] e_{1}*  \dots *[0, t_{d}]e_{d}$. Then for  $\w\in \Omega$, 
 $$\frac{\leb_{|R_{\ut}}}{\leb(R_{\ut})}*\delta_{\w} \underset{\inf t_{i}\to +\infty}{\longrightarrow}  m_{\w}$$
 where $\leb$ stands for the Lebesgue measure on $\ku$. 
 \end{theorem}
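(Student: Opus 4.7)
The plan is to deduce the statement from Shah's multi-parameter equidistribution theorem for polynomial trajectories on homogeneous spaces \cite{shah94}, combined with Ratner's orbit-closure theorem identifying the limit as $m_{\w}$. The key point is that any regular basis $(e_1,\dots,e_d)$ of $\ku$ yields a polynomial diffeomorphism
$$\Phi:\R^d\to U,\qquad \Phi(\ut):=t_1 e_1*\cdots*t_d e_d,$$
(Malcev-type coordinates of the second kind), whose inverse is also polynomial and whose Jacobian $J(\ut):=|\det D\Phi(\ut)|$ is a polynomial function of $\ut$. Consequently $R_{\ut}$ is the image under $\Phi$ of $B_{\ut}:=[0,t_1]\times\cdots\times[0,t_d]\subset\R^d$, and the measure in the statement can be rewritten as
$$\frac{\leb_{|R_{\ut}}}{\leb(R_{\ut})}*\delta_{\w}=\frac{1}{Z_{\ut}}\int_{B_{\ut}}J(\rr)\,\delta_{\Phi(\rr).\w}\,d\rr,\qquad Z_{\ut}:=\int_{B_{\ut}}J,$$
i.e.\ a polynomially-weighted box average along the polynomial trajectory $\rr\mapsto\Phi(\rr).\w$ on $\Omega$.

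First, I would reduce to the case of a triangular basis. Passing from a triangular ordering to a regular (permuted) ordering amounts to a polynomial change of variables on $\R^d$: using the Baker-Campbell-Hausdorff formula, each adjacent swap $t_i e_i * t_{i+1}e_{i+1}\leftrightarrow t_{i+1}e_{i+1}*t_i e_i$ introduces only higher-index correction terms that remain polynomial in the triangular coordinates, and an induction on the step recasts $\Phi$ in triangular form modulo a polynomial diffeomorphism of $\R^d$. In the triangular case the Jacobian $J$ is constant, so the measure $\mu_{\ut}$ above is simply the uniform box average. For a general regular basis, the polynomial weight $J$ can be absorbed by a dyadic subdivision of $B_{\ut}$ into subboxes on which $J$ is essentially constant, combined with an Abel-type summation.

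Next, I would invoke Shah's equidistribution theorem for polynomial trajectories: since $\Ad\circ\rho$ takes values in a unipotent subgroup of $\GL(\kh)$, the map $\rr\mapsto\Phi(\rr).\w$ is a polynomial trajectory on $\Omega$, and the theorem yields the weak-$\star$ convergence of the uniform box averages $\frac{1}{|B_{\ut}|}\int_{B_{\ut}}\delta_{\Phi(\rr).\w}\,d\rr$ as $\inf t_i\to+\infty$ to a probability measure invariant under and supported on $\overline{U.\w}$. By Ratner's orbit-closure theorem, $\overline{U.\w}=L_{\w}.\w$ is a homogeneous subspace of finite volume that carries a unique $L_{\w}$-invariant probability measure, namely $m_{\w}$, so the limit equals $m_{\w}$. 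Combined with the dyadic reduction above, this gives the conclusion for the weighted average.

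The main obstacle will be ensuring Shah's theorem applies in the required multi-parameter regime (where only $\inf_i t_i\to+\infty$ is assumed, with no constraint on the ratios $t_i/t_j$) and justifying the dyadic reduction from polynomial weight $J$ to constant weight uniformly in $\ut$. Both points can be handled either by invoking Shah's quantitative uniform version directly, or by a compactness argument: extract a weak-$\star$ subsequential limit of $\mu_{\ut_n}$, verify that it is a probability measure (no escape of mass) supported on $\overline{U.\w}$ and $U$-invariant (via a Følner-type boundary estimate on $B_{\ut_n}$ using $\Phi$ being a polynomial diffeomorphism), and conclude by Ratner's measure classification that every such limit is $m_{\w}$.
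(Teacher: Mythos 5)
The paper does not actually prove this statement: it is quoted as an external result, attributed to Ratner for $d=1$ and cited verbatim as Shah \cite[Corollary 1.3]{shah94}, with the single remark that Shah's simple-connectedness hypothesis on $U$ is not used in his proof. Measured against that, the first branch of your proposal ("invoke Shah's quantitative uniform version directly") is essentially the same citation, and the preparatory reductions you build around it are unnecessary: for a regular basis the map $\Phi(\ut)=t_1e_1*\cdots*t_de_d$ has Jacobian of absolute value $1$ everywhere (writing $e_i=f_{\pi(i)}$ for a triangular basis $(f_j)$, the $i$-th column of the differential, read at the group point, is $\Ad\bigl((g_{i+1}\cdots g_d)^{-1}\bigr)e_i=f_{\pi(i)}+(\text{terms in the span of }f_m,\ m>\pi(i))$, so the determinant is $\pm1$), hence $\leb_{|R_{\ut}}$ is exactly the pushforward of the uniform measure on the parameter box and no dyadic subdivision or Abel summation is needed. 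Note also that passing from a one-parameter polynomial-trajectory theorem to box averages in which only $\inf_i t_i\to+\infty$, with no constraint on the ratios $t_i/t_j$, is precisely the content of Shah's Corollary 1.3; so at this point you are restating the theorem, not deriving it.

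The genuine gap is in your fallback "compactness" argument, specifically the F\o{}lner-type boundary estimate. The sets $R_{\ut}$ are not asymptotically invariant under left translation by $U$, even when all side lengths are equal. In the Heisenberg group with $R_{\ut}=\{(x,y,w):x\in[0,t_1],\,y\in[0,t_2],\,w-\tfrac12xy\in[0,t_3]\}$, left translation by $\exp(sY)$ shears the central fiber over $(x,y)$ by $\tfrac12 sx$; averaging over $x\in[0,t_1]$, the relative measure of the symmetric difference is bounded below by a constant when $t_1\asymp t_3$ and tends to $1$ when $t_1\gg t_3$ (which the hypothesis $\inf t_i\to\infty$ allows). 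A soft boundary estimate therefore yields invariance of a weak-$\star$ limit only under the single one-parameter group $\exp(\R e_1)$ (whose left action merely shifts the first parameter interval), and Ratner's measure classification for one unipotent subgroup then only says the limit is a convex combination of homogeneous measures invariant under that subgroup — possibly concentrated on smaller intermediate orbit closures — which does not identify it as $m_{\w}$. Ruling out such concentration is exactly what Shah's linearization/induction argument does, and it cannot be replaced by amenability-type reasoning; likewise "no escape of mass" for these polynomial box averages requires the Dani–Margulis quantitative non-divergence estimates, not a compactness remark. So either you cite Shah's Corollary 1.3 as the paper does, or you must reproduce the substance of his proof; the self-contained route as sketched does not close.
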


This theorem is due to Ratner when $U$ is one-dimensional. It was subsequently generalized by Shah \cite[Corollary 1.3]{shah94} in the present form. Note that Shah states it for simply connected $U$, but his proof does not require this assumption.

Applying this result to a finite union of rectangles, it clearly holds with $R_{\ut}$ replaced by $B_{\ut}:=[-t_{1}, t_{1}]e_{1}* \dots * [-t_{d}, t_{d}]e_{d}$, as $\inf t_{i} \to +\infty$.  

We now assume that  $e_{1}, \dots, e_{d}$ enumerates the basis $(e^{(i)}_{j})$ introduced  in \Cref{Sec2}. 
It is indeed a regular basis (thanks to the relation $[\kg^{(i_{1})}, \kg^{(i_{2})}]\subseteq \kg^{(i_{1}+i_{2})}$). In the non-centred case, we also assume $e_{d}=\XX$.   


\begin{proof}[Proof of \Cref{domination}]
First we observe that \Cref{pixel} allows us to replace $\mu^k$ by the measure $D_{\sqrt{k}}\nu*k\XX$ without loss of generality. Indeed, endow $H$ with a right invariant Riemannian metric, and $\Omega$  with the quotient metric. For every Lipschitz function $f$ on $\Omega$, we set $F:\ku \to \R$, $F(u)=f(\exp(u)\omega)$. Then $F$ is Lipschitz on $\ku$ for say a right-invariant Riemannian metric on $(\ku,*)$, and \Cref{pixel} guarantees that $$(\mu^k-D_{\sqrt{k}}\nu*k\XX)(F) \underset{k\to+\infty}{\longrightarrow} 0.$$
 Hence, the set of weak limits of  $\frac{1}{N}\sum_{k=1}^N\mu^k*\delta_\omega$ (or $\mu^N*\delta_{\w}$ in the centered case)  does not change if we consider $D_{\sqrt{k}}\nu*k\XX$ in place of  $\mu^k$. 

\bigskip
  \Cref{domination} now follows from the following claim.  

\begin{affirmation} There is $C>0$, such that for all   $N,\eps >0$ there are positive measures $m_{N}$ and $\sigma_{N}$ on $\Omega$, with  $\|m_{N}\|=1$ and $\|\sigma_{N}\|\leq \eps$, such that
$$\frac{1}{N}\sum_{k=1}^N D_{\sqrt{k}}\nu*k\XX \leq C m_{N} + \sigma_{N}$$
and  $m_{N}\to_{N \to +\infty} m_{\w}$. The statement also holds without  Ces\`aro average if $\XX=0$.
\end{affirmation}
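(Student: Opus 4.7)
The plan is to dominate each ingredient $D_{\sqrt{k}}\nu*k\XX$ above by a multiple of a probability measure $\pi_k$ of uniform Lebesgue type, whose push-forward to $\Omega$ equidistributes to $m_\w$ via \Cref{Ratner-Shah}. The key input is that $\nu=v(x)dx$ has a bounded Schwartz density, in particular $V:=\|v\|_\infty<\infty$. Fix $\eps>0$. Since $\nu$ is a probability measure with rapidly decaying density, there is a compact set $A_0=A_0^\eps\subseteq\ku$ with $\nu(\ku\setminus A_0)<\eps$. Because $D_{\sqrt{k}}$ scales Lebesgue by a factor $k^{d_\mu/2}$ and right-translation by $k\XX$ preserves Lebesgue on the nilpotent group $(\ku,*)$, the measure $D_{\sqrt{k}}(\nu|_{A_0})*k\XX$ has Lebesgue density at most $Vk^{-d_\mu/2}$ on the set $E_k:=D_{\sqrt{k}}A_0*k\XX$, hence
$$D_{\sqrt{k}}(\nu|_{A_0})*k\XX\ \le\ V\,\mathrm{vol}(A_0)\cdot \pi_k,\qquad \pi_k:=\frac{\mathrm{Leb}|_{E_k}}{\mathrm{vol}(E_k)}.$$
The residual $D_{\sqrt{k}}(\nu|_{A_0^c})*k\XX$ has total mass $\nu(A_0^c)<\eps$.

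Pushing everything to $\Omega$ via $x\mapsto x\cdot\w$, I define $m_N:=\tfrac{1}{N}\sum_{k=1}^N \pi_k*\delta_\w$ (a probability measure on $\Omega$) and $\sigma_N:=\tfrac{1}{N}\sum_{k=1}^N (D_{\sqrt{k}}(\nu|_{A_0^c})*k\XX)*\delta_\w$ (a positive measure of total mass $<\eps$). The claimed inequality holds with $C:=V\,\mathrm{vol}(A_0)$. It remains to establish $m_N\to m_\w$ weakly. By right-invariance of Lebesgue in $\ku$ one has $\pi_k*\delta_\w = \pi_k^0*\delta_{\w_k}$, where $\w_k:=\exp(k\XX)\cdot\w\in U\w$ and $\pi_k^0$ is the uniform probability on $D_{\sqrt{k}}A_0$.

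Sandwiching $D_{\sqrt{k}}A_0$ between two Shah-type rectangles $R_{\underline{t}_k}\subseteq D_{\sqrt{k}}A_0\subseteq R_{\underline{t}'_k}$ with $\min_i t_k^i\to\infty$ in every weight direction and with bounded volume ratio (possible because $A_0$ is a fixed compact and $D_{\sqrt{k}}$ acts diagonally with eigenvalues $k^{w_i/2}$), \Cref{Ratner-Shah} applied at base point $\w_k$ gives $\tfrac{\mathrm{Leb}|_{R_{\underline{t}_k}}}{\mathrm{vol}(R_{\underline{t}_k})}*\delta_{\w_k}\to m_{\w_k}$ as $k\to\infty$. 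Since $\w_k\in U\w$, one has $\overline{U\w_k}=\overline{U\w}$, and by the uniqueness of the $U$-invariant probability measure $m_{\w_k}=m_\w$. Thus $\pi_k*\delta_\w\to m_\w$, and the Cesàro average $m_N$ inherits this limit. In the centered case $\XX=0$ the base point $\w_k=\w$ is stationary, so convergence of the individual $\pi_k*\delta_\w$ suffices and no Cesàro average is needed, which is why the statement also holds without Cesàro when $\XX=0$.

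The main obstacle is the moving base point $\w_k$: \Cref{Ratner-Shah} as stated is pointwise in $\w$, not uniform, so asserting $\pi_k^0*\delta_{\w_k}\to m_\w$ needs some uniformity along the polynomial trajectory $k\mapsto \exp(k\XX)\w$. I expect to overcome this in one of two ways: either directly, by noting that Shah's proof of \Cref{Ratner-Shah} is uniform on compact pieces of the orbit closure and that $\w_k$ traces a polynomial trajectory along which the limit $m_{\w_k}=m_\w$ is constant; or indirectly, by a Fölner-type argument showing that any weak-$\star$ subsequential limit of $m_N$ is $U$-invariant and supported on $\overline{U\w}$, hence coincides with $m_\w$ by Ratner's uniqueness (combined with a domination argument to rule out escape of mass inside this piece of the argument, which is separate from the escape-of-mass issue that the claim itself is designed to control).
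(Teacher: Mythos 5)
Your domination step is sound and is essentially the paper's: bound the density of $\nu$ to compare $D_{\sqrt k}\nu * k\XX$ with a normalized Lebesgue measure on a dilated set of volume $\asymp k^{\dd/2}$, discard an $\eps$-mass tail, and enlarge the dilated set to a Shah-type box via the comparison principle (\Cref{boules-eucl-mult}). Two remarks on the execution, though. First, sandwiching $D_{\sqrt k}A_0$ between boxes of bounded volume ratio and applying \Cref{Ratner-Shah} to the inner and outer boxes only shows that weak limits of your $\pi_k^0*\delta_{\w_k}$ are squeezed between constant multiples of $m_\w$; it does not give convergence to $m_\w$. The clean fix (and what the paper does) is to define $m_N$ as the normalized Lebesgue measure of the \emph{outer} box itself and absorb the volume ratio into $C$.

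The genuine gap is in the non-centered case. You reduce to showing that $\pi_k^0*\delta_{\w_k}\to m_\w$ where $\w_k=\exp(k\XX)\w$, i.e.\ equidistribution of growing boxes based at a point moving to infinity along the orbit, and neither of your proposed repairs closes this. \Cref{Ratner-Shah} is a statement for a \emph{fixed} base point as $\inf t_i\to\infty$; it carries no uniformity in $\w$, and such uniformity is exactly what a diagonal argument over $(k,\w_k)$ would require. Equivalently, at the fixed base point $\w$ you are asking for equidistribution of a window of multi-scale size $\sqrt k$ translated a distance $k$ along the orbit — a ``short interval far from the origin'' statement that is much stronger than Ratner--Shah and should not be expected in general (the paper points out right after \Cref{equid-homogene} that individual, non-Cesàro convergence genuinely fails for non-centered walks). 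Your fallback (b) — Følner invariance of the boxes plus classification of $U$-invariant limits — founders on escape of mass: a weak-$\star$ limit of $m_N$ in $\Omega\cup\{\infty\}$ could a priori charge $\infty$, and ruling this out is the very content of \Cref{domination}; doing it for the box measures at the moving base points $\w_k$ would require quantitative non-divergence (Dani--Margulis) uniformly over the trajectory, an input the paper never needs. The device you are missing is the choice $e_d=\XX$ in the regular basis: then $B_{\ut}*k\XX$ is the same box with its last coordinate interval translated by $k$, and the Cesàro average over $k\le N$ of these translated boxes is dominated, up to a bounded constant, by the normalized Lebesgue measure of a \emph{single} box whose last side is $[-2N,2N]$. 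One then applies \Cref{Ratner-Shah} once, at the fixed base point $\w$, to that elongated box; the moving base point never appears, and this is also precisely where the Cesàro average is consumed.
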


Let $K_1$ be a compact neighbourhood of the origin in $\ku$ and let $K_r=D_{\sqrt{r}}K_1$. Since $\nu$ has a continuous density, there is $C_1>0$ such that $\nu\leq C_{1}\leb_{|K_r}+ \nu_{\ku\smallsetminus K_r}$ for all $r\ge 1$.
For all $N\geq1$, 
\begin{align*}
\DilN \nu &  \leq \DilN  C_{1}\leb_{K_r}+\DilN \nu_{\ku\smallsetminus K_r}= C_{1}N^{-\dd/2}\leb_{D_{\sqrt{rN}}K_1}+\DilN \nu_{\ku\smallsetminus K_r}\\
&\leq C_{1}M\frac{\leb_{B_{\ut_{Nr}}}}{\leb(B_{\ut_{Nr}})}+\DilN \nu_{\ku\smallsetminus K_r} = Cm_N + \sigma_N
\end{align*}
setting $C=C_1M$, where the constant $M$ and the box $B_{\ut_{rN}} \supset D_{\sqrt{rN}}K_1$ are given by   \Cref{boules-eucl-mult} below applied to $K_1$. Choosing $r$ such that $\|\nu_{\ku\smallsetminus K_r}\|<\eps$, the result follows in case $\mu$ is centered. 
In the general case when $\XX \neq 0$, we translate by $k\XX$ on the right and average over $k$:
$$\frac{1}{N}\sum_{k=1}^N D_{\sqrt{k}}\nu*k\XX \leq C\frac{1}{N}\sum_{k=1}^N \frac{\leb_{B_{\ut_{rN}} *k\XX}}{\leb(B_{\ut_{rN}})} + \sigma_{N}$$
where $\sigma_{N}=\frac{1}{N}\sum_{k=1}^N(D_{\sqrt{k}}\nu_{\ku\smallsetminus K_r})*k\XX$ has mass less than $\eps$. 
As we took care of choosing $e_{d}=\XX$,  the diffeomorphism $t_{1}e_{1}*\dots *t_{d}e_{d}\mapsto (t_{1}, \dots, t_{d})$ sends the Lebesgue measure $\leb_{B_{\ut_{rN}} *k\XX}$ to the product measure
$$\leb_{[-\ut_{rN,1}, \ut_{rN,1}]}\otimes \dots \otimes \leb_{[-\ut_{rN,d-1}, \ut_{rN,d-1}]}\otimes \leb_{[-\ut_{rN,d}, \ut_{rN,d}]+k}.$$
By construction of $\ut_{rN}$, we have  $\ut_{rN,d}=M \sqrt{rN}$, so for $N$ large enough
$$ \frac{1}{N}\sum_{k=1}^N\leb_{[-\ut_{rN,d}, \ut_{rN,d}]+k}\leq  \frac{2M\sqrt{Nr}+1}{N}\leb_{[-2N,2N]}.$$
Hence
$$\frac{1}{N}\sum_{k=1}^N D_{\sqrt{k}}\nu*k\XX \leq C(2M+1)\frac{\sqrt{r}\leb_{B_{\ut'_{rN}}}}{\sqrt{N}\leb(B_{\ut_{rN}})} + \sigma_{N}$$
 where $\ut'_{rN}$ is obtained from $\ut_{rN}$ by replacing the coordinate $\ut_{rN,d}$ par $2N$. As  $\leb(B_{\ut'_{rN}})$  is comparable to $\frac{2N}{\sqrt{rN}}\leb(B_{\ut_{rN}})$ the claim follows by \Cref{Ratner-Shah}.
\end{proof}

In the proof we used the following well-known comparison principle between Malcev coordinates of the first and second kind \cite[1.2]{corwin-greenleaf90}.

\begin{lemme}[comparison principle]  \label{boules-eucl-mult}
Let $K\subseteq \ku$ be a compact set. There exists $M>0$ such that for all $r>1$, there exists $\ut_{r}\in \R^d$ satisfying
$$D_{\sqrt{r}}K \subseteq B_{\ut_{r}}\,\,\,\,\,\,\,\,\,\,et \,\,\,\,\,\,\,\,\,\,\leb(B_{\ut_{r}}) \leq  M r^{\frac{\dd}{2}}.$$
\end{lemme}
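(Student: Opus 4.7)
The statement is a comparison between Malcev coordinates of the first kind (the map $\phi:\R^d\to\ku$, $\phi(t_1,\ldots,t_d)=t_1e_1*\cdots*t_de_d$) and Malcev coordinates of the second kind (the linear parametrization $\psi(u_1,\ldots,u_d)=\sum_j u_je_j$), combined with the behavior of the dilations $D_{\sqrt r}$ with respect to the weight decomposition. My plan is to reduce the problem to a Euclidean box in linear coordinates and then invert $\phi$ on such a box, controlling each coordinate by the appropriate power of $\sqrt r$.

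The first step is to record how dilations and compactness give a Euclidean box. Since $K$ is compact, there is $C_0>0$ such that for every $y\in K$ and every $j$, the linear coordinate satisfies $|\langle y,e_j\rangle|\leq C_0$. Each $e_j$ belongs to some $\km^{(w_j)}$, and $D_{\sqrt r}$ scales that component by $r^{w_j/2}$, so
$$D_{\sqrt r}K\subseteq L_r:=\{x\in\ku\,:\,|\langle x,e_j\rangle|\leq C_0\,r^{w_j/2}\text{ for all }j\}.$$
Note that $\mathrm{leb}(L_r)=(2C_0)^d\prod_j r^{w_j/2}=(2C_0)^d r^{\dd/2}$, which already gives the right order of magnitude. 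It remains to inflate $L_r$ into a set of the form $B_{\ut_r}$ of comparable volume.

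The second step is a weight-graded inversion of $\phi$. Because the basis $(e_1,\ldots,e_d)$ is regular and compatible with the weight filtration, one has $[e_i,e_j]\in\kg^{(w_i+w_j)}=\bigoplus_{l\,:\,w_l\ge w_i+w_j}\km^{(l)}$. Expanding $\phi(t)$ via BCH, the $e_k$ coordinate takes the form $\phi(t)_k=t_k+Q_k(t_1,\ldots,\hat t_k,\ldots,t_d)$ where $Q_k$ is a polynomial with no constant or linear term, and each monomial $c\,t_{i_1}\cdots t_{i_m}$ of $Q_k$ satisfies $w_{i_1}+\cdots+w_{i_m}\leq w_k$ (because the $e_k$-component of an iterated bracket lives in $\kg^{(w_k)}$). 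Solving recursively in the triangular order associated to the regular basis, one obtains $\phi^{-1}(x)_k=x_k-R_k(x_1,\ldots,\hat x_k,\ldots,x_d)$ where, by induction on the triangular order, each monomial of $R_k$ has total w-degree $\leq w_k$ when $x_i$ is assigned weight $w_i$.

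The third step combines the two. If $x\in L_r$, substituting $|x_i|\leq C_0\,r^{w_i/2}$ into $R_k$ gives $|R_k(x)|\leq C_5(C_0)\,r^{w_k/2}$ for a constant $C_5$ depending only on the fixed polynomial $R_k$ and on $C_0$. Hence $|\phi^{-1}(x)_k|\leq(C_0+C_5)\,r^{w_k/2}$, so setting $C_1:=C_0+\max_k C_5$ and $\ut_r:=(C_1r^{w_1/2},\ldots,C_1r^{w_d/2})$ gives $L_r\subseteq B_{\ut_r}$ and hence $D_{\sqrt r}K\subseteq B_{\ut_r}$. For the volume bound, I would use the standard fact (see \cite[1.2]{corwin-greenleaf90}) that $\phi$ preserves Lebesgue measure: writing $\phi$ as an iterated left-translation, the differential is upper triangular in the regular basis with ones on the diagonal, so $\det d\phi\equiv 1$. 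Therefore $\mathrm{leb}(B_{\ut_r})=\prod_j 2t_{r,j}=(2C_1)^d r^{\dd/2}$, and we may take $M=(2C_1)^d$.

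The main conceptual point, and the only nontrivial step, is the weight-graded bound on $\phi^{-1}$ in step two; everything else is bookkeeping. The proof handles the regular (not necessarily triangular) case by working in the triangular order provided by the definition of regular basis, which is enough to run the recursive inversion. Throughout, the fact that the weight filtration satisfies $[\kg^{(i)},\kg^{(j)}]\subseteq\kg^{(i+j)}$ (\Cref{basics-weight-filtration}) is what makes the w-degree of the inversion polynomials $R_k$ cap out at $w_k$, which in turn is what makes the volume scale exactly like $r^{\dd/2}$ rather than something larger.
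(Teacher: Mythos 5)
Your proof is correct, but it takes a different route from the one in the paper. The paper establishes the inclusion $D_{\sqrt r}K\subseteq B_{\ut_r}$ by passing to the graded product $*'$: since the dilations are automorphisms of $(\ku,*')$, the set $D_{\frac{1}{\sqrt r}}B_{\ut_r}$ equals the $*'$-Malcev box of radius $M$ plus a perturbation $P_r$ whose coefficients carry negative powers of $\sqrt r$; for $r$ large the perturbation is $O(1)$, and one concludes by choosing $M$ large enough that the $*'$-Malcev box of radius $M$ swallows a fixed linear box (a soft statement about the graded group, justified by properness of its second-kind coordinate map), with small $r$ handled by enlarging $M$. You instead invert the $*$-Malcev coordinate map $\phi$ directly and track w-degrees: the observation that a monomial $t_{i_1}\cdots t_{i_m}$ can contribute to the $e_k$-coordinate of $\phi(t)$ only if $w_{i_1}+\cdots+w_{i_m}\le w_k$ (because the corresponding iterated bracket lies in $\kg^{(w_{i_1}+\cdots+w_{i_m})}=\bigoplus_{l\ge w_{i_1}+\cdots+w_{i_m}}\km^{(l)}$) forces $Q_k$ to involve only variables of strictly smaller weight, so the recursion on weight closes and $R_k$ again has w-degree $\le w_k$. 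Both arguments rest on the same structural input, namely $[\kg^{(i)},\kg^{(j)}]\subseteq\kg^{(i+j)}$ from \Cref{basics-weight-filtration} applied through Baker--Campbell--Hausdorff, but yours is more explicit: it works uniformly in $r>1$ without a large-$r$/small-$r$ case split, and it makes the surjectivity step that the paper only asserts (the covering of a linear box by a Malcev box) a byproduct of the computed inverse. It also has the merit of addressing the volume bound explicitly, via the triangularity (in the weight order) of $d\phi$ that your own formula $\phi(t)_k=t_k+Q_k(t_i:w_i<w_k)$ exhibits, giving $\det d\phi\equiv 1$ and hence $\leb(B_{\ut_r})=(2C_1)^d r^{\dd/2}$ since $\sum_j w_j=\dd$.
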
 


\begin{proof}
It is sufficient to check that for all $C>0$, there exists $M>0$ such that for all $r>1$, 
\begin{align} \label{eq-incl}
\left\{\sum_{i,j} s^{(i)}_{j}e^{(i)}_{j}\,: \,\forall i,j, \,|s^{(i)}_{j}|\leq C \right\} \subseteq D_{\frac{1}{\sqrt{r}}} \left\{ \prod_{i,j} t^{(i)}_{j}e^{(i)}_{j}\,: \,\forall i,j, \,|t^{(i)}_{j}|\leq Mr^{i/2}  \right\}
\end{align}
where the product follows the order prescribed the enumeration $e_{1}, \dots, e_{d}$.  But we can write  $D_{\frac{1}{\sqrt{r}}} \prod_{i,j} t^{(i)}_{j}e^{(i)}_{j}= \prod^{*'}_{i,j} r^{-i/2}t^{(i)}_{j}e^{(i)}_{j} + P_{r}((r^{-i/2}t^{(i)}_{j})_{i,j})$ where $*'$ is the graded product coming from $*$ for the weight filtration, and $P_{r}$ is obtained by dividing by a  (non zero)  power of $\sqrt{r}$ each coefficent of a polynomial function $P$ with $d$ variables. Then, we choose $r_{0}>0$ such that for $r>r_{0}$, $$\|P_{r}((r^{-i/2}t^{(i)}_{j})_{i,j})\|\leq 1$$
and $M>0$ large enough so that
$$\left\{\sum_{i,j} s^{(i)}_{j}e^{(i)}_{j}\,: \,\forall i,j, \,|s^{(i)}_{j}|\leq C+1 \right\} \subseteq  \left\{ {\prod_{i,j}}^{*'} t^{(i)}_{j}e^{(i)}_{j}\,: \,\forall i,j, \,|t^{(i)}_{j}|\leq M \right\}$$
This proves \eqref{eq-incl} for $r>r_{0}$. Up to increasing the value of $M$ even more, we get the result for all $r>1$. 
\end{proof}

We can finally conclude the proof of \Cref{equid-homogene}.

\begin{proof}[Proof of \Cref{equid-homogene}]
\Cref{domination} guarantees that every weak limit of the sequence
$(\frac{1}{N}\sum_{k=1}^N \mu^N*\delta_{\w})_{N\geq1}$
is a probability measure on $\Omega$ that is absolutely continuous with respect to   $m_{\w}$. It is also $\mu$-stationary, so it must be $U$-invariant by \Cref{Choquet-Deny} and hence $m_\w$ itself, because $m_\w$ is $U$-ergodic by Ratner's theorem \cite{ratner91}.

When $\mu$ is centered, we may apply the same argument to the sequence $(\mu^N*\delta_{\w})_{N\geq1}$, but we need to justify why limit measures are $\mu$-stationary. It is sufficient to check that for every compactly supported smooth function  $f$  on $\Omega$, we have $|\mu^{N+1}*\delta_\w(f)- \mu^{N}*\delta_\w(f)|\rightarrow 0$ as $N\to +\infty$.  Setting $F(u)=f(\exp(u)\w)$ we obtain a bounded Lipschitz function on $\ku$ and by \Cref{pixel}, it  boils down  to checking that  $|D_{\sqrt{N+1}}\nu(F)- \DilN \nu(F)|\rightarrow 0$. To see this, we observe that for the  norm in total variation, 
$$\|D_{\sqrt{N+1}}\,\nu-\DilN \nu\|= \|\nu -D_{\sqrt{\frac{N}{N+1}}}\,\nu\|\longrightarrow 0$$
whence the result.

\end{proof}

















\bigskip

\bigskip

\noindent\textsc{Mathematics Institute, University of Warwick, Coventry CV4 7AL, United Kingdom}

\noindent\textit{Email address}: \texttt{timothee.benard@warwick.ac.uk},

\bigskip

\noindent\textsc{Mathematical Institute, University of Oxford, Woodstock Rd Oxford OX2 6GG, United Kingdom}

\noindent\textit{Email address}: \texttt{breuillard@maths.ox.ac.uk}





\end{document}